\definecolor{darkblue}{rgb}{0.0, 0.0, 0.55}
\newcommand\N{\mathbb N}
\newcommand\R{\mathbb R}
\newcommand\C{\mathbb C}
\newcommand\OO{\mathcal O}
\renewcommand\S{\mathbb S}
\definecolor{inv}{rgb}{1,1,1}
\DeclareMathOperator\spam{span}
\DeclareMathOperator\inte{int}
\DeclareMathOperator\pz{pz}
\DeclareMathOperator\kz{kz}
\DeclareMathOperator\rk{rk}
\DeclareMathOperator\mconv{mconv}
\DeclareMathOperator\mcc{mcc}
\DeclareMathOperator\im{im}
\DeclareMathOperator\dom{dom}
\DeclareMathOperator\size{size}
\DeclareMathOperator\tr{tr}
\DeclareMathOperator\abex{abex}
\DeclareMathOperator\mex{mext}
\DeclareMathOperator\mexp{mexp}
\DeclareMathOperator\Deh{Deh}
\DeclareMathOperator\Hom{Hom}
\newcommand\ph\varphi
\newcommand\ps\psi
\newcommand\ep\varepsilon
\newcommand\rh\varrho
\newcommand\al\alpha
\newcommand\be\beta
\newcommand\ga\gamma
\newcommand\om\omega
\newcommand\ta\tau
\renewcommand\th\vartheta
\newcommand\de\delta
\newcommand\ze\zeta
\newcommand\ch\chi
\newcommand\et\eta
\newcommand\io\iota
\newcommand\la\lambda
\newcommand\si\sigma
\newcommand\ka\kappa
\newcommand\ov\overline
\newcommand\uv\underline
\DeclareRobustCommand{\SkipTocEntry}[5]{}
\newcommand\ii{\mathbbm i}
\newcommand\NC{\C\langle \ov X \rangle}
\newcommand\ml{\mathfrak{L}}
\newcommand\mh{\mathfrak{H}}
\newcommand\mfm{\mathfrak{m}}
\theoremstyle{definition}
\newaliascnt{lemma}{satz}
\newtheorem{lemma}[lemma]{Lemma}
\newaliascnt{theo}{satz}
\newtheorem{theo}[theo]{Theorem}
\newaliascnt{cor}{satz}
\newtheorem{cor}[cor]{Corollary}
\newaliascnt{rem}{satz}
\newtheorem{rem}[rem]{Remark}
\newaliascnt{exar}{satz}
\newtheorem{exar}[exar]{Example}
\newaliascnt{definition}{satz}
\newtheorem{definition}[definition]{Definition}
\newaliascnt{defprop}{satz}
\newtheorem{defprop}[defprop]{Definition and Proposition}
\newaliascnt{proposition}{satz}
\newtheorem{proposition}[proposition]{Proposition}
\newaliascnt{structure}{satz}
\newtheorem{structure}[structure]{Overview}
\newaliascnt{notation}{satz}
\newtheorem{notation}[notation]{Notation}
\subjclass[2010]{Primary 14P10, 47L07, Secondary 13J30, 46L07}
\keywords{Free spectrahedra, matrix convexity}
\begin{document}

\newpage
\allowdisplaybreaks
\pagenumbering{arabic}

\title{An introduction to matrix convex sets and free spectrahedra}
\author{Tom-Lukas Kriel}

\begin{abstract}
The purpose of this paper is to give a self-contained overview of the theory of matrix convex sets and free spectrahedra. We will give new proofs and generalizations of key theorems. However we will also introduce various new concepts and results as well. Key contributions of this paper are
\begin{itemize}
\item A new general Krein-Milman theorem that characterizes the smallest operator tuple defining a compact matrix convex set.
\item The introduction and a characterization of matrix exposed points.
\item A (weak) Minkowski theorem in the language of matrix extreme points (with a new proof of the weak Krein-Milman theorem of Webster and Winkler).
\item Simplified/new proofs of the Gleichstellensatz, Helton and McCulloughs characterization of free spectrahedra as closures of matrix convex "free basic open semialgebraic" sets and a characterization of hermitian irreducible free loci of Helton, Klep and Vol$\check{\text{c}}$i$\check{\text{c}}$. 
\end{itemize}
\end{abstract}

\maketitle

\section{Introduction}

\subsection{Overview over the main results}

We advise non-experts who are interested in a introduction to the topic to skip this subsection and continue with the next section "Motivation". In the following we will present the most important new contributions of this work. Since most basic definitions will be well-known to the experts, we will postpone the rigorous introduction of terms and objects to the remainder of the introduction. Let $\S^g:=(\S^g(k))_{k \in \N}:=((S\C^{k \times k})^g)_{k \in \N}$, the set consisting of the collection of $g$-tuples of Hermitian square matrices of uniform size. In this work, all matrix convex sets are supposed to be subsets of $\S^g$.
\begin{itemize}
\item For matrix convex sets various notions of extreme points exist. One aim is to formulate an analogue of the classical Krein-Milman theorem. Due to the weak Krein-Milman theorem of Webster and Winkler it is known that the closure of the convex hull of the matrix extreme points of a compact matrix convex set $K$ equals $K$. However this set of points is not a minimal generator. We show that there exists a $g$-tuple $L$ of hermitian operators on a separable Hilbert space $\mathcal{H}$ such that $K$ equals the matix convex hull/ matricial range of $L$ (i.e. $L$ generates $K$) and up to approximate unitary equivalence $L$ is a direct summand of every other operator valued tuple $H$ generating $K$ (\autoref{secondhalf}). Hence it is justified to call $L$ the smallest generator of $K$, and this gives a satisfactory analogue to the Krein-Milman theorem. The absolute extreme points of $K$ are encoded in $L$ as a generalized direct summand of $L$. 
\item In classical convexity one can separate points from convex sets via linear maps. Matrix convex sets can be separated from points which they do not contain by monic linear pencils. This leads to the natural notion of matrix exposed points of a compact matrix convex set $K$ (as a generalization of exposed points of convex sets) as points $A$ of $K$ that are matrix extreme and for which there exists a monic linear pencil certifying this property via a separation property. We show that the closure of the matrix convex hull of all matrix exposed points of $K$ is again $K$ (an analogue to the classical Straszewicz theorem) and that matrix exposed points can be characterized as matrix extreme points that are also ordinary extreme on the respective level-set (\autoref{grund2} and \autoref{straszewicz}).
\item We give a new and very simple proof of the afore-mentioned weak Krein-Milman theorem by Webster and Winkler and show that the closure is superfluous making it a weak Minkowski theorem (\autoref{krain}).
\item The Gleichstellensatz characterizes the smallest monic linear (matrix) pencil $\ml=I-L_1 X_1 - ... -L_g X_g$ defining a given free spectrahedron $S=\mathcal{D}_\ml:=\{A \in \S^g \ | \ I \otimes I- L_1 \otimes A_1 - ... - L_g \otimes A_g \succeq 0\}$. In fact, this smallest pencil is a direct summand of every other monic linear pencil defining the same spectrahedron. The original proof of Helton, Klep and McCullough uses the concept of the Silov ideal, a complicated object from the theory of operator systems and $C^*$-algebras. We present two new more elementary proofs of the Gleichstellensatz (\autoref{gleichi} and \autoref{gleich2}).  
\item We introduce a new technique to analyze matrix convex sets of the form $S=\{A \in \S^g \ | \ \forall t \in [0,1]: p(tA) \succ 0\}$, where $p$ is a noncommutative polynomial. This technique allows us to give a new variant of the proof of a theorem of Helton and McCullough stating that $S$ is the interior of a free spectrahedron (\autoref{existence}). Another application is that we are able to infer that spectrahedra can be written as intersections of irreducible spectrahedra. We prove that for a monic linear pencil $\ml$ either $\ml$ is reducible or the sequence $(\det_k \ml)_{k \in \N}$ of determinants of $\ml$ restricted to the $k$-th level set $\S^g(k)$ become irreducible for big $k$ (\autoref{ald}).   
\end{itemize}

\subsection{Motivation}

A spectrahedron is a set of the form
\begin{align*}
\left\{(x_1,...,x_g\} \in \R^g \ \middle| \ n \in \N, \ L_0+\sum_{i=1}^g L_i x_i \succeq 0\right\}
\end{align*}
where the $L_i$ are Hermitian matrices of a uniform size. It has turned out that spectrahedra are suitable for numerical calculations, especially because they are convex. For the problem of optimizing a linear functional over a spectrahedron (this task is called semidefinite programming and generalizes linear programming) there exist efficient interior-point algorithms. Therefore in recent years much effort has been made to approximate polynomial optimization problems by semidefinite programs or hierarchies of those (e.g. the $\theta$-number of a graph, Lasserre relaxation; see \cite{BV} and \cite{L}). In Engineering, especially in linear systems and control theory, so-called matrix inequalities appear in pratical problems. The most common approach to solve them is to try to find an equivalent formulation in terms of a semidefinite program (see \cite{BEFB} and \cite{HKM2}). \\[0.2cm]
For those reasons many papers dealing with the question which sets are spectrahedra or at least projections of spectrahedra have been written (see \cite{HN1}, \cite{HN2}, \cite{KS}). Another area of research connected to spectrahedra is the question which hyperbolic polynomials admit a determinantal representation (see \cite{HV}).\\[0.2cm]
Consider now the following linear matrix inequality (LMI)
\begin{align*}
L_0 \otimes I+\sum_{i=1}^g L_i \otimes X_i \succeq 0
\end{align*}
(the left is side is also called a (linear) pencil and in case that $L_0=I$ this pencil is called monic). This expression can be evaluated in the $X_i$ with Hermitian matrices of an arbitrary common size. The solution set \hypertarget{I}{}\begin{align*}
\left(\left\{(X_1,...,X_g\} \in (S\C^{k \times k})^g \ \middle| \ L_0 \otimes I_k+\sum_{i=1}^g L_i \otimes X_i \succeq 0\right\}\right)_{k \in \N}, \ \ \ \text{(I)}
\end{align*}
where $\otimes$ denotes the Kronecker tensor product and $S\C^{k \times k}$ the Hermitian $k \times k$-matrices, is called a free spectrahedron and
fulfills some stronger convexity property, which is called matrix convexity. Matrix convex sets $C$ are closed under matrix convex combinations, i.e.
\begin{align*}
A_1,...,A_r \in C, \sum_{j=1}^r V_j^*V_j =I \implies \sum_{i=1}^r V_j^* A_j V_j \in C,
\end{align*} where the $V_j$ are rectangular matrices of appropriate sizes and $V_j^* A_j V_j:=(V_j^* {A_j}_i V_j \ | \ i \in \{1,...,g\})$. Note that the $A_j$ do not have to be all of the same size. \\[0.2cm]
We see that if we specialize our solution set \hyperlink{I}{(I)} to vectors (tuples of Hermitian $1 \times 1$-matrices), we end up with the (ordinary) spectrahedron from the beginning. Given a spectrahedron defined by a pencil, one can hope that key properties of its geometry are reflected by a particularly nice form of the pencil. Here it turns out that the right framework for following this purpose is the free setting. An example is for instance the following question: Given two spectrahedra with monic defining pencils $\ml_1$ and $\ml_2$, when is the first one contained in the second one? No sharp characterisation for this property is known. A straight-forward certificate for containment would be a sums of squares-representation of $\ml_2$ in terms of $\ml_1$. However this would imply also the containment of the related free spectrahedra. Indeed, for free spectrahedra such a sums-of-squares representation exists if and only if the free spectrahedron given by $\ml_1$ is contained in the one given by $\ml_2$ \cite[Lemma 4.7]{HKM}. \\[0.2cm]
Morally speaking, a spectrahedron can be defined by many different pencils. Evaluating those pencils in tuples of Hermitian matrices and not only vectors makes this hidden structure visible. Therefore a big motivation behind the study of free spectrahedra is to find out more about (ordinary) spectrahedra. However the free theory still needs to evolve in order to provide powerful techniques for the study of ordinary spectrahedra. One example of a succesful result approached in this way is the main result of \cite{HKMS}, which says that one monic pencil $\ml_2$ defining a much bigger spectrahedron than a monic pencil $\ml_1$ defines automatically a bigger free spectrahedron than the pencil $\ml_1$ and hence $\ml_2$ has a sums-of-squares representation with $\ml_1$ as a weight. \\[0.2cm]
Another source of motivation to study matrix convex sets is its connection to the theory of operator systems and completely positive maps. The structure of an operator system is determined up to complete isometry by its matrix range, which is a compact matrix convex set. A further instance where matrix convexity is utilized is the theoretical study of so-called dimension independent polynomial matrix inequalities. Some problems in the theory of linear systems and control are equivalent to solving a polynomial matrix inequality $p(A,Y) \succeq 0$ given by a noncommutative polynomial $p$ with a fixed parameter $Y$. The parameter $Y$ determines the size of the solution matrices $A$. Considering those problems for fixed parameters and transforming them to a problem over commutative variables is not well-behaved in the sense that two different parameters lead to very different and seemingly unrelated problems. The idea is now to stick to the non-commutative setting where the the problem keeps the same structure for different choices of parameters (one says that the problem scales automatically when changing dimensions because noncommutative polynomials behave well with respect to direct sums and compressions). In case that the polynomial matrix inequality defines a convex problem (or at least convex in the variables $A$) the theory of (partial) matrix convexity can be applied (see \cite{HHLM} and \cite{HMPV}).     \\[0.2cm]
The research of matrix convex sets started in the early 1980s. First fundamental results were given in the 1990s, however the number of researchers working in this field was relatively small. Nowadays the subject is increasing and drawing more attention. Matrix convexity has a strong connection to operator theory, in particular to completely positive maps. Other related topics are free real algebraic geometry and free complex analysis.  \\[0.2cm]
One aim of this work is to present a unified introduction to matrix convexity and free spectrahedra. For non-specialists, who want to know more about the topic, there are some obstacles we want to reduce. Some important theorems are not easy to understand because
\begin{itemize}
\item the proofs use advanced results and concepts from operator theory (e.g. Gleichstellensatz \cite{HKM4}) or
\item the theorem is stated in too much generality resulting in a more involved proof and weaker conclusion (e.g. weak free Krein-Milman theorem \cite{WW}) or
\item the proof is long (e.g. Helton-McCulloughs characterization of free spectrahedra as closures of matrix convex free basic open semialgebraic sets \cite{HM}).
\end{itemize}

\subsection{Notation and basic definitions}

\begin{notation} \label{gennot} For this article let \index{g}$g \in \N$ and fix a tuple \index{x@$\ov X$}$\ov X=(X_1,...,X_g)$ of $g$ free noncommutative variables. The letter $i$ will be exclusively denote elements of $\{1,...,g\}$. In this paper all rings shall contain $1$ and ring homomorphisms are required to be unital except of cases where we explicitely use the word "non-unital". If $R$ is a commutative ring, then a polynomial over $R$ in the variables $\ov{X}$ is the formal object 
\begin{align*} f=\sum_{d \in \N_0} \sum_{\al \in {\{1,...,g\}^d}} f_{\al} X_{\al(1)}...X_{\al(d)}
\end{align*} given by a coefficients $f_\al \in R$ where only finitely many $f_{\al}$ are non-trivial. The degree of $f \neq 0$ is the maximal $d$ such that there is $\al \in {\{1,...,g\}^d}$ with $f_\al \neq 0$. Two polynomials are equal if and only if all coefficients are equal. Scalars from $R$ shall commute with the variables $X_1,...,X_g$, i.e. $rX_i=X_ir$ for $i \in \{1,...,g\}$ and $r \in R$. We form the sum and product of two polynomials in the obvious way and denote the polynomials in the variables $\ov X$ over $R$ by \index{C@$\C \langle \ov X \rangle$}$R \langle \ov X \rangle$ ({\bfseries NC polynomials}). This object is also called the {\bfseries free $R$-algebra} over $g$ generators. It has the universal property that for every not necessarily commutative $R$-algebra $B$ which contains $R$ in its center and $b_1,...,b_g \in B$ there is a unique $R$-algebra homomorphism $\varphi: R \langle \ov X \rangle \ \rightarrow B$ such that $\varphi(X_i)=b_i$ for $i \in \{1,...,g\}$. Let \index{F@$\mathcal{F}_g$}$\mathcal{F}_g$ be the free monoid generated by $g$ elements $X_1,...,X_g$. One can identify $\mathcal{F}_g$ as the monomials of $\NC$. The free (complex) algebra $\NC$ admits an involution ${}^*$. For $p \in \NC$ we get $p^*$ by conjugation of the coefficients and reversing the order of the multiplication of variables in each monomial. \\[0.2cm] Contrary to the noncommutative case, if $(Y_1,...,Y_r)$ are commuting variables, we write $R[Y_1,...,Y_r]$ for the polynomial ring over $R$. If $T$ is a ring with a notion of degree and $d \in \N$, then $T_d$ shall denote the elements of $T$ with degree at most $d$. We denote with \index{S@$S\C^{k \times k}$}$S\C^{k \times k}$ the Hermitian $k \times k$ matrices with entries from $\C$. \\[0.2cm] Much of the work will take place in the {\bfseries free space} \index{S@$\S^g$}$\S^g:=(\S^g(k))_{k \in \N}:=((S\C^{k \times k})^g)_{k \in \N}$, the set consisting of the collection of $g$-tuples of Hermitian square matrices of uniform size. The elements of this set will serve as point evaluations for the noncommutative polynomial ring. $\S^g(k)$ will be called the {\bfseries k-th level} (set) of the free space. For $A \in \S^g(k)$ and $B \in \C^{k \times r}$ we set $B^*AB=(B^*A_1B,...,B^*A_gB) \in \S^g(r)$. We define \index{n@$\mid\mid.\mid\mid_{\S^g}$}$||A||=\sqrt{||\sum_{i=1}^g A_i^2||}$ where $||.||$ denotes the operator norm. 
\\[0.2cm]
Consider now a noncommutative matrix polynomial $f \in (\NC)^{\delta \times \ep}$ for some $\ep,\delta \in \N_{0}$. This is nothing else than a matrix with polynomial entries. Equivalently one can also interpret it as a polynomial with matrices as coefficients. The maximum of the degrees of the entries of $f$ will be the degree of $f$. $f^*$ is obtained by transposing $f$ and afterwards applying the involution on $\NC$ to each entry. Write $f=\sum_{\al \in \mathcal{F}^g} f_\al \al$ with matrices $f_\al \in \C^{\delta \times \ep}$.
For a square matrix $A \in (\C^{k \times k})^g$ we define $f(A) \in \C^{k\delta \times k\ep}$ to be $f(A)=\sum_{\al \in \mathcal{F}^g} f_\al \otimes \al(A)$ where the evaluation $\al(A)$ is obtained by replacing every occurance of $X_i$ in $\alpha$ by $A_i$.  Here \index{t@$\otimes$}$\otimes$ denotes the Kronecker product of matrices and $A^0=I_k$ the $k \times k$-identity matrix. We remind the reader of the calculation rule $(a \otimes b)(c \otimes d)=ac \otimes bd$ for matrices of appropriate sizes. 
Another way of defining $f(A)$ is to consider $f$ as a matrix with polynomial entries and substitute $A_i$ for $X_i$ in each entry. If $f^*=f$ and $A \in \S^g(k)$, then $f(A) \in \S\C^{\delta k \times \delta k}$. \\[0.2cm]
All Hilbert spaces in this paper shall be separable and complex. If $\mathcal{H}, \mathcal{K}$ are such Hilbert spaces, we write\index{B@$\mathcal{B}(\mathcal{H},\mathcal{K})$}
\begin{align*}
\mathcal{B}(\mathcal{H},\mathcal{K})=\{A: \mathcal{H} \rightarrow \mathcal{K} \ \text{linear and bounded}\},
\end{align*}$\mathcal{B}(\mathcal{H})=\mathcal{B}(\mathcal{H},\mathcal{H})$ and \index{B@$\mathcal{B}_h(\mathcal{H})$}$\mathcal{B}_h(\mathcal{H})=\{A \in \mathcal{B}(\mathcal{H}) \ | \ A^*=A\}$.
For $A \in \mathcal{B}(\mathcal{H})$ we define $f(A)=\sum_{\al \in \mathcal{F}^g} f_\al \otimes \al(A)$ in the above way as well (we will also allow that the coefficients $f_\al$ take values in some $\mathcal{B}(\mathcal{K})$). We write \index{H@$\mathcal{H}^{(\infty)}$}$\mathcal{H}^{(\infty)}:=\ov{\bigoplus_{n \in \N} \mathcal{H}}$, which becomes a Hilbert space by setting $||\bigoplus_{n \in \N} a_n||=\sqrt{\sum_{n \in \N}||a_n||^2}$. For $B \in \mathcal{B}(\mathcal{H})$ we obtain $B^{(\infty)} \in \mathcal{B}(\mathcal{H}^{(\infty)})$ by declaring $B^{(\infty)}\left(\bigoplus_{n \in \N} a_n\right)=\bigoplus_{n \in \N} Ba_n $. \\[0.2cm] 
For $L \in \S^g(\delta)$ and $C \in S\C^{\delta \times \delta}$, we define $C-L\ov{X}=C-(L_1X_1+...+L_gX_g)$. This expression is a Hermitian matrix polynomial of degree $1$ and {\bfseries size} $\delta$. Matrix polynomials of these form are called \textbf{(linear) pencils} (even though one could make an argument that they should be called affine linear). We say that the pencil is \textbf{monic} if $C=I$. We adopt the following convention: For $L \in \S^g(\delta)$, we denote by $\mathfrak{L}$ the monic pencil $I-L \ov X$. If on the other hand $\mathfrak{L}$ is a monic linear pencil, then $L=(-\mathfrak{L}(e_1)-I,...,-\mathfrak{L}(e_g)-I)$ \index{L@$\mathfrak{L}$} will be its \textbf{truly linear part} (in case we work with two monic pencils we will also use the letters $H$ and $\mathfrak{H}$). \\[0.2cm]
A linear pencil $C-B\ov X$ defines an associated \textbf{(free closed) spectrahedron} \index{D@$\mathcal{D}_{\mathfrak{L}}$}$\mathcal{D}_{C-B \ov X}=\{A \in \S^g \ | \ (C-B\ov X)(A) \succeq 0\}$ where \index{g@$\succeq$}$(C-B \ov X)(A) \succeq 0$ means that $(C- B\ov X)(A)$ is positive semidefinite ($\succ$ stands for "positive definite"). For a separable Hilbert space $\mathcal{H}$ and $B \in \mathcal{B}_h(\mathcal{H})^g, C \in \mathcal{B}_h(\mathcal{H})$ we also set $\mathcal{D}_{C-B \ov X}=\{A \in \S^g \ | \ (C-B\ov X)(A) \succeq 0\}$, however such a set will not be a free spectrahedron in general. We will focus our analysis of free spectrahedra on those which have an interior point $x \in \mathcal{D}_{C-B\ov X}(1)$ in $\R^g$ or equivalently admit a description with a pencil $C-B \ov X$ which is positive definite in this point. Most of the time we will assume that this point is zero. Then one can find a description of this spectrahedron by a monic linear pencil $\mathfrak{L}$ (\autoref{monisieren}). \\[0.2cm] 
Notice that the first level $\mathcal{D}_{B-C\ov{X}}(1)$ coincides with the ordinary spectrahedron defined by $B-C \ov X$ in $\R^g$. All other levels $\mathcal{D}_{B-C \ov X}(k)$ are also ordinary spectrahedra in $\S^g(k)$. For a monic pencil $\mathfrak{L}$ the set $\mathcal{D}_\mathfrak{L}(k)$ can be also determined as the closure of the connected component of the set $\{A \in \S^g(k) \ | \ \det_k \mathfrak{L}(A) \neq 0\}$ around $0$ where $\det_k \mathfrak{L}$ denotes the determinant $\det_k \mathfrak{L}: \S^g(k) \rightarrow \R, A \mapsto \det(\mathfrak{L}(A))$. \\[0.2cm]
Let $k \in \N$. For $\al \in \{1,...,k\}$ let $e_\al \in \C^k$ be the $\al$-th unit vector\index{e@$e_\al$} and set $E_{\al,\al}=e_\al e_\al^*$. For $\al,\beta \in \{1,...,k\}$ with $\al<\beta$ set $E_{\al,\beta}=e_\al e_\beta^*+e_\beta e_\al^*$ and $E_{\beta,\al}=\ii e_\al e_\beta^* - \ii e_\beta e_\al^*$\index{E@$E_{\al,\beta}$}. We define the $g$-tuple of generic $k \times k$ matrices \index{X@$\mathcal{X}$} as \begin{align*}
\mathcal{X}=\sum_{\al,\beta=1}^k E_{\al,\beta} \mathcal{X}_{\al,\beta} \footnotemark 
\end{align*}
where \footnotetext{Our definition of the matrix units $E_{\al,\beta}$ is non-standard. We adapted it in order to be suited for the study of hermitian matrix tuples. The normal matrix units $e_\al e_\beta^*$ are not all hermitian.}the $\mathcal{X}_{\al,\beta}=(\mathcal{X}_{\al,\beta}^1,...,\mathcal{X}_{\al,\beta}^g)$ are $g$-tuples of variables and the tuple \begin{align*}(\mathcal{X}_{\alpha,\beta}^i)_{i \in \{1,...,g\}, \alpha, \beta \in \{1,...,k\}}\end{align*} consists of commuting variables. We surpress the dimension $k$ in this notation. \\[0.2cm] 
For a monic linear pencil $\mathfrak{L}$ the determinant \index{d@$\det_k \mathfrak{L}$}$\det_k \mathfrak{L}(\mathcal{X})$ is an example of a \textbf{real-zero polynomial} (RZ-polynomial). These are defined to be polynomials $p \in \R[Y_1,...,Y_n]$ such that $p(0)\neq 0$ and for all $y \in \R^n \setminus \{0\}$ the univariate polynomial $p(Ty)$ has only real roots. The connected component of $p^{-1}(\R \setminus \{0\})$ around $0$ is automatically convex. We refer the reader to \cite{HV} for more material on RZ-polynomials. \\[0.2cm]
Let $A \in \S^g(k), v \in \C^{\delta k}$ and $C-B \ov X \in S(\NC)^{\delta \times \delta}_1$ be a linear pencil with $(C-B \ov X)(A)v=0$. Write $v=\sum_{\al=1}^\delta e_\al \otimes v_\al$ with $v_\al \in \C^k$. Set \index{M@$M(A,v)_{C-B \ov X}$}$M(A,v)_{C-B \ov X}=\spam(v_1,...,v_\delta)$. \\[0.2cm]
Free spectrahedra are an important example of matrix convex sets (\autoref{mainz}). A
set $S \subseteq \S^g$ is called \textbf{matrix convex} if
\begin{align*}
A_j \in S(k_j), V_j \in \C^{k_j \times s}, \sum_{j=1}^r V_j^* V_j = I_s \implies \sum_{j=1}^r V_j^* A_j V_j \in S(s)
\end{align*}       
We remind the reader that \index{v@$V^*AV$}$V_j^* A_j V_j$ is defined as $(V_j^* (A_j)_1 V_j,...,V_j^* (A_j)_g V_j)$. For a unitary matrix $U \in \C^{k \times k}$ and $A \in S(k)$ this implies that also $U^* A U \in S(k)$. Therefore matrix convex sets are closed under unitary conjugation. For $A,B \in \S^g$ we will write \index{A@$\approx$}$A \approx B$ if there is a unitary matrix $U$ such that $A = U^* B U$. Obviously, each level $S(k)$ of a matrix convex set $S$ is convex. Matrix convex sets are also closed under forming direct sums meaning that $A,B \in S$ implies \index{o@$\oplus$}$A \oplus B:=(A_1 \oplus B_1,...,A_g \oplus B_g) \in S$. If $A \in S(k)$ and $P: \C^k \rightarrow \im(P) \subseteq \C^{k}$ is an orthogonal projection, then $PP^*=I$ and thus $PAP^* \in S(\rk(P))$. In this paper projections are treated as surjective maps; if we want to view them as endomorphisms, we write $\uv{P}$ instead of $P$. \\[0.2cm]
If $0 \in S$, then matrix convexity can be restated as 
\begin{align*}
A_j \in S(k_j), V_j \in \C^{k_j \times s}, \sum_{j=1}^r V_j^* V_j \preceq I_s \implies \sum_{j=1}^r V_j^* A_j V_j \in S(s).
\end{align*}
For $S \subseteq \S^g$ the set \index{m@$\mconv$}$\mconv(S):=\bigcap\{T \subseteq \S^g \ | \ T \text{ matrix convex }, T \supseteq S\}$ is called the \textbf{matrix convex hull} of $S$ and is the smallest matrix convex superset of $S$ in $\S^g$. \\[0.2cm]
There are reasons why we chose our matrix convex sets to be subsets of $\S^g$; however, in principle it would be also natural to allow that $g$-tuples of Hermitian operators on an infinite-dimensional separable Hilbert space are elements of our free space. Both view points have their advantages (The finite-dimensional setting leans itself more to practical computations and the theory of ordinary spectrahedra; if one allows ordinary spectrahedra to be defined by LOI (Linear operator inequality), then every closed convex set would be a spectrahedron defined by a diagonal LOI. For either setting there are theorems which have cleaner statements than in the other setting.). The following notation is reminiscent of this hidden additional structure, which becomes important in some situations. If $\mathcal{H}$ is a separable Hilbert space and $L \in \mathcal{B}_h(\mathcal{H})^g$, then we set 
\begin{align*}
\mconv(L):&=\ov{\mconv\{PLP^* \ | \ P: \mathcal{H} \rightarrow \im(P) \text{ is a finite-dimensional projection }\}} \subseteq \S^g 
\end{align*}
where the closure of a set $S \subseteq \S^g$ is defined as $\ov{S}=(\ov{S(k)})_{k \in \N}$.
(We will prove later that for $\mathcal{H}$ finite-dimensional the closure in the definition of $\mconv(L)$ is superfluous (\autoref{kompakti}), making this definition consistent with the previous one.)
We have seen that matrix convex sets are invariant under unitary conjugation. Therefore when dealing with matrices, we treat them sometimes rather as linear operators than representations of operators with a fixed chosen orthonormal basis. For example we call a matrix $A$ a submatrix (resp. direct summand) of a matrix $B$ if there is a unitary matrix $U$ and matrices $C,D,E$ such that $U^*BU=\begin{pmatrix} A & C \\ D & E\end{pmatrix}$ $\left(\text{resp. }U^*BU=\begin{pmatrix} A & 0\\ 0 & E \end{pmatrix}\right)$. \\[0.2cm]  
For $k \in \N$ we denote by $\mathcal{S}^{k-1} \subseteq \C^k$ the $(k-1)$-dimensional sphere. If $(U,||.||)$ is a normed vector space, $a \in U$, $r>0$, then \index{b@$B_U(a,r)$}$B_U(a,r)=\{x \in U \ | \ ||x-a||<r\}$ is the ball of radius $r$ around $a$. Sometimes we omit the ambient space $U$ in this notation.
\end{notation} 

\subsection{Content of the paper and readers guide}

\begin{structure} \label{struct}
In the rest of the introduction we will cite well-known results about matrix convex sets and free spectrahedra, which are needed for the further exposition. Our methods incorporate techniques from real algebraic geometry, $C^*$-algebras and the theory of completely positive maps. Since most readers will be familiar with the first two branches of mathematics, we will mention quickly what we need from there in the appendix (real closed fields, Tarski transfer principle, Finiteness theorem for semialgebraic classes, infinitesimals and standard part; characterization of $C^*$-algebras as closed subalgebras of the algebra of bounded linear operators on a Hilbert space, basic representation theory, Burnsides theorem). The theory of completely positive maps is helpful for the analysis of matrix convex sets and conceptually very nice, however not completely necessary. In most cases one can get away with the Effros-Winkler separation technique (Chapter 2.1) as a replacement. Some papers combine these two concepts, even though one is enough for most purposes. Therefore complete positivity will be further explained in the appendix and it is up to the reader to read Chapter 2.1 or the appendix or both. As standard references we give \cite{BCR} (real algebraic geometry), \cite{D} ($C^*$-algebras) and \cite{P} (completely positive maps). \\[0.2cm]
Similar to the theory of ordinary convexity, separation techniques are essential and valuable for the analysis of matrix convex sets. The Effros-Winkler separation will be explained in Chapter $2$. We will generalize it in order to also handle non-closed sets (\autoref{effrosnormalexp}; in order to expose matrix extreme points) and even situations where only weak separation is possible (\autoref{exposs} in the appendix). For the latter purpose we will allow separating linear forms to attain values in a real closed extension field of $\R$.\\[0.2cm]
In Chapter 3 we will assign two numbers $\text{pz}(S),\text{kz}(S) \in \N_0 \cup \{\infty\}$ to each matrix convex set $S$. These numbers will indicate whether the whole $S$ is generated by one level $S(k)$ in a maximal/minimal fashion. We analyze how these numbers are affected by taking the polar of $S$ (\autoref{dualo}). As an outcome we will see how the $\text{pz}$-number of a free spectrahedron encodes how small the maximal size of the blocks occuring in the block diagonalization of a pencil description of the given free spectrahedron can be (\autoref{konz} and \autoref{polyh}). 
\\[0.2cm]
In \cite{HM} Helton and McCullough characterized free spectrahedra as "closures of matrix convex free basic open semialgebraic" sets. A new proof and slight generalization of Helton and McCulloughs theorem is given in Chapter 4. The employed techniques will also prove to be fruitful in later chapters. The main results are \autoref{existence} and \autoref{gen}. \\[0.2cm] 
In Chapter 5 we will see that every free spectrahedron $\mathcal{D}_\mathfrak{L}$ can be expressed as an intersection of "irreducible" spectrahedra $\mathcal{D}_{\mathfrak{L_j}}$. Under a natural minimality condition, the pencils $\mathfrak{L_j}$ will be uniquely determined (up to unitary equivalence). This will enable us to decompose the pencil $\mathfrak{L}$ into a direct sum of pencils involving all the $\mathfrak{L_j}$ (\autoref{gleichi}). As a consequence we obtain a new/simplified proof of the Gleichstellensatz \cite[Theorem 3.12]{HKM4} and its generalization \cite[Theorem 1.2]{Z}. \\[0.2cm]
Furthermore we will consider a monic linear pencil $\mathfrak{L}$ and study the system of determinants $\det_k \mathfrak{L}(\mathcal X)$ as polynomials on Hermitian $k \times k$-matrices ($k \in \N$). We will see how a decomposition of $\det_k \mathfrak{L}$ in factors in each level $k$ gives a decomposition of the free locus $\mathcal{Z}(\mathfrak{L})=\{ X \in \S^g \ | \ \ker \mathfrak{L}(X) \neq \{0\} \}$ as a union of "irreducible loci" (\autoref{dedecomp}). This will lead to a characterization of pencils which are "irreducible" and "minimal" (\autoref{minirrchar}). \\[0.2cm]
One can define different notions of extreme points of matrix convex sets. These will be investigated in Chapter 6.  
We will give a new proof of the weak free Krein-Milman theorem from \cite{WW} for compact matrix convex sets $S$ in $\S^g$ and generalize it to a free Minkowski theorem (\autoref{krain}). For sets $S$ with finite $\text{kz}$-number we will strengthen the theorem. We prove that the matrix convex hull of the absolute extreme points of $S$ equals $S$ (\autoref{abso}). This culminates in another proof of the classical Gleichstellensatz (\autoref{gleichcor}). Additionally we introduce matrix exposed points, characterize them (\autoref{grund2}) and prove a free Straszewicz theorem (\autoref{straszewicz}).
Furthermore we give examples that compact matrix convex sets do not need to have absolute extreme points (\autoref{abexex}). Finally we prove a general Gleichstellensatz/strong free Krein-Milman theorem for compact matrix convex sets which shows that every such set $S$ admits a smallest description as the matrix convex hull of an operator tuple $L$ (\autoref{secondhalf}). In the tuple $L$ the absolute extreme points of $S$ and a generalized form of absolute extreme points are encoded. \\[0.2cm]
Afterwards we will take the chance to pause and give some examples of the applications of the accumulated theory up to Chapter 7. References to the examples are given in the earlier chapters. However some techniques are introduced only after the references. \\[0.2cm]
The purpose of Chapter 8 is to analyze the connection between the sequence of determinants $(\det_k \mathfrak{L}(\mathcal X))_{k \in \N}$ of a linear pencil $\mathfrak{L}$ and $\pz(\mathcal{D}_\ml)$. We make some beginning steps towards answering the question when a sequence of RZ polynomials comes from the determinant of a pencil. \\[0.2cm]  
Finally, Chapter 9 contains an analysis of the degrees of the determinants $\det_k \ml$ of a monic linear pencil $\mathfrak{L}$. We show that there is $N \in \N$ and $\ep>0$ such that $\deg(\det_k \mathfrak{L})=k \ep$ for $k \geq N$ (\autoref{degdet}) and that this $N$ cannot be set as $1$ in general (\autoref{gegen}).\\[0.2cm]
At the end, we have included an index for the notation.
\end{structure}

\subsection{More facts on matrix convex sets}

In this section we collect some important facts and techniques for the study of matrix convex sets.

\begin{proposition} \label{mainz}
Let $B-C \ov X$ be a linear pencil. Then $\mathcal{D}_{B-C \ov X}$ is matrix convex.
\end{proposition}

\begin{proof}
Let $r \in \N$, $A_j \in \mathcal{D}_{B-C \ov X}(k_j)$ and $V_j \in C^{k_j \times k}$ for $j \in \{1,...,r\}$ such that $A:=\sum_{j=1}^r V_j^* A_j V_j$ and $I=\sum_{j=1}^r V_j^* V_j$. Then we have $(B-C \ov X)(A)=B \otimes I + \sum_{i=1}^g C_i \otimes (\sum_{j=1}^r V_j^* (A_j)_i V_j)=\sum_{j=1}^r (I \otimes V_j^*) (B \otimes I + \sum_{i=1}^g C_i \otimes (A_j)_i )(I \otimes V_j)=\sum_{j=1}^r (I \otimes V_j^*) (B-C \ov X)(A)(I \otimes V_j) \succeq 0$
\end{proof}

\begin{definition} We say that a set $S \subseteq \S^g$ is bounded/open/closed/compact if the respective level sets $S(k)$ are bounded/open/closed/compact. A matrix convex set $S$ is bounded if and only if $S(1)$ is bounded (\autoref{easy}). We write \index{i@$\inte$}$\inte(S)(k)$ for the interior points of $S(k)$. This hierarchy of sets constitutes $\inte(S)$. Similarly we define the closure $\ov{S}$.
We say that $S$ contains an open neighborhood of $0$ if an $\ep>0$ exists such that
\begin{align*}
B_{\S^g}(0,\ep)=\left\{A \in \S^g \ \middle| \ ||A||<\ep \right\} \subseteq S
\end{align*} (for matrix convex $S$ this is equivalent to the fact that $0$ is an interior point of $S(1)$ (\autoref{easy2})).
We say that a subset $T \subseteq \S^g$ is invariant under \textbf{reducing subspaces} if $A \oplus B \in T$ implies $A \in T$. 
\end{definition}

\begin{proposition} \label{easy} Let $S \subseteq \S^g$ be matrix convex and $r \in [0,\infty)$ such that $S(1) \subseteq B_{\R^g}(0,r)$. Then $S \subseteq B_{\S^g}(0,r\sqrt{g})$ is also bounded.
\end{proposition}

\begin{proof}
Let $A \in S(k)$ such that $||A||^2 \geq r^2g$. Choose $i \in \{1,...,g\}$, $v \in \mathcal{S}^{k-1}$ such that $||A_i^2v|| \geq r^2$. WLOG $v$ is an eigenvector of $A_i$. Then we have $|v^* A_i^2 v| \geq r^2$, $v^*Av \in S(1)$ and $||v^*Av||^2 \geq r^2$.
\end{proof}

\begin{proposition} \label{easy2} \cite[Lemma 4.2]{HKM}
Let $S \subseteq \S^g$ be matrix convex and $r \in (0,\infty)$ such that $\ov{B_{\R^g}(0,r)} \subseteq S(1)$. Then $S$ contains all $A \in \S^g$ with $||A|| \leq \frac{r}{g}$ (i.e. $\ov{B_{\S^g}(0,\frac{r}{g})} \subseteq S$).  
\end{proposition}

\begin{proof} \cite[Lemma 4.2]{HKM}
Let $A \in \S^g(\delta)$ such that $||A|| \leq \frac{r}{g}$. Then each $||g A_i|| \leq r$ and we can find $C_i \in \C^{\delta \times \delta}$ unitary such that $C_i^* g A_i C_i$ is diagonal and the diagonal entries are of norm $\leq r$. Therefore $(0,...,0,C_i^* g A_i C_i,0,...,0) \in S$. $S$ is closed under unitary conjugation; thus $(0,...,0,g A_i,0,...,0) \in S$ and $A=\sum_{i=1}^g \frac{1}{g}(0,...,0,g A_i,0,...,0) \in S$.
\end{proof}

\begin{lemma} \label{inte}
Let $S \subseteq \S^g$ be a matrix convex set and $\varphi: \S^g \rightarrow \R$ be affine linear such that $\varphi(S(1))=\{0\}$. Then we can represent $\varphi$ as the evaluation of a polynomial of degree $1$ and are able to evaluate $\varphi$ in points of $\S^g$. We have $\varphi(S)=\{0\}$.  
\end{lemma}

\begin{proof}
Suppose that $A \in S(k)$ and $\varphi(A) \neq 0$. Choose $v \in \mathcal{S}^{k-1}$ such that $v^* \varphi(A) v \neq 0$. Then $v^*Av \in S(1)$ and $\varphi(v^*Av)=v^*\varphi(A)v \neq 0$.
\end{proof}

\begin{lemma} (Free Caratheodory) \label{carad}
Let $A\in \S^g(m)$, $T \subseteq \S^g$ and $A \in \mconv(T)$.
Then there exist $C_1,...,C_{2gm^2+1} \in T$ and matrices $V_1,...,V_{2gm^2+1}$ of appropriate sizes such that $I_m=\sum_{j=1}^{2gm^2+1} V_j^*V_j$ and $A=\sum_{j=1}^{2gm^2+1} V_j^* C_j V_j$.
\end{lemma}

\begin{proof}
We find $d \in \N$, $C_1,...,C_{d} \in T$ and $V_1,...,V_{d}$ such that $\sum_{j=1}^{d} V_j^*V_j=I_m$ and $\sum_{j=1}^{d} V_j^* C_j V_j=A$. Let $d>2gm^2+1$. We conclude that the matrices $(V_j^*C_jV_j \oplus V_j^*V_j)-(V_1^*C_1V_1\oplus V_1^*V_1)$ $(j \in \{2,...,d\})$ are $\R$-linearly dependent. Choose $\lambda=(\lambda_2,...,\lambda_d) \in \R^{d-1} \setminus \{0\}$ with $\sum_{j=2}^d \lambda_j (V_j^*C_jV_j-V_1^*C_1V_1)=0$, $\sum_{j=2}^d \lambda_j (V_j^*V_j-V_1^*V_1)=0$ and set $\lambda_1=-\sum_{j=2}^d \lambda_j$. Now we have for all $\al \in \R$ that $A=\sum_{j=1}^d (1-\al \lambda_j) (V_j^*C_jV_j)$. For $\al \in\R$ small we can write $A=\sum_{j=1}^d (\sqrt{1-\al\lambda_j}V_j^*X_j\sqrt{1-\al\lambda_j}V_j)$. It is straightforward that we still have $\sum_{j=1}^d (\sqrt{1-\al\lambda_j}V_j^*\sqrt{1-\al\lambda_j}V_j)=I$. Now choose $\al \in \R$ in such a way that all $(1- \al \lambda_j)$ are nonnegative and one is zero. We have reduced one summand in the description of $A$.
\end{proof}

\begin{cor} \label{kompakti} \cite[Proposition 2.5]{DDSS}
Let $L \in \S^g$. Then $\mconv(L)$ is compact. \hfill\qedsymbol
\end{cor}

The following is a nice characterization of matrix convex sets, which is seemingly easier to verify in practice because it does not feature complicated matrix convex combinations, which can be difficult to calculate with.

\begin{lemma} \label{konv} \cite[Lemma 2.3]{HKM3} Let $S \subseteq \S^g$. Then the following is equivalent:
\begin{enumerate}[(a)]
\item $S$ is matrix convex.
\item For each level $k \in \N$ the set $S(k)$ is convex. Furthermore $S$ is invariant under unitary tranformations, taking direct sums and reducing subspaces.
\end{enumerate}
\end{lemma}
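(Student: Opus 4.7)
The plan is to prove the two implications separately, with the forward direction being mostly bookkeeping and the reverse direction requiring a standard symmetrization-and-compression trick.

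For (a) $\Rightarrow$ (b), each structural property falls out of the two defining conditions. Direct sum closure is condition (a) of matrix convexity verbatim. Unitary invariance follows from (b) of matrix convexity applied with a single $V_1 = U$ (since $U^*U = I$). Convexity at each level $k$ is obtained by first forming $A \oplus B \in S(2k)$ via direct sum closure and then applying matrix convexity with the single isometry $V = \begin{pmatrix}\sqrt t\, I_k\\ \sqrt{1-t}\, I_k\end{pmatrix}$, which gives $V^*(A \oplus B)V = tA + (1-t)B$. Invariance under reducing subspaces follows by applying condition (b) to $A \oplus B \in S(k_1+k_2)$ with the single coordinate embedding $V = \begin{pmatrix} I_{k_1} \\ 0 \end{pmatrix}$, which satisfies $V^*V = I_{k_1}$ and $V^*(A \oplus B)V = A$.

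For (b) $\Rightarrow$ (a), the direct sum part is already assumed, so the work is to produce $\sum_{j=1}^r V_j^* A V_j \in S(s)$ from $A \in S(k)$ and $V_1,\dots,V_r \in \C^{k\times s}$ with $\sum V_j^* V_j = I_s$. First I collect the $V_j$ into a single isometry $W = \begin{pmatrix} V_1 \\ \vdots \\ V_r \end{pmatrix} \in \C^{rk \times s}$ (note $W^*W = I_s$) and form $A^{(r)} := A \oplus \cdots \oplus A \in S(rk)$ via direct sum closure; then $\sum V_j^* A V_j = W^* A^{(r)} W$. Next I extend $W$ to a unitary $U = [W\ W']\in \C^{rk \times rk}$, giving $B := U^* A^{(r)} U \in S(rk)$ by unitary invariance.

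The key (and only mildly tricky) step is to reduce $B$ to a block-diagonal form so that reducing-subspace invariance can be applied. The symmetrization trick is to conjugate $B$ by the unitary $\widetilde U := \mathrm{diag}(I_s, -I_{rk-s})$, which flips the sign of the off-diagonal blocks, and then average. Unitary invariance gives $\widetilde U^* B \widetilde U \in S(rk)$, and convexity at level $rk$ gives $\tfrac12(B + \widetilde U^* B \widetilde U) \in S(rk)$. This average equals the honest direct sum $(W^* A^{(r)} W) \oplus ((W')^* A^{(r)} W')$, and invariance under reducing subspaces then yields $W^* A^{(r)} W = \sum V_j^* A V_j \in S(s)$, as desired.

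The main obstacle is the one just addressed: reducing-subspace invariance by itself only recovers summands of genuine direct sums, not arbitrary isometric compressions, so one has to manufacture a direct sum out of $B$ using level-wise convexity together with an auxiliary sign-flipping unitary. Everything else is routine.
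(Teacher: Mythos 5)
The paper itself gives no proof of this lemma --- it is stated with a citation to [HKM3], Lemma 2.3 --- so there is no internal argument to compare against. Your proof is correct, and it is in fact the standard argument that appears in [HKM3] and in the Effros--Winkler literature: stack the $V_j$ into a single isometry $W$, extend to a unitary $U = [W\ W']$, and then manufacture a genuine direct sum out of $U^*A^{(r)}U$ by conjugating with $\mathrm{diag}(I_s,-I_{rk-s})$ and averaging, so that reducing-subspace invariance applies. You have correctly isolated the one non-trivial point, namely that reducing-subspace invariance only recovers summands of honest direct sums, and the sign-flip-and-average step is exactly how one bridges that gap. No issues.
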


\begin{proof} \cite[Lemma 2.3]{HKM3}
$(a) \implies (b)$: This is easy. \\[0.2cm]
$(b) \implies (a)$: Suppose $S$ fulfills (b) and $A \in \mconv(S)(\delta)$. Then we find $r \in \N$, $B_j \in S(k_j)$ and matrices $V_j\in \C^{k_j \times \delta}$ for $j \in \{1,...,r\}$ satisfying $A=\sum_{j=1}^r V_j^* B_j V_j$ and $I=\sum_{j=1}^r V_j^* V_j$. We set $k=\sum_{j=1}^r k_j$, $B=\bigoplus_{j=1}^r B_j$, $V=\begin{pmatrix} V_1 \\ \vdots \\ V_r \end{pmatrix}$ and obtain $A=V^*BV$ as well as $V^*V=I$. As $S$ is closed under direct sums, we know $B \in S$. Extend $V$ to a unitary $W: \C^\delta \oplus \C^{k-\delta} \mapsto \C^{k}$. Let $P: \C^{k} \rightarrow \C^{\delta}$ be the projection on the first $\delta$ components. Then $A=P C P^*$ where $C=W^*B W \in S$. Now write $C=\begin{pmatrix} A & E \\ E^* & F \end{pmatrix}$.
\begin{align*}
\begin{pmatrix}
A & 0 \\ 0 & F
\end{pmatrix}
=
\frac{1}{2}\begin{pmatrix}
A & E \\ E^* & F
\end{pmatrix} + \frac{1}{2}\begin{pmatrix}
1 & 0 \\ 0 & -1
\end{pmatrix}\begin{pmatrix}
A & E \\ E^* & F
\end{pmatrix}
\begin{pmatrix}
1 & 0 \\ 0 & -1
\end{pmatrix} \in S
\end{align*}   
Here we used that $S$ is convex and closed under unitary conjugation. Since $S$ is also closed under taking reducing subspaces, we get $A \in S$.
\end{proof}

\begin{rem} \label{determing}
Let $S \subseteq \S^g$ be a matrix convex set. Then the k-th level of $S$ determines all lower levels. Indeed, fix $A \in S(k)$ and let $s <k$. Choose an arbitrary projection $P$ such that $PAP^* \in S(k-s)$. Let $B \in \S^g(s)$. Then we have $B \in S(s) \Longleftrightarrow PAP^* \oplus B \in S(k)$.  
\end{rem}

\begin{proposition} \cite[Proposition 2.1]{HKM4} \label{monisieren}
Let $T-H \ov X \in S(\NC)_1^{\delta \times \delta}$ be a linear pencil and $S=\{A \in \S^g(k) \ | \ [T-H \ov X](A) \succeq 0\}$. Suppose that $0$ is in the interior of $S$. Then there exists a monic linear pencil $\mathfrak{L}$ of size $\delta$ such that $\mathcal{D}_{\mathfrak{L}}=S$.
\end{proposition}

\noindent An important technique in the study of matrix convexity is to use projections to transform statements about a matrix convex set to statements about a certain level set. The following observation is one of the key properties of free spectrahedra.

\begin{lemma} \label{specki} (Projection lemma) \cite[Lemma 2.3]{HKMS}
Let $B \in S\C^{\delta \times \delta},C \in \S^g(\delta)$ and $k \in\N$. Then
$\mathcal{D}_{B-C \ov X}(k)=\{A \in \S^g(k) \ | \ \forall \text{ projections }P: \C^k \rightarrow \im(P): \dim(\im(P)) \leq \delta \implies PAP^* \in \mathcal{D}_{B-C \ov X} \}$. 
\end{lemma}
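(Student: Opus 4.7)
The forward inclusion is immediate from the matrix convexity of $\mathcal{D}_L$ recorded in \Cref{konv}: any orthogonal projection $P\colon \C^k \to \im(P)$ satisfies $PP^* = I_{\rk P}$, so the single ``contraction'' $V_1 := P^*$ has $V_1^* V_1 = I$ and $PAP^* = V_1^* A V_1 \in \mathcal{D}_L$ whenever $A \in \mathcal{D}_L$, regardless of $\rk P$.

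For the reverse inclusion I would argue by contraposition. Suppose $A \in \S^g(k)$ with $L(A) \not\succeq 0$ and pick $u \in \C^{k\delta} \cong \C^\delta \otimes \C^k$ with $u^* L(A) u < 0$. Decompose $u = \sum_{\alpha=1}^\delta e_\alpha \otimes u_\alpha$ with $u_\alpha \in \C^k$, stack the $u_\alpha$ into a matrix $V := (u_1\ \cdots\ u_\delta) \in \C^{k \times \delta}$, and set $s := \rk V \leq \delta$. The plan is to produce an isometry $U \in \C^{k \times s}$ for which $L(U^* A U)$ fails to be positive semidefinite; the projection $P := U^*$ will then satisfy $\dim \im P = s \leq \delta$ and $PAP^* = U^* A U \notin \mathcal{D}_L$, contradicting the right-hand side of the claimed identity.

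To construct $U$ I take a singular value decomposition $V = U \Sigma Z^*$, where $U \in \C^{k \times s}$ and $Z \in \C^{\delta \times s}$ have orthonormal columns and $\Sigma$ is positive diagonal. Setting $\tilde u := \sum_{\alpha=1}^\delta e_\alpha \otimes (\Sigma Z^* e_\alpha) \in \C^{\delta s}$ one gets $u = (I_\delta \otimes U)\tilde u$. Using the mixed-product rule $(I_\delta \otimes U^*)(\tilde L_i \otimes A_i)(I_\delta \otimes U) = \tilde L_i \otimes (U^* A_i U)$ and $U^* U = I_s$, a short computation should yield
\begin{align*}
u^* L(A) u \;=\; \tilde u^* \Bigl( I_\delta \otimes I_s + \sum_{i=1}^g \tilde L_i \otimes U^* A_i U \Bigr) \tilde u \;=\; \tilde u^* L(U^* A U) \tilde u,
\end{align*}
which is negative by the choice of $u$. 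Thus $L(U^*AU) \not\succeq 0$ and the contrapositive is established.

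The only non-automatic step is the Kronecker-product identity in the display, and I do not expect it to be a real obstacle: it is governed entirely by the mixed-product rule, but this is precisely the point where the size $\delta$ of the pencil enters, which makes transparent why $\delta$-dimensional compressions suffice to detect failure of positivity. Using a full SVD rather than a polar decomposition handles the case of rank-deficient $V$ without extra bookkeeping, and the case $k < \delta$ is trivial since one may take $P = I_k$ on the right-hand side.
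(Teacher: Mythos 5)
Your proof is correct and matches the idea behind the reference the paper gives: it does not prove \Cref{specki} directly but cites [HM, Lemma 7.3], of which \Cref{haupt} is the infinite-dimensional, higher-degree generalization, and the core mechanism there is exactly yours --- compress to the (at most $\delta$-dimensional) subspace spanned by the $\C^k$-components of a witness vector and use the mixed-product rule to see the quadratic form $u^*L(\cdot)u$ is unchanged. Your SVD is just a clean way to package that compression as an isometry of rank $s\le\delta$, so this is essentially the paper's argument made explicit in the degree-one, single-point, finite-dimensional case.
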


\begin{proof} \cite[Lemma 2.3]{HKMS}
$\mathcal{D}_{B-C \ov X}$ is matrix convex. Thus if $A \in \S^g(k)$ and $P: \C^k \rightarrow \im(P)$ is a projection, then $PAP^* \in \mathcal{D}_{B-C \ov X}$. Now let $A \in \S^g(k)$ and $A \notin \mathcal{D}_{B-C \ov X}$. Choose $v \in \mathcal{S}^{\delta k}$ such that $v^*(B-C \ov X)(A)v<0$ and write $v=\sum_{\al=1}^\delta e_\al \otimes v_\al$ with $v_\al \in \C^k$. Let $P: \C^k \rightarrow \im(P)$ be the projection onto $\spam\{v_1,...,v_\delta\}$. Then we calculate $v^*(B-C \ov X)(PAP^*)v=[(I \otimes P^*)v]^*(B-C \ov X)(A)(I \otimes P^*)v=v^*(B-C \ov X)(A)v<0$.
\end{proof}

\section{Separation techniques for matrix convex sets}

\noindent An important technique in classical convexity consists in the separation of a point from a convex set that does not not contain that point. The following theorem transfers the Hahn-Banach separation theorem (bipolar theorem) to the matrix convex setting. Version (a) was originally stated for bounded sets $S$ however the assumption is not used in the proof. It shows that the role of linear functionals in the classical convex setting in $\R^g$ can be taken over by linear matrix inequalities in the free setting. 

\begin{lemma} \label{trenn} \cite[Proposition 6.4]{HM}, \cite[Theorem 5.4]{EW} \label{abc} (Effros-Winkler theorem, separation of matrix convex sets and points via pencils)
\begin{enumerate}[(a)]
\item Let $S\subseteq \S^g$ be a closed matrix convex set with $0 \in S$. Let $Y \notin S$. Then we can find a monic linear pencil $\mathfrak{L}$ of the same size of $Y$ which separates $S$ from $Y$ in the sense that $\mathfrak{L}(A) \succeq 0$ for all $A \in S$ and $\mathfrak{L}(Y)$ is not positive semidefinite.
\item Let $S\subseteq \S^g$ be an open matrix convex set with $0 \in S$. Let $Y \in \partial S$. Then we can find a monic linear pencil $\mathfrak{L}$ of the same size of $Y$ which separates $S$ from $Y$ in the sense that $\mathfrak{L}(A) \succ 0$ for all $A \in S$ and $\ker \mathfrak{L}(Y) \neq \{0\}$.
\end{enumerate}
\end{lemma}

\noindent There are two ways to prove this result. First, one can interpret matrix convex combinations as images of completely positive maps, an object originating from the theory of $C^*$-algebras. Hence one is able to use the power of this better developed area of mathematics to deal with matrix convex sets. The second option is to apply the Effros-Winkler separation technique which aims to translate separating linear forms into separating pencils. \\[0.2cm]
In this paper we present both ways. Material on completely positive maps, its connections to matrix convexity and one proof of \autoref{trenn} are located in the appendix. We will need some of those results also later for the study of absolute extreme points. The Effros-Winkler separation technique is explained in the rest of this chapter starting after \autoref{notzero}. We will need some variations of this concept also later for the study of matrix exposed points. \\[0.2cm]
It is possible to understand most of the paper having read only one of those chapters. For readers unfamiliar with completely positive maps we recommend to read the chapter about the Effros-Winkler separation technique and to skim over the chapter in the appendix quickly. For readers familiar with completely positive maps we recommend to read this chapter until \autoref{notzero} and the chapter in the appendix while only skimming over the chapter of the Effros-Winkler separation technique. \\[0.2cm] We can reformulate the separation theorem by defining the notion of a polar of a matrix convex set and prove a bipolar theorem.

\begin{definition} \label{polar}
Let $S\subseteq \S^g$ be a matrix convex set. Then we define the \textbf{free polar} of $S$ to be
\begin{align*}
S^\circ=\left\{ H \in \S^g \ \middle| \ \forall A \in S: I \otimes I - \sum_{i=1}^g H_i \otimes A_i \succeq 0\right\}=\{H \in \S^g \ | \ S \subseteq \mathcal{D}_{\mathfrak{H}}\}.
\end{align*} $S^\circ$ is a matrix convex set because $S^\circ=\bigcap_{L \in S} \mathcal{D}_\ml$.
\end{definition}

\begin{theo} \label{simpolara}\cite[Proposition 4.3]{HKM} (Bipolar theorem)
Let $S \subseteq \S^g$. Then $S^{\circ \circ}=\ov{\mconv(S \cup \{0\})}$.
\end{theo}

\begin{proposition} \label{simpolarb} \cite[Proposition 4.3]{HKM}
Let $S \subseteq \S^g$ be a matrix convex set. Then $0 \in \inte(S^\circ) \Longleftrightarrow S$ is bounded.
\end{proposition}

\begin{proof} \cite[Proposition 4.3]{HKM}
$"\Longleftarrow"$: Let $0 \in \inte(S^\circ)$. Hence there is $r>0$ such that $B(0,r) \subseteq S^\circ(1)$. In particular for $i \in \{1,...,g\}$ we have $\pm r e_i \in S^\circ$. Let $A \in \S^g$. Then $(1 \pm r e_i \ov X)(A)=I \pm r A_i \succeq 0$. Therefore $r^2 A_i^2 \preceq I$ and $\sum_{i=1}^g A_i^2 \preceq \frac{g}{r^2}$ \\[0.2cm]
$"\Longrightarrow"$: Let $S$ be bounded. Fix $r \in [0,\infty)$ such that $||A||^2 \leq r^2$ for all $A \in S$. Now let $L \in \S^g$ with $||L|| \leq \frac{1}{g r}$. We have that $(L_i \otimes A_i)^2 \preceq ||L||^2 r^2 \preceq \frac{1}{g^2}$ and hence $||L_i \otimes A_i|| \leq \frac{1}{g}$ for all $i \in \{1,...,g\}$. Thus $\ml(A) \succeq 0$.
\end{proof}

\begin{rem} \label{simpolarbb}
Let $S \subseteq \S^g$ be matrix convex and closed with $0 \in S$. Then with the help of the bipolar theorem we get [$0 \in \inte(S) \Longleftrightarrow S^\circ$ is bounded] as a corollary of \autoref{simpolarb}. $\Longrightarrow$ can be proved without usage of the bipolar theorem by basically copying the second part of the proof of \autoref{simpolarb}; we need this fact for the proof of \autoref{simpolara} in the appendix.   
\end{rem}

\begin{proposition} \label{simpolarc}
Let $S \subseteq \S^g$ be a matrix convex set. Then $S^\circ(\delta)=\mconv(S(\delta))^\circ(\delta)$
\end{proposition}

\begin{proof}
Clearly we have $S^\circ(\delta) \subseteq \mconv(S(\delta))^\circ(\delta)$. Now let $H \in \mconv(S(\delta))^\circ(\delta)$ and assume there is some $A \in S(k)$ and $v \in \C^{\delta k}$ such that $v^*\mh(A)v <0$. If we write $v=\sum_{\al=1}^\delta e_\al \otimes v_\al$ with $v_\al \in \C^k$ and define $P: \C^k \rightarrow \im(P)$ to be the projection onto $M(A,v)_\mh=\spam(v_1,...,v_\delta)$, then $v^* \mh(PAP^*) v <0$, however $PAP^* \in \mconv(S(\delta))$.
\end{proof}

\begin{rem} \label{notzero}
In case that $0 \notin S$ in \autoref{trenn}, the statements are still true if one deletes the word "monic". The reason is that we can shift $S$ and $Y$ in such a way that $0 \in S$.
\end{rem}

\subsection{A closer look at the Effros-Winkler separation technique}

In this section we want to present and refine the separation technique of Effros and Winkler \cite{WW}. The aim is to give a certificate that a point $A \in \S^g(\delta)$ is not contained in a closed matrix convex set $S$ by the means of a linear matrix inequality. The idea is the following: One applies the usual Hahn-Banach separation to separate $A$ from $S(\delta)$ with the help of an affine-linear function $\ell$. Now one wants to translate this to a linear matrix inequality. By compressing elements of $S$ to elements of $S(\delta)$ one is able to apply $\ell$. Up to the constant part of $\ell$ this procedure translates $\ell$ into a homogeneous linear pencil $L \ov X$ with $L \in \S^g(\delta)$. However translating the constant part is difficult and non-constructive. \\[0.2cm]
Therefore, to obtain the Effros-Winkler result for matrix cones is easier (see \cite[Lemma 2.1]{FNT} and \autoref{mcone}). So in order to separate $A$ from $S$, it is convenient to homogenize the problem, separate the resulting matrix cone $S'$ from the point $(I,A)$ and after dehomogenize (cf. \autoref{homo}). However it is not clear how one can homogenize $\ell$. Therefore this approach has an even more non-constructive component. \\[0.2cm]
For our later results we have to strengthen the Effros-Winkler separation to be able to cover also situations where the matrix convex set is not closed (strong separation for non-closed sets) or only weak separation is possible (for instance separating a non-exposed extreme point $x$ of a convex set $S$ from $S \setminus \{x\}$). The strong separation theorem for non-closed sets will be later needed to analyze matrix exposed points (cf. \autoref{grund2}). \\[0.2cm] Since the proof of the weak separation theorem is a bit technical and a slightly weaker statement can be recovered by using a homogenization trick, we have moved it to the appendix. We will use the weak separation to give a proof of the free Minkowski theorem in the appendix. 

\begin{definition}
We set \index{T@$\mathcal{T}_\delta$}$\mathcal{T}_\delta=\{ T \in S\C^{\delta \times \delta} \ | \ T \succeq 0, \tr(T)=1\}$.
\end{definition}

\begin{proposition} \label{effri1} \cite[Lemma 6.2]{HM}
Let $S \subseteq \S^g$ be matrix convex, $0 \in S$ and $A \in \S^g(\delta) \setminus S$. Suppose there is a linear $\varphi: \S^g(\delta) \rightarrow \R$ such that $\varphi(A)>1$ and $\varphi(B) \leq 1$ for all $B \in S(\delta)$. For $B \in S(k)$ and a contraction $V \in \C^{k\times \delta}$ define $f_{B,V}: S\C^{\delta \times \delta} \rightarrow \R, T \mapsto \text{tr}(V \ov{T} V^*)-\varphi(V^* B V)$.
Then the set $\mathcal{F}=\{f_{B,V} \ | \ k \in \N, \ B \in S(k), V\in \C^{k \times \delta}, V^*V \preceq I\}$ is convex. For $f_{B,V} \in \mathcal{F}$ there is $T \in \mathcal{T}_\delta$ such $f_{B,V}(T) \geq 0$.
\end{proposition}

\begin{proof}
\cite[Lemma 6.2]{HM} Let $f_{B,V} \in \mathcal{F}$. Choose $w \in \mathcal{S}^{\delta-1}$ such that $||Vw||=||V||$ and set $T=\ov{w}\ \ov{w}^*$. Then $T$ is positive semidefinite and $\tr(T)=\tr(\ov{w}^*\ov{w})=1$. We have $f_{B,V}(T)=\text{tr}(V ww^* V^*)-\varphi(V^* B V)= \text{tr}((V w)^* V w)-\varphi(V^* B V)=||V||^2-\varphi(V^* B V)=||V||^2\left(1-\varphi\left(\frac{V^*}{||V||}B\frac{V}{||V||}\right)\right) \geq 0$. \\[0.2cm]
Let $r \in \N$ and $B_j \in S(k_j)$, $V_j \in \C^{k_j \times \delta}$ contractions, $\lambda_j \in [0,1]$ for $j \in \{1,...,r\}$ such that $\sum_{j=1}^r \lambda_j=1$. Define 
\begin{align*}
B=\bigoplus_{j=1}^r B_j \text{  and  } V^*=\begin{pmatrix}\sqrt{\lambda_1}V_1^* & \hdots & \sqrt{\lambda_r}V_r^*\end{pmatrix}
\end{align*}
Then $B \in S$, $V^*BV=\sum_{j=1}^r \lambda_j V_j^* B_j V_j$ and $V^* V=\sum_{j=1}^r \lambda_j V_j^* V_j \preceq I$ and for $T \in S\C^{\delta \times \delta}$ we have $\text{tr}(V \ov{T} V^*)=\text{tr}(\ov{T} V^* V)=\sum_{j=1}^r \lambda_j \text{tr}(\ov{T} V_j^* V_j)=\sum_{j=1}^r \lambda_j \text{tr}(V_j \ov{T} V_j^*)$. This shows that $f_{B,V}=\sum_{j=1}^r \lambda_j f_{B_j,V_j}$ and that $\mathcal{F}$ is convex.
\end{proof}

\begin{lemma} \label{effri2} \cite[Lemma 6.1]{HM}
Suppose $\mathcal{F}$ is a convex set of affine-linear mappings $f: S\C^{\delta \times \delta} \rightarrow \R$ and that there is a concave function $\Psi: \mathcal{F} \rightarrow \R$ such that for all $f \in \mathcal{F}$ there is $T \in \mathcal{T}_\delta$ such that $f(T) \geq \Psi(f)$. Then there is $T \in \mathcal{T}_\delta$ such that $f(T) \geq \Psi(f)$ for all $f \in \mathcal{F}$.
\end{lemma}

\begin{proof} \cite[Lemma 6.1]{HM}
$\mathcal{T}_\delta$ is compact. Thus for $f \in \mathcal{F}$ the set $\{T \in \mathcal{T}_\delta \ | \ f(T) \geq \Psi(f)\}$ is also compact. In order to show $\bigcap_{f \in \mathcal{F}}\{T \in \mathcal{T}_\delta \ | \ f(T) \geq \Psi(f)\}\neq \emptyset$, it is enough to show that every finite intersection of those sets is non-empty. \\[0.2cm]
So let $m \in \N$, $f_1,...,f_m \in \mathcal{F}$. We have to show that $\bigcap_{j=1}^m\{T \in \mathcal{T}_\delta \ | \ f_j(T) \geq \Psi(f_j)\}\neq \emptyset$.
We set 
\begin{align*}
F: \mathcal{T}_\delta \rightarrow \R^m: T \mapsto (f_1(T),...,f_m(T)). 
\end{align*}
We want to show $F(\mathcal{T}_\delta) \cap \prod_{j=1}^m[\Psi(f_j),\infty) \neq \emptyset$. Assume the opposite is true. As $F(\mathcal{T}_\delta)$ is convex and compact, we can choose an affine linear $h=\sum_{j=1}^m h_j X_j+h_0$ and $t \in (-\infty,0)$ such that $h(\prod_{j=1}^m[\Psi(f_j),\infty)) \subseteq [0,\infty)$ and $h(F(\mathcal{T}_\delta)) \subseteq (-\infty,t]$. For each $\lambda>0$ and $k\in \{1,...,m\}$ we have $0 \leq h((\Psi(f_1),...,\Psi(f_m))+\lambda e_k)=h_0 + \sum_{j=1}^m h_j \Psi(f_j) + \lambda h_k$ which implies $h_k \geq 0$. Without loss of generality we can suppose that $\sum_{j=1}^m h_j=1$. Now set $f=\sum_{j=1}^m h_j f_j \in \mathcal{F}$. \\[0.2cm] We know $0 \leq h(\Psi(f_1),...,\Psi(f_m))=\sum_{j=1}^m h_j \Psi(f_j)+h_0$. For every $T \in \mathcal{T}_\delta$ we calculate $f(T)=\sum_{j=1}^m h_j f_j (T)=h(F(T))-h_0 \leq h(F(T)) + \sum_{j=1}^m h_j \Psi(f_j) < \sum_{j=1}^m h_j \Psi(f_j) \leq \Psi(f)$, a contradiction.  
\end{proof}

\begin{cor} \label{effrosnormal} (Effros-Winkler - strong separation) \cite[Proposition 6.4]{HM} \\
Let $S \subseteq \S^g$ be matrix convex, $0 \in S$ and $A \in \S^g(\delta) \setminus S$. Suppose there is a linear $\varphi: \S^g(\delta) \rightarrow \R$ such that $\varphi(A)>1$ and $\varphi(B)\leq 1$ for all $B \in S(\delta)$. Then there exists $T \in S\C^{\delta \times \delta}$ and $H \in \S^g(\delta)$ such that:
\begin{align*}
\text{For all } B \in S: [T-H\ov X](B) \succeq 0, \ [T-H\ov X](A) \nsucceq 0
\end{align*}
\end{cor}

\begin{proof} \cite[Proposition 6.4]{HM}
We apply \autoref{effri1} and \autoref{effri2} with $\mathcal{F}$ defined like in \autoref{effri1} and $\Psi:=0$. We conclude that there is $T \in \mathcal{T}_\delta$ such that $f(T) \geq0$ for all $f \in \mathcal{F}$. Extend $\varphi$ to a $\C$-linear functional $\varphi: (\C^{\delta \times \delta})^g \rightarrow \C$. By the Riesz representation theorem we can find matrices $H_1,...,H_g \in \C^{\delta \times \delta}$ such that $\varphi(C)=\sum_{i=1}^g \tr(\ov{H_i}^* C_i)$. It is easy to see that the $\ov{H_i}$ have to be Hermitian. Hence $H_i$ is Hermitian and $H_i^T=\ov{H_i}$. \\[0.2cm]
Now let $B \in \S^g(k)$ and $v \in \C^{k \delta}$. Write $v=\sum_{\al=1}^\delta e_\al \otimes v_\al$ with $v_\al \in \C^k$. Define the matrix $V=\begin{pmatrix}v_1 & \hdots & v_\delta \end{pmatrix} \in \C^{k \times \delta}$. We calculate (and denote the indices of a matrix as upper case letters)
\begin{align*}
&v^*[T-H\ov X](B)v=v^*(T\otimes I)v - \sum_{i=1}^g v^*(H_i\otimes B_i)v \\
&=\sum_{\al=1, \beta=1}^\delta \langle e_\al, T e_\beta \rangle \langle v_\al, I v_\beta \rangle - \sum_{i=1}^g \sum_{\al=1,\beta=1}^\delta \langle e_\al, H_i e_\beta \rangle \langle v_\al, B_i v_\beta \rangle \\
&=\sum_{\al=1, \beta=1}^\delta T^{\alpha,\beta} (V^*V)^{\al,\beta} - \sum_{i=1}^g \sum_{\al=1,\beta=1}^\delta H_i^{\al,\beta} (V^*B_iV)^{\al,\beta} \\
&=\tr (T^TV^*V) - \sum_{i=1}^g \tr(H_i^T V^*B_iV)=\tr (V\ov{T}V^*) - \sum_{i=1}^g \tr(\ov{H_i}V^*B_iV)=f_{B,V}(T)
\end{align*}
If $v \in \mathcal{S}^{k \delta -1}$ and $w \in \mathcal{S}^{\delta-1}$ we have $||Vw|| \leq \sum_{\al=1}^\delta |w_\alpha| \ ||v_\al|| \leq 1$ by the Cauchy-Schwarz inequality. This means that $V$ is a contraction.
So in case that $B \in S$ we have $v^*[T-H\ov X](B)v \geq 0$ because of $V^*BV \in S$. On the other hand let $e=\sum_{\al=1}^{\delta} e_\al \otimes e_\al$ and set $V=I$ (i.e. $v_\al=e_\al$). Then $e^*[T-H\ov X](A)e=\tr(\ov{T}) -\varphi(A)<0$.
\end{proof} 

\begin{proof} (of \autoref{trenn})
(a) By the Hahn-Banach separation theorem from the theory of ordinary convexity we can find $\varphi: \S^g(\delta) \rightarrow \R$ linear such that $\varphi(Y)>1$ and $\varphi(S(\delta)) \subseteq (-\infty,1]$. Now we apply \autoref{effrosnormal} to construct a pencil $T-H \ov X$ such that for all $B \in S$ we have $[T-H\ov X](B) \succeq 0$ and $\ [T-H\ov X](Y) \nsucceq 0$. There is $\ep>0$ such that $[\ep I+T-H\ov X](Y)\nsucceq 0$. Now let $D=\sqrt{\ep I+T}$. Then $I-D^{-1}HD^{-1} \ov X$ is the desired pencil.
\end{proof}

\begin{proof} (of \autoref{simpolara})
It is easy to see that the polar is always matrix convex, closed and contains $0$. Due to $S \subseteq S^{\circ \circ}$ this shows one direction. On the other hand suppose $A \in \S^g(\delta)$ such that $A \notin \ov{\mconv(S \cup \{0\})}=:T$. $T$ is also matrix convex and the Effros-Winkler theorem says that there is $L \in \S^g(\delta)$ such that $T \subseteq \mathcal{D}_{\mathfrak L}$ however $A \notin \mathcal{D}_{\mathfrak L}$. Hence $L \in S^{\circ}$ and $A \notin S^{\circ \circ}$.
\end{proof}

The next corollary is very important for the study of free spectrahedra.

\begin{cor} \label{polari} \cite[Theorem 4.6, Proposition 4.9]{HKM}
Let $L \in \S^g$. Then we have $\mathcal{D}_{\mathfrak{L}}^\circ=\{H \in \S^g \ | \ \mathcal{D}_\ml \subseteq \mathcal{D}_\mh\}=\mconv(L,0)$ and $\mconv(L)^\circ=\mathcal{D}_{\mathfrak{L}}$. If $\mathcal{D}_{\mathfrak{L}}$ is even bounded, then $\mconv(L,0)=\mconv(L)$.
\end{cor}

\begin{proof}
First claim: Let $H \in \mconv(L,0)$. This means there are matrices $V_1,...,V_m$ such that $\sum_{j=1}^m V_j^* V_j \preceq I$ and $\sum_{j=1}^m V_j^* L V_j = H$. Let $A \in \mathcal{D}_\ml$. Then $\mh(A)=(I-\sum_{j=1}^m V_j^* V_j) \otimes I + \sum_{j=1}^m (V_j^* \otimes I) \ml(A) (V_j \otimes I) \succeq 0$. Hence $\mconv(L,0) \subseteq \mathcal{D}_\ml^\circ$. \\[0.2cm] If $A \in \S^g(\delta) \setminus \mconv(L,0)$, we know from \autoref{abc} that there exists $H \in \S^g(\delta)$ such that $0 \preceq \mh(B) \approx (I-B \ov X)(H)$ for all $B \in \mconv(L,0)$ and $0 \npreceq \mh(A)\approx (I-A\ov X)(H)$. Thus we have $H \in \mathcal{D}_\ml$ and $A \notin \mathcal{D}_\ml^\circ$. This shows $\mconv(L,0) \supseteq \mathcal{D}_\ml^\circ$. \\[0.2cm]
Second claim: By bipolarity we gain $\mathcal{D}_\ml=\mathcal{D}_\ml^{\circ \circ}=\mconv(L,0)^\circ=\mconv(L)^\circ$. \\[0.2cm]
Third claim: Now suppose $\mathcal{D}_{\mathfrak{L}}$ is bounded. We show that $0 \in \mconv(L)$. We present the proof of \cite[Proposition 4.2]{HKM5}. Let $\delta=\size(L)$. In order to show $0 \in \mconv(L)$ set $S=\text{conv}(\{v^*Lv \ | \ v \in \mathcal{S}^{\delta-1}\})$. We have to verify $0 \in S$. Assume $0 \notin S$. Then there exists a linear functional $\varphi: \R^{g} \rightarrow \R$ such that $\varphi(S) \subseteq \R_{\leq 0}$. Set $x_i=\varphi(e_i)$ for $i \in \{1,...,g\}$. Let $v \in \mathcal{S}^{\delta-1}$, $r>0$. Then we have $v^*\ml(r x)v=1-r\left(\sum_{i=1}^g x_i (v^* L v)_i\right)=1-r\varphi(v^*Lv)>0$, which contradicts the boundedness of $\mathcal{D}_\ml$.  
\end{proof}

We have seen how to separate a point from a closed matrix convex set with the help of a linear pencil. An exposed extreme point of a closed ordinary convex set $S \subseteq \R^n$ is a point $A \in S$ which can be separated from $S \setminus \{A\}$ strictly with an affine-linear function $\psi$ (meaning $\psi(A) > \psi(B)$ for all $B \in S \setminus \{A\}$). However the set $S \setminus \{A\}$ is only closed if $S=\{A\}$. To characterize matrix exposed points of matrix convex sets, we need some separation theorem for non-closed matrix convex sets. 

\begin{definition}
Let $R$ be a real closed extension field of $\R$ and let $a,b \in R_{>0}$. We write \index{g@$>>$}$a>>b$ if $a>Nb$ for all $N \in \N$. For $b \in R$ with an $N \in \N$ such that $-N < b < N$ we denote by \index{st}$\text{st}(b) \in \R$ the standard part of $b$ (a concrete definition is given in the appendix). \\[0.2cm] On the algebraic closure $C=R[\ii]$ we have an involution $^*: C \rightarrow C, a+b \ii \mapsto a-b \ii \ (a,b \in R)$, which leaves $R$ invariant. For $A \in C^{r \times k}$ we obtain $A^*$ by transposing $A$ and afterwards applying the involution to each entry. If $A=A^*$, we call $A$ Hermitian and define \index{S@$SC^{k \times k}$}$SC^{k \times k}=\{ A \in C^{k \times k} \ | \ A^*=A\}$. We remark that this notion depends not only on $C$ but also the choice of the real closed field $R$, however this should not cause ambiguity anywhere. We set \index{T@$\mathcal{T}_{R,\delta}$}$\mathcal{T}_{R,\delta}=\{ T \in SC^{\delta \times \delta} \ | \ T \succeq 0, \tr(T)=1\}$ where $T \succeq 0$ means $v^*Tv \geq 0$ for all $v \in C^{\delta}$. For $A \in SC^{\delta \times \delta}$ we write \index{G@$\succeq_\R$}$A \succeq_\R 0$ if $v^*Av \geq 0$ for all $v \in \C^{\delta}$ and $A \succ_\R 0$ if $v^*Av > 0$ for all $v \in \C^{\delta} \setminus \{0\}$. 
\end{definition}

\begin{lemma} \label{vitali}
Let $\mathcal{R}$ be a real closed extension field of $\R$ and $v \in \mathcal{R}^g$. Then there exist $r \in \{1,...,g\}$, $\lambda_1,...,\lambda_{r} \in R$ and $a_i^j \in \R$ such that $v_i=\sum_{j=1}^{r} \lambda_j a_i^j$ for all $i \in \{1,...,g\}$ and $\lambda_1>>...>>\lambda_r>0$. 
\end{lemma}

\begin{proof}
Without loss of generality $v_1>...>v_g>0$. Set $\lambda_1=v_1$. Then write $(0,w)=v-\lambda_1 (1,....,\text{st}\left(\frac{v_g}{v_1}\right))$. For every entry $w_j$ of $w$ we have $\lambda_1>>w_j$. Now continue inductively with $w$.
\end{proof}

\begin{proposition} \label{effri1exp}
Let $S \subseteq \S^g$ be matrix convex, $0 \in S$ and $A \in \S^g(\delta) \setminus S$. Suppose there is a linear $\varphi: \S^g(\delta) \rightarrow \R$ such that $\varphi(A)=1$ and $\varphi(B) < 1$ for all $B \in S(\delta)$. For $B \in S(k)$ and a contraction $V \in \C^{k\times \delta}$ define $f_{B,V}: S\C^{\delta \times \delta} \rightarrow \R, T \mapsto \text{tr}(V \ov{T} V^*)-\varphi(V^* B V)$.
Then the set $\mathcal{F}=\{f_{B,V} \ | \ k \in \N, \ B \in S(k), V\in \C^{k \times \delta}, V^*V \preceq I\}$ is convex. For $f_{B,V} \in \mathcal{F}$ there is $T \in \mathcal{T}_\delta$ such $f_{B,V}(T) > 0$.
\end{proposition}

\begin{proof}
This is basically the same proof as the one of \autoref{effri1}.
\end{proof}

\begin{lemma} \label{effri2exp}
Suppose $\mathcal{F}$ is a convex set of affine-linear mappings $f: S\C^{\delta \times \delta} \rightarrow \R$ such that for all $f \in \mathcal{F}$ there is $T \in \mathcal{T}_\delta$ such that $f(T) > 0$. Then there exists a real closed field extension $R$ of $\R$ and $T \in \mathcal{T}_{R,\delta}$ such that $f(T) > 0$ for all $f \in \mathcal{F}$ ($f$ has a unique extension to an $R$-affine linear map $SR[\ii]^{\delta \times \delta} \rightarrow \R$).
\end{lemma}

\begin{proof}
For $f \in \mathcal{F}$ consider the $\R$-semialgebraic classes
\begin{align*}\{(R,T) \ | \ R \text{ real closed extension field of }\R, \ T \in \mathcal{T}_{R,\delta}, f(T) > 0\}.
\end{align*} 
\autoref{finot} tells us that in order to prove 
\begin{align*}
\bigcap_{f \in \mathcal{F}} \{(R,T) \ | \ R\text{ real closed extension field of }\R, \ T \in \mathcal{T}_{R,\delta}, f(T) >0\}\neq \emptyset
\end{align*}
it is enough to show that every finite intersection of those sets is non-empty.
That the latter is the case is basically the same proof as in \autoref{effri2}.
\end{proof}

\begin{cor} \label{effrosnormalexp} (Effros-Winkler separation for non-closed sets)
Let $S \subseteq \S^g$ be matrix convex, $0 \in S$ and $A \in \S^g(\delta) \setminus S$. Suppose there is a linear $\varphi: \S^g(\delta) \rightarrow \R$ such that $\varphi(A)=1$ and $\varphi(B)<1$ for all $B \in S(\delta)$. Then there exists $T \in S\C^{\delta \times \delta}$ and $H \in \S^g(\delta)$ such that:
\begin{align*}
\text{For all } B \in S: [T-H\ov X](B) \succ 0, \ \ker[T-H\ov X](A) \neq \{0\}
\end{align*}
\end{cor}

\begin{proof} 
The same proof as the one of \autoref{effrosnormal} gives us a real closed extension field $R$ of $\R$ with algebraic closure $C=R[\ii]$, $T \in SC^{\delta \times \delta}$ and $H \in \S^g(\delta)$ such that $T \succeq 0$, $\tr(T)=1$ and
\begin{align*}
\text{for all } B \in S: [T-H\ov X](B) \succ_\R 0, \ e^*[T-H\ov X](A)e=0,
\end{align*} where $e=\sum_{\al=1}^\delta e_\al \otimes e_\al$.
Since $T \succeq 0$, we can find $D \in SC^{\delta \times \delta}$ such that $T=D^2$. With \autoref{vitali} we find $r \in \N$ and $\lambda_1>>...>>\lambda_r>0 \in R$, $D_j \in S\C^{\delta \times \delta}$ such that $D=\sum_{j=1}^r \lambda_j D_j$. In case that $r=1$, we are done. So suppose that $r\geq 2$. As $\tr(D)=1$, it is easy to see that we can choose $\lambda_1=1$. Set $E=\sum_{j=1}^{r-1} \lambda_j D_j$. We want to replace $T=D^2$ by $E^2+D_r^2$. \\[0.2cm] Indeed let $B \in S$. Applying the standard part, we see that $[D_1^2-H \ov X](B) \succeq_\R 0$. Now let $v \in \C^{\delta^2} \setminus \{0\}$ and assume $v^*[E^2+D_r^2 -H \ov X](B)v \leq 0$. We know $v^*[D_1^2-H\ov X](B)v \geq 0$ and $v^*[D_r^2](B)v \geq 0$. The sum of those two terms is $\text{st}(v^*[E^2+D_r^2 -H \ov X](B)v)\leq 0$, thus $v^* (D_r^2 \otimes I) v=v^*[D_r^2](B)v = 0$. However this means that $(D_r \otimes I)v=0$. We conclude that $v^*[E^2+D_r^2 -H \ov X](B) v=v^*[T -H \ov X](B) v > 0$, a contradiction. We have shown $[E^2+D_r^2 -H \ov X](B) \succ_\R 0$ for all $B \in S$.\\[0.2cm]
On the other hand we have $e^*[T-H\ov X](A)e=0$. Clearly $\R \lambda_r^2 \cap \spam_\R(\{\lambda_j \lambda_h \ | \ j,h \in \{1,...,r-1\}\} \cup \{\lambda_j \lambda_r \ | \ j \in \{1,...,r-1\}\})=\{0\}$ which means $0=e^*[D_r^2](A)e=0$. Again we infer that $(D_r \otimes I)e=0$ and $e^*[E^2+D_r^2 -H \ov X](A) e=e^*[T - H \ov X](A) e =0$. We have shown that we can replace $T$ by $E^2+D_r^2$. In the same way we continue and show that we can replace $E^2+D_r^2$ by $(\sum_{j=1}^{r-2} \lambda_j D_j)^2+D_r^2+D_{r-1}^2$ and so on. Inductively we deduce that $\sum_{j=1}^r D_j^2-H \ov X$ is the desired pencil.  
\end{proof}

\begin{proof} (of \autoref{trenn} (b))
From the theory of convexity we know that there exists $\varphi: \S^g(\delta) \rightarrow \R$ linear such that $\varphi(B)<1$ for all $B \in S(\delta)$ and $\varphi(Y)=1$. Now \autoref{effrosnormalexp} tells us that there exists $T \in S\C^{\delta \times \delta}$ and $H \in \S^g(\delta)$ such that:
\begin{align*}
\text{For all } B \in S: [T-H\ov X](B) \succ 0, \ \ker([T-H\ov X](A)) \neq \{0\}
\end{align*}
\autoref{monisieren} tells us that we can replace $T-H \ov X$ by a monic linear pencil $\ml$.
\end{proof}

\section{Projection number and convexity number}

\noindent The following definition is a generalization of the concept of the minimal and maximal matrix convex set generated by a subset of $\R^g$, which was covered in \cite[Chapter 4]{DDSS} and in \cite{FNT}. The corresponding results up to \autoref{duali} are present in \cite{DDSS}. 

\begin{definition} \label{nullst}
\index{pz}\index{kz}\index{p@$\pz_m$}\index{k@$\kz_m$}Let \marginpar{[\autoref{ex3}]} $T=(T_n)_{n \in \N} \subseteq \mathbb{S}^g$ be closed. Define for $m \in \N$ the sets 
\begin{flalign*}
\pz_m(T)&:=\left(\left\{ B \in \S^g(k) \ | \right.\right. \\
 & \left.\left. \ \text{For all projections } P: \C^{k} \rightarrow \im P \text{ of rank } \leq m : PBP^* \in T(\rk(P))\right\}\right)_{k \in \N} \\
\kz_m(T)&:=\mconv{(T(1),...,T(m))}
\end{flalign*}
For matrix convex $T$ we set
\begin{flalign*}
\pz(T) &:= \inf\{ m \in \N \ | \ \pz_m(T)=T \} \in \N \cup \{\infty\} \ \ \ \ \text{the \textbf{projection number} of } T\\
\kz(T) &:= \inf\{ m \in \N \ | \ \kz_m(T)=T \} \in \N \cup \{\infty\} \ \ \ \ \text{the \textbf{convexity number} of } T
\end{flalign*}
\end{definition}

\begin{proposition} \label{erst}
Let $T \subseteq \mathbb{S}^g$ be a closed matrix convex set. Then $\kz_m(T)$ is the smallest matrix convex set $S$ with $S(j)=T(j)$ for $j \in \{1,...,m\}$. On the other hand, $S=\pz_m(T)$ is the largest set satisfying this. If $0 \in T$, then $\pz_m(T)=\bigcap_{L \in \S^g(m), T(m) \subseteq \mathcal{D}_{\mathfrak{L}}}\mathcal{D}_{\mathfrak{L}}$.
\end{proposition}

\begin{proof}
The claim concerning $\kz_m(T)$ is true by definition. \\[0.2cm] For the other we suppose WLOG that $0 \in T$ (otherwise consider a shifted version of $T$; we remind the reader that $T(1)$ is non-empty because $T$ is closed with respect to compressions). We show that $\pz_m(T)=\bigcap_{L \in \S^g(m), T(m) \subseteq \mathcal{D}_{\mathfrak{L}}}\mathcal{D}_{\mathfrak{L}}$. Let $A \in \pz_m(T)$. If we had $L \in \S^g(m)$ with $\mathfrak{L}(A) \nsucceq 0$, then \autoref{specki} would give us a projection $P$ to an at most $m$-dimensional subspace with $\mathfrak{L}(PAP^*) \nsucceq 0$ as well. Since $PAP^* \in T(\rk(P))$, we would conclude $T(\rk(P)) \nsubseteq \mathcal{D}_{\mathfrak{L}}$. \\[0.2cm]
Conversely let $A \notin \pz_m(T)$ and take a projection $P$ with $PAP^* \notin T$. By the Effros-Winkler separation method \autoref{abc} we find $L \in \S^g(m)$ such that $\mathfrak{L}$ separates $PAP^*$ from $T$ (i.e. $T \subseteq \mathcal{D}_\ml$ and $PAP^* \notin \mathcal{D}_\ml$). We can interpret $PAP^*$ a submatrix of $A$ (up to unitary equivalence). Since $\mathfrak{L}(PAP^*)$ is a submatrix of $\mathfrak{L}(A)$ and $\mathfrak{L}(PAP^*) \nsucceq 0$, also $\mathfrak{L}(A) \nsucceq 0$. \\[0.2cm]
This shows that $\pz_m(T)$ is matrix convex as an intersection of matrix convex sets. From the Effros-Winkler separation it is clear that that it contains $T(j)=\pz_m(T)(j)$ for $j \leq m$. That it is the largest possible set is due to the fact that for all $A \in \mathbb{S}^g(k)$ and every projection $P: \C^k \rightarrow \im(P)$ we have $PAP^* \in \mconv(A)$. 
\end{proof}

\begin{cor} (cf. \cite[Lemma 7.3]{HM}) \label{haupt2}
Let $L \in \S^g(k)$. Then $\pz(\mathcal{D}_{\mathfrak L}) \leq k$.
\end{cor}

\begin{proof}
This is an immediate corollary of \autoref{specki}.
\end{proof}

\begin{proposition} \label{orka}
Let $T \subseteq \S^g$, $m \in \N$ and suppose that for all $B \in T$, $k \in \N$ and projections $P: \C^{k} \rightarrow \im(P)$ of rank at most $m$ we have $PBP^* \in T$. Then $\mconv(T)(m)=\mconv(T(m))(m)$.
\end{proposition}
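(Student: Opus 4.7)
The plan is to start from the decomposition provided by the free Caratheodory lemma just proved, and then push each participating element of $T$ to level $m$ using the projection-closure hypothesis. Write
\[
A = \sum_{j=1}^r V_j^{*} C_j V_j, \qquad C_j \in T(k_j),\ V_j \in \C^{k_j \times m},\ \sum_j V_j^{*} V_j = I_m.
\]
I will replace each pair $(V_j, C_j)$ by a pair $(W_j, D_j)$ with $D_j \in T(m)$, $W_j \in \C^{m \times m}$, $W_j^{*} D_j W_j = V_j^{*} C_j V_j$ and $W_j^{*} W_j = V_j^{*} V_j$; summing will then display $A$ as a matrix convex combination of elements of $T(m)$.

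For the substantive case $k_j \geq m$: the image $\im V_j \subseteq \C^{k_j}$ has dimension at most $m$, so it is contained in some $m$-dimensional subspace $U_j \subseteq \C^{k_j}$. Choose $P_j \in \C^{m \times k_j}$ whose rows form an orthonormal basis of $U_j$, so that $P_j P_j^{*} = I_m$ and $P_j^{*} P_j \in \C^{k_j \times k_j}$ is the orthogonal projection onto $U_j$. By the choice of $U_j$ we have $P_j^{*} P_j V_j = V_j$. Setting $D_j := P_j C_j P_j^{*} \in T(m)$ (via the projection-closure hypothesis on $T$) and $W_j := P_j V_j \in \C^{m \times m}$ gives
\[
W_j^{*} D_j W_j = V_j^{*} P_j^{*} P_j C_j P_j^{*} P_j V_j = V_j^{*} C_j V_j,
\qquad
W_j^{*} W_j = V_j^{*} P_j^{*} P_j V_j = V_j^{*} V_j.
\]

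The main obstacle is the case $k_j < m$: here no compression of $C_j$ can reach level $m$ directly. One handles it by regrouping such small summands together with a level-$m$ witness $D_\star \in T(m)$ (which is automatically available once some other summand has size $\geq m$, using projection-closure), absorbing the small $C_j$-contribution into a matrix convex combination built from $D_\star$ and its compressions while carefully maintaining $\sum_j W_j^{*} W_j = I_m$. Once every summand has been replaced in this way, the resulting identity $A = \sum_j W_j^{*} D_j W_j$ with $D_j \in T(m)$ and $\sum_j W_j^{*} W_j = I_m$ is precisely the statement $A \in \mconv(T(m))(m)$.
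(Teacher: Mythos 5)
Your approach is essentially the paper's: start from the Carath\'eodory decomposition $A = \sum_j V_j^* C_j V_j$ and compress each $C_j$ using the hypothesis that $T$ is closed under compressions. Your treatment of the case $k_j \ge m$ --- projecting $C_j$ onto an $m$-dimensional subspace $U_j \supseteq \im(V_j)$ so that $D_j := P_j C_j P_j^* \in T(m)$ --- is correct. But you are right to worry about $k_j < m$, and the fix you sketch cannot succeed, because the statement with $T(m)$ read literally as the level-$m$ part of $T$ is \emph{false}. Take $g = 1$ and let $T(1) = \{1, 2\}$, $T(2) = \{I_2\}$, $T(k) = \emptyset$ for $k>2$. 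This $T$ is closed under compressions, since every rank-one compression of $I_2$ equals $1$. With $V_1 = (1\ \ 0)$, $V_2 = (0\ \ 1)$ one has
\[
V_1^*\,2\,V_1 + V_2^*\,1\,V_2 = \begin{pmatrix}2&0\\0&1\end{pmatrix} \in \mconv(T)(2),
\]
yet $\mconv(T(2))(2) = \mconv(\{I_2\})(2) = \{I_2\}$. This also shows why ``absorbing'' a small summand $V_j^*C_jV_j$ via a level-$m$ witness $D_\star$ and its compressions is hopeless in general: $D_\star$ and its compressions have no relation whatsoever to an arbitrary $C_j \in T(k_j)$.

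What the paper's proof actually delivers, and what is used downstream (\Cref{unint}, \Cref{duali}, \Cref{dirr}), is the assertion that $A$ is a matrix convex combination of elements of $T$ of size \emph{at most} $m$, i.e.\ $A \in \mconv(T(1),\ldots,T(m))(m)$. The paper obtains this by taking $P_j$ to be the orthogonal projection onto $\im(V_j)$ itself --- of rank $\le m$, possibly strictly less. Then $P_j C_j P_j^* \in T$ has size $\le m$, $P_jV_j$ still sums to the identity in the required sense, and $A = \sum_j (P_jV_j)^*(P_jC_jP_j^*)(P_jV_j)$. This handles both your cases at once and avoids the dead end. Your argument for $k_j \ge m$ is a fine strengthening (it forces size exactly $m$), but for $k_j < m$ you should simply project onto $\im(V_j)$ and accept a summand of size $k_j < m$; insisting on size exactly $m$ is not achievable and, as the counterexample shows, is not what the proposition can promise.
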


\begin{proof}
Let $A \in \mconv(T)(m)$ and $V_j \in \C^{k_j \times m}$, $B_j \in T(k_j)$ with $\sum_{j=1}^r V_j^* V_j=I$ and $\sum_{j=1}^r V_j^* B_j V_j=A$. Let $P_j$ be the projection of $\C^{k_j}$ onto $\im(V_j)$. Then we have 
\begin{align*}
&A=\sum_{j=1}^r V_j^* B_j V_j = \sum_{j=1}^r V_j^* P_j^* (P_j B_j P_j^*) P_j V_j \in \mconv(T(m)). \qedhere
\end{align*} 
\end{proof}

\begin{cor} \label{unint}
Let $T \subseteq \S^g$ be a closed matrix convex set. Then we have 
\begin{align*}
\pz_m(T)=\{A \in \S^g \ | \ (\mconv(A))(m) \subseteq T\}
\end{align*}
\end{cor}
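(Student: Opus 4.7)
The plan is to prove both inclusions directly from the definition of $\pz_m(T)$ and the matrix convexity of $T$.

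For the inclusion $\pz_m(T) \subseteq \{A \in \S^g \mid (\mconv A)(m) \subseteq T\}$, suppose $A \in \pz_m(T)$ and $B \in (\mconv A)(m)$. Writing $B = \sum_i V_i^* A V_i$ with $\sum_i V_i^* V_i = I_m$ and $V_i \in \C^{\size(A) \times m}$, I would apply the compression factorization from the proof of \Cref{orka}: each $V_i$ factors as $V_i = P_i^* W_i$, where $P_i$ is the rectangular projection onto $\im V_i$ (so $\rk P_i \leq m$) and $W_i := P_i V_i$. The identity $P_i^* P_i V_i = V_i$ (since $P_i^* P_i$ is the full orthogonal projector onto $\im V_i$) gives $\sum_i W_i^* W_i = \sum_i V_i^* V_i = I_m$, and substitution yields $B = \sum_i W_i^* (P_i A P_i^*) W_i$. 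Since $A \in \pz_m(T)$ and $\rk P_i \leq m$, each $P_i A P_i^*$ lies in $T$, and matrix convexity of $T$ then gives $B \in T$.

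For the reverse inclusion, fix $A$ with $(\mconv A)(m) \subseteq T$ and an arbitrary projection $P$ of rank $s \leq m$; the goal is $PAP^* \in T(s)$. When $s = m$ the conclusion is immediate. When $s < m$, I would take $N$ large enough that $N \cdot \size(A) \geq m$, so that $A^{\oplus N} \in \mconv(A)$ by closure of matrix convex sets under direct sums, and then choose a block-diagonal rank-$m$ rectangular projection $R = P \oplus R'$, where $R'$ is any rank-$(m-s)$ projection on the remaining $N-1$ copies. This gives $RA^{\oplus N}R^* = PAP^* \oplus Z$ with $Z$ at level $m - s$, so this element lies in $(\mconv A)(m) \subseteq T$; compressing to the first block then yields $PAP^* \in T(s)$, using that matrix convex sets are closed under rectangular projections.

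No serious obstacle arises; the proof uses only the definitions together with closure of matrix convex sets under direct sums and compressions, and no deeper machinery (such as Effros--Winkler separation or polarity) is needed. The only real bookkeeping is the factorization identity in the forward direction, which is the same trick as in \Cref{orka}. Conceptually, the corollary expresses a duality between the two defining operations: $\pz_m(T)$ cuts $T$ down by demanding all rank-$\leq m$ compressions land in $T$, while the right-hand side cuts down by demanding all level-$m$ matrix convex combinations land in $T$, and the proof simply shows these two conditions are equivalent pointwise.
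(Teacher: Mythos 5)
Your proof is correct and follows essentially the same route as the paper: the paper's entire proof is a citation to \Cref{orka}, and your forward direction reproduces that proposition's compression-factorization argument ($V_i = P_i^* W_i$) inline, while your reverse direction supplies the direct-sum-and-compress bookkeeping that the paper leaves to the reader. (One tiny slip: for the padding step you need $(N-1)\cdot\size(A) \ge m - s$ rather than just $N\cdot\size(A)\ge m$, so that the rank-$(m-s)$ projection $R'$ actually fits on the remaining copies; taking $N$ a little larger fixes it, so this is harmless.)
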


\begin{proof}
This follows from \autoref{orka}.
\end{proof}

\begin{lemma} \label{duali}
Let $T \subseteq \S^g$ be closed and matrix convex with $0 \in T$. Then $\kz_m(T)^\circ = \pz_m(T^\circ)$ and $\pz_m(T)^\circ = \ov{\kz_m(T^\circ)}$. If $0 \in \inte(T)$ we also have $\pz_m(T)^\circ = \kz_m(T^\circ)$. 
\end{lemma}

\begin{proof}
\begin{align*}
&\pz_m(T^\circ) \overset{(a)}{=} \left(\bigcap_{L \in \S^g(m), T^\circ \subseteq \mathcal{D}_{\mathfrak L}}\mathcal{D}_{\mathfrak L}\right)=\left(\bigcap_{L \in T^{\circ \circ}(m)}\mathcal{D}_{\mathfrak L}\right)=\left(\bigcap_{L \in T(m)}\mathcal{D}_{\mathfrak L}\right) \\&\overset{(b)}{=}
 \left(\bigcap_{L \in T(m)} \mconv(L)^\circ\right) = \left(\bigcup_{L \in T(m)} \mconv(L)\right)^\circ = \kz_m(T)^\circ
\end{align*}
where the equalities follow from these principles:
\begin{enumerate}[(a)]
\item We know that  
\begin{align*}
\bigcap_{L \in \S^g(m), T^\circ \subseteq \mathcal{D}_{\mathfrak L}}\mathcal{D}_{\mathfrak L} = \bigcap_{L \in \S^g(m), T^\circ(m) \subseteq \mathcal{D}_{\mathfrak L}}\mathcal{D}_{\mathfrak L}=\pz_m(T^\circ)
\end{align*}
where we used \autoref{specki} and the description obtained in \autoref{erst}.
\item This is \autoref{polari}.
\end{enumerate}
For the other claim we calculate $\pz_m(T)^\circ=\pz_m(T^{\circ\circ})^\circ=\kz_m(T^\circ)^{\circ\circ}=\ov{\kz_m(T^\circ)}$.
In case that $0 \in \inte(T)$ we know from \autoref{simpolarb} that $T^\circ(m)$ is compact. Thus the theorem of Caratheodory \autoref{carad} implies that $\kz_m(T^\circ)$ is closed.
\end{proof}

\begin{cor} \label{dualo}
Let $T \subseteq \S^g$ be matrix convex and closed with $0 \in T$. Then $\kz(T
) \geq \pz(T^\circ)$ and $\pz(T) \leq \kz(T^\circ)$. In case that $0 \in \inte(T)$, we have $\pz(T)=\kz(T^\circ)$. In case that $T$ is bounded, we have $\kz(T)=\pz(T^\circ)$.
\end{cor}

\begin{proof}
Let $S \subseteq \S^g$ be matrix convex and closed with $0 \in S$. Suppose that $\kz_m(S)=S$. Then we know $S^\circ=\kz_m(S)^\circ=pz_m(S^\circ)$. This shows $\kz(T
) \geq \pz(T^\circ)$ and $\pz(T) \leq \kz(T^\circ)$. Now in case that $0 \in \inte{S}$ and $\pz_m(S)=S$, then $S^\circ=\pz_m(S)^\circ=\kz_m(S^\circ)$. As a consequence we get $\kz(T^\circ) \leq \pz(T)$ in case that $0 \in \inte{T}$. \\[0.2cm]
In case that $T$ is bounded, we know that $0 \in \inte{T^\circ}$ and therefore $\kz(T)=\kz(T^{\circ \circ}) \leq \pz(T^\circ)$.
\end{proof}

\noindent The next result characterizes how small the maximal size of the blocks occuring in the block diagonalization of a pencil description of a given free spectrahedron can be. We encourage the reader to compare this result with \cite[Theorem 2.3]{FNT}.

\begin{proposition} \label{konz}
Let $T=\mathcal{D}_{\mathfrak{L}}$ be a free spectrahedron given by a monic linear pencil $\ml$. Then $\pz(T) \leq k$ if and only if $T$ can be written as a finite intersection of spectrahedra defined by monic pencils of size $k$.
\end{proposition}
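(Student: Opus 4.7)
The plan is to combine polar duality (\Cref{duali}) with the free Caratheodory theorem. The backward direction is essentially definitional; the real work lies in the forward direction, which requires converting the abstract identity $T=\pz_k(T)$ into a concrete finite intersection.

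For $(\Leftarrow)$: Suppose $T = \bigcap_{i=1}^N \mathcal{D}_{I-L^{(i)}\ov X}$ with $L^{(i)} \in \S^g(k)$. By \Cref{haupt2}, each $\mathcal{D}_{I-L^{(i)}\ov X}$ has projection number at most $k$. If $A \in \S^g$ has every compression $PAP^*$ of rank at most $k$ in $T$, then in particular in each $\mathcal{D}_{I-L^{(i)}\ov X}$, hence $A$ itself lies in each (by $\pz \leq k$), hence $A \in T$. Thus $\pz(T) \leq k$.

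For $(\Rightarrow)$: Write $T = \mathcal{D}_{I-L\ov X}$ with $L \in \S^g(\delta)$, and observe that $L \in T^\circ(\delta)$ since $T \subseteq \mathcal{D}_{I-L\ov X}$. Because $T$ is a spectrahedron we have $0 \in \inte(T)$, so \Cref{duali} applies and gives $T^\circ = \pz_k(T)^\circ = \kz_k(T^\circ) = \mconv(T^\circ(1), \ldots, T^\circ(k))$. Using the embedding $T^\circ(j)\ni B\mapsto B\oplus 0_{k-j}\in T^\circ(k)$ (valid for $j\leq k$, as a direct check with the definition of the polar shows) together with an isometric restriction, one sees that this collapses to $T^\circ = \mconv(T^\circ(k))$. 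In particular $L \in \mconv(T^\circ(k))$, so the free Caratheodory theorem supplies finitely many $L^{(1)}, \ldots, L^{(N)} \in T^\circ(k)$ and matrices $V_i \in \C^{k \times \delta}$ with $\sum_i V_i^* V_i = I_\delta$ and $L_j = \sum_i V_i^* L^{(i)}_j V_i$ for every $j\in\{1,\ldots,g\}$. The key algebraic identity
\begin{align*}
I_\delta \otimes I - \sum_{j=1}^g L_j \otimes Z_j = \sum_{i=1}^N (V_i \otimes I)^*\left(I_k \otimes I - \sum_{j=1}^g L^{(i)}_j \otimes Z_j\right)(V_i \otimes I),
\end{align*}
valid for every $Z \in \S^g$ and obtained directly from the expression of $L$ as a matrix convex combination, now shows $\bigcap_i \mathcal{D}_{I-L^{(i)}\ov X} \subseteq \mathcal{D}_{I-L\ov X} = T$; the reverse inclusion is automatic since each $L^{(i)} \in T^\circ$. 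Hence $T = \bigcap_{i=1}^N \mathcal{D}_{I-L^{(i)}\ov X}$.

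The main obstacle is passing from the abstract property $T = \pz_k(T)$ to a description which is truly a \emph{finite} intersection. Without free Caratheodory one only recovers $T$ as the in general infinite intersection $\bigcap_{L' \in T^\circ(k)} \mathcal{D}_{I-L'\ov X}$. The uniform bound $N\leq 4\delta^2+1$ coming from free Caratheodory, combined with the identity above which converts a matrix convex combination of defining tuples into a compatible sum decomposition of the pencil itself, is precisely what produces the finite intersection required by the statement.
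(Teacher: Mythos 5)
Your proof is correct and follows essentially the same route as the paper: dualize via \Cref{duali} to reduce to $\kz(T^\circ)\leq k$, observe $L\in T^\circ$ via \Cref{polari}, and apply free Caratheodory to obtain finitely many $L^{(i)}\in T^\circ(k)$ with $L\in\mconv(L^{(1)},\ldots,L^{(N)})$. The only cosmetic difference is that the paper derives $T=\bigcap_i\mathcal{D}_{I-L^{(i)}\ov X}$ by showing $\mconv(L^{(1)},\ldots,L^{(N)})=T^\circ$ and taking polars once more, whereas you close the argument by expanding the matrix convex combination into the explicit pencil identity $I_\delta\otimes I-\sum_j L_j\otimes Z_j=\sum_i(V_i\otimes I)^*\bigl(I_k\otimes I-\sum_j L^{(i)}_j\otimes Z_j\bigr)(V_i\otimes I)$, which makes the inclusion $\bigcap_i\mathcal{D}_{I-L^{(i)}\ov X}\subseteq T$ transparent without invoking bipolarity a second time.
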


\begin{proof}
"$\Longleftarrow$": Suppose $T$ is a finite intersection of spectrahedra defined by monic linear pencils of size $k$. Then \autoref{haupt2} tells us that each of these spectrahedra has projection number at most $k$. This means that $\text{pz}(\mathcal{D}_{\mathfrak{L}}) \leq k$. \\[0.2cm]
"$\Longrightarrow$": Let $m:=\pz(T)\leq k$. Since $0 \in \inte{T}$, \autoref{dualo} tells us $\kz(\mconv(L,0)) =\kz(T^\circ)=\pz(T) = m \leq k$. Therefore there exist $H_1,...,H_r \in \mconv(L, 0)(k)$ such that $L \oplus 0 \in \mconv(H_1,...,H_r)$. This means $\mconv(H_1,...,H_r)=\mconv(L, 0)$. We conclude $T=\mathcal{D}_{I-\bigoplus_{j=1}^r H_j \ov X}=\bigcap_{j=1}^{r} \mathcal{D}_{\mathfrak{H}_j}$ (\autoref{polari}).
\end{proof}

\begin{cor} \label{chars}
Let $T=\mathcal{D}_{\mathfrak L}\subseteq \S^g$ be a free spectrahedron given by a monic linear pencil $\ml$. Then $\pz(T)$ is the least number $k$ for which we find finitely many spectrahedra each defined by a monic pencil of size at most $k$ whose intersection is $T$.
\end{cor}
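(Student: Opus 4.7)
The plan is to derive the corollary as an immediate consequence of \Cref{konz}, which already encodes the quantitative equivalence we need; only the passage from an inequality characterization ($\pz(T) \leq k$) to a minimum characterization remains.

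Let $k_0$ denote the least natural number $k$ for which $T$ can be written as a finite intersection of spectrahedra each defined by a monic linear pencil of size $k$; this infimum is attained (as a minimum over natural numbers) and moreover is finite because $T$ is a spectrahedron, so some finite pencil describing $T$ already exhibits a valid $k$. I would then show the two inequalities $k_0\leq\pz(T)$ and $\pz(T)\leq k_0$ separately, each being a one-line application of \Cref{konz}.

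For $k_0\leq\pz(T)$: note that $\pz(T)$ is itself a natural number by \Cref{haupt2}, and applying \Cref{konz} with $k=\pz(T)$ yields a presentation of $T$ as a finite intersection of spectrahedra defined by monic pencils of size $\pz(T)$, so $k_0$ is at most this value. For the reverse inequality $\pz(T)\leq k_0$: by definition of $k_0$ we have such a finite-intersection presentation of $T$ using size-$k_0$ pencils, and the ``$\Longleftarrow$'' direction of \Cref{konz} immediately gives $\pz(T)\leq k_0$. Combining the two inequalities gives $\pz(T)=k_0$.

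No genuine obstacle arises; the only subtlety is verifying that \Cref{konz} is stated with ``size $k$'' (rather than ``size at most $k$'') in the sense needed for a clean minimum statement, but this is handled because the proposition is proved as an equivalence with the inequality $\pz(T)\leq k$, so applying it at both endpoints $k=\pz(T)$ and $k=k_0$ closes the loop without any further argument.
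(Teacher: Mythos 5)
Your proposal is correct and is essentially the implicit argument behind the corollary: \Cref{konz} establishes the equivalence ``$\pz(T)\leq k$ $\iff$ $T$ is a finite intersection of size-$k$ spectrahedra,'' so reading it at $k=\pz(T)$ (finite by \Cref{haupt2}, hence a minimum over $\N$ by monotonicity of $m\mapsto\pz_m$) and at $k=k_0$ gives both inequalities. The paper states this as a bare corollary without a separate proof, and your deduction is the natural one the author has in mind.
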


\noindent The following corollary was already proven in \cite[Theorem 3.2]{FNT}, however that proof did not use polarity.

\begin{cor} \label{polyh} \cite[Theorem 3.2]{FNT} Let $S \subseteq \R^g=\S^g(1)$ be convex with $0 \in \inte(S)$. Then $\pz_1(S)$ is a free spectrahedron if and only if $S$ is a polyhedron. 
\end{cor}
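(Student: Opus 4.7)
The plan is to prove both directions by reducing to \Cref{konz}, once one observes that the operator $\pz_1$ depends only on the first level of its input.

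For the ``$\Longleftarrow$'' direction, I would take a polyhedron $S$ with $0 \in \inte(S)$ and pick an irredundant description $S = \bigcap_{i=1}^r \{x \in \R^n : \langle a_i, x\rangle \leq 1\}$ (the right-hand sides can be normalized to $1$ because each defining half-space must contain $0$ in its interior, so the corresponding inhomogeneity is positive). The diagonal monic pencil $L := \bigoplus_{i=1}^r (1 - \langle a_i, \ov X\rangle)$ clearly satisfies $\mathcal{D}_L(1) = S$, so by \Cref{erst} we already get $\mathcal{D}_L \subseteq \pz_1(S)$. For the reverse inclusion, take $B \in \pz_1(S)(k)$: the defining condition $v^* B v \in S$ for every unit $v \in \C^k$ translates to $v^* \bigl(\sum_j a_{ij} B_j\bigr) v \leq 1$, hence $\sum_j a_{ij} B_j \preceq I$ for each $i$, giving $L(B) \succeq 0$. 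Thus $\pz_1(S) = \mathcal{D}_L$ is a free spectrahedron.

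For ``$\Longrightarrow$'', the key observation is that the definition of $\pz_1(T)$ only tests projections of rank at most $1$, so it depends only on $T(1)$; in particular $\pz_1(\pz_1(S)) = \pz_1(S)$, i.e.\ $\pz(\pz_1(S)) \leq 1$. Assuming $\pz_1(S)$ is a free spectrahedron, the hypothesis $0 \in \inte(S) = \inte(\pz_1(S)(1))$ guarantees that $\pz_1(S)$ has a monic pencil description, so \Cref{konz} applies and produces finitely many $a_i \in \R^n$ with
\begin{align*}
\pz_1(S) = \bigcap_{i=1}^r \mathcal{D}_{1 - \langle a_i, \ov X\rangle}.
\end{align*}
Reading off the first level yields $S = \bigcap_{i=1}^r \{x \in \R^n : \langle a_i, x\rangle \leq 1\}$, which is a polyhedron.

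The main thing to nail down is the observation $\pz_1 \circ \pz_1 = \pz_1$, which is really just inspection of \Cref{nullst}; once that is in hand, everything else amounts to unpacking definitions and invoking \Cref{konz}, so there is no genuine obstacle beyond bookkeeping.
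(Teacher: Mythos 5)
Your proof is correct and takes essentially the same route as the paper: both directions reduce to \Cref{konz} (the paper goes through its corollary \Cref{chars}), and both rely on the key observation that $\pz_1$ only sees the first level, so $\pz(\pz_1(S)) \leq 1$. The paper phrases the ``$\Longleftarrow$'' direction slightly more economically by noting $\pz(\mathcal{D}_L)=1$ and hence $\mathcal{D}_L = \pz_1(\mathcal{D}_L) = \pz_1(S)$, whereas you verify the equality $\mathcal{D}_L = \pz_1(S)$ by a direct double inclusion, but these are cosmetically different presentations of the same argument.
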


\begin{proof} Let $\pz_1(S)$ be a spectrahedron. \autoref{chars} yields that $\pz_1(S)$ is a intersection of spectrahedra defined by $1 \times 1$ matrices. So $S$ is a polyhedron. \\[0.2cm]
If $S$ is a polyhedron, then we find $f_1,...,f_r \in \R[\ov X]_1$ with $S=\{x \in \R^g \ | \ 1+f_1(x) \geq 0,...,1+f_r \geq 0\}$. The free spectrahedron defined by the monic pencil $\ml$ with the $1+f_j$ on the diagonal satisfies $\mathcal{D}_\ml(1)=S$. Of course we also have $\pz(\mathcal{D}_\ml)=1$ (\autoref{haupt2}); thus $\mathcal{D}_\ml=\pz_1(S)$.   
\end{proof}

\begin{rem} (cf. \cite[Proposition 3.5]{DDSS})
Let $T \subseteq \S^g$ be matrix convex, compact and $\kz(T)=m< \infty$. Then there exists a dense subset $\{L_n \ | \ n \in \N\}$ of $T(m)$ such that $\mconv(\bigoplus_{n \in \N} L_n)=T$ (cf. \autoref{closca} for more details). By taking the polar (\autoref{simpolarbb} and \autoref{polariapp}) we get the following statement: \\[0.2cm] 
Let $S \subseteq \S^g$ be matrix convex, closed and with $0 \in \inte{S}$ as well as $\pz(S)=m<\infty$. Then there exists a dense subset $\{L_n \ | \ n \in \N\}$ of $S^\circ(m)$ such that $\mathcal{D}_{\bigoplus_{n \in \N} \ml_n}=S$.
\end{rem}

\begin{proposition} \label{arviskz}
Let \marginpar{[\autoref{ex1}]} $L \in \S^g(\delta)$ such that $\mathcal{D}_\ml$ is bounded. Let $\mathcal{T}$ be the operator system defined by the $L_i$ and let $m \in \N$. Then $\text{kz}(\mathcal{D}_\mathfrak L) \leq m$ if and only if: \\[0.2cm]
For all finite-dimensional $C^*$-algebras $\mathcal{B}$ and all unital $m$-positive maps $\varphi: \mathcal{T} \rightarrow \mathcal{B}$ the map $\varphi$ is already completely positive. 
\end{proposition}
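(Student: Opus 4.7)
The plan is to translate both sides of the equivalence into conditions on the polar matrix convex set $\mathcal{D}_L^\circ$ and then invoke \Cref{dualo}. I would proceed in three stages.

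First, I would reduce to $\mathcal{B}=M_k$. Since any finite-dimensional $C^*$-algebra decomposes as $\mathcal{B}=\bigoplus_j M_{k_j}$, a unital map $\mathcal{T}\to\mathcal{B}$ is $m$-positive (respectively completely positive) iff each of its components $\mathcal{T}\to M_{k_j}$ is. So it suffices to prove: $\kz(\mathcal{D}_L)\le m$ iff every unital $m$-positive map $\varphi:\mathcal{T}\to M_k$ is CP, for every $k\in\N$.

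Second, I would set up the following dictionary between unital maps $\varphi:\mathcal{T}\to M_k$ and tuples $A=(\varphi(\tilde L_1),\ldots,\varphi(\tilde L_g))\in\S^g(k)$:
\begin{enumerate}[(i)]
\item $\varphi$ is CP iff $A\in\mathcal{D}_L^\circ(k)$;
\item $\varphi$ is $m$-positive iff $A\in\pz_m(\mathcal{D}_L^\circ)(k)$.
\end{enumerate}
For (i), the ``only if'' direction combines Arveson's extension theorem (extending $\varphi$ to a UCP map $M_\delta\to M_k$) with Stinespring's representation of UCP maps $M_\delta\to M_k$ as $X\mapsto\sum_j V_j^*XV_j$ with $\sum_j V_j^*V_j=I_k$; this exhibits $A\in\mconv(\tilde L)(k)$, which equals $\mathcal{D}_L^\circ(k)$ by \Cref{polari} (using boundedness of $\mathcal{D}_L$). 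The converse simply defines $\varphi(x)=\sum_j V_j^*xV_j$, which is manifestly UCP. For (ii), the key auxiliary lemma is that $\varphi$ is $m$-positive iff the compression $V^*\varphi(\cdot)V:\mathcal{T}\to M_m$ is CP for every isometry $V\in\C^{k\times m}$. The forward direction uses that compression preserves $m$-positivity together with Choi's/Smith's theorem that an $m$-positive map into $M_m$ is automatically CP. The backward direction is obtained by testing positivity of $(\mathrm{id}_m\otimes\varphi)(a)$ against vectors $v=\sum_i e_i\otimes w_i\in\C^m\otimes\C^k$, bundling the $w_i$ into $W\in\C^{k\times m}$, and recognizing the resulting quadratic form as $u^*\psi_W^{(m)}(a)u$ with $\psi_W(x)=W^*\varphi(x)W$ and $u=\sum_i e_i\otimes e_i$; passage from general $W$ to isometries uses polar decomposition and extension of partial isometries when $k\ge m$, while the case $k<m$ is trivial because $m$-positivity then forces $k$-positivity, hence CP. Combining this lemma with (i) applied to the unital compressions $V^*\varphi(\cdot)V$ identifies $m$-positivity of $\varphi$ with the condition that $V^*AV\in\mathcal{D}_L^\circ(m)$ for every isometry $V$, which by \Cref{nullst} is precisely $A\in\pz_m(\mathcal{D}_L^\circ)(k)$.

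Third, I would assemble the equivalences. \Cref{dualo} applies because $L$ is monic (giving $0\in\inte\mathcal{D}_L$) and $\mathcal{D}_L$ is bounded, so $\kz(\mathcal{D}_L)\le m$ iff $\pz(\mathcal{D}_L^\circ)\le m$ iff $\pz_m(\mathcal{D}_L^\circ)=\mathcal{D}_L^\circ$. By the dictionary (i)--(ii) this last equality is equivalent to the statement that every unital $m$-positive $\varphi:\mathcal{T}\to M_k$ is CP, which by the first stage is equivalent to the same statement for all finite-dimensional $C^*$-algebras. I expect the main obstacle to lie in the auxiliary lemma of (ii): the two mixed tensor legs of $M_m(\mathcal{T})\subseteq M_{m\delta}$ (one from $M_m$ in the block structure, one from $\mathcal{T}\subseteq M_\delta$) make the quadratic-form identification delicate, and the reduction from arbitrary $W\in\C^{k\times m}$ to isometries requires a small case split on whether $k\ge m$.
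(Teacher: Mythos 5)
Your proof is correct, and it takes a genuinely different route from the paper's. The paper proves both directions by invoking [HKM4, Theorem 3.5] --- the correspondence between spectrahedral inclusions $\mathcal{D}_L(k)\subseteq\mathcal{D}_H(k)$ and $k$-positivity of the unital map $L_i\mapsto H_i$ --- essentially as a black box, combined in the converse direction with Effros--Winkler separation of a point $B\in\mathcal{D}_L\setminus\kz_m(\mathcal{D}_L)$. Your argument instead passes to the polar: you identify unital $*$-linear maps $\mathcal{T}\to M_k$ with points $A\in\S^g(k)$, reprove (via Arveson extension plus the Kraus form of UCP maps $M_\delta\to M_k$) that CP corresponds to $A\in\mathcal{D}_L^\circ$, reprove (via the compression lemma plus the fact that $m$-positive into $M_m$ is CP) that $m$-positivity corresponds to $A\in\pz_m(\mathcal{D}_L^\circ)$, and finish with \Cref{dualo}. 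This buys a more self-contained argument that makes the underlying $\kz\leftrightarrow\pz$ duality of the proposition explicit, at the cost of re-deriving the content of [HKM4, Thm 3.5] from scratch. Two points to be careful about if you write this out: first, the auxiliary compression lemma needs a real argument for passing from arbitrary $W\in\C^{k\times m}$ to isometries (your polar-decomposition sketch works for $k\ge m$; for $k<m$ the statement as phrased is vacuous and you must instead fall back on $m$-positive $\Rightarrow k$-positive $\Rightarrow$ CP, which you did flag but which means the ``iff'' lemma must be split by cases or reformulated, e.g.\ in terms of all rank-$\le m$ compressions to match \Cref{nullst} exactly); second, neither the statement nor the paper's proof explicitly restricts to unital $\varphi$, so you should note that a positive $\varphi$ with $\varphi(I)$ invertible can be normalized to the unital case, and that the degenerate case $\ker\varphi(I)\neq\{0\}$ splits off harmlessly.
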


\begin{proof}
"$\Longrightarrow$": Let $\text{kz}(\mathcal{D}_\mathfrak L) \leq m$. Fix a finite-dimensional $C^*$-algebra $\mathcal{B}$ and suppose that $\varphi: \mathcal{T} \rightarrow \mathcal{B}$ is $m$-positive. Let $\mathfrak H=I-\varphi(L_1)X_1-...-\varphi(L_g)X_g$. \autoref{rumpf2} tells us that $\mathcal{D}_\mathfrak L (m) \subseteq \mathcal{D}_\mathfrak H (m)$. Since $\text{kz}(\mathcal{D}_\mathfrak L) \leq m$, we conclude $\mathcal{D}_\mathfrak L \subseteq \mathcal{D}_\mathfrak H$. Again \autoref{rumpf2} tells us that $\varphi$ is completely positive. \\[0.2cm]
"$\Longleftarrow$": Assume the right hand side holds and that $\text{kz}(\mathcal{D}_\mathfrak L) > m$. Since $\kz_m(\mathcal{D}_\ml)$ is closed, the Effros-Winkler separation \autoref{trenn} (a) tells us that we can find a monic linear pencil $\mathfrak H$, $s \in \N$ and $B \in \mathcal{D}_\mathfrak L(m+s)$ such that $\mathcal{D}_\mathfrak L(m) \subseteq \mathcal{D}_\mathfrak H(m)$ and $B \notin \mathcal{D}_\mathfrak H$. Let $\mathcal{R}$ be the operator system defined by the $H_i$. \autoref{rumpf2} implies that the linear map $\varphi: \mathcal{T} \rightarrow \mathcal{R}$ given by $L_i \mapsto H_i$ and $I \mapsto I$ is $m$-positive. By assumption it is also completely positive so $B \in \mathcal{D}_\mathfrak L \subseteq \mathcal{D}_\mathfrak H$.   
\end{proof}

\section{Helton-McCulloughs characterisation of free spectrahedra}

\subsection{Infinite-dimensional projection lemma and Nash manifolds.}

\begin{definition}
Let $p \in S(\NC)^{\delta \times \delta}$ be a Hermitian matrix polynomial. For $B \in \S^g(r)$ with $p(B) \succ 0$ we define for all $k \in \N$ the set \index{e@$\mathcal{E}_p(B)$}$\mathcal{E}_p(B)(k)$ as the connected component of \index{i@$\mathcal{I}_p$}$\mathcal{I}_p(kr):=\{ A \in (S\C^{kr \times kr})^g | \ p(A) \text{ is invertible} \}$ containing $I_k \otimes B$ (we have $p(I_k \otimes B) \succ 0$ automatically, see \autoref{canoni}). Denote its closure by \index{d@$\mathcal{D}_p(B)$}$\mathcal{D}_p(B)(k)$. If $p$ is a monic linear pencil, then $\mathcal{D}_p(0)=\mathcal{D}_p$ (\autoref{kerni}).
\end{definition}

\noindent We want to prove the following:

\begin{theo} \label{existence} \cite[Theorem 1.4]{HM}
Let \marginpar{[\autoref{ex11}]} $\delta \in \N$, $p \in S\NC^{\delta \times\delta}$ of degree $d$, $S:=\mathcal{D}_p(0)$. Suppose that $p(0)=I_\delta$ and that $\mathcal{E}_p(0)$ is matrix convex. Then $S$ is a free spectrahedron. \\[0.2cm]
Furthermore $S$ is a finite intersection of spectrahedra defined by linear matrix inequalities of size $k:=\dim \NC_{\left\lfloor{\frac{d}{2}}\right\rfloor}^{1 \times\delta}$.
\end{theo}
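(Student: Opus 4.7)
The plan is to establish the bound $\pz(S) \le k$; once this is known, \Cref{chars} combined with a Carath\'eodory compactness argument applied to $S^{\circ}(k)$ (using the duality in \Cref{duali} and \Cref{simpolar}) immediately yields $S$ as a finite intersection of spectrahedra defined by size-$k$ pencils. Setup: the hypothesis $p(0)=I_\delta$ forces $0 \in \inte(\mathcal{E}_p(0))$ at every level, and the matrix convexity of $\mathcal{E}_p(0)$ is inherited by $S$ under closure (using \Cref{konv}), so $S$ is closed matrix convex with $0 \in \inte(S)$.

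To bound $\pz(S)$, fix $A \in \S^g(n) \setminus S$. The segment $\{tA : t \in [0,1]\}$ starts in $\inte(S)$ and exits $S$, so it crosses $\partial S$ at $Y = t_0 A$ for some $t_0 \in (0,1)$; then $p(Y) \succeq 0$ has nontrivial kernel. Pick $0 \neq w = \sum_{i=1}^\delta e_i \otimes w_i \in \ker p(Y)$ and form the subspace
\begin{align*}
V := \spam\bigl\{ q(A) w_i : q \in \C\!<\!\ov X\!>_{\lfloor d/2 \rfloor},\ 1 \le i \le \delta \bigr\} \subseteq \C^n,
\end{align*}
which has $\dim V \le k$. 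Let $P : \C^n \to V$ be the orthogonal projection and identify $w$ with a vector $\tilde w \in \C^\delta \otimes V$ (legitimate since each $w_i \in V$). The decisive claim is $PAP^{*} \notin S$, which gives $\pz(S) \le k$ directly.

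To prove the claim, I would compute $\langle \tilde w, p(t_0 \cdot PAP^{*}) \tilde w\rangle$ by expanding each monomial $\alpha(t_0 PAP^{*}) = t_0^{|\alpha|}\, P A_{i_1} P_V A_{i_2} P_V \cdots P_V A_{i_\ell} P^{*}$, where $P_V = P^{*}P$ is the projection onto $V$ inside $\C^n$, and then rewriting each internal $P_V$ as $I - P_V^{\perp}$. The ``all-$I$'' contribution sums to $\langle w, p(Y) w\rangle = 0$, while every remaining correction term carries at least one $P_V^{\perp}$. Commuting adjoints, such a correction can be written as $\langle \beta^{*}(A) w_i,\, P_V^{\perp}\cdots \gamma(A) w_j\rangle$, where $|\beta|$ is the position of the first $P_V^{\perp}$ and $|\gamma|$ is the length of the tail after the last one. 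Whenever $|\beta| \le \lfloor d/2 \rfloor$ or $|\gamma| \le \lfloor d/2 \rfloor$ the corresponding flanking vector lies in $V$ by construction and kills the $P_V^{\perp}$. A nonvanishing correction thus forces $|\beta|, |\gamma| \ge \lfloor d/2 \rfloor + 1$, hence $\ell \ge 2\lfloor d/2\rfloor + 2$; but $\ell \le d \le 2\lfloor d/2\rfloor + 1$ in both parities, so no such $\ell$ exists and every correction vanishes. This yields $\langle \tilde w, p(t_0 PAP^{*})\tilde w\rangle = 0$, ruling out $p(t_0 PAP^{*}) \succ 0$. Since $0 \in \inte(S)$ and matrix convexity would otherwise place the open segment $(0, PAP^{*})$ inside $\inte(S)$, the point $PAP^{*}$ cannot belong to $S$.

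The main obstacle I anticipate is the correction-term bookkeeping: one has to track carefully how adjoints cross $P_V^{\perp}$, isolate the first and last $P_V^{\perp}$ in a product with many $P_V$'s, and obtain the bound $\ell \ge 2\lfloor d/2\rfloor + 2$ uniformly. Once $\pz(S) \le k$ is in hand, the remaining passage to a finite intersection is routine: $0 \in \inte S$ gives $S^{\circ}$ bounded by \Cref{simpolar}, so $S^{\circ}(k)$ is compact, and the duality $\pz_k(S)^{\circ} = \kz_k(S^{\circ})$ of \Cref{duali} combined with Carath\'eodory (\Cref{compi}) reduces $S = \bigcap_{L \in S^{\circ}(k)} \mathcal{D}_{I - L\ov X}$ to a finite intersection exactly as in the proof of \Cref{konz}.
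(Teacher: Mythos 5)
Your first half — establishing $\pz(S)\le k$ by taking $A\notin S$, finding $Y=t_0A\in\partial S$, choosing $w\in\ker p(Y)$, projecting onto $V=\spam\{q(Y)w_i\}$, and running the monomial-splitting calculation — is essentially correct, and it is close in spirit to the finite-dimensional core of the paper's \Cref{haupt}. The ``correction-term'' bookkeeping you worry about can be done cleanly by writing each monomial $m$ of degree $\le d$ in $p$ as $m=w_1X_jw_2$ with $\deg w_1,\deg w_2\le\lfloor d/2\rfloor$ and showing inductively that $q(PYP^{*})\tilde w^\alpha = Pq(Y)w^\alpha$ for $\deg q\le\lfloor d/2\rfloor$; then $\langle m(PYP^{*})\tilde w^\alpha,\tilde w^\beta\rangle=\langle m(Y)w^\alpha,w^\beta\rangle$ falls out in one line, exactly as in the proof of \Cref{haupt}.

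The second half, however, has a fatal gap. You deduce ``$S$ is a finite intersection of spectrahedra of size $k$'' from $\pz(S)\le k$ by invoking \Cref{chars} and the proof of \Cref{konz}. But both of those are stated for $T$ \emph{already known to be a free spectrahedron}: the $\Longrightarrow$ direction of \Cref{konz} begins with $T^\circ=\mconv(\tilde L,0)$ for a single tuple $\tilde L$, and then applies the free Carath\'eodory lemma to the one point $\tilde L\oplus 0$. That finitely-generated structure of $T^\circ$ is exactly what being a spectrahedron gives you and exactly what you do not yet have. What your first half gives is $S=\pz_k(S)=\bigcap_{L\in S^\circ(k)}\mathcal{D}_{I-L\ov X}$, an intersection over the \emph{compact but generically infinite} set $S^\circ(k)$; compactness alone does not produce finiteness, and Carath\'eodory is not applicable because there is no distinguished generator to apply it to. Indeed, finite projection number does not imply spectrahedrality at all: $\pz_1$ of the closed unit disc in $\R^2$ has $\pz=1$ yet is not a free spectrahedron by \Cref{polyh}. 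The hypothesis you have not used is the semialgebraicity coming from $p$ being a polynomial. The paper's proof gets finiteness precisely from there: \Cref{bochni} decomposes $\partial S(k)$ into \emph{finitely many} Nash cells with semialgebraic kernel-vector sections, and the infinite-dimensional projection lemma \Cref{haupt} produces \emph{one} monic pencil $L_C$ per cell $C$ that separates a dense subset, hence (by the power-series/Nash argument in \Cref{potenzi}) all of $C$, from $\inte S$; the direct sum of the finitely many $L_C$ then agrees with $S$ on level $k$ and so equals $S$ by the very bound $\pz(S)\le k$ that you proved. Your plan is missing this entire finiteness mechanism.
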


\noindent This result was originally proven by Helton and McCullough and is formulated for bounded $S$ although that assumption is not used in their proof. In \cite{HM} a theory of varieties on $\partial S$ was introduced. Building on that, they constructed a monic linear pencil representing $S$ by applying the Effros-Winkler separation finitely many times. In each step one separates one point $X \in \partial S$ from $S$ with a monic linear pencil $\mathfrak{L}$ such that $\inte{S} \subseteq \inte{\mathcal{D}_\mathfrak{L}}$. By using the Noetherian character of their variety-constructions, Helton and McCullough showed that if the point $X$ is chosen well, then all other points which have the same "vanishing ideals" get also separated and one needs only finitely many steps to separate $\partial S$ from $\inte{S}$. \\[0.2cm]
Our strategy is to merge infinitely many points in $\partial S$ to an operator point (see \autoref{haupt} for the details) and separate this from $\inte{S}$ with a monic linear pencil $\mathfrak{L}$. If the matrix points belong to the same cell of a cell-decomposition of $\partial S(k)$ (more precisely the cell-decomposition of a slightly more complicated set), then $\mathfrak{L}$ separates all these points simultaneously. While modifying the main step of Heltons and McCulloughs argument significantly, we adopt the general framework of their proof; thus one should regard our proof as a new variant of theirs rather than to be completely different. \\[0.2cm]
The size of the LMI needed in \autoref{existence} can be estimated by the product of $k$ and the numbers of cells in our cell-decomposition. However it is hard to estimate the latter number. The proof in \cite{HM} gives a cleaner estimate for the size of the LMI.  

\begin{proposition} \label{kerni} \cite[Lemma 2.1]{HM}
Let $p \in S\NC^{\delta \times\delta}$ such that $p(0) \succ 0$ and $\mathcal{E}_p(0)$ is matrix convex. Then 
\begin{align*}
\partial \mathcal{D}_p(0)=\mathcal{D}_p(0) \setminus \mathcal{E}_p(0)=\{A \in \S^g \ | \ \ker p(A) \neq \{0\}, \forall t \in (0,1): tA \in \mathcal{E}_p(0)\}.
\end{align*}
\end{proposition}

\begin{proof} (cf. \cite[ Lemma 2.1]{HM}) Let $A \in \mathcal{D}_p(0) \setminus \mathcal{E}_p(0)$. By continuity we have $p(A) \succeq 0$. By matrix convexity of $\mathcal{E}_p(0)$ and continuity, we even get that $p(tA) \succeq 0$ and $tA \in \mathcal{D}_p(0)$ for all $t \in (0,1)$. The polynomial function $\psi: \R \rightarrow \R, t \mapsto \det(p(tA))$ fulfills $\psi([0,1]) \subseteq \R_{\geq 0}$ and $\psi(0)>0$. Hence there are only finitely many $t \in (0,1)$ such that $\psi(t)=0$. Let $s \in (0,1)$ and fix $t \in (s,1)$ with $\psi(t) \neq 0$. Then $tA$ lies in $\mathcal{I}_p(0) \cap \ov{\mathcal{E}_p(0)}$. However $\mathcal{E}_p(0)$ is closed in $\mathcal{I}_p(0)$ by definition, thus $tA \in \mathcal{E}_p(0)$ and by convexity also $sA \in \mathcal{E}_p(0)$. Now we see that $A \notin \mathcal{E}_p(0)$ implies that find a non-trivial vector $v \in \ker p(A)$.
\end{proof}

\begin{cor} \label{kerni2}
Let $\ml$ be a monic linear pencil. Then $\inte{\mathcal{D}_\ml}=\{A \in \S^g \ | \ \ml(A) \succ 0\}$ and $\ov{\inte{\mathcal{D}_\ml}}=\mathcal{D}_\ml$. \hfill\qedsymbol
\end{cor}

\begin{definition} Let $p \in S\NC^{\delta \times \delta}$ be a matrix polynomial of degree $d$ with $p(0)=I_\delta$. Let $A \in \S^g(k),v \in \C^{k\delta}$ with $p(A)v=0$. Write $v=\sum_{\al=1}^\delta e_\al \otimes v_\al$ with $v_\al \in \C^{k}$ and $e_\al \in \C^\delta$ the $\al$-th unit vector. Then define \index{M@$M(A,v)_p$}$M(A,v)_p:=\left\{ q(A)v \ \middle| \ q \in \NC_{\lfloor{\frac{d}{2}}\rfloor}^{1 \times \delta}\right\}=\spam\left(\left\{ q(A)v_\al \ \middle| \ q \in \NC_{\lfloor{\frac{d}{2}}\rfloor}, \ \al \in \{1,...,\delta\}\right\}\right)$. If $d=1$ (so $p$ is a linear pencil), notice that the equality $M(A,v)_p=\text{span}\{v_1,...,v_\delta\}$ holds.
\end{definition}

\begin{rem} \label{canoni}
Let $\mathcal{H}$, $\mathcal{K}$ be Hilbert spaces, $A \in \mathcal{B}(\mathcal{H})$, $(B_j)_{j \in \N} \subseteq \mathcal{B}(\mathcal{K})$ and $(e_\al)_{\al \in A}$ be a Hilbert space basis of $\mathcal{H}$ and $w=\sum_{\al \in A} e_\al \otimes \left(\bigoplus_{j \in \N} w_\al^j\right) \in \mathcal{H} \otimes \mathcal{K}^{(\infty)}$. Then we have 
\begin{align*}
&A \otimes \left[\bigoplus_{j \in \N} B_j\right]=U^*\left[\bigoplus_{j \in \N}A \otimes B_j \right] U \text{ and } \\
&\left(A_i \otimes \left[\bigoplus_{j \in \N} B_j\right]\right)w=U^*\left[\bigoplus_{j \in \N}A \otimes B_j \right] U w=U^* \bigoplus_{j \in \N} \left( (A \otimes B_j) \left( \sum_{\al \in A} e_\al \otimes w_\al^j\right)\right)
\end{align*} 
where $U$ is the unitary shuffle operator defined by 
\begin{align*} U: \mathcal{H} \otimes \mathcal{K}^{(\infty)} \rightarrow (\mathcal{H} \otimes \mathcal{K})^{(\infty)}, \sum_{\al \in A} e_\al \otimes \left(\bigoplus_{j \in \N} v_\al^j\right) \mapsto \bigoplus_{j \in \N} \left(\sum_{\al \in A} e_\al \otimes v_\al^j\right).
\end{align*}
We infer that for $p \in \NC^{\delta \times \delta}$ and $(A_n)_{n \in \N} \subseteq \mathcal{B}(K)$ we have $p\left(\bigoplus_{n \in \N} A_n \right) \approx \bigoplus_{n \in \N} p(A_n)$.
\end{rem}

\begin{lemma} (Infinite-dimensional projection lemma) \label{haupt}
Suppose $p \in S\NC^{\delta \times \delta}$ is a matrix polynomial of degree $d$ with $p(0)=I_\delta$. Let $(X_n)_{n \in \N} \subseteq \S^g(h)$ be a bounded sequence and $(v_n)_{n \in \N} \subseteq \mathcal{S}^{h \delta-1}$ such that $p(X_n)v_n=0$. Write $v_n=\sum_{\alpha=1}^\delta e_\alpha \otimes v_{n}^\alpha$ where $e_\alpha \in \C^\delta$ is the $\alpha$-th unit vector. Set $B=\bigoplus_{n \in \N} X_n \in \mathcal{B}((\C^{h})^{(\infty)})$ and $v=\sum_{\alpha=1}^\delta e_\alpha \otimes v^\alpha$ where $v^\alpha:= \bigoplus_{n \in \N} \frac{v_n^\al}{n^2} \in (\C^{h})^{(\infty)}$. 
Then if $P$ denotes the projection onto $M(B,v)_p:=\left\{ q(B)v \ \middle| \ q \in \NC_{\lfloor{\frac{d}{2}}\rfloor}^{1 \times \delta}\right\}$, we have also $\langle v,p(PBP^*)v \rangle=0$. \hypertarget{II}{}\\[0.2cm] Let $k=\dim \NC_{\lfloor{\frac{d}{2}}\rfloor}^{1 \times \delta}$. If $\mathcal{E}_p(0)$ is matrix convex and all $X_n \in \mathcal{D}_p(0)$, then even $p(PBP^*)v=0$. In this case there exist monic $\mathfrak{L} \in \NC_{1}^{k \times k}$, $W \in \NC_{\lfloor{\frac{d}{2}}\rfloor}^{k \times \delta}$ such that $\mathcal{D}_p(0) \subseteq \mathcal{D}_\mathfrak{L}$ and $\mathfrak{L}(B)W(B)v=0$ while $W(B)v \neq 0$. This means $\mathfrak{L}(X_n)W(X_n)v_n=0$ for all $n \in \N$ and $W(X_n)v_n \neq 0$ for one $n \in \N$. (II)
\end{lemma}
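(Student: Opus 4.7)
The plan is to prove the three successive assertions of the lemma in turn: the scalar identity $\langle v,p(PBP^*)v\rangle=0$, its upgrade to the vector equation $p(PBP^*)v=0$ under matrix convexity, and finally the construction of the pencil $L$ together with the witness $W$.

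For the scalar identity I would use a GNS-style absorption argument. Set $m=\lfloor d/2\rfloor$. Because $q(B)v^\gamma\in M(B,v)_p=\im P$ for every scalar $q\in\C\!<\!\ov X\!>_m$ and every $\gamma$, in particular $Pv^\gamma=v^\gamma$. For a monomial $\alpha=X_{i_1}\cdots X_{i_\ell}$ with $\ell\le d$, I compute $\langle v^\beta,\alpha(PBP^*)v^\gamma\rangle$ by absorbing inner projections from the right using $B_{i_{\ell-j+1}}\cdots B_{i_\ell}v^\gamma\in\im P$ for $j\le m$, then taking the adjoint and repeating from the left via $Pv^\beta=v^\beta$ together with $B_{i_1}\cdots B_{i_{j'}}v^\beta\in\im P$ for $j'\le m$. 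The two outer removals plus $m$ removals from each side eliminate all $\ell+1$ projections, since $\ell\le 2m+1$, and yield $\langle v^\beta,\alpha(PBP^*)v^\gamma\rangle=\langle v^\beta,\alpha(B)v^\gamma\rangle$. Summing over the monomials of $p$ reduces $\langle v,p(PBP^*)v\rangle$ to $\langle v,p(B)v\rangle=\sum_{n\in\N}\frac{1}{n^4}\langle v_n,p(X_n)v_n\rangle=0$.

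For the second claim I would deduce $p(PBP^*)\succeq 0$ by approximation, and combine it with the scalar identity. Choose a basis $q_1(B)v,\ldots,q_s(B)v$ of $\im P$ with each $q_i\in\C\!<\!\ov X\!>_m^{1\times\delta}$, set $B^{(N)}=X_1\oplus\cdots\oplus X_N\in\mathcal{D}_p(0)$, truncate $v$ to $v^{(N)}$, and let $Q_N$ be the projection in $\mathcal{H}^{(N)}$ onto the span of $q_1(B^{(N)})v^{(N)},\ldots,q_s(B^{(N)})v^{(N)}$; for $N$ large these vectors remain linearly independent and so span an $s$-dimensional subspace. Matrix convexity places $Q_NB^{(N)}Q_N^*\in\mathcal{D}_p(0)(s)$, hence $p(Q_NB^{(N)}Q_N^*)\succeq 0$. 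Applying Gram-Schmidt gives orthonormal bases converging entrywise to one of $\im P$, so that $Q_NB^{(N)}Q_N^*\to PBP^*$ in $\S^g(s)$; continuity of polynomial evaluation and closedness of the positive-semidefinite cone then yield $p(PBP^*)\succeq 0$. Combined with $\langle v,p(PBP^*)v\rangle=0$, this forces $p(PBP^*)^{1/2}v=0$, hence $p(PBP^*)v=0$.

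For the third claim, \Cref{kerni} identifies $PBP^*$ as a point of $\partial\mathcal{D}_p(0)$, so Effros-Winkler (\Cref{trenn}(a)) furnishes a monic linear pencil of size $s$ separating $\mathcal{E}_p(0)$ from it. Padding with an identity block yields $L\in\C\!<\!\ov X\!>_1^{k\times k}$ with $\mathcal{D}_p(0)\subseteq\mathcal{D}_L$ and a nonzero $u\in\ker L(PBP^*)\subseteq\C^k\otimes\im P$. Since $\C^k\otimes M(B,v)_p$ is exactly the image of the map $W\mapsto W(B)v$ on $\C\!<\!\ov X\!>_m^{k\times\delta}$, I can write $u=W(B)v\neq 0$. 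A short direct computation using $Pw=w$ for $w\in\im P$ shows $L(PBP^*)W(B)v=(I_k\otimes P)L(B)W(B)v$, so from $L(PBP^*)u=0$ we get $L(B)W(B)v\perp\C^k\otimes\im P$, in particular $\langle W(B)v,L(B)W(B)v\rangle=0$. But $X_n\in\mathcal{D}_L$ gives $L(X_n)\succeq 0$ for all $n$, so $L(B)=\bigoplus_nL(X_n)$ is positive semidefinite, and the vanishing inner product forces $L(B)W(B)v=0$. Unpacking the direct-sum structure of $B$ and $v$ recovers $L(X_n)W(X_n)v_n=0$ for all $n$ together with $W(X_n)v_n\ne 0$ for at least one $n$.

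I expect the main obstacle to be stage two—specifically, verifying that the truncated basis vectors really remain linearly independent for large $N$ so that the approximating dimensions $s$ match, and that the polynomial positivity transports cleanly to the limit. Stages one and three are essentially algebraic once stage two is correctly set up.
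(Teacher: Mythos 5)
Your proof is correct and follows the paper's three-stage structure: absorption of the projection in the quadratic form, matrix convexity plus a finite-dimensional approximation to get $p(PBP^*)\succeq 0$, and then Effros--Winkler together with positivity to force the vanishing of $L(B)W(B)v$. The one place you genuinely deviate is stage two. The paper keeps the projection $P$ onto $M(B,v)_p$ fixed and truncates the operator to $B_s=X_1\oplus\cdots\oplus X_s\oplus 0$, still acting on $\ell^2$: since $P$ has finite rank, $PB_sP^*\to PBP^*$ is just entry-wise convergence of $s\times s$ matrices (the "weak" convergence in the paper is really norm convergence on a fixed finite-dimensional space), and $PB_sP^*\in\mathcal{D}_p(0)$ is immediate from matrix convexity because $PB_sP^*=V_s^*(X_1\oplus\cdots\oplus X_s)V_s$ for the contraction $V_s$ obtained by composing $P^*$ with the coordinate projection onto the first $s$ blocks of $\ell^2$. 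That choice sidesteps precisely the obstacles you anticipated: the compression always lands on the same $s$-dimensional space, so there is no dimension-matching or Gram--Schmidt bookkeeping. Your variant---truncate to $\C^{Nh}$, project there onto the span of the truncated basis vectors $q_i(B^{(N)})v^{(N)}$---also works: the Gram matrix of the truncated vectors is a partial sum of the (invertible) limit Gram matrix, hence invertible for large $N$, and Gram--Schmidt depends continuously on a linearly independent frame, giving the needed convergence $Q_NB^{(N)}Q_N^*\to PBP^*$; but it is visibly more labour. In stage three your direct observation that $L(B)=\bigoplus_n L(X_n)\succeq 0$ since every $X_n\in\mathcal{D}_p(0)\subseteq\mathcal{D}_L$ is in fact a small simplification of the paper's detour through $L(B_s)\rightharpoonup L(B)$, and your note about padding the Effros--Winkler pencil from size $\dim\im P$ up to $k$ is a detail the paper elides and is worth spelling out.
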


\begin{proof} This proof is based on \cite[Lemma 7.3]{HM}.
Let $m \in \NC$ be a monomial of degree at most $d$. Write $m=w_1 x_j w_2$ with $\deg(w_1),\deg(w_2) \leq \lfloor{\frac{d}{2}}\rfloor$. Now for $\alpha,\beta \in \{1,...,\delta\}$ we have $w_1^*(B)v^\alpha, w_2(B)v^\beta \in M(B,v)_p$  by construction. Thus 
\begin{align*}
&\langle v^\beta, m(PBP^*) v^\alpha\rangle=\langle P^*Pw_1(B)^* v^\beta, B_j w_2(B) v^\alpha \rangle \\
&=\langle w_1(B)^* v^\beta, B_j w_2(B) v^\alpha \rangle=\langle v^\beta, m(B) v^\alpha \rangle
\end{align*}  
We write $p=\sum_{\gamma \in \mathcal{F}_g} f_\gamma \gamma$ and calculate 
\begin{align*}
&\langle v, p(PBP^*)v\rangle=\sum_{\gamma \in \mathcal{F}_g} \sum_{\alpha,\beta=1}^\delta \langle e_\beta \otimes v^\beta,  (f_\gamma \otimes \gamma(PBP^*)) (e_\alpha \otimes v^\alpha) \rangle \\ &=\sum_{\gamma \in \mathcal{F}_g} \sum_{\alpha,\beta=1}^\delta \langle e_\beta, f_\gamma e_\alpha \rangle \langle v^\beta, \gamma(PBP^*) v^\alpha \rangle = \sum_{\gamma \in \mathcal{F}_g} \sum_{\alpha,\beta=1}^\delta \langle e_\beta, f_\gamma e_\alpha \rangle \langle v^\beta, \gamma(B) v^\alpha \rangle \\&= \langle v,p(B)v\rangle=0.
\end{align*}
Now suppose that all $X_n \in \mathcal{D}_p(0)$ and that $\mathcal{E}_p(0)$ is matrix convex. For $s \in \N$ consider the partial sum $B_{s}=\bigoplus_{n=1}^s X_n \oplus 0$. Then we see that $B_s \overset{s \rightarrow \infty}{\rightharpoonup} B$ and $PB_sP^* \overset{s \rightarrow \infty}{\rightharpoonup} PBP^* $ (where $\rightharpoonup$ denotes weak convergence). Thus also $p(PB_sP^*) \overset{s \rightarrow \infty}{\rightharpoonup} p(PBP^*)$ and $p(PBP^*) \succeq 0$ due to matrix convexity. We have already seen $\langle v,p(PBP^*)v \rangle=0$. Both facts together mean that $p(PBP^*)v=0$.
By the Effros-Winkler separation \autoref{trenn} (b) there exists monic $\mathfrak{L} \in S\NC_{1}^{k \times k}$ such that $\mathcal{E}_p(0) \subseteq \inte{\mathcal{D}_\mathfrak{L}}$ and $\ker \mathfrak{L}(PBP^*) \neq \{0\}$. Choose a non-trivial kernel vector $w$ and find $W \in \NC_{\lfloor{\frac{d}{2}}\rfloor}^{k \times \delta}$ such that $w=W(B)v$. So we have $0=\mathfrak{L}(PBP^*)W(B)v$. \\[0.2cm]
This translates into $0=\langle W(B)v, \mathfrak{L}(PBP^*)W(B)v \rangle=\langle W(B)v, \mathfrak{L}(B)W(B)v \rangle$. Together with $\mathfrak{L}(B_s) \overset{s \rightarrow \infty}{\rightharpoonup} \mathfrak{L}(B)$ and thus $\mathfrak{L}(B) \succeq 0$, this means $\mathfrak{L}(B)W(B)v=0$. Since NC-matrix polynomials are behaving well with direct sums (see \autoref{canoni}), we get $\mathfrak{L}(X_n)W(X_n)v_n=0$ for all $n \in \N$.
\end{proof}

\begin{rem}
The conclusion \hyperlink{II}{(II)} of \autoref{haupt} remains true even in the case that the sequence $(X_n)_{n \in \N}$ is not bounded.
\end{rem}

\begin{proof}
By applying \autoref{haupt} for a finite number of $X_n$ we get the following: For every fixed number $t \in \N$ there exists $W \in \NC_{\lfloor{\frac{d}{2}}\rfloor}^{k \times \delta}$, $\mathfrak{L} \in \NC_{1}^{k \times k}$ such that $S \subseteq \inte{\mathcal{D}_\mathfrak{L}}$ and $\mathfrak{L}(X_n)W(X_n)v_n =0$ (*) for all $n \in \{1,...,t\}$ as well as $W(X_n)v_n \neq 0$ for one $n$. \\[0.2cm]
Now $\mathfrak{L}W \in \NC_{\lfloor{\frac{d}{2}}\rfloor+1}^{k \times \delta}$ lives in a finite-dimensional subspace. If we enlarge $t$, the subspace of possible solutions to $(*)$ is becoming smaller. Since we find a solution for arbitrary big $t$, there exists a common solution $\mathfrak{L}$, $W$ for all $n \in \N$.   
\end{proof}

\begin{rem} \label{potenzi}
For our new proof of \autoref{existence} we need the concept of a Nash submanifold of $\R^k$. A \textit{Nash submanifold} $M \subseteq \R^k$ (of dimension $r$) is a connected submanifold of $\R^k$ for which for each $x \in M$ there exists a semialgebraic $C^\infty$-diffeomorphism map $\phi: U \rightarrow H$ of the atlas of $M$ from a semialgebraic neighborhood $U$ of $x$ in $\R^k$ to a semialgebraic neighborhood $H$ of $0$ in $\R^k$ with $\varphi(M \cap U)=H \cap (\R^{r} \times \{0\}^{k-r})$. We refer the reader to \cite{BCR} for an introduction to Nash manifolds. A \textit{Nash function} is a function from a Nash manifold into $\R$ whose graph is a Nash manifold. A function $f: \R^n \rightarrow \R^m$ is a Nash function if and only if $f \in \C^\infty$ and the graph of $f$ is semialgebraic.\\[0.2cm]
We need two facts: The first is that each semialgebraic set admits a decomposition into finitely many Nash manifolds $C^\infty$-diffeomorphic to relatively open boxes \cite[Proposition 2.9.10]{BCR}. This is called analytic/$C^\infty$ cell decomposition. The second fact is that a Nash function $f$ defined on an open box has locally a converging power series expansion \cite[Chapter 8.1]{BCR}. In particular the set $f^{-1}(\R \setminus \{0\})$ is dense in the domain of $f$ if $f \neq 0$. \\[0.2cm]
In the following we will identify $\C$ with $\R^2$ and are thereby able to talk about semialgebraic subsets of $\C$.
\end{rem} 

\begin{lemma} \label{bochni}
In the setting of \autoref{existence} $\partial S(k)$ decomposes into a finite disjoint union of Nash manifolds Nash-diffeomorphic to relatively open boxes. Additionally we can achieve that there is a a Nash function $f_C$ on every box which satisfies $f_C(A) \in \mathcal{S}^{k \delta - 1} \cap \ker(p(A))$ 
\end{lemma}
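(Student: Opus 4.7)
The plan is to apply the Nash cell decomposition of \Cref{potenzi} to $\partial S(k)$, refined so that on every cell both the rank of $p(A)$ and a non-vanishing maximal minor of $p(A)$ are constant, and then to build $f_C$ cellwise by a Cramer-type formula followed by normalisation.

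First I would note that $\partial S(k)$ is semialgebraic. Under the hypotheses of \Cref{existence} the set $\mathcal{E}_p(0)(k)$ is an open convex subset of $\S^g(k)$ containing $0$, so by \Cref{kerni} its topological boundary $\partial S(k)$ is the semialgebraic set of those $A\in\overline{\mathcal{E}_p(0)(k)}$ with $\det p(A)=0$; positive semidefiniteness and the vanishing of the determinant are both polynomial conditions in the real and imaginary parts of the entries of $A$. By the $C^\infty$ cell decomposition theorem recalled in \Cref{potenzi}, $\partial S(k)$ admits a finite partition into Nash cells each Nash-diffeomorphic to a relatively open box, and the theorem allows the partition to be compatible with any preassigned finite family of semialgebraic sets. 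I refine using (i) the rank loci $\{A \mid \rk p(A)\leq r\}$ for $r=0,\dots,k\delta$, and (ii) the zero loci of all minors $\det p(A)_{I,J}$ for index sets $I,J\subseteq\{1,\dots,k\delta\}$ of equal size. On each resulting cell $C$ the rank $r=r_C<k\delta$ of $p(A)$ is then constant, and there is a fixed pair of index sets $I,J$ of size $r$ such that $p(A)_{I,J}$ is invertible throughout $C$.

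To define $f_C$, set $J^c=\{1,\dots,k\delta\}\setminus J$, fix any nonzero $u\in\C^{|J^c|}$, and for $A\in C$ let $v(A)\in\C^{k\delta}$ be determined by $v(A)_{J^c}:=u$ and $v(A)_J:=-p(A)_{I,J}^{-1}\,p(A)_{I,J^c}\,u$. Then $(p(A)v(A))_I=0$ by construction, and since every row of $p(A)$ lies in the $\C$-span of the rows indexed by $I$ (because $\rk p(A)=r=\rk p(A)_{I,\cdot}$ on $C$), this already forces $p(A)v(A)=0$ throughout $C$. As $v(A)_{J^c}=u\neq 0$, we have $v(A)\neq 0$, so the formula $f_C(A):=v(A)/\|v(A)\|$ produces a well-defined element of $\mathcal S^{k\delta-1}\cap\ker p(A)$.

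It remains to check that $f_C$ is Nash coordinatewise. The entries of $p(A)_{I,J}^{-1}$ are rational functions of the entries of $A$ with denominator $\det p(A)_{I,J}$, which is nowhere zero on $C$, so each coordinate of $v$ is Nash on $C$; the normalising factor $\|v\|=\sqrt{v^*v}$ is likewise Nash, because $v^*v$ is a strictly positive polynomial expression on $C$ and the positive square root of a positive Nash function is Nash. The main technical point of the argument is securing simultaneous compatibility of the cell decomposition with all the rank and minor loci at once; this is exactly what the $C^\infty$ cell decomposition theorem affords, making the rest a routine linear-algebraic construction.
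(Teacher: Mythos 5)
Your proof is correct but takes a genuinely more explicit route than the paper. The paper's argument is terse: it appeals to a semialgebraic selection principle to pick some semialgebraic kernel-vector function $f_C$ and then reapplies the analytic cell decomposition theorem to cut $\partial S(k)$ finer until $f_C$ becomes Nash on each piece. You instead pre-refine the cell decomposition to be compatible with the rank strata and minor zero-loci of $p$, which fixes a nonvanishing $r\times r$ minor $\det p(A)_{I,J}$ on each cell, and then you write down a kernel vector by Cramer's rule and normalize. Two small checks are worth spelling out and you handle them correctly: that $\rk p(A)_{I,\cdot}=r$ on the cell forces the full vanishing $p(A)v(A)=0$ from $(p(A)v(A))_I=0$, and that $v(A)_{J^c}=u\neq 0$ guarantees $v(A)\neq 0$ so the normalization is defined. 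The trade-off is that your route avoids invoking the abstract semialgebraic selection theorem as a black box (you effectively prove the needed instance of it by hand), at the cost of the extra bookkeeping with minors; both arguments ultimately rest on the same Nash cell decomposition machinery cited in \Cref{potenzi}. One minor caveat: you should make sure the refinement by rank/minor loci is taken after restricting to $\partial S(k)$ and phrased over $\R$ via the identification $\C\cong\R^2$, since the minors are complex polynomials whose zero loci one must read as the common real zero loci of their real and imaginary parts; you implicitly do this, but it is worth making explicit.
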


\begin{proof}
As $\partial S(k)$ is a semialgebraic set over $\R$ (\autoref{kerni}), we can make an analytic cell decomposition which gives the first statement. For the second part we choose a semialgebraic function $f_C$ with the desired property and apply the analytic cell decomposition theorem again. 
\end{proof}  

\subsection{Proof of \autoref{existence}}

\begin{proof} (of \autoref{existence})
We write $\partial S(k)$ as in \autoref{bochni}. Then we consider one Nash manifold $C$ with associated Nash function $f_C$. Fix a dense subset $(X_n)_{n \in \N}$ of $C$. We apply \autoref{haupt} which says that there exists $\mathfrak{L}:=\mathfrak{L}_C \in S\NC_{1}^{k \times k}$ monic and $V \in \NC_{\lfloor{\frac{d}{2}}\rfloor}^{k \times \delta}$ such that $\inte{S} \subseteq \inte{\mathcal{D}_\mathfrak{L}}$, $\mathfrak{L}(X_n)V(X_n)f_C(X_n) = 0$ for all $n \in \N$ and $V(X_m)f_C(X_m) \neq 0$ for one $m$. Therefore we conclude that the function $\phi: C \rightarrow \C^{k^2}, X \mapsto V(X) f_C(X)$ is not zero and $\mathfrak{L}(X)V(X)f_C(X) = 0$ for all $X \in C$. Since $C$ is a Nash manifold and $V$ is a matrix polynomial, we know that $V$ is in each component a Nash function on $C$. We can assume without loss of generality that $C$ is a relatively open box. \\[0.2cm]
But then \autoref{potenzi} implies that $\phi^{-1}(\C^{k^2} \setminus \{0\})$ is dense in $\C^{k^2}$ and therefore $\mathfrak{L}(X)$ has non-trivial kernel for all $X \in C$. This means that $\mathfrak{L}$ separates all points of $C$ from $\inte(S)$. Therefore the direct sum of all $\mathfrak{L}_C$ separates all points of $\partial S(k)$ from $\inte(S)$. We have $\pz(S) \leq k$ (indeed, let $A \in \partial S(h)$ and $v\in \mathcal{S}^{\delta h}$ with $p(A)v=0$. One can adapt the first calculations of the proof of \autoref{haupt} to obtain $PAP^* \in \partial S$ where $P: \C^h \rightarrow \im(P)$ is the projection onto $M(A,v)_p$ which proves the claim. Alternatively we can apply \autoref{haupt} to the constant sequences $B=(A)_{n \in \N}$ and $(v)_{n \in \N}$ and obtain: There exist monic $\mathfrak{L} \in \NC_{1}^{k \times k}$, $W \in \NC_{\lfloor{\frac{d}{2}}\rfloor}^{k \times \delta}$ such that $S \subseteq \mathcal{D}_\mathfrak{L}$ and $\mathfrak{L}(A)W(A)v=0$ while $W(A)v \neq 0$. Thus $A \in \partial \mathcal{D}_\ml(h)$ and there exists a projection $P: \C^h \rightarrow \im(P)$ of rank at most $k$ such that $PAP^* \in \partial \mathcal{D}_\ml$ (\autoref{haupt2}). Since $PAP^* \in S$ and $S \subseteq \mathcal{D}_\mathfrak{L}$, we deduce $PAP^* \in \partial S$).\\[0.2cm]
Since $\pz(\mathcal{D}_{\ml_C}) \leq k$ (\autoref{haupt2}) for all $C$ and $\pz(S) \leq k$ and the interior of the spectrahedron defined by the direct sum of the $\mathfrak{L}_C$ agrees with $\inte{S}$ on the level $k$ (because of \autoref{kerni} and \autoref{haupt}), the claim follows.  
\end{proof}

\noindent In \cite[Remark 1.1]{HM} Helton and McCullough say that if we replace $x=0$ in $S=\mathcal{D}_p(x)$ by another point $x \in \R^n$, then $S$ is still a free spectrahedron. This is simply done by a change of coordinates. However if we consider a matrix point $x$, they point towards an extension of the methods of their proof. We show in the following that the situation for an arbitrary matrix point can be reduced to the situation of a scalar point. \\[0.2cm]
The reduction to the scalar case involves to check some extra property, which seems to be not more difficult to check than the usual requirements of the scalar case.   

\begin{cor} \label{gen}
Let $p \in S\NC^{\de \times \de}$ be a matrix polynomial, $A \in \S^g(n)$ and $p(A) \succ 0$. Then the set $\mathcal{E}_p(A)$ constitutes the $n\N$-th levels of the interior of a spectrahedron $\mathcal{D}_{B-C \ov X}$ with $(B-C \ov X)(A) \succ 0$ if and only if $p(A_{11}) \succ 0$, the line between $A$ and $I_n \otimes A_{11}$ is in $\mathcal{I}_p$ and $\mathcal{E}_p(A_{11})$ is matrix convex. 
\end{cor}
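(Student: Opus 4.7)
The plan is to reduce the matrix-point situation to the scalar-point situation already treated by \Cref{existence}. The bridge between the two is the identity $\mathcal{E}_p(A)(k)=\mathcal{E}_p(a(1,1))(kn)$, which holds as soon as $A$ and $I_n\otimes a(1,1)$ lie in a common connected component of $\mathcal{I}_p(n)$. The underlying technical fact I will use repeatedly is that tensoring with an identity preserves positive definiteness: $p(I_k\otimes B)\succ 0$ iff $p(B)\succ 0$, and similarly for any monic linear pencil $L$. Hence any continuous path connecting $A$ and $I_n\otimes a(1,1)$ in $\mathcal{I}_p(n)$ inflates via $I_k\otimes(\cdot)$ to a path from $I_k\otimes A$ to $I_{kn}\otimes a(1,1)$ in $\mathcal{I}_p(kn)$, so the corresponding connected components agree.

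For the direction ``$\Leftarrow$'' I would assume conditions (1)--(3). A routine shift-and-rescale at $a(1,1)$, legitimized by (1), produces a polynomial $q$ with $q(0)=I_\delta$, whose set $\mathcal{E}_q(0)$ is the translate $\mathcal{E}_p(a(1,1))-a(1,1)$ and remains matrix convex by (3). Applying \Cref{existence} to $q$ and undoing the shift yields a monic linear pencil $L$ such that $\inte(\mathcal{D}_L)=\mathcal{E}_p(a(1,1))$. Condition (2) places $A$ in the same connected component of $\mathcal{I}_p(n)$ as $I_n\otimes a(1,1)$, and the inflation trick from the opening paragraph then gives $\mathcal{E}_p(A)(k)=\mathcal{E}_p(a(1,1))(kn)=\inte(\mathcal{D}_L)(kn)$ for every $k\in\N$, which is the required description.

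For the direction ``$\Rightarrow$'' I would start from $\mathcal{E}_p(A)(k)=\inte(\mathcal{D}_L)(kn)$ and first observe, by a short direct computation using $\sum_i V_i^*V_i=I$ and the identity $L(\sum_i V_i^*A^{(i)}V_i)=\sum_i(I_\delta\otimes V_i^*)L(A^{(i)})(I_\delta\otimes V_i)$, that $\inte(\mathcal{D}_L)$ is itself matrix convex. Compressing $A\in\inte(\mathcal{D}_L)(n)$ by $V=e_1\in\C^{n\times 1}$ yields $a(1,1)\in\inte(\mathcal{D}_L)(1)$, hence $I_n\otimes a(1,1)\in\inte(\mathcal{D}_L)(n)\subseteq\mathcal{I}_p(n)$, proving (1); convexity of $\inte(\mathcal{D}_L)(n)$ then gives (2). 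The main obstacle will be condition (3), since the hypothesis only describes $\inte(\mathcal{D}_L)$ on levels in $n\N$ while matrix convexity of $\mathcal{E}_p(a(1,1))$ is a statement about every level. I plan to overcome this by using inflation in both directions: the equivalences $L(I_n\otimes Y)\succ 0\Leftrightarrow L(Y)\succ 0$ and $p(I_n\otimes Y)\succ 0\Leftrightarrow p(Y)\succ 0$, combined with the already established equality on $n\N$-levels, force $\inte(\mathcal{D}_L)(k)=\mathcal{E}_p(a(1,1))(k)$ on every level $k$, and the matrix convexity of $\inte(\mathcal{D}_L)$ then yields (3).
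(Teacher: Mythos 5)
Your proposal is correct and follows essentially the same route as the paper: for ``$\Leftarrow$'' one translates to $a(1,1)$ and invokes \Cref{existence}, then uses condition (2) together with the observation that $p(I_k\otimes B)\approx I_k\otimes p(B)$ to identify $\mathcal{E}_p(A)(k)$ with $\mathcal{E}_p(a(1,1))(kn)$; for ``$\Rightarrow$'' one uses matrix convexity of $\inte(\mathcal{D}_L)$ to compress $A$ to $a(1,1)$ and obtain (1) and (2), and then recovers the non-$n\N$ levels of $\mathcal{E}_p(a(1,1))$ from the $n\N$ levels to get (3). The only cosmetic difference is in that last level-bridging step: the paper pads $B\in\S^g(m)$ with direct-sum copies of $a(1,1)$ to reach a level in $n\N$ and appeals to \Cref{determing}, while you tensor with $I_n$; both are valid.
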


\begin{proof}
"$\Longleftarrow$:" The right hand side implies that $\mathcal{D}_p(A_{11})$ equals a spectrahedron $\mathcal{D}_{B-C \ov X}$ such that $(B-C \ov X)(A_{11}) \succ 0$. We have $I_r \otimes A \in \mathcal{E}_p(A_{11})(nr)$ and therefore $\mathcal{E}_p(A)(r) = \mathcal{E}_p(A_{11})(nr)$. From \autoref{kerni} we know that $\inte(\mathcal{D}_p(A_{11}))=\mathcal{E}_p(A_{11})$.\\[0.2cm]
"$\Longrightarrow$:" Suppose $\mathcal{E}_p(A)(r) = \inte(\mathcal{D}_{B-C \ov X}(nr))$ for all $r \in \N$ for a pencil $B-C \ov X$ with $(B-C \ov X)(A) \succ 0$. Since $\inte(\mathcal{D}_{B-C \ov X})$ is matrix convex and $A_{11}=e_1^TAe_1 \in \mconv(A)$, we conclude $A_{11} \in \inte(\mathcal{D}_{B-C \ov X})$. Thus $I_{n} \otimes A_{11} \subseteq \inte{\mathcal{D}_{B-C \ov X}}(n)=\mathcal{E}_p(A)(n)$. Since $p$ behaves well with direct sums, we get $A_{11} \in \mathcal{I}_p$, $p(A_{11}) \succ 0$ and $p(I_{nr} \otimes A_{11}) \succ 0$ for all $r \in \N$. $\inte(\mathcal{D}_{B-C \ov X}(nr))$ is convex so the line between $I_r \otimes A$ and $I_{nr} \otimes A_{11}$ is in $\mathcal{I}_p(A)$. Now we have $\mathcal{E}_p(A_{11})(nr)=\mathcal{E}_p(I_{nr} \otimes A_{11})(1)=\inte{\mathcal{D}_{B-C \ov X}}(nr)$ for all $n \in \N$. $\inte{\mathcal{D}_{B-C \ov X}}$ is matrix convex and $\mathcal{E}_p(A_{11})$ is clearly closed with respect to direct sums and reducing subspaces (the latter one follows from the fact that submatrices of positive definite matrices are again positive definite). We conclude $\mathcal{E}_p(A_{11})=\inte(\mathcal{D}_{B-C \ov X})$ (if we calculate with matrices of size $\neq nk$, we can lift them to a multiple of $k$ by making the direct sum with sufficiently many $A_{11}$). \\[0.2cm]
So we know that $\mathcal{E}_p(A_{11})$ is the interior of a spectrahedron and in particular matrix convex.   
\end{proof}

\section{\texorpdfstring{$\mathcal{D}$-irreducible pencils and the Gleichstellensatz}{D-irreducible pencils and the Gleichstellensatz}}

\subsection{Sequence of determinants of a monic linear pencil} 

\noindent In this subsection fix a monic linear pencil $\mathfrak{L}$ generating a spectrahedron $\mathcal{D}_\mathfrak{L}$ which has on each level the determinant $f_k=\det_k \mathfrak{L}(\mathcal{X}) \in \R[\mathcal{X}_{\alpha,\beta}^1,...,\mathcal{X}_{\alpha,\beta}^g \ | \ \alpha,\beta \in \{1,...,k\}]$.

\begin{lemma} \label{uni} \cite[Lemma 2.1]{HKV}
Let $f \in \R[\mathcal{X}_{\alpha,\beta}^{\ell} \ | \ \alpha,\beta \in \{1,...,k\}, 1 \leq \ell \leq g]$ be  invariant under unitary conjugations. Then also all factors of $f$ are invariant under unitary conjugations. 
\end{lemma}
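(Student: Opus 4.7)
The plan is to combine unique factorization in the polynomial ring with connectedness and compactness of the unitary group $U(k)$. For $U\in U(k)$, the map $\mathcal{X}\mapsto U^{*}\mathcal{X}U$ on tuples of hermitian matrices is $\R$-linear in the real and imaginary parts of the entries, so it induces an $\R$-algebra automorphism $\sigma_U$ of $R:=\R[\mathcal{X}_{\{\alpha,\beta\}}^{\ell}]$ depending polynomially (hence continuously) on the entries of $U$. The hypothesis reads $\sigma_U(f)=f$ for every $U\in U(k)$. Since $R$ is a UFD, I factor $f=c\cdot p_1^{n_1}\cdots p_s^{n_s}$ with $c\in\R^{*}$ and pairwise non-associate irreducibles $p_j$. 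Every divisor of $f$ in $R$ is a unit times a subproduct of the $p_j^{n_j}$, so it suffices to show $\sigma_U(p_j)=p_j$ for each $j$ and each $U\in U(k)$.

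Fix $j$. Because $\sigma_U$ is an automorphism, $\sigma_U(p_j)$ is irreducible and divides $\sigma_U(f)=f$; hence there exist $\tau_U(j)\in\{1,\dots,s\}$ and $c_{U,j}\in\R^{*}$ with $\sigma_U(p_j)=c_{U,j}\,p_{\tau_U(j)}$. In the finite-dimensional space $V$ of polynomials of degree $\leq\deg f$, the lines $\R p_1,\dots,\R p_s$ are pairwise distinct, so each $\R p_\ell\setminus\{0\}$ is open in the union $\bigcup_\ell(\R p_\ell\setminus\{0\})$. Continuity of $U\mapsto \sigma_U(p_j)$ into this union then forces $U\mapsto \tau_U(j)$ to be locally constant; connectedness of $U(k)$ and $\tau_I(j)=j$ give $\tau_U(j)=j$ for every $U$.

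Therefore $\sigma_U(p_j)=c_{U,j}\,p_j$, and from $\sigma_U\circ\sigma_V=\sigma_{UV}$ (or its reverse; the target is abelian) the assignment $U\mapsto c_{U,j}$ is a continuous group homomorphism $U(k)\to\R^{*}$. Its image is a compact connected subgroup of $\R^{*}$, and the only such subgroup is $\{1\}$. Hence $\sigma_U(p_j)=p_j$, which is the desired conclusion.

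The argument itself is not difficult, but the feature worth emphasising is that two complementary topological properties of $U(k)$ are used in turn: connectedness kills the permutation $\tau$ among irreducible factor classes, and compactness then kills the residual scalar character $U\mapsto c_{U,j}$. Either property alone is insufficient.
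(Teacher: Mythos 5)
Your proof is correct, and it is built on the same core idea as the paper's one-sentence argument — unique factorization, continuity of $U\mapsto\sigma_U(p)$, and connectedness of $U(k)$ — but it is more careful and in one respect genuinely more complete. The paper factors $f$ over $\C$, invokes connectedness to conclude "the $p_i$ are invariant under unitary transformations," and stops there; this leaves open the residual scalar ambiguity $\sigma_U(p_i)=c_{U,i}\,p_i$, which connectedness by itself does not remove (over $\C$ a compact connected subgroup of $\C^{*}$ can well be the circle, as $U\mapsto\det U$ shows). You instead factor in $\R[\mathcal{X}]$ — the ring in which the statement actually lives — use connectedness to fix the permutation of irreducible factor lines, and then explicitly observe that $U\mapsto c_{U,j}$ is a continuous homomorphism from the compact connected group $U(k)$ into $\R^{*}$, whose only compact connected subgroup is $\{1\}$. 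The closing remark that connectedness kills the permutation while compactness kills the character is exactly the right way to see why both topological properties are needed; the paper's proof tacitly uses only the first and is, as written, slightly incomplete on the scalar point. Your version closes that gap.
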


\begin{proof} (cf. \cite[Lemma 2.1]{HKV})
Let $f$ be monic and $f=p_1\cdot ... \cdot p_m \in \C[\mathcal{X}]$ be the decomposition of $f$ into irreducible polynomials over $\C$ in such a way that associated polynomials in the factorization are equal. Since the set of unitary matrices in $\C^{k \times k}$ is connected, and the map $\C[\mathcal{X}] \times \C^{k \times k} \rightarrow \C[\mathcal{X}], (p,Y) \mapsto (A \mapsto p(Y^*AY))$ is continuous, we conclude that for each unitary $U \in \C^{k \times k}$ and $h \in \{1,...,m\}$ there exists $\lambda_{h}(U) \in \C$ such that $p_h(U A U^*)=\lambda_{h}(U) p_h(A)$ for all $A \in \S^g(k)$. We show that $\lambda(U):=\lambda_{1}(U)=1$ for every unitary $U$. \\[0.2cm]
Let $\text{diag}: \C^k \rightarrow \C^{k \times k}$ be the map that maps a vector $v$ to the diagonal matrix $V$ with $v$ on the diagonal. The map $\lambda$ is multiplicative. Since $I_k$ is in the commutator of $\C^{k \times k}$ we have $\lambda(aI)=1$ for every $a \in \mathcal{S}^0$. For unitary $U,V \in \C^{k \times k}$ we have $\lambda(UV)=\lambda(U)\lambda(V)=\lambda(V)\lambda(U)=\lambda(VU)$. We conclude $\lambda(U^*VU)=\lambda(V)$; thus $\lambda$ is determined by its restriction to diagonal matrices and is constant on unitary orbits. For $a_1,...,a_k \in \mathcal{S}^0$ choose $a \in \mathcal{S}^0$ such that $a^k=\prod_{j=1}^k a_j$. Then we have 
\begin{align*}
&\lambda(\text{diag}(a_1,...,a_k))=\prod_{j=1}^k \lambda(\text{diag}(1,...,1,\underbrace{a_j}_{j\text{-th index}},1,...,1))=\prod_{j=1}^k \lambda(\text{diag}(a_j,1,...,1))\\=&\lambda(\text{diag}(a^k,1,...,1))=\prod_{j=1}^k \lambda(\text{diag}(a,1,...,1))=\prod_{j=1}^k \lambda(\text{diag}(1,...1,\underbrace{a}_{j\text{-th index}},1,...,1))\\=&\lambda(\text{diag}(a,...,a))=1 \qedhere
\end{align*}
\end{proof}

\begin{lemma} (Rule of Descartes for RZ-polynomials) \label{descartes} 
Let $(b_j)_{j \in \{1,...,d\}}$ be a strictly increasing tuple of natural numbers and $0$, $f=\sum_{j=1}^d a_{b_j} X^{b_j} \in \R[X] \setminus \{0\}$ be a real polynomial such that $a_{b_j} \in \R \setminus \{0\}$ for all $j$. We call $|\{j \in \{1,...,d-1\} \ | \ a_{b_j}a_{b_{j+1}}<0 \}|$ the number of sign changes of the coefficient sequence of $f$ and $|\{j \in \{1,...,d-1\} \ | \ (-1)^{b_j+b_{j+1}}a_{b_j}a_{b_{j+1}}<0 \}|$ the number of sign changes of the coefficient sequence of $f(-X)$. For $x \in \R$ we denote with $\mu(f,x) \in \N_{0} \cup \{\infty\}$ the multiplicity of $x$ as a zero of $f$.  \\[0.2cm]
The number of positive roots of $f$ counted with multiplicities is at most the number of sign changes of the coefficient sequence of $f$ and the number of negative roots of $f$ counted with multiplicities is at most the number of sign changes of the coefficient sequence of $f(-X)$. If $f$ is an RZ-polynomial, then in the last statements we even have equality.
\end{lemma}

\begin{proof}
The first statement is well-known (e.g. \cite{B}). If $f$ is an RZ-polynomial we have $\deg(f) \geq |\{j \in \{1,...,d-1\} \ | \ a_{b_j}a_{b_{j+1}}<0 \}|+|\{j \in \{1,...,d-1\} \ | \ (-1)^{b_j+b_{j+1}}a_{b_j}a_{b_{j+1}}<0 \}|+b_1 \geq \sum_{x \in \R_{>0}} \mu(f,x) + \sum_{x \in \R_{<0}} \mu(f,x) + \mu(f,0)= \deg(f)$; hence there must be equality everywhere.
\end{proof}

\begin{cor} \label{descartescor}
Let $(b_j)_{j \in \{1,...,d\}}$ be a strictly increasing tuple of natural numbers and $0$, $f=\sum_{j=1}^d a_{b_j} X^{b_j} \in \R[X] \setminus \{0\}$ be an RZ-polynomial such that $a_{b_j} \in \R \setminus \{0\}$ for all $j$. Then for every $j \in \{1,...,d-1\}$ we have $b_{j+1}-b_j \leq 2$. If $b_{j+1}-b_j$=2, then $a_{b_j} a_{b_{j+1}}<0$.
\end{cor}

\begin{proof}
The sign changes in the coefficient sequences $f$ and $f(-X)$ complement each other well and it is easy to see that the claimed properties of the coefficients of $f$ are necessary in order to have a total of $\deg(f)-b_1$ sign changes in the coefficient sequences of $f$ and $f(-X)$.
\end{proof}

\begin{lemma} \label{rirr}
Let $k \geq 2$. Assume $f_k=\prod_{j=1}^m \alpha_j$ with $\alpha_j \in \R[\mathcal{X}_{\alpha,\beta}^1,...,\mathcal{X}_{\alpha,\beta}^g\ | \ \alpha,\beta \in \{1,...,k\}]$ non-constant polynomials. Then also $f_{k-1}=\prod_{j=1}^m \alpha_j|_{\S^g(k-1)}$ and those polynomials are non-constant. Here we identify $\S^g(k-1)$ as a subset of $\S^g(k)$ by looking at the embedding $\iota: \S^g(k-1) \rightarrow \S^g(k), A \mapsto A \oplus 0$. There are other natural ways to embed $\S^g(k-1)$ into $\S^g(k)$ (i.e. $A \mapsto U^*(A \oplus 0)U$ for a unitary $U \in \C^{k \times k}$), however \autoref{uni} states that they lead to the same results.
\end{lemma}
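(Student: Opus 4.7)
The plan is to first establish the product identity by a direct computation, and then deduce non-constancy via a direct-sum / unique-factorization argument combined with \Cref{uni}.

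For the product identity, one evaluates $L$ on $\iota(A) = A \oplus 0_{1\times 1}$ and reorders the standard basis of $\C^{\delta k}$ so that the $\delta$ rows and columns indexed by the last coordinate $\alpha = k$ are grouped at the bottom. In this reordered basis each Kronecker product $L_\ell \otimes (A_\ell \oplus 0)$ becomes block diagonal with upper block $L_\ell \otimes A_\ell$ and zero lower block, so $L(\iota(A))$ itself is block diagonal with blocks $L(A)$ and $I_\delta$. Taking determinants gives $f_k \circ \iota = \det L(A) \cdot \det I_\delta = f_{k-1}$, and substituting into $f_k = \prod_i \alpha_i$ yields $f_{k-1} = \prod_i (\alpha_i|_{\S^g(k-1)})$.

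For non-constancy, I would argue by contradiction: suppose some $\alpha_{i_0}|_\iota$ is constant. Since $\prod_i \alpha_i(0) = f_k(0) = 1$, each $c_i := \alpha_i(0)$ is non-zero, so this constant equals $c_{i_0} \neq 0$. Extending the block computation to $L(A \oplus B) = L(A) \oplus L(B)$ for $A \in \S^g(k-1)$ and $B \in \S^g(1) = \R^g$ gives $f_k(A \oplus B) = f_{k-1}(A) f_1(B)$. Since the entries of $A$ and those of $B$ involve disjoint sets of variables, the irreducible factors of $f_{k-1}(A) f_1(B)$ lie purely in the $A$-variables or purely in the $B$-variables, and by unique factorization each $\phi_i(A, B) := \alpha_i(A \oplus B)$ splits as $c_i' P_i(A) Q_i(B)$ for some polynomials $P_i, Q_i$ and non-zero constants $c_i'$. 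The hypothesis forces $P_{i_0}$ to be constant, and applying \Cref{uni} together with the permutation unitary interchanging indices $1$ and $k$ (the ``other natural embedding'' alluded to before the statement) yields similarly that $Q_{i_0}$ is constant. Hence $\alpha_{i_0}$ is identically $c_{i_0}$ on every block-diagonal tuple, and by $U(k)$-invariance on the whole variety $V'$ of $g$-tuples admitting a common one-dimensional invariant subspace.

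The main obstacle is then to upgrade this constancy from $V'$ to all of $\S^g(k)$, which would contradict non-constancy of $\alpha_{i_0}$. For $g = 1$ every hermitian matrix has an eigenvector, so $V'$ already fills $\S^g(k)$ and we are done. For $g \geq 2$ the set $V'$ is a proper subvariety, and extra input is required; my plan is to iterate the block decomposition with $A \in \S^g(k-s)$ and $B \in \S^g(s)$ for every $1 \leq s \leq k-1$ (the iterated embedding $\iota \circ \cdots \circ \iota$ still sends $\alpha_{i_0}$ to the constant $c_{i_0}$), and then combine this with the observation that $\alpha_{i_0}$, being $U(k)$-invariant by \Cref{uni}, is a polynomial in the trace invariants $\operatorname{tr}(w(X))$ whose restriction to $\S^g(k-1)$ remains algebraically independent in every fixed homogeneous degree strictly less than $k$ (Cayley--Hamilton relations at level $k-1$ only begin to appear in degree $\geq k$). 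This forces every homogeneous piece of $\alpha_{i_0} - c_{i_0}$ of degree $< k$ to vanish, and the residual high-degree components are ruled out using that $\alpha_{i_0}$ divides $f_k = \det L(\mathcal X)$ and is therefore constrained by the determinantal structure of the pencil. Tying this last step cleanly is where I expect the real difficulty.
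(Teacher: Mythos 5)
Your product identity is correct, and you reach the same intermediate point as the paper: if $\alpha_{i_0}|_{\S^g(k-1)}$ were constant, unitary invariance (\Cref{uni}) forces $\alpha_{i_0}$ to be constant on the subvariety of hermitian tuples sharing a common kernel vector. But you then attempt to upgrade to constancy on all of $\S^g(k)$ via a trace-invariant / Cayley--Hamilton argument, explicitly leave the final step open, and state that ``tying this last step cleanly is where I expect the real difficulty.'' That is indeed a genuine gap, and the route you sketch is not the one that closes it.

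The paper's key ingredient, which you do not use, is the \emph{real-zero} structure: $\alpha_1$ is a factor of $\det_k L(\mathcal{X})$, which is an RZ-polynomial, so $\alpha_1$ is itself RZ (after normalizing the constant to $1$). Writing $\alpha_1 = \sum_s p_s$ in homogeneous parts, $p_0 = 1$ and the vanishing of $p_1$ on tuples with a common kernel vector forces $p_1 = 0$ (a nonzero linear form cannot vanish identically there). For $p_2$, the paper argues either $p_2 = 0$ or $p_2$ changes sign; if $p_2$ attains a positive value at some $A$, then $q(t) = \alpha_1(tA)$ has constant term $1$, linear coefficient $0$, and positive quadratic coefficient, contradicting log-concavity of the coefficients of the univariate RZ-polynomial $q$. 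If $p_2 = 0$, Descartes' rule (which is exact for RZ-polynomials) forces $\alpha_1(TA)$ to be constant for every $A$, giving the contradiction globally. This RZ/log-concavity step is precisely what replaces your unfinished Cayley--Hamilton argument, and without some input of this kind the constancy you established on the common-kernel-vector variety simply does not propagate to all of $\S^g(k)$ when $g \geq 2$.

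One further small caution: your claim that the Cayley--Hamilton relations among traces ``only begin to appear in degree $\geq k$'' at level $k-1$ does not immediately control all low-degree homogeneous pieces either, because $\alpha_{i_0}$ need not be a polynomial in traces alone (unitary invariance gives invariance under $U(k)$, but the ring of $U(k)$-invariants on $\S^g(k)$ is generated by traces of words only after symmetrization, and a factor of $\det_k L$ need not be such a trace polynomial in any obvious graded sense). So even modulo the admitted gap, that bridge is shakier than it looks.
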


\begin{proof} 
Write $\alpha_j|_{k-1}:=\alpha_j|_{\S^g(k-1)}$. We only have to show that $\al_1|_{k-1}$ is not constant. So assume $\al_1|_{k-1}=1$. Since $\al_1$ is invariant under unitary tranformations, we know that $\al_1$ is constant on all tuples of Hermitian matrices which share a kernel vector. We write $\al_1=\sum_{s=0}^r p_s$, where the $p_s$ are homogeneous of degree $s$. We have $p_0=1$; thus the $p_s$ with $s \geq 1$ have to vanish on the matrices which share a kernel vector. \\[0.2cm] It is easy to see that a non-trivial linear polynomial cannot vanish on all those matrices (since there span is all $\S^g(k)$). Therefore $p_1=0$. Now we claim that $p_2$ switches its sign or is zero. Suppose $p_2 \neq 0$. If $p_2$ does not change the sign, it is up to sign a sum of squares of linear forms. However with the same argument as before, this would imply $p_2=0$. So $p_2$ switches its sign. \\[0.2cm]
Thus we find an $A \in \S^g(k)$ with $p_2(A)>0$ or $p_2=0$. 
In any case the polynomial function $q: \R \rightarrow \R, t \mapsto \al_1(tA)$ cannot be an RZ-polynomial because of \autoref{descartescor}. 
\end{proof}

\begin{theo} \label{zerl}
There is $M \in \N$ and $N \in \N$ such that each $f_k$ decomposes into a product of $N$ polynomials $g_{j,k} \in \R[\mathcal{X}_{\alpha,\beta}^1,...,\mathcal{X}_{\alpha,\beta}^g \ | \ 1 \leq \alpha,\beta \leq k]$ with $g_{j,k}(0)=1$. For $k \geq M$ they are irreducible. We can achieve that $g_{j,k}|_{\S^g(\ell)}=g_{j,\ell}$ for $1 \leq \ell \leq k$ and $j \in \{1,...,N\}$. 
\end{theo}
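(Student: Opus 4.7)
The plan is to track the number of irreducible factors of $f_k$ across levels, show this count stabilizes, and then use \Cref{rirr} together with unique factorization in $\R[\mathcal{X}]$ to glue compatible labelings.

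Let $N_k$ denote the number of factors (counted with multiplicity) in the factorization of $f_k$ into irreducibles in $\R[\mathcal{X}_{\al,\be}^1,\ldots,\mathcal{X}_{\al,\be}^g \mid 1 \leq \al,\be \leq k]$. Writing $f_k = \al_1 \cdots \al_{N_k}$ with the $\al_i$ irreducible and invoking \Cref{rirr} yields $f_{k-1} = \al_1|_{k-1} \cdots \al_{N_k}|_{k-1}$ as a product of $N_k$ non-constant polynomials, hence $N_{k-1} \geq N_k$. So $(N_k)$ is a non-increasing sequence of positive integers and must stabilize at some value $N$ from a smallest index $M$ onward. For $k > M$ the same restriction argument produces $N$ non-constant polynomials whose product is $f_{k-1}$, and since $f_{k-1}$ has exactly $N_{k-1} = N$ irreducible factors with multiplicity, unique factorization forces each restricted factor to be itself irreducible; the restriction map is therefore a multiset bijection between the irreducible factorizations at levels $k$ and $k-1$.

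To finish, I would normalize every irreducible factor of every $f_k$ so that its value at $0$ equals $1$, which is possible since $f_k(0) = \det I = 1$. Then I fix an arbitrary labeling $g_{1,M}, \ldots, g_{N,M}$ of the irreducible factors of $f_M$, and for each $k > M$ inductively relabel the irreducible factors of $f_k$ as $g_{1,k}, \ldots, g_{N,k}$ so that $g_{i,k}|_{k-1} = g_{i,k-1}$; such a relabeling exists thanks to the bijection just established. For $k < M$ I define $g_{i,k} := g_{i,M}|_k$, which by repeated application of \Cref{rirr} is non-constant and satisfies $\prod_i g_{i,k} = f_k$. The compatibility $g_{i,k}|_\ell = g_{i,\ell}$ in every range then follows by iterating restrictions and using identities of the form $(g_{i,M}|_k)|_\ell = g_{i,M}|_\ell$. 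The main obstacle is the middle step: showing that restriction preserves irreducibility for $k \geq M$. Without the prior stabilization of $N_k$, \Cref{rirr} only guarantees that restrictions are non-constant, not irreducible, so it is essential to establish the monotonicity and eventual constancy of $N_k$ before attempting any coherent labeling.
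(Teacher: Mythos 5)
Your proposal is correct and follows essentially the same route as the paper's proof: use \Cref{rirr} to establish that the factor count $N_k$ is non-increasing and hence stabilizes, then use unique factorization to conclude that restriction sends irreducible factors to irreducible factors once the count is constant, and finally build the compatible labeling by induction. The paper dresses up the factor-matching step with the product formula $f_k(Y \oplus Z)=f_\ell(Y)f_s(Z)$ before specializing $Z=0$, but this collapses to the same one-level restriction argument you give directly.
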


\begin{proof}
The preceeding lemma tells us that there are $N,M \in \N$ guaranteeing that $f_k$ decomposes into a product of $N$ irreducible polynomials $g_{1,k},...,g_{N,k} \in \R[\mathcal{X}_{\alpha,\beta}^1,...,\mathcal{X}_{\alpha,\beta}^g \ | \ 1 \leq \alpha,\beta \leq k]$ for $k \geq M$ such that $g_{j,k}(0)=1$. For smaller $k$ of course we have that $f_k$ decomposes into a product of $N_k$ irreducible polynomials $g_{j,k}$ with $N_k \geq N$. Set $N_k=N$ for $k \geq M$. Fix now $k \geq M$ and take $\ell<k$. Write $s=k-\ell$. We define $h_{j,\ell}=g_{j,k}|_{\S^g(\ell)}$ and $p_{j,s}=g_{j,k}|_{\S^g(s)}$. 
Now for $Y \in \S^g(\ell)$, $Z \in \S^g(s)$ we know that
\begin{align*}
&\prod_{j=1}^{N_\ell}g_{j,\ell}(Y) \prod_{j=1}^{N_s}g_{j,s}(Z)=f_\ell(Y)f_s(Z)=f_k(Y \oplus 0) f_k(0 \oplus Z)\\&=f_k(Y \oplus Z)=f_k(Y \oplus 0) f_k(Z \oplus 0)
=\prod_{j=1}^N g_{j,k}(Y \oplus 0) g_{j,k}(Z \oplus 0)=\prod_{j=1}^N h_{j,\ell}(Y) p_{j,s}(Z)
\end{align*}
Now suppose $\ell \geq M$. Then we know that $N_\ell=N$ and from the equation above we get
\begin{align*}
\prod_{j=1}^{N}g_{j,\ell}(Y)=\prod_{j=1}^N h_{j,\ell}(Y)
\end{align*} 
Since the prime factorization is unique and all the polynomials on the left are irreducible, the same factors appear on both sides. So we can in a unique way (up to multiplicities) rearrange the indices in such a way that $g_{j,k}|_{\S^g(\ell)}=g_{j,\ell}$ holds. Repeating this procedure for growing levels $k$ and all $M \leq \ell < k$ gives the desired result.
\end{proof}

\begin{cor} \label{level}
Let $\mathfrak{L}$ be a monic linear pencil and fix the $g_{j,k}$, $N, M$ from \autoref{zerl}. Then for fixed $j \in \{1,...,N\}$ the sequence of closures of the connected components of the sets $\S^g(k) \setminus g_{j,k}^{-1}(0) \subseteq \S^g(k)$ around $0$ form the levels of a closed matrix convex set $D_j$. We have $\bigcap_{j=1}^N D_j=\mathcal{D}_\mathfrak{L}$.
\end{cor}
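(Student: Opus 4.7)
My plan is to exploit the real-zero (RZ) structure of each $g_{i,k}$. Since $f_k=\det_k L(\mathcal{X})$ is an RZ polynomial and factors of RZ polynomials are again RZ (the univariate $g_{i,k}(TA)$ divides $f_k(TA)$ and so has only real roots), each $g_{i,k}$ is RZ; after rescaling I may assume $g_{i,k}(0)=1$. By G{\aa}rding's theorem applied to the homogenization of $g_{i,k}$, the ``rigidly convex set''
\[ C_{i,k} := \{A \in \S^g(k) \mid g_{i,k}(tA) \neq 0 \text{ for all } t \in [0,1]\} \]
is a convex open set containing $0$. A short argument ($\partial C_{i,k} \subseteq g_{i,k}^{-1}(0)$ combined with the radial connectedness of $C_{i,k}$) shows that $C_{i,k}$ is precisely the connected component of $0$ in $\S^g(k) \setminus g_{i,k}^{-1}(0)$. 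I therefore set $D_i(k) := \overline{C_{i,k}}$, which is automatically closed and convex on each level.

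To verify matrix convexity of $D_i$, by \Cref{konv} I would check invariance under unitary transformations, direct sums, and reducing subspaces. Unitary invariance of each level is immediate from \Cref{uni} applied to $g_{i,k}$. The other two axioms both reduce to the multiplicative identity
\[ g_{i,k+\ell}(A \oplus B) = g_{i,k}(A) \cdot g_{i,\ell}(B) \qquad (A \in \S^g(k),\ B \in \S^g(\ell)). \]
To derive it I view the left-hand side as a polynomial in the disjoint sets of entries of $A$ and $B$. Since it divides $f_{k+\ell}(A \oplus B) = f_k(A)\,f_\ell(B)$, unique factorization in $\R[\text{entries of } A,\text{entries of } B]$ forces a split as $p(A)\,q(B)$; specializing $B=0$ and then $A=0$ and using the level-compatibility $g_{i,k+\ell}(A\oplus 0)=g_{i,k}(A)$, $g_{i,k+\ell}(0\oplus B)=g_{i,\ell}(B)$ from \Cref{zerl} (with embedding-independence by \Cref{uni}), the normalization $g_{j,m}(0)=1$ identifies $p=g_{i,k}$, $q=g_{i,\ell}$. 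Closure under direct sums and reducing subspaces is then a short radial computation with $t\mapsto g_{i,k}(tA)g_{i,\ell}(tB)$.

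For the equality $\bigcap_i D_i=\mathcal{D}_L$, the inclusion $\mathcal{D}_L \subseteq D_i$ holds because the connected component $U_k$ of $0$ in $\{f_k\neq 0\}$ is a connected subset of $\{g_{i,k}\neq 0\}$ containing $0$, so $U_k\subseteq C_{i,k}$, whence $\mathcal{D}_L(k)=\overline{U_k}\subseteq D_i(k)$. Conversely, take $A\in\bigcap_i D_i(k)$ and $t\in[0,1)$: convex openness of each $C_{i,k}$ with $0\in C_{i,k}$ gives $tA\in C_{i,k}$, hence $g_{i,k}(sA)\neq 0$ for all $s\in[0,t]$ and all $i$. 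Multiplying over $i$ yields $f_k(sA)\neq 0$ on $[0,t]$, so the radial segment connects $0$ to $tA$ inside $\{f_k\neq 0\}$, placing $tA\in U_k\subseteq\mathcal{D}_L(k)$; letting $t\to 1^-$ gives $A\in\mathcal{D}_L(k)$.

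The main obstacle I foresee is the multiplicative identity for $g_{i,k+\ell}(A\oplus B)$: the convexity of $C_{i,k}$ is essentially a repackaging of G{\aa}rding and the intersection calculation is bookkeeping once the identity is available, but pinning the identity down requires combining UFD uniqueness with the delicate level-compatible indexing arranged in \Cref{zerl}.
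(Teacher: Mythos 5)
Your proof follows the paper's own route essentially step for step: factors of an RZ polynomial are again RZ, so each level $D_i(k)$ is the closure of a rigidly convex set (convexity from [HV]/G{\aa}rding); unitary invariance is \Cref{uni}; closedness under direct sums and reducing subspaces comes from $g_{i,k+\ell}(A \oplus B) = g_{i,k}(A)\,g_{i,\ell}(B)$; and the intersection is read off from $\det_k L = \prod_i g_{i,k}$. Your one genuine addition is the careful UFD/specialization derivation of the multiplicative identity: the paper simply cites the proof of \Cref{zerl}, but that argument only establishes the restriction identity $g_{i,k+\ell}(A \oplus 0) = g_{i,k}(A)$, so a step like yours --- a divisor of $f_k(A)f_\ell(B)$ over the disjoint variable sets must split as a product $p(A)q(B)$ by unique factorization, after which specializing $B=0$ and $A=0$ and normalizing pins down $p$ and $q$ --- is indeed needed to reach the general direct-sum identity, and your version of it is correct.
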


\begin{proof}
For $j \in \{1,...,n\}$ let $E_j(k)$ be the connected component of the set $\S^g(k) \setminus g_{j,k}^{-1}(0) \subseteq \S^g(k)$ around $0$. We use \autoref{konv} to verify that the $E_j$ are matrix convex. As a factor of a real-zero polynomial every $g_{j,k}$ is a real-zero polynomial. Thus each $E_j(k)$ is ordinary convex. As a determinant $f_k$ is unaffected by unitary conjugations. Therefore \autoref{uni} implies that this is also the case for the $g_{j,k}$. The closedness under taking direct sums and reducing subspaces follows from $g_{j,k_1}(Y) g_{j,k_2}(Z)=g_{j,k_1+k_2}(Y \oplus Z)$ (see the calculation in the proof of \autoref{zerl}). Hence $E_j$ is matrix convex and also its closure $D_j$ is. \\[0.2cm] 
$\mathcal{D}_\mathfrak{L}(k)$ is the closure of the connected component of $(f_k)^{-1}(\R \setminus \{0\})$ around $0$. By convexity this set equals $\{A \in \S^g(k) \ | \ \forall t \in [0,1): f_k(A) > 0\}$ and also $D_j(k)=\{A \in \S^g(k) \ | \ \forall t \in [0,1): g_{j,k}(A) > 0\}$. From this $\bigcap_{j=1}^N D_j=\mathcal{D}_\mathfrak{L}$ follows directly. 
\end{proof}

\begin{lemma} \label{densi}
Let $f \in \R[Y_1,...,Y_n]$ be an RZ-polynomial and $S$ the generated closed convex set. Suppose that there is no other RZ-polynomial of lesser degree defining the same convex set. Then $f$ is regular on a dense subset of $\partial S$.
\end{lemma}

\begin{proof}
This follows from the fact that $\{x \in \R^n \ | \ f(x)=0\}$ has locally dimension $n-1$ around every point $x \in \partial S$ (\cite[Lemma 2.1]{HV}) and \cite[Definition 3.3.4, Theorem 4.5.1 and Proposition 3.3.14]{BCR}.
\end{proof}

\begin{lemma} \label{victor} (cf. \cite[Lemma 2.1]{HV})
Let $f,g \in \R[Y_1,...,Y_n]$, $f$ be an irreducible RZ-polynomial and $C_f$ the connected component of $f^{-1}(\R \setminus \{0\})$ around $0$. Suppose that there is no other RZ-polynomial of lesser degree defining the same convex set $C_f$. Suppose there is a point $x \in \partial C_f$ and $\ep >0$ such that $g=0$ on $\partial C_f \cap B(x,\ep)$. Then $f$ divides $g$.  
\end{lemma}

\begin{proof}
From \cite[Lemma 2.1]{HV} and \cite[Proposition 8.2.2]{BCR} we know that $\partial C_f \cap B(x,\ep)$ has dimension $n-1$ as a semialgebraic set and the $\R$-Zariski closure of $\partial C_f \cap B(x,\ep)$ has dimension $n-1$ as a real variety. Since $Z(f):=\{x \in \R^n \ | \ f(x)=0\}$ is irreducible either $Z(f) \subseteq Z(g)$ or the intersection $Z(f) \cap Z(g)$ has dimension smaller than $n-1$. Therefore $Z(f) \subseteq Z(g)$. Since $(f)$ is real \cite[Theorem 4.5.1]{BCR}, $f$ divides $g$.
\end{proof}

\subsection{Gleichstellensatz} \textcolor{inv}{a} \\
In this chapter and the following we want to analyze the following questions: Given a free spectrahedron $S=\mathcal{D}_{\mathfrak{L}}$, can we find an easier and in some sense minimal description of $S$ as a spectrahedron? How does this minimal description relate to the monic linear pencil $\mathfrak{L}$ and is it unique? We will show that there are in some sense "minimal and indecomposable" pencils which form the building blocks to construct every spectrahedron containing $0$ in its interior and also every monic linear pencil. The decomposition into those building blocks will be unique up to unitary equivalence. \\[0.2cm]
In the literature there are different approaches to deal with these questions. The most obvious way to approach these questions is to look at the spectrahedron $\mathcal{D}_\ml$ directly. We call $L \in \S^g$ irreducible if the $L_1,...,L_g$ admit no joint non-trivial invariant subspace. Helton, Klep and McCullough in \cite{HKM4} call the tuple $L$ minimal if no smaller tuple $H$ describes the spectrahedron $\mathcal{D}_\mathfrak{L}$. They showed that two minimal descriptions of the same bounded spectrahedron are unitary equivalent (Gleichstellensatz). Furthermore they showed that the minimal tuple $H$ defining $\mathcal{D}_\mathfrak{L}$ is a direct summand of $L$. We will give a new proof of these results and generalize them to the unbounded setting (which was established already by Zalar in \cite{Z} by improving the techniques of \cite{HKM4}). Furthermore we will show how these results lead to the existence of the "irreducible" building blocks mentioned above. \\[0.2cm]
An alternative approach of Klep and Vol$\check{\text{c}}$i$\check{\text{c}}$ in \cite{KV} deals with the free locus $\mathcal{Z}(\mathfrak{L}):=\{X \in \S^g \ | \ \ker \mathfrak{L}(X) \neq \{0\} \}$ instead of the spectrahedron $\mathcal{D}_\mathfrak{L}$. This leads to the same building blocks. We look further into this approach in the next subsection.

\begin{definition} 
Let $\mathfrak{L}$ be a monic linear pencil. We call $\mathfrak{L}$ (or $L$) irreducible if $L_1,...,L_g$ admit no joint non-trivial invariant subspace. \\[0.2cm]
We call $\mathfrak{L}$ (or $L$) \textbf{$\mathcal{D}$-irreducible} if for all other monic linear pencils $\mathfrak{H}_1,\mathfrak{H}_2$ the equality $\mathcal{D}_{\mathfrak{H}_1} \cap \mathcal{D}_{\mathfrak{H}_2}=\mathcal{D}_\mathfrak{L}$ implies $\mathcal{D}_\mathfrak{L}=\mathcal{D}_{\mathfrak{H}_i}$ for some $i \in \{1,2\}$. \\[0.2cm]
We call $\mathfrak{L}$ (or $L$) \textbf{$\mathcal{D}$-minimal} if there is no monic linear pencil $\mathfrak{H}$ of smaller size such that $\mathcal{D}_\mathfrak{H}=\mathcal{D}_\mathfrak{L}$.
\end{definition}

For the following we need a special case of the boundary theorem of Arveson. We will outline the elementary proof found by Davidson, adapted to our special situation. 

\begin{lemma} \label{david} (\cite{D2})
Let $L \in \S^g(n)$ irreducible. Suppose that there are $C_j \in \C^{n \times n}$ with $L=\sum_{j=1}^r C_j^* L C_j$ and $I \succeq \sum_{j=1}^r C_j^* C_j$. Then $\sum_{j=1}^r C_j^* C_j =I$ and all $C_j$ are scalar multiples of the identity. 
\end{lemma}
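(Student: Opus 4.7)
The plan is to show that each $C_j$ commutes with every $L_i$, and then invoke Burnside's theorem: irreducibility of $(L_1,\ldots,L_g)$ places each $C_j$ in the commutant $\C I$, so $C_j = \lambda_j I$; substituting into $\sum_j C_j^* L_i C_j = L_i$ for some $L_i \neq 0$ (guaranteed by irreducibility when $k\geq 2$; the case $k=1$ is trivial) yields $\sum_j |\lambda_j|^2 = 1$ and therefore $\sum_j C_j^* C_j = I$.

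Set $\Phi(X) := \sum_{j=1}^r C_j^* X C_j$ and $T := \Phi(I) \preceq I$. My first step is to prove $T$ is invertible: applying the $2$-positive map $\Phi$ to the positive block matrix $\left(\begin{smallmatrix} I & L_i \\ L_i & L_i^2 \end{smallmatrix}\right)$ yields
\begin{align*}
\begin{pmatrix} T & L_i \\ L_i & \Phi(L_i^2) \end{pmatrix} \succeq 0,
\end{align*}
so Douglas's lemma gives $\ker T \subseteq \ker L_i$ for each $i$; irreducibility forces $\bigcap_i \ker L_i = \{0\}$, so $T \succ 0$. Then $\Psi(X) := T^{-1/2}\Phi(T^{1/2} X T^{1/2}) T^{-1/2}$ is a genuine UCP map on $M_k(\C)$ fixing each hermitian $L_i' := T^{-1/2} L_i T^{-1/2}$, and a careful check using the strict positivity of $T$ shows that $(L_1', \ldots, L_g')$ is still jointly irreducible.

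The main obstacle will be the finite-dimensional Arveson boundary theorem: a UCP map on $M_k(\C)$ fixing a jointly irreducible hermitian tuple must equal the identity. The standard argument upgrades the Kadison--Schwarz inequality $\Psi((L_i')^2) \succeq (L_i')^2$ to an equality via a telescoping Cesaro iteration using trace-preservation of the dual $\Psi^+$; each $L_i'$ then lies in the multiplicative domain of $\Psi$, which by Burnside coincides with all of $M_k(\C)$, so $\Psi$ is a unital $*$-homomorphism of the simple algebra $M_k(\C)$, hence a $*$-automorphism, necessarily inner, and the irreducibility of $(L_1', \ldots, L_g')$ forces $\Psi = \mathrm{id}$.

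Translating back, $\Phi = \mathrm{id}$ on all of $M_k(\C)$. In particular $T = \Phi(I) = I$, and the uniqueness of Kraus decompositions of the identity CP map (up to isometric mixing) yields $C_j = \lambda_j I$ with $\sum_j |\lambda_j|^2 = 1$, completing the proof. The most delicate step is upgrading Kadison--Schwarz to equality in the Arveson-boundary argument, which is where the full strength of the irreducibility hypothesis enters.
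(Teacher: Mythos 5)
Your plan has a genuine gap in the normalization step. You define $\Psi(X) := T^{-1/2}\Phi(T^{1/2}XT^{1/2})T^{-1/2}$ and assert it is a UCP map, but a direct computation gives $\Psi(I) = T^{-1/2}\Phi(T)T^{-1/2}$, and the hypothesis only tells you $\Phi(I)=T$, not $\Phi(T)=T$. Since $T\preceq I$ and $\Phi$ is positive, you get $\Phi(T)\preceq\Phi(I)=T$ and hence $\Psi(I)\preceq I$, but equality would require $T$ to be a fixed point of $\Phi$ — which is exactly the kind of statement you are trying to prove, not something you may assume. So $\Psi$ is merely sub-unital, and the finite-dimensional Arveson boundary theorem in the form you invoke (a UCP map on $M_k$ fixing a jointly irreducible hermitian tuple is the identity) does not apply. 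Attempting to re-normalize $\Psi$ the same way reproduces the same issue, and the "Cesaro/trace-preservation" mechanism you sketch requires unitality (equivalently, trace-preservation of $\Psi^+$) to get off the ground. There is also a secondary, unaddressed point: irreducibility of the conjugated tuple $(L_1',\dots,L_g')$ is not automatic, since $L_i'L_j' = T^{-1/2}L_iT^{-1}L_jT^{-1/2}$ is not a conjugate of $L_iL_j$, so the generated $*$-algebra is not simply a conjugate of the original one; this needs an actual argument.

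The paper's proof (Davidson's elementary proof of the boundary theorem) handles the sub-unital situation head-on rather than trying to normalize it away. It introduces the extra Kraus operator $C_0 := (I-\sum_j C_j^*C_j)^{1/2}$ so that $V^*=(C_0^*\ C_1^*\ \cdots\ C_r^*)$ is an isometry and $\varphi(A)=V^*(J\otimes A)V$ with $J=0\oplus I_r$. Working with a self-adjoint $A$ in the operator system spanned by the $L_i$ and the subspace $U$ of norm maximizers of $A$, a chain of equalities in the inequality $\|Av\|=\|\varphi(A)v\|\leq\|A\|\,\|v\|$ forces simultaneously that $U$ is invariant under all $C_i$ and that $C_0|_U=0$; extending via a minimal invariant subspace $W\subseteq U$ and the set $\Gamma$ one shows $\varphi=\mathrm{id}$, hence $C_0=0$, and a final Cauchy--Schwarz argument gives that the $C_j$ are scalar. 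The key structural difference from your attempt is that unitality ($C_0=0$) is a conclusion of this argument, not a hypothesis that needs to be secured in advance by a change of variables. To repair your approach you would essentially have to prove the sub-unital version of the boundary theorem directly, which is what Davidson's argument does.
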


\begin{proof} (\cite{D2})
Consider $C_0=\sqrt{I-\sum_{j=1}^r C_j^*C_j}$, $J=0 \oplus I_r$. Then $\sum_{j=0}^r C_j^* C_j=I_n$. Set $V^*=(C_0^* \ C_1^* \ ... \ C_r^*)$. Then $V$ is an isometry, i.e. $V^*V=I$, $VV^*$ is the projection onto $\im(V)$, and $V^*$ is contractive. We define $\varphi: \C^{n \times n} \rightarrow \C^{n \times n}, A \mapsto \sum_{j=1}^r C_j^* A C_j = V^* (J \otimes A)V$. Denote by $S$ the operator system generated by the $L_i$.\\[0.2cm]
Now let $W$ be a minimal non-zero subspace of $\C^n$ such that $W$ is $C_j$-invariant for all $j \in \{0,...,m\}$. Define $\Gamma=\{ D \in \C^{n \times n} \ | \ \forall j \in \{0,...,r\} \forall w \in W: C_j D w=D C_j w\}=\{ D \in \C^{n \times n} \ | \ \forall w \in W: ((1 \oplus I) \otimes D) V w=V D w\}$. We show that if $D \in S$ and $B \in \Gamma$, then $DB \in \Gamma$. \\[0.2cm]
So take such $D,B$ and set $M=\{ w \in W \ | \ || DB|_W || \ ||w|| = ||DBw|| \}$. Let $w \in M$. Then 
\begin{align*}
&||DBw||^2=||\varphi(D)Bw||^2=||V^* (J \otimes D) VBw||^2=||V^* (J \otimes D)((1 \oplus I) \otimes B)V||^2\\=&||V^* (J \otimes DB) V w||^2 \leq ||(J \otimes DB) V w||^2 =\sum_{j=1}^r ||DB C_j w||^2 \\ 
\leq & \sum_{j=1}^r ||DB|_{W}||^2 \ ||C_j w||^2 \leq ||DB|_{W}||^2 \ ||w||^2= ||DB w||^2
\end{align*}    
Therefore $C_jw \in M$ for $j \in \{1,...,m\}$ and $C_0w=0$. So we verified that $C_jM \subseteq M$ for all $j$. By minimality of $W$ we get $M=W$. We also have
\begin{align*}
||(J \otimes DB)Vw||=||V^* (J \otimes DB)Vw||=||VV^* (J \otimes DB)Vw||
\end{align*} 
where the first equality follows from the calculation above and the second from the fact that $V$ is an isometry. Hence
\begin{align*}
&((1 \oplus I) \otimes DB)Vw=(J \otimes DB)Vw=VV^*(J \otimes D)((1 \oplus I) \otimes B)Vw=VV^*(J \otimes D)VBw\\&=V \varphi(D)Bw=VDBw
\end{align*}
where the second equality comes from the Pythagoras theorem and the first from $C_0w=0$. Thus $DB \in \Gamma$. Of course $I \in \Gamma$ and we conclude that $\Gamma$ contains the algebra $\C^{n \times n}$ generated by the $L_i$. Now let $D \in \C^{n \times n}$, $x \in \C^n$. Choose $w \in W$ and $B \in \C^{n \times n}$ such that $Bw=x$. We know
\begin{align*}
&Dx=DBw=V^*VDBw=V^* ((1 \otimes I) \otimes DB) V w=V^* (J \otimes DB) V w \\ &=V^* (J \otimes D)((I \oplus 1) \otimes B) V w =V^* (J \otimes D)V B w =\varphi(D) x
\end{align*} 
where we used that $DB,B \in \Gamma$. We conclude $\varphi(D)=D$. We have $\varphi(I)=I$. Thus $C_0=0$. Now let $v \in \C^n$ with $||v||=1$. With Cauchy-Schwarz $1 = v^*vv^*v = v^* \varphi(vv^*) v = \sum_{j=1}^r (v^*C_j v)^*(v^* C_j v)=\sum_{j=1}^r| \langle v, C_j v \rangle|^2 \leq \sum_{j=1}^r |\langle C_jv, C_jv \rangle|=v^* \varphi(I) v=1$ and therefore $\spam{v}$ is an invariant subspace of $C_j$. Thus $C_j$ is a multiple of the identity.
\end{proof}

\begin{lemma} \label{ehkm} (\cite[Corollary 3.18]{HKM4} and \cite[Theorem 3.1]{Z})
Let $L \in \S^g(k)$ and $H \in \S^g(h)$ be irreducible such that $\mathcal{D}_{\mathfrak{L}}=\mathcal{D}_{\mathfrak{H}}$. Then $L$ and $H$ are unitarily equivalent. 
\end{lemma}

\begin{proof}
We know that their exist $C_j$ and $D_\ell$ such that $L=\sum_{j=1}^r C_j^* H C_j$ and $H=\sum_{\ell=1}^s D_\ell^* L D_\ell$ with $I_k \succeq \sum_{j=1}^r C_j^* C_j$ and $I_h \succeq \sum_{\ell=1}^s D_\ell^* D_\ell$. Of course we can assume that $C_j^* C_j \neq 0$ for all $j$ and $D_\ell^* D_\ell \neq 0$ for all $\ell$. We see that $L=\sum_{j=1}^r \sum_{\ell=1}^s C_j^* D_\ell^* L D_\ell C_j$ as well as $I_k \succeq \sum_{j=1}^r \sum_{\ell=1}^s C_j^* D_\ell^* I_k D_\ell C_j$. We apply \autoref{david} and find $\lambda_{\ell,j} \in \C$ such that $\lambda_{\ell,j} I=D_\ell C_j$. Additionally $I_k = \sum_{j=1}^r \sum_{\ell=1}^s C_j^* D_\ell^* I_k D_\ell C_j$ which implies $I_k = \sum_{j=1}^r C_j^* C_j$. Fix $j$. Since $0 \neq C_j^* C_j = C_j^* (\sum_{\ell=1}^sD_\ell^* D_\ell )C_j$, we know that $\lambda_{\ell,j} \neq 0$ for one $\ell$. In particular $C_j$ is injective and $k \leq h$. \\[0.2cm]
The same calculation the other way around shows that all $D_\ell$ are also injective, $I_h = \sum_{\ell=1}^s D_\ell^* D_\ell$ as well as $h \leq k$ and thus $h=k$. Since all $D_\ell,C_j$ are injective, we have that $D_\ell C_j=C_j D_\ell$ are a nonzero multiple of the identity. In particular all $C_j$ are linear dependent, so we can find $\lambda_j \neq 0$ and $C$ such that $C_j=\lambda_j C$ for all $j$. We conclude \begin{align*}L=\sum_{j=1}^r |\lambda_j|^2 C^* H C=\left(\sqrt{\sum_{j=1}^r |\lambda_j|^2}C\right)^* H \sqrt{\sum_{j=1}^r |\lambda_j|^2} C.\end{align*} Since $I=\left(\sqrt{\sum_{j=1}^r |\lambda_j|^2}C\right)^* \sqrt{\sum_{j=1}^r |\lambda_j|^2}C$ we conclude that $L$ and $H$ are unitarily equivalent. 
\end{proof}

\begin{lemma} \label{dirr}
Let $\mathfrak{L}$ be a monic linear pencil. Suppose $\det \mathfrak{L}_k = g_{1,k} \cdot ...\cdot g_{N,k}$ is the decomposition of \autoref{level}, where $g_{j,k}$ becomes eventually irreducible for big $k$ and $D_j$ is the induced closed matrix convex set. Let $J \subseteq \{1,...,N\}$ such that $\mathcal{D}_\ml=\bigcap_{j \in J} D_j$ and no matrix convex set on the right sight can be omitted without destroying equality. Then for all $j \in J$ the matrix convex set $D_j$ is in fact a spectrahedron.  
\end{lemma}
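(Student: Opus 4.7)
The plan is to identify, for each $i^*\in I$, the matrix convex set $D_{i^*}$ with the spectrahedron of an explicit direct-summand sub-pencil of $L$.

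First, decompose $L=L^{(1)}\oplus\cdots\oplus L^{(r)}$ into irreducible monic linear pencils by a joint reducing-subspace decomposition of $\tilde L_1,\ldots,\tilde L_g$; then $\det_k L=\prod_j\det_k L^{(j)}$ level-by-level. Applying \Cref{zerl} to $L$ and independently to each $L^{(j)}$, together with unique factorisation of polynomials, every irreducible factor (for $k$ large) of $\det_k L^{(j)}$ is one of the $g_{i,k}$, and the compatibility $g_{i,k}|_{\S^g(\ell)}=g_{i,\ell}$ makes
\[
J(i):=\{j\in\{1,\ldots,r\}\,:\,g_{i,k}\mid\det_k L^{(j)}\text{ for all sufficiently large }k\}
\]
well-defined. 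Set $H_i:=\bigoplus_{j\in J(i)}L^{(j)}$. From $g_{i,k}\mid\det_k H_i$ it follows that the connected component of $0$ in $\{\det_k H_i\neq 0\}$ is contained in the connected component of $0$ in $\{g_{i,k}\neq 0\}$, so $\mathcal{D}_{H_i}\subseteq D_i$; since $\bigcup_iJ(i)=\{1,\ldots,r\}$ we moreover get $\bigcap_{i\in I}\mathcal{D}_{H_i}=\bigcap_j\mathcal{D}_{L^{(j)}}=\mathcal{D}_L=\bigcap_{i\in I}D_i$.

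For the reverse inclusion $D_{i^*}\subseteq\mathcal{D}_{H_{i^*}}$, fix $i^*\in I$. By the minimality of $I$ the intersection $\bigcap_{i\in I\setminus\{i^*\}}D_i$ strictly contains $\mathcal{D}_L$, and sliding from $0$ along a segment towards a witness point yields $Y\in\partial D_{i^*}\cap\partial\mathcal{D}_L$ with $g_{i,k}(Y)\neq 0$ for every $i\in I\setminus\{i^*\}$. Using \Cref{bochni} and \Cref{densi}, $Y$ can be refined so that $g_{i^*,k}$ is regular at $Y$, $\partial D_{i^*}$ is a smooth hypersurface locally, and no other irreducible factor of $\det_k L$ vanishes at $Y$. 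In such a neighbourhood each summand $L^{(j)}$ with $j\notin J(i^*)$ remains strictly positive definite, hence $\partial\mathcal{D}_{H_{i^*}}$ locally agrees with $\partial D_{i^*}$. Invoking \Cref{victor} for every irreducible factor of $\det_k H_{i^*}$, and using the unitary invariance guaranteed by \Cref{uni}, forces each such factor to coincide with $g_{i^*,k}$; hence $\mathcal{D}_{H_{i^*}}$ and $D_{i^*}$ share the same defining RZ polynomial on every level, which upgrades the local identity of boundaries to the global equality $D_{i^*}=\mathcal{D}_{H_{i^*}}$.

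The main obstacle is this final global upgrade. \Cref{victor} provides the correct local-to-global bridge at the level of individual RZ polynomials, but applying it demands that $Y$ be a regular point for every irreducible factor of $\det_k H_{i^*}$ simultaneously, which has to be arranged uniformly in $k$ through the Nash-cell stratification of $\partial D_{i^*}$ from \Cref{bochni}. The minimality of $I$ is precisely what guarantees the existence of a well-separated boundary point $Y$ where only $g_{i^*,k}$ vanishes among the relevant factors, and this is where the hypothesis that $I$ cannot be shrunk is actually used.
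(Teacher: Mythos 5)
Your proof takes a genuinely different route from the paper's, but it has a gap in the final step. The paper works on the polar side: from $\mconv(\tilde{L},0)=\mconv\bigl(\bigcup_{i\in I}D_i^\circ\bigr)$ it extracts, via Caratheodory and \Cref{orka}, a single point $A_i\in D_i^\circ$ for each $i$; then $D_i\subseteq\mathcal{D}_{I-A_i\ov X}$ holds for free, and the residual inclusion follows from $g_{i,k}\mid\det_k(I-A_i\ov X)$ via \Cref{victor}, applied at a boundary point supplied by minimality of $I$. Your construction of $H_{i^*}=\bigoplus_{j\in J(i^*)}L^{(j)}$ gives the \emph{opposite} easy inclusion $\mathcal{D}_{H_{i^*}}\subseteq D_{i^*}$, and the hard inclusion $D_{i^*}\subseteq\mathcal{D}_{H_{i^*}}$ is where the argument breaks. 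To get it you need every irreducible factor $q$ of $\det_k H_{i^*}$ to be associated to $g_{i^*,k}$, and you try to force this via \Cref{victor}. But \Cref{victor} requires a point on $\partial C_q$ at which $C_q$ and $C_{g_{i^*,k}}$ agree locally, and you chose $Y$ precisely so that no $g_{i,k}$ with $i\neq i^*$ vanishes at $Y$; any factor $q$ of $\det_k H_{i^*}$ not associated to $g_{i^*,k}$ is such a $g_{i,k}$, hence $q(Y)\neq 0$ and $Y\notin\partial C_q$, so \Cref{victor} cannot be invoked there. What you would actually need is the statement that for an irreducible (no common invariant subspace) summand $L^{(j)}$, $\det_k L^{(j)}$ is eventually a power of a single irreducible polynomial; that is a nontrivial fact (it is essentially \Cref{detzel}/\Cref{minirrchar} of the paper) which your argument does not establish and which in the paper's narrative comes only after this lemma.

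Two smaller remarks: first, the assertion $\bigcup_{i\in I}J(i)=\{1,\ldots,r\}$ is not justified and may fail (some $L^{(j)}$ could carry only factors $g_{i,k}$ with $i\notin I$); fortunately you do not need it, since $\mathcal{D}_L=\bigcap_{i\in I}D_i\supseteq\bigcap_{i\in I}\mathcal{D}_{H_i}\supseteq\bigcap_j\mathcal{D}_{L^{(j)}}=\mathcal{D}_L$ already forces equality. Second, note that the paper's choice of representative $A_i\in D_i^\circ$ is exactly what allows the residual inclusion to be handled by \Cref{victor} alone: with $f=g_{i,k}$ irreducible and $g=\det_k(I-A_i\ov X)$, the divisibility $f\mid g$ immediately yields the needed containment of zero-components. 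Your $H_{i^*}$ does not enjoy the analogous automatic inclusion, which is why the extra structural input becomes necessary.
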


\begin{proof}
For a tuple $(K_i)_{i \in I}$ of closed matrix convex sets containing $0$ we have $(\bigcup_{i \in I} K_i)^\circ=\bigcap_{i \in I} K_i^\circ$ and by the bipolarity theorem $\ov{\mconv(\bigcup_{i \in I} K_i)}=((\bigcup_{i \in I} K_i)^\circ)^\circ=(\bigcap_{i \in I} K_i^\circ)^\circ$. \\[0.2cm]
We know from \autoref{polari} that
\begin{align*}
\mconv(L, 0)=(\mathcal{D}_\mathfrak{L})^\circ=\left(\bigcap_{j \in J} D_j\right)^\circ=\ov{\mconv\left(\bigcup_{j \in J} D_j^\circ\right)}=\mconv\left(\bigcup_{j \in J} D_j^\circ\right).
\end{align*} 
where the last equality comes from the fact that $\bigcup_{j \in J} D_j^\circ$ is compact together with the free Caratheodory theorem \autoref{carad} and \autoref{orka}.
Therefore we find $(L_j)_{j \in J} \subseteq \S^g$ with $L_j \in D_j^\circ$ such that $L \in \mconv(\{L_j \ | \ j \in J\})$. From $L_j \in D_j^\circ$ it is clear that $\mathcal{D}_{\mathfrak{L}_j} \supseteq D_j \supseteq \mathcal{D}_\ml$. We also have $\mathcal{D}_\ml=\bigcap_{j \in J} \mathcal{D}_{\ml_j}$. We claim $\mathcal{D}_{\mathfrak{L}_j}=D_j$ for all $j \in J$. \\[0.2cm]
Fix $j \in J$ and choose $M$ such that $g_{j,k}$ is irreducible and $\bigcap_{h \in J \setminus \{j\}} D_h(k)\neq \mathcal{D}_\ml(k)$ for all $k>M$. Then we know $g_{j,k} \ | \ \det_k \mathfrak{L}_j$ from \autoref{victor} which implies the other inclusion $\mathcal{D}_{\mathfrak{L}_j} \subseteq D_j$.
\end{proof}

\begin{proposition} \label{blop}
Let $L \in \S^g(k)$ be $\mathcal{D}$-irreducible and $\mathcal{D}$-minimal. Then $L$ is irreducible. \hfill\qedsymbol
\end{proposition}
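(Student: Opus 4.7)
My plan is to argue by contraposition: I will show that if $A$ is reducible in the pencil sense, then it cannot simultaneously be $\mathcal{D}$-irreducible and $\mathcal{D}$-minimal. So suppose that $A_1,\dots,A_g$ share a nontrivial joint invariant subspace $V \subsetneq \C^k$ with $0 < \dim V < k$. Since each $A_i$ is hermitian, $V^\perp$ is also invariant, and after conjugating by a unitary sending $V$ and $V^\perp$ to the standard coordinate subspaces, $A$ is unitarily equivalent to a tuple of the block-diagonal form $B \oplus C$ with $B \in \S^g(\dim V)$ and $C \in \S^g(\dim V^\perp)$.

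Since unitary equivalence of pencils preserves the associated free spectrahedron, we have
\begin{align*}
\mathcal{D}_{I-A\ov X} = \mathcal{D}_{(I_{\dim V} - B \ov X)\,\oplus\, (I_{\dim V^\perp} - C \ov X)}.
\end{align*}
A block-diagonal hermitian matrix is positive semidefinite precisely when both diagonal blocks are, so the right-hand side equals $\mathcal{D}_{I_{\dim V}-B\ov X}\cap\mathcal{D}_{I_{\dim V^\perp}-C\ov X}$. Both $I_{\dim V} - B\ov X$ and $I_{\dim V^\perp} - C\ov X$ are monic linear pencils, hence this is a valid decomposition of $\mathcal{D}_{I-A\ov X}$ as an intersection of two free spectrahedra in the sense of the definition of $\mathcal{D}$-irreducibility.

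By the $\mathcal{D}$-irreducibility assumption on $A$, at least one of the two intersecands equals $\mathcal{D}_{I-A\ov X}$; say without loss of generality that $\mathcal{D}_{I_{\dim V}-B\ov X}=\mathcal{D}_{I-A\ov X}$. But $B$ is a monic linear pencil of size $\dim V < k$, contradicting the $\mathcal{D}$-minimality of $A$. Therefore no such invariant subspace $V$ exists, meaning $A$ is irreducible. The argument is essentially immediate once one notices that invariance under the hermitian tuple $A$ yields an orthogonal direct sum decomposition; I don't foresee any obstacle beyond recording these observations carefully.
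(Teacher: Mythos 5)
Your proof is correct and complete. The paper actually states this proposition without giving any proof, so there is no "paper approach" to compare against; your contrapositive argument is precisely the natural elementary route. The key observations are all verified: invariance of a subspace $V$ under the hermitian tuple $A$ forces invariance of $V^\perp$, giving a unitary block-diagonalization $A\approx B\oplus C$; a Kronecker product distributes over direct sums, so $(I-A\ov X)(X)$ is unitarily equivalent to $(I-B\ov X)(X)\oplus(I-C\ov X)(X)$ and hence $\mathcal D_{I-A\ov X}=\mathcal D_{I-B\ov X}\cap\mathcal D_{I-C\ov X}$; then $\mathcal D$-irreducibility forces one of the two strictly smaller monic pencils to already cut out the whole spectrahedron, contradicting $\mathcal D$-minimality.

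One minor wording slip: near the end you write "$B$ is a monic linear pencil of size $\dim V < k$," but $B$ is the coefficient tuple; the pencil you mean is $I_{\dim V}-B\ov X$. This is clear from context. It is also worth remarking that your direct proof is genuinely useful here: the later claim in \Cref{minirrchar} that the equivalence of (b) and (d) "follows directly from the decomposition theorem" is, in the direction $(b)\Rightarrow(d)$, not self-contained without an argument like the one you give, since the decomposition theorem only recovers $L$ as a direct sum of $\mathcal D$-minimal, $\mathcal D$-irreducible blocks and does not by itself show such blocks have no joint invariant subspace.
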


\begin{theo} \label{minim}
Let $L_1,...,L_s$ be $\mathcal{D}$-irreducible and $\mathcal{D}$-minimal such that the Zariski closure of each $\partial{D}_{\mathfrak{L}_j}(k)$ is irreducible in the Zariski topology for big $k$. Suppose that 
\begin{align*}
\bigcap_{j=1}^s \mathcal{D}_{\mathfrak{L}_j} \neq \bigcap_{j=1, j\neq \ell}^s \mathcal{D}_{\mathfrak{L}_j}.
\end{align*}
for all $\ell \in \{1,...,s\}$. Suppose $L \in \S^g(\delta)$ defines a spectrahedron $\mathcal{D}_\mathfrak{L}=\bigcap_{j=1}^s \mathcal{D}_{\mathfrak{L}_j}$. Then up to unitary equivalence $L_1 \oplus ... \oplus L_s$ is a direct summand of $L$.
\end{theo}

\begin{proof}
From \autoref{rumpf2} and \autoref{steinfels} we know that we can write 
\begin{align*}
L=\sum_{j=1}^s \sum_{\ell=1}^r V_{j,\ell} L_j V_{j,\ell}^* \text{  and  } I = WW^* + \sum_{j=1}^s \sum_{\ell=1}^r V_{j,\ell} V_{j,\ell}^*.\end{align*}
For $k \in \N$ denote $T_j(k)=\partial \mathcal{D}_{\mathfrak{L}_j}(k) \cap \left(\bigcap_{h \neq j} \inte{\mathcal{D}_{\mathfrak{L}_h}(k)}\right)$. Choose $k_0 \in \N$ in such a way that the Zariski closure of each $\partial \mathcal{D}_{\mathfrak{L}_j}(k)$ is irreducible and $T_j(k) \neq \emptyset$ for $k \geq k_0$. Let $k \geq k_0$. Suppose $X_h \in T_h(k)$ and $v_h \in \C^\delta \otimes \C^{k}$ such that $\mathfrak{L}(X_h)v_h=0$. Write $v_h=\sum_{\alpha=1}^\delta e_\alpha \otimes v_{h,\alpha}$ with $v_{h,\alpha} \in \C^k$. Now since $\mathfrak{L}_j(X_h) \succ 0$ for $j \neq h$, we conclude that $0=(V_{j,\ell}^* \otimes I_\delta) v_h$, $0=(W^* \otimes I_\delta)v_h$. Denote $\delta_h=\dim M(X_h,v_h)_\mathfrak{L}$. \\[0.2cm]
Now choose an orthonormal basis $w_{h,1},...,w_{h,\delta_h}$ of $M(X_h,v_h)_\mathfrak{L}$. Let $U_h(X_h,v_h)=U^h \in \C^{\delta \times \delta_h}$ be the matrix where the $\beta$-th row contains the coefficients of $v_{h,\beta}$ modulo the basis $w_{h,1},...,w_{h,\delta_h}$. Then $U^h$ has rank $\delta_h$. For fixed $\alpha$ we have:
\begin{align*}
e_\al \otimes v_{h,\al} = e_\al \otimes \sum_{\gamma=1}^{\delta_h} U_{\al,\gamma}^h w_{h,\gamma}= \sum_{\gamma=1}^{\delta_h}  U_{\al,\gamma}^h e_\al \otimes w_{h,\gamma}
\end{align*}
 We calculate for $h \neq j$
\begin{align*}
\sum_{\gamma=1}^{\delta_h} \left(\sum_{\ell=1}^r V_{j,\ell}^* \sum_{\alpha=1}^\delta U_{\alpha,\gamma}^h e_\alpha\right) \otimes w_{h,\gamma}
=\sum_{\ell=1}^r (V_{j,\ell}^* \otimes I) \sum_{\alpha=1}^\delta \left( e_\alpha \otimes v_{h,\al} \right)
= \sum_{\ell=1}^r (V_{j,\ell}^* \otimes I) v_h = 0
\end{align*}
Therefore for all $\gamma \in \{1,...,\delta_h\}$ the equality 
\begin{align*} 0=\sum_{\ell=1}^r V_{j,\ell}^* \sum_{\alpha=1}^\delta U_{\alpha,\gamma}^h e_\alpha=\sum_{\ell=1}^r V_{j,\ell}^*U_{\cdot,\gamma}^h
\end{align*}
 holds, so $\sum_{\ell=1}^r V_{j,\ell}^* U^h=0$ for $j \neq h$. Analogeously $W^* U^h=0$. Because of $WW^*+\sum_{j=1}^s\sum_{\ell=1}^r V_{j,\ell}V_{j,\ell}^*=I_\delta$ we have $\sum_{\ell=1}^r V_{h,\ell}V_{h,\ell}^* U^h=U^h$. This means that $\sum_{\ell=1}^r V_{h,\ell}V_{h,\ell}^*$ is the identity on the $\delta_h$-dimensional space $\im(U_h(X_h,v_h))$. Since the eigenspaces of a Hermitian matrix are pairwise orthogonal, we get the following: \\[0.2cm]
Set $S_j=\spam\{ \im(U_j(X,v)) \ | \ k\geq k_0, \ \exists X \in T_j(k), v \in \C^\delta \otimes \C^k: L(X)v=0 \}$ for all $j$, where $U_j$ is defined in the same way as $U_h$. Then $V_{j,\ell}^*|_{S_h}=0$ and $S_h \perp S_j$ for $j \neq h$. We decompose $\C^{\delta}=\bigoplus_{j=1}^s S_j \oplus T$. With respect to this decomposition: Write $Y_{j,\ell}^*:=V_{j,\ell}^*|_{S_j}$, $Z_{j,\ell}^*:=V_{j,\ell}^*|_{T}$
\begin{align*}
L=\begin{pmatrix}
\sum Y_{1,\ell} L_1 Y_{1,\ell}^* & ... & 0 & \sum Y_{1,\ell} L_1 Z_{1,\ell}^* \\
\vdots & \ddots & \vdots & \vdots \\
0 & ... & \sum Y_{s,\ell} L_s Y_{s,\ell}^* & \sum Y_{s,\ell} L_s Z_{s,\ell}^* \\
\sum Z_{1,\ell} L_1 Y_{1,\ell}^* & ... & \sum Z_{s,\ell} L_s Y_{s,\ell}^* & \sum \sum Z_{j,\ell} L_{j} Z_{j,\ell}^*
\end{pmatrix} \\
I=\begin{pmatrix}
\sum Y_{1,\ell} Y_{1,\ell}^* & ... & 0 & \sum Y_{1,\ell} Z_{1,\ell}^* \\
\vdots & \ddots & \vdots & \vdots \\
0 & ... & \sum Y_{s,\ell} Y_{s,\ell}^* & \sum Y_{s,\ell} Z_{s,\ell}^* \\
\sum Z_{1,\ell} Y_{1,\ell}^* & ... & \sum Z_{s,\ell} Y_{s,\ell}^* & \sum \sum Z_{j,\ell} Z_{j,\ell}^*
\end{pmatrix}
\end{align*} 
Fix $j \in \{1,...,s\}$. Let $P$ be the projection onto $S_j$. We want to determine $\mathcal{D}_{P\mathfrak{L}P^*}$. Let $X_j \in T_j(k)$ and $v_j \in \C^{\delta} \otimes \C^k$ with $\ml(X_j)v_j=0$. With the same notation as above ($U^j=U_j(X_j,v_j)$) we see
\begin{align*}
&v_j=\sum_\al e_\al \otimes v_{j,\al} = \sum_\al e_\al \otimes \sum_{\gamma=1}^{\delta_j} U_{\al,\gamma}^j w_{j,\gamma}= \sum_\al \sum_{\gamma=1}^{\delta_j}  U_{\al,\gamma}^j e_\al \otimes w_{j,\gamma}=\sum_{\gamma=1}^{\delta_j}  U_{\cdot,\gamma}^j \otimes w_{j,\gamma} \\
&0=(P \otimes I)0=(P \otimes I) \mathfrak{L}(X_j)v_j=(P \otimes I)\mathfrak{L}(X_j) \left(\sum_{\gamma=1}^{\delta_j}  U_{\cdot,\gamma}^j \otimes w_{j,\gamma}\right) \\
&=(P \otimes I)\mathfrak{L}(X_j)(P^* \otimes I) \left(\sum_{\gamma=1}^{\delta_j}  U_{\cdot,\gamma}^j \otimes w_{j,\gamma}\right)=(P\mathfrak{L}P^*)(X_j)v_j
\end{align*} 
We conclude $\mathcal{D}_{P\mathfrak{L}P^*} \subseteq \mathcal{D}_{\mathfrak{L}_j}$ with \autoref{victor}. The destription of $PLP^*$ in terms of $L_j$ yields that $\mathcal{D}_{P\mathfrak{L}P^*} \supseteq \mathcal{D}_{\mathfrak{L}_j}$; hence we have $\mathcal{D}_{P\mathfrak{L}P^*} = \mathcal{D}_{\mathfrak{L}_j}$. \autoref{ehkm} in connection with \autoref{blop} and the fact that $L_j$ is $\mathcal{D}$-irreducible imply that $L_j$ is a submatrix of $PLP^*$ and $L$. That $L_j$ is even a direct summand is implied by the following easy proposition.
\end{proof}

\begin{proposition}
Let $\begin{pmatrix} L & B \\ B^* & C\end{pmatrix}=\sum_{j=1}^r V_j^* L V_j$ with $\sum_{j=1}^r V_j^* V_j = I$ and $L$ be irreducible. Then $B=0$.
\end{proposition}

\begin{proof}
Write $V_j=\begin{pmatrix} W_j & Z_j \end{pmatrix}$. Then we know $\sum_{j=1}^r\begin{pmatrix} W_j^* W_j & W_j^* Z_j \\ Z_j^* W_j & Z_j^* Z_j\end{pmatrix}=\begin{pmatrix} I & 0 \\ 0 & I\end{pmatrix}$. Because of \autoref{david} and $L=\sum_{j=1}^r W_j^* L W_j$ we know that there exist $\lambda_j \in \C$ such that $W_j = \lambda_j I$. In particular we have $B=\sum_{j=1}^r W_j^* L Z_j=L \sum_{j=1}^r \lambda_j^* Z_j=L \sum_{j=1}^r W_j^* Z_j=0$. 
\end{proof}

\begin{cor} \label{zetdecomp}
Let $\mathfrak{L}$ be a monic pencil. Then $\mathfrak{L}$ is a direct sum of monic linear pencils $\mathfrak{L}_j$ which are simultaneously $\mathcal{D}$-minimal and $\mathcal{D}$-irreducible (and constant pencils $1$) and such that the Zariski closure of $\partial \mathcal{D}_{\mathfrak{L}_j}(k)$ is irreducible for $k$ big enough. 
\end{cor}

\begin{proof}
We prove the claim by induction on $\text{size}(\mathfrak{L})$. The case $\size(\ml)=1$ is clear. Suppose that $\size(\ml)>1$, $\ml$ has no constant direct summand and $\det \mathfrak{L}_k = g_{1,k} \cdot ...\cdot g_{M,k}$ is the decomposition of \autoref{level}, where $g_{j,k}$ becomes eventually irreducible for big $k$ and $D_j$ is the induced closed matrix convex set. Let $J \subseteq \{1,...,M\}$ such that $\mathcal{D}_\mathfrak{L}=\bigcap_{j \in J} D_j$ and no matrix convex set on the right sight can be omitted without destroying equality. Then \autoref{dirr} allows us to write $D_j=\mathcal{D}_{\mathfrak{L}_j}$ with $L_j$ $\mathcal{D}$-minimal. Since the Zariski closure of $\partial \mathcal{D}_{\mathfrak{L}_j}(k)$ is irreducible for big $k$, we know that $L_j$ is also $\mathcal{D}$-irreducible. \autoref{minim} tells us that $\bigoplus_{j \in J} L_j$ is a direct summand of $L$. Thus we can write $L \approx \bigoplus_{j \in J} L_j \oplus H$. Now apply the induction hypothesis to $H$. 
\end{proof}

Finally, we can formulate our version of the Gleichstellensatz. Originally Helton, Klep and McCullough proved the Gleichstellensatz using the Silov ideal of an operator system, whose existence is already a deep result in operator theory. Since they need the Silov boundary only for operator systems living in the set of bounded linear operators of a fixed finite-dimensional Hilbert space, it could be possible that the proof of the existence of the Silov ideal in that case can be carried out in a more elementary way.  

\begin{cor} \label{gleichi} (Gleichstellensatz) (cf. \cite[Corollary 3.18]{HKM4} and \cite[Theorem 3.1]{Z})
Let \marginpar{[\autoref{ex2},\\ \autoref{kreu},\\ \autoref{ex13}]}$\ml$ be a monic linear pencil and $S=\mathcal{D}_\ml$. Then there exist monic linear pencils $\mathfrak{L}_1,...,\mathfrak{L}_r$ satisfying the following: Each $\mathfrak{L}_j$ is $\mathcal{D}$-irreducible and $\mathcal{D}$-minimal. For each $j \in \{1,...,r\}$ and big $k$ the Zariski closure of $\partial \mathcal{D}_{\mathfrak{L}_j}(k) \cap \bigcap_{h \neq j} \inte{\mathcal{D}_{\mathfrak{L}_h}}(k)$ is non-empty and irreducible. $\bigoplus_{j=1}^r \mathfrak{L}_j$ is $\mathcal{D}$-minimal and a direct summand of all monic linear pencils $\mh$ with $S=\mathcal{D}_\mh$. The $\mathfrak{L}_j$ are uniquely determined up to order and unitary equivalence.   
\end{cor}

\begin{proof} 
Combine \autoref{minim} and \autoref{dirr}.
\end{proof}

\subsection{\texorpdfstring{Characterisation of $\mathcal{D}$-irreducible and $\mathcal{D}$-minimal pencils}{Characterisation of Z-irreducible pencils}} 

\begin{definition} 
For a monic linear pencil $\mathfrak{L}$ we define the \textbf{free locus} $\mathcal{Z}(\ml):=\{X \in \S^g \ | \ \ker \ml(X) \neq \{0\} \}$.
We call a monic linear pencil $\ml$ (or $L$) \textbf{$\mathcal{Z}$-irreducible} if for all other monic linear pencils $\ml_1,\ml_2$ the equality $\mathcal{Z}(\ml_1) \cup \mathcal{Z}(\ml_2)=Z(\ml)$ implies $\mathcal{Z}(\ml)=\mathcal{Z}(\ml_j)$ for some $j \in \{1,2\}$. \\[0.2cm] 
We call a monic linear pencil $\ml$ (or $L$) \textbf{$\mathcal{Z}$-minimal} if there is no monic linear pencil $\mh$ of smaller size such that $\mathcal{Z}(\ml)=\mathcal{Z}(\mh)$.\\[0.2cm]
Let $L \in \S^g(k)$. Then $\mathcal{A}_L$ (or $\mathcal{A}_\ml$) shall denote the unital $C^*$-algebra generated by $L_1,...,L_g$.
\end{definition}

\noindent $\mathcal{Z}$-irreducible pencils were introduced by Klep and Vol$\check{\text{c}}$i$\check{\text{c}}$ in \cite{KV} in a more general setting (allowing also non-Hermitian pencils). We recall their main result:

\begin{theo} \cite[Theorem 5.4]{KV}\label{volcicklep} 
Let $L,H \in \S^g$. Then $\mathcal{Z}(\ml) \subseteq \mathcal{Z}(\mh)$ if and only if there is a $*$-homomorphism of $C^*$-algebras $\varphi: \mathcal{A}_H \rightarrow \mathcal{A}_L$ such that $L_i=\varphi(H_i)$ for all $i \in \{1,...,g\}$. In case that $\mathcal{Z}(\ml) = \mathcal{Z}(\mh)$, the map $\varphi$ is even an isomorphism. $L$ is $\mathcal{Z}$-irreducible if and only if $\mathcal{A}_L$ is simple.
\end{theo}

\begin{theo} \label{minirrchar}
Let $\ml$ be a non-constant monic linear pencil of size $\delta$. Then the following is equivalent:
\begin{enumerate}[(a)]
\item $L$ is $\mathcal{Z}$-irreducible and $\mathcal{Z}$-minimal
\item $L$ is $\mathcal{D}$-irreducible and $\mathcal{D}$-minimal
\item $\pz(\mathcal{D}_\ml)=\delta$
\item $L$ is irreducible (i.e. $\mathcal{A}_L=\C^{\delta \times \delta}$).
\item There is $k_0 \in \N$ such that the for all $k \in \N_{\geq k}$ the determinant $\det_k \ml$ is irreducible.
\end{enumerate} 
\end{theo}

\begin{proof}
(a) $\Longleftrightarrow$ (e): This is a direct consequence of \autoref{dedecomp}, which we will prove in the rest of the chapter. \\[0.2cm]
(b) $\implies$ (a): Suppose $L$ is $\mathcal{D}$-irreducible and $\mathcal{D}$-minimal. Obviously $L$ is $\mathcal{Z}$-minimal. Now take monic linear pencils $L_1, L_2$ such that $\mathcal{Z}(\ml_1) \cup \mathcal{Z}(\ml_2) =\mathcal{Z}(\ml)$. If $\mathcal{D}_{\ml_1},\mathcal{D}_{\ml_2} \neq \mathcal{D}_\ml$, then $L$ was not $\mathcal{D}$-irreducible. So suppose $\mathcal{D}_{\ml_1}=\mathcal{D}_\ml$. Then the Gleichstellensatz implies that $L$ is a submatrix of $L_1$. Thus $\mathcal{Z}(\ml_1)=\mathcal{Z}(\ml)$. \\[0.2cm]
(a) $\implies$ (b): Suppose $L$ is $\mathcal{Z}$-irreducible and $\mathcal{Z}$-minimal. We know that $\ml$ decomposes into a direct sum of $\mathcal{D}$-irreducible and $\mathcal{D}$-minimal pencils $\ml_j$. Due to (b) $\implies$ (a) they are also $\mathcal{Z}$-irreducible and $\mathcal{Z}$-minimal. So the decomposition has been trivial. \\[0.2cm]
(b) $\Longleftrightarrow$ (d): This follows directly from \autoref{gleichi}. \\[0.2cm]
(c) $\implies$ (b): Suppose $\pz(\mathcal{D}_\ml)=\delta$. If $L$ is not $\mathcal{D}$-minimal, then there exists a monic linear pencil $\mh$ with size less than $\pz(\mathcal{D}_\mh)$. This contradicts \autoref{haupt2}. So $L$ is $\mathcal{D}$-minimal. Assume $L$ is not $\mathcal{D}$-irreducible. By \autoref{zetdecomp} we know that $L$ is a direct sum of smaller $\mathcal{D}$-irreducible pencils. The $\pz$-number of $L$ is less than the maximum of the $\pz$-number of the summands. So there cannot be more than one summand. \\[0.2cm] 
(b) $\implies$ (c): Suppose now that $L$ is $\mathcal{D}$-irreducible and $\mathcal{D}$-minimal. Assume that $\pz(\mathcal{D}_\ml) < \delta$. \autoref{chars} and \autoref{zetdecomp} allow us to find a monic linear pencil $\mh=\mh_1 \oplus ... \oplus \mh_s$ such that $\mathcal{D}_\mh=\mathcal{D}_\ml$, where each $\mh_j$ has size smaller than $\delta$. Since $L$ is $\mathcal{D}$-irreducible, we can suppose $s=1$. Since $L$ is also $\mathcal{D}$-minimal$, \delta>\text{size}(H_1)$ is a contradiction.    
\end{proof}

\begin{proposition} \label{minand}
A monic linear pencil $L$ is $\mathcal{Z}$-irreducible if and only if for all pencils $L_1$, $L_2$ with $\mathcal{Z}(\ml) \subseteq \mathcal{Z}(\ml_1) \cup \mathcal{Z}(\ml_2)$ we have $\mathcal{Z}(\ml) \subseteq \mathcal{Z}(\ml_j)$ for some $j$. 
\end{proposition}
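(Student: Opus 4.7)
For the $(\Leftarrow)$ direction the argument is immediate: applying the hypothesis to any pair $(L_1,L_2)$ with $\mathcal{Z}(L_1)\cup\mathcal{Z}(L_2)=\mathcal{Z}(L)$ gives $\mathcal{Z}(L)\subseteq\mathcal{Z}(L_i)$ for some $i$, which combined with the reverse inclusion $\mathcal{Z}(L_i)\subseteq\mathcal{Z}(L_1)\cup\mathcal{Z}(L_2)=\mathcal{Z}(L)$ yields $\mathcal{Z}(L)=\mathcal{Z}(L_i)$, i.e.\ the original definition of $\mathcal{Z}$-irreducibility.

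For the $(\Rightarrow)$ direction my plan is to convert the set-theoretic inclusion into a polynomial-divisibility statement at each high enough level and then lift back. Assume $L$ is $\mathcal{Z}$-irreducible and $\mathcal{Z}(L)\subseteq\mathcal{Z}(L_1)\cup\mathcal{Z}(L_2)$. The crucial input I will use is that, by the algebraic characterization of $\mathcal{Z}$-irreducibility via simplicity of the $C^*$-algebra $\mathcal{A}_L$ (available from [KV]) together with the across-levels stability of the irreducible factorization of $\det_k L$ from \Cref{zerl}, there exists $N\in\N$ such that for each $k\ge N$ the polynomial $\det_k L$ is a positive power of a single irreducible $p_k\in\R[\mathcal{X}]$. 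Hence $\mathcal{Z}(L)(k)=V(p_k)$ is an irreducible real-algebraic set, and $p_k$ is itself an RZ polynomial as an irreducible factor of one (\Cref{rirr}).

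At each such level $k$ the hypothesis gives $V(p_k)\subseteq V(\det_k L_1\cdot\det_k L_2)$. Applying \Cref{victor} to $p_k$ and $\det_k L_1\cdot\det_k L_2$ forces $p_k\mid\det_k L_1\cdot\det_k L_2$; irreducibility of $p_k$ then gives $p_k\mid\det_k L_{i(k)}$ for some $i(k)\in\{1,2\}$, so $\mathcal{Z}(L)(k)\subseteq\mathcal{Z}(L_{i(k)})(k)$. Since $i(k)\in\{1,2\}$, pigeonhole fixes a single $i\in\{1,2\}$ for which the inclusion holds for infinitely many $k$. I finish by lifting: for any $X\in\mathcal{Z}(L)(m)$, pick such a $k\ge m$ and set $Y=X\oplus 0_{k-m}$. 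Since both $L$ and $L_i$ are monic, a direct tensor-product computation shows that after a coordinate permutation $L(Y)\cong L(X)\oplus I$ and $L_i(Y)\cong L_i(X)\oplus I$, so $Y\in\mathcal{Z}(L)(k)\subseteq\mathcal{Z}(L_i)(k)$ transfers to $X\in\mathcal{Z}(L_i)$. Thus $\mathcal{Z}(L)\subseteq\mathcal{Z}(L_i)$.

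The main obstacle I expect is establishing the single-irreducible-factor structure of $\det_k L$ at large $k$ for $\mathcal{Z}$-irreducible $L$, which is the content of the upcoming \Cref{detzel}. Although \Cref{detzel} itself is proved later in the paper, the direction needed here is the easy half: given simplicity of $\mathcal{A}_L$, irreducibility of $\mathcal{Z}(L)(k)$ at high $k$ follows from representation-theoretic considerations, and in particular does not rely on the present proposition, so no circularity arises.
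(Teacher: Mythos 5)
Your $(\Leftarrow)$ direction is correct and matches what the paper leaves unstated. For the $(\Rightarrow)$ direction, however, you take a genuinely different route from the paper, and the route you take has a gap that makes it unconvincing as written.

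The paper's own proof is purely $C^*$-algebraic: it invokes [KV], Theorem 5.5 to get a $C^*$-homomorphism $\varphi\colon\mathcal{A}_{L_1}\times\mathcal{A}_{L_2}\to\mathcal{A}_L$ from the inclusion $\mathcal{Z}(L)\subseteq\mathcal{Z}(L_1)\cup\mathcal{Z}(L_2)$, factors $\varphi$ through a quotient, and uses simplicity of $\mathcal{A}_L$ to conclude that $\varphi$ already factors through one of $\mathcal{A}_{L_1}$, $\mathcal{A}_{L_2}$; a second application of the same theorem then yields $\mathcal{Z}(L)\subseteq\mathcal{Z}(L_i)$. Your plan instead passes through determinants and RZ-polynomial divisibility levelwise.

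The problem is the ``crucial input'' you invoke: that $\mathcal{Z}$-irreducibility forces $\det_k L$ to be a power of a single irreducible for all large $k$. This is exactly the nontrivial $(\Rightarrow)$ direction of \Cref{detzel}, and the paper's proof of \Cref{detzel} (Case 2) uses precisely this Proposition (to turn the inclusion $\mathcal{Z}(L_2)\subseteq\mathcal{Z}(L_1)\cup\bigcup_j\mathcal{Z}(E_j)$ into a containment in a single locus). So, as the paper is structured, invoking \Cref{detzel} here is circular. You anticipate this and assert that the needed half ``follows from representation-theoretic considerations, and in particular does not rely on the present proposition,'' but you do not supply such an argument, and the paper itself explicitly treats this as a substantial result that Klep and Vol\v{c}i\v{c} only \emph{announced}, with the paper's own proof being geometric and dependent on the material around \Cref{minand}. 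Unless you actually exhibit the independent algebraic proof, this step is a genuine gap, not merely an out-of-order citation. (Separately, \Cref{victor} as stated is a local statement about connected components matching near a boundary point, not a bare $V(p)\subseteq V(g)\Rightarrow p\mid g$ implication; you would want to cite the underlying real-Nullstellensatz facts used in its proof — [HV] Lemma 2.1 and [BCR] Theorem 4.5.1 — rather than the lemma itself. The pigeonhole-plus-lifting at the end is fine, given monicity of the $L_i$.)
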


\begin{proof}
Let $L$ be $\mathcal{Z}$-irreducible. \autoref{volcicklep} says that $\mathcal{A}_L$ is simple. Suppose $\mathcal{Z}(\ml) \subseteq \mathcal{Z}(\ml_1) \cup \mathcal{Z}(\ml_2)$ holds. From \autoref{volcicklep} we know that there is a unital $^*$-homomorphism $\varphi: \mathcal{A}_{L_1} \times \mathcal{A}_{L_2} \rightarrow \mathcal{A}_L$ induced by $((L_1)_i,(L_2)_i) \mapsto L_i$. Since $\mathcal{A}_L$ is simple, $\varphi$ is surjective. The ideals of $\mathcal{A}_{L_1} \times \mathcal{A}_{L_2}$ are direct products of ideals of the $\mathcal{A}_{L_i}$. Now we divide by the kernel of $\varphi$ to see that there are ideals $I_j$ of $\mathcal{A}_{L_j}$ for $j \in \{1,2\}$ such that $\ov{\varphi}: (\mathcal{A}_{L_1} \times \mathcal{A}_{L_2}) /(I_1 \times I_2) \rightarrow \mathcal{A}_L$ is a $C^*$-isomorphism. The domain is isomorphic to $(\mathcal{A}_{L_1}/I_1) \times (\mathcal{A}_{L_2}/I_2)$ and simple.
Thus there is an $j$ such that $\mathcal{A}_{L_j}=I_j$ and $\varphi$ induces also a $C^*$-homomorphism from one of the $\mathcal{A}_{L_k}$ to $\mathcal{A}_L$. Another usage of \autoref{volcicklep} yields the result.
\end{proof}

\noindent The upcoming \autoref{ald} characterizes $\mathcal{Z}$-irreducible linear pencils $L$ as those whose determinants are irreducible polynomial at high levels. In a similar formulation Klep and Vol$\check{\text{c}}$i$\check{\text{c}}$ announced a proof in \cite{KV} for a later paper. Using completely different techniques than ours, they gave a proof in \cite{HKV}. We do not think that our proof generalizes to the non-Hermitian setting in \cite{HKV}.

\begin{lemma} \label{specz}
Let $\ml$ be a monic linear pencil of size $\delta$. Suppose $\det_k \ml = g_{1,k}^{\beta_1} \cdot ...\cdot g_{N,k}^{\beta_N}$ is the decomposition as in \autoref{level}, where $\beta_j \in \N$ denote multiplicities and such that for $k\geq M$ the polynomials $g_{j,k}$ are irreducible and pairwise not associated and $D_j$ is the induced closed matrix convex set. Then each $\pz(D_j) \leq \delta$ and $D_j$ is a spectrahedron for all $j \in \{1,...,N\}$.
\end{lemma}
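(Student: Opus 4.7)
The plan is to decompose $L$ into simpler pencils via \Cref{dedecomp} and then match each $\mathcal{Z}$-irreducible summand with an irreducible determinantal factor. First I would invoke \Cref{dedecomp} to write, up to unitary equivalence, $L \approx \bigoplus_{j=1}^{s} H_j$, where each $H_j$ is a monic pencil that is simultaneously $\mathcal{Z}$-irreducible and $\mathcal{Z}$-minimal of size $\delta_j$, with $\sum_j \delta_j = \delta$. By \Cref{haupt2} each $\mathcal{D}_{H_j}$ is then a spectrahedron satisfying $\pz(\mathcal{D}_{H_j}) \leq \delta_j \leq \delta$.

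Next, the multiplicative identity $\det_k L = \prod_{j=1}^{s} \det_k H_j$ combined with unique factorisation in $\R[\mathcal{X}]$ forces every irreducible factor $g_{i,k}$ of $\det_k L$ to divide at least one $\det_k H_j$ at any level $k \geq M$. The pivotal intermediate step will be to establish that each $\det_k H_j$ is, at high $k$, a power of a single irreducible polynomial. Granting this, there is a well-defined assignment $j \mapsto i(j)$ with $\det_k H_j = g_{i(j),k}^{\nu_j}$ for some $\nu_j \in \N$, and consequently $\mathcal{D}_{H_j} = D_{i(j)}$, since both are the closure of the connected component of $0$ in $\{X : g_{i(j),k}(X) \neq 0\}$. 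Unique factorisation forces $j \mapsto i(j)$ to surject onto $\{1, \ldots, N\}$, so every $D_i$ arises as some $\mathcal{D}_{H_{j(i)}}$, yielding the sought spectrahedral description with $\pz(D_i) \leq \delta$.

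The main obstacle will be the single-factor claim for $\mathcal{Z}$-irreducible $H_j$. My plan is to argue by contradiction: assume $\det_k H_j$ has two distinct irreducible factors $p$ and $q$ at some $k \geq M$. Applying \Cref{level} to $H_j$ in place of $L$ yields matrix convex sets $D_p, D_q$ with $\mathcal{D}_{H_j} = D_p \cap D_q$, and then \Cref{dirr} applied to $H_j$ provides size-$\delta_j$ monic pencils $L_p, L_q$ with $\mathcal{D}_{L_p} = D_p$ and $\mathcal{D}_{L_q} = D_q$, satisfying $p \mid \det_k L_p$ and $q \mid \det_k L_q$ by \Cref{victor}. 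The hard part will be to upgrade this $\mathcal{D}$-level decomposition to a genuine decomposition of the free locus $\mathcal{Z}(H_j) \subseteq \mathcal{Z}(L_p) \cup \mathcal{Z}(L_q)$ with neither inclusion tight, so as to contradict the $\mathcal{Z}$-irreducibility of $H_j$ via \Cref{minand}; the containments given by \Cref{victor} do not immediately equate the free loci. This bookkeeping between spectrahedra and free loci, rather than the direct-sum reduction itself, is where the real difficulty lies, and it is morally equivalent to the upcoming \Cref{detzel}.
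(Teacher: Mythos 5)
Your approach has a genuine circular dependency. The crux of your argument is the ``single-factor claim'': that for a $\mathcal{Z}$-irreducible, $\mathcal{Z}$-minimal pencil $H_j$, the determinant $\det_k H_j$ is a power of a single irreducible polynomial for large $k$. You correctly note that this is essentially one direction of \Cref{detzel}. But the paper proves \Cref{detzel} \emph{using} \Cref{specz} (its proof begins by asserting ``Every $D_i$ is a spectrahedron, so write $D_i = \mathcal{D}_{L_i}$\ldots''), so reducing \Cref{specz} to that claim is not an available shortcut unless you give an independent proof — and your sketch of one does not close.

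Concretely, your contradiction argument has two gaps. First, if $\det_k H_j$ has irreducible factors beyond $p$ and $q$, then $\mathcal{Z}(H_j) \subseteq \mathcal{Z}(L_p) \cup \mathcal{Z}(L_q)$ fails (those extra factors' zero sets are not covered), so you cannot invoke \Cref{minand}. Second, \Cref{dirr} only yields that the $D_i$ appearing in an irredundant intersection representation of $\mathcal{D}_{H_j}$ are spectrahedra; it does not let you produce pencils $L_p, L_q$ for the two specific factors you singled out, because one of them may be redundant in the intersection. The paper's proof is entirely different in structure and avoids all of this: it works directly with points of $\partial D_i(k)$ that avoid the other factors' zero sets, compresses by a projection onto $M(X,v)_L$ of rank at most $\delta$ to get $\pz(D_i) \leq \delta$, and then re-runs the Nash-manifold / cell-decomposition machinery from the proof of \Cref{existence} and \Cref{haupt} to build the separating pencils showing $D_i$ is a spectrahedron. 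That route is self-contained and does not presuppose any structural result about $\mathcal{Z}$-irreducible pencils; it is what then \emph{licenses} \Cref{detzel} and \Cref{minirrchar} downstream.
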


\begin{proof}
Let $f_k:=\det_k \ml$. We denote by $\partial D_j(k)^{\text{on}}=\{A \in \partial D_j(k) \ | \ \forall h \in \{1,...,N\} \setminus \{j\}: g_{h,k}(B) \neq 0 \}$. \autoref{victor} tells us that those points are dense in $\partial D_j(k)$ for $k > M$ in the Euclidean topology. So fix $j$ and let $X \in \partial D_j(k)^{\text{on}}$ with $k \geq M$, so $g_{j,k}(X)=0$.
\\[0.2cm]
Choose $v=\sum_{\al=1}^\delta e_\al \otimes v_\al \in \C^\delta \otimes \C^k$ with $L(X)v=0$. Let $P$ be the projection of $\C^k$ onto $M(X,v)_L=\text{span}(v_1,...,v_\delta)$. We calculate: 
\begin{align*}
0&=\ml(X)v=\left(I \otimes I + \sum_{j=1}^g L_i \otimes X_i\right) \sum_{\al=1}^\delta (e_\al \otimes v_\al)\\&=\left(I \otimes I + \sum_{j=1}^g L_i \otimes X_i\right) \sum_{\al=1}^\delta (I \otimes P^*)(e_\al \otimes v_\al)
=\ml(XP^*)v
\end{align*} and therefore $\ml(PXP^*)v=(I \otimes P)\ml(XP^*)v=0$. Set $X_\alpha=(1-\alpha) P^*[PXP^*]P + \alpha X$ for $\al \in [0,1]$. The previous calculations mean that $\ml(X_\al)v=0$. Thus $f_k(X_\al)=0$. We conclude that $g_{j,k}(X_\alpha)=0$ for $\alpha$ near $1$ because $g_{h,k}(X) \neq 0$ for $h \neq j$. Since $g_{j,k}$ is a polynomial $g_{j,k}(X_0)=0$. Thus $0=g_{j,k}(P^*[PXP^*]P)=g_{j,k}(PXP^* \oplus 0)=g_{j,\rk(P)}(PXP^*)$. On the other hand we have $X \in D_j$; thus also $PXP^* \in D_j$ by matrix convexity. \\[0.2cm]
We have checked that for every $X \in \partial D_j(k)^{\text{on}}$ with $k \geq M$ there exists a projection $P$ of rank at most $\delta$ such that $PXP^* \in \partial D_j$. Now let $A \in \partial D_j(k)$ with $k \in \N$. Choose $s \in \N$ such that $s+k \geq M$. Then $A'=A \oplus (I_s \otimes 0) \in \partial D_j(s+k)$. Approximate $A'$ by points of $\partial D_j(k+s)^{\text{on}}$. With the help of Bolzano-Weierstra{\ss} we get a projection $P: \C^{k+s} \rightarrow \im(P)$ of rank at most $\delta$ such that $PA'P^* \in \partial D_j$. We have $PA'P^* \in \mconv(A,0)(\delta)=\mconv(\{QAQ^* \ | \  Q: \C^k \rightarrow \im(Q) \text{ projection of rank at most } \delta \} \cup \{0\})$ (\autoref{orka}). Moreover $0 \in \inte(D_j)$, so there is one projection $Q$ of rank at most $\delta$ such that $Q^*AQ \in \partial D_j$. This means $\text{pz}(D_j) \leq \delta$. \\[0.2cm]
Now we imitate our proof that "closures of matrix convex free basic open semialgebraic" sets are spectrahedra to show that each $D_j$ is a spectrahedron. The reason why this is possible is that we did not merely prove $\pz(D_j) \leq \delta$, but also that for a point $X \in \partial D_j(k)^{\text{on}}$ with $\ml(X)v=0$ we have $PAP^* \in \partial D_j$ where $P$ is the projection onto $M(X,v)_\ml$. Let $\ep=\max\{\delta,M\}$. \\[0.2cm]
We work with $\partial D_j(\ep)^{\text{on}}$. We consider one Nash manifold $C$ of $\partial D_j(\ep)^{\text{on}}$ with associated Nash function $f_C$ that maps each $A \in C$ to a non-trivial kernel vector of $\ml(A)$. Fix a dense subset $(X_n)_{n \in \N}$ of $C$ and write $f_C(X_n)=(f_{C,1}(X_n),...,f_{C,\delta}(X_n))$ with $f_{C,j}(X_n) \in \C^\ep$. \\[0.2cm]
We apply the techniques of the proof of \autoref{haupt} and \autoref{existence} and achieve: There exist monic $\mh:=\mh_C \in S\NC_1^{\de \times \de}$, $V \in S\C^{\de \times \de}$ such that $\mh_C(X_n)V(X_n)f_C(X_n)=0$ for all $n \in \N$, $V(X_m)f_C(X_m) \neq 0$ for one $m \in \N$ as well as $\mh(X) \succ 0$ for all $X \in \inte(D_j)$. \\[0.2cm]
Therefore we conclude that the function $\phi: C \rightarrow \C^{\delta\ep}, X \mapsto V(X) f_C(X)$ is not zero and $\mh(X)V(X)f_C(X) = 0$ for all $X \in C$. Since $C$ is a Nash manifold and $V$ is constant, we know that $V$ is in each component a Nash function on $C$. Without loss of generality we can assume that $C$ is a relatively open box. \\[0.2cm]
But then \autoref{potenzi} implies that $\phi^{-1}(\C^{\delta \ep} \setminus \{0\})$ is dense in $C$ and therefore $\mh(X)$ has non-trivial kernel for all $X \in C$. This means that $\mh$ separates all points of $C$ from $\inte(D_j)$. Therefore the direct sum of all $\mh_C$ separates all points of $\partial S_j(\ep)^{\text{on}}$ from $\inte(D_j)$. Due to the Zariski density of $\partial D_j(\ep)^{\text{on}}$ in $\partial D_j(\ep)$ we get that the direct sum of all $\mh_C$ separates all points of $\partial S_j(\ep)$ from $\inte(D_j)$. Since $\pz(D_j) \leq \delta$, $\pz(\mathcal{D}_{\mh_C}) \leq \delta$ and the spectrahedron defined by the direct sum of the $\mh_C$ coincides with $D_j$ on level $\ep$, the claim follows.   
\end{proof}

\begin{theo} \label{detzel} (cf. \cite[Theorem 3.4]{HKV})
Let $\ml$ be a monic linear pencil. Then $L$ is $\mathcal{Z}$-irreducible if and only for big $k$ we have $\det_k \ml=q_k^n$ where $q_k$ is irreducible and $n$ does not depend on $k$. 
\end{theo}

\begin{proof}
"$\Longleftarrow$": This is obvious. \\[0.2cm]
"$\Longrightarrow$": Suppose $\det L_k = g_{1,k}^{\alpha_1} \cdot ...\cdot g_{N,k}^{\alpha_n}$ is the decomposition as in \autoref{level}, where $g_{j,k}$ eventually becomes irreducible for big $j$, $D_j$ is the induced closed matrix convex set and the $g_{j,k}$ are pairwise not associated for all big fixed $k$. Every $D_j$ is a spectrahedron, so write $D_j=\mathcal{D}_{\ml_j}$ with $L_j$ $\mathcal{D}$-minimal. It is clear that the $L_j$ are also $\mathcal{D}$-irreducible. Hence the $L_j$ are also $\mathcal{Z}$-irreducible and $\mathcal{Z}$-minimal. We distinguish two cases:\\[0.2cm]
Case 1: There is no $j$ such that $\mathcal{Z}(\ml_j)=\mathcal{Z}(\ml)$. However $\mathcal{Z}(\ml) \subseteq \bigcup_j \mathcal{Z}(\ml_j)$. This contradicts the fact that $L$ is $\mathcal{Z}$-irreducible (\autoref{minand}). \\[0.2cm]
Case 2: There is an $j$ such $\mathcal{Z}(\ml_j)=\mathcal{Z}(\ml)$. WLOG assume that the vanishing ideal of the Zariski closure of $\partial \mathcal{D}_\ml$ is generated by $g_{1,k}$ (the Zariski closure must be irreducible otherwise we are in case $1$) and that $\mathcal{Z}(\ml_1)=\mathcal{Z}(\ml)$. Since $g_{2,k}$ is not associated to $g_{1,k}$ we have $\mathcal{D}_{\ml_2} \supset \mathcal{D}_{\ml_1}$. \\[0.2cm]
We want that $\mathcal{Z}(\ml_2) \subset \mathcal{Z}(\ml_1)$. If this is not the case, again we apply \autoref{specz} to $\ml_2$ and write $\det_k \ml_2=h_{1,k}^{\beta_1} \cdot ...\cdot h_{M',k}^{\beta_{M'}}$ with associated spectrahedra $\mathcal{D}_{\mathfrak{E}_s}$ $(s \in \{1,...,M'\})$ where $h_{1,k}=g_{2,k}$ and all other $h_{\ell,k}$ are not asssociated to $h_{1,k}$. Then we have $\mathcal{Z}(\ml_2) \subseteq \mathcal{Z}(\ml_1) \cup \bigcup_{s=2}^M \mathcal{Z}(\mathfrak{E}_s)$. $L_2$ is $\mathcal{Z}$-irreducible and $\mathcal{Z}(\ml_2) \nsubseteq \mathcal{Z}(\mathfrak{E}_\ell)$ for $\ell \geq 2$, thus $\mathcal{Z}(\ml_2) \subset \mathcal{Z}(\ml_1)$. \\[0.2cm]
\autoref{volcicklep} says that the mapping rule $(L_1)_i \mapsto (L_2)_i$ defines a unital $^*$-homomorphism $\varphi: \mathcal{A}_{L_1} \rightarrow \mathcal{A}_{L_2}$. However $\mathcal{A}_{L_1}$ and $\mathcal{A}_{L_2}$ are simple so there is some unitary $U$ such that $L_1=U^{-1}L_2U$ (\autoref{ken}). Hence $\mathcal{Z}(L_2)=\mathcal{Z}(L_1)$, a contradiction.
\end{proof}

\begin{cor} \label{ald} (cf. \cite[Theorem 3.4]{HKV})
Let $\ml$ be a monic linear pencil such that $L$ is $\mathcal{D}$-minimal and $\mathcal{D}$-irreducible. Then for $k$ big enough $\det_k \ml$ is irreducible.
\end{cor}

\begin{proof}
We have $\delta:=\size(\ml)=\pz(\mathcal{D}_\ml)$ due to \autoref{minirrchar}. From \autoref{detzel} we know already that $\det_k \ml$ is a power of an irreducible polynomial for large $k$. Hence we are finished if we can show that there is some $X \in \partial \mathcal{D}_\ml (\delta)$ such that the kernel of $\ml(X)$ is one-dimensional. Indeed, then $\det_\delta \ml$ cannot have a double zero in $X$. \\[0.2cm]
Since $\pz(\mathcal{D}_\ml)=\delta$, we find $X \in \partial \mathcal{D}_\ml(\delta)$ such that for each projection $P$ of $\C^\delta$ onto a smaller space $\ml(PXP^*) \succ 0$.
Now assume $v=\sum_{\al=1}^\delta e_\al \otimes v_\al$ and $w=\sum_{\al=1}^\delta e_\al \otimes w_\al$ are linearly indepent members of $\ker \ml(X)$. By construction we know that $\dim M(X,v)_\ml=\dim M(X,w)_\ml=\delta$. Consider the invertible linearly independent matrices $V=(v_1 \ ... \ v_\delta)$, $W=(w_1 \ ... \ w_\delta)$. Then the polynomial $\det (V+tW)$ is not constant and has a root $\lambda \in \C$. Then $v+\lambda w \in \ker \ml(X) \setminus \{0\}$, however $M(X,v + \lambda w)_\ml$ has dimension less than $\delta$. 
\end{proof}

\begin{rem} \label{ald2} 
The preceding proof shows: If $\mathcal{L}=C-B \ov X$ is a linear pencil of size $\delta$, $X \in \mathcal{D}_{\mathcal{L}}(\delta)$ such that $\mathcal{L}(PXP^*) \succ 0$ for all projections $P: \C^{\delta} \rightarrow \im(P)$ of rank at most $\delta-1$, then there exists $v \in \mathcal{S}^{\delta^2-1}$ such that $\dim M(X,v)_{\mathcal{L}}=\delta$ and $\ker(\mathcal{L}(X))=\spam\{v\}$.
\end{rem}

\begin{cor} \label{dedecomp}Let $\ml$ be a monic linear pencil of size $\delta$. Suppose $\det_k \ml = g_{1,k}^{\beta_1} \cdot ...\cdot g_{N,k}^{\beta_N}$ is the decomposition as in \autoref{level}, where $\beta_j \in \N$ denote multiplicities and such that for $k\geq M$ the polynomials $g_{j,k}$ are irreducible and pairwise not associated and $D_j$ is the induced closed matrix convex set. Then for all $j \in \{1,...,N\}$ there is a simultaneously $\mathcal{D}$-irreducible and $\mathcal{D}$-minimal monic linear pencil $\ml_j$ such that $\ml \approx \bigoplus_{j=1}^N (I_{\beta_j}) \otimes \ml_j$.
\end{cor}

\begin{proof}
Write $\ml$ as a direct sum $L=\bigoplus_{j=1}^r L_j$ of $\mathcal{D}$-irreducible and $\mathcal{D}$-minimal pencils $\ml_j$ and observe that $\det_k \ml=\prod_{j=1}^r \det_k \ml_j$.
\end{proof}

\begin{rem} \label{lax}
\autoref{rirr} showed that the number of irreducible factors of the determinant of a monic linear pencil regarded as a commutative polynomial on a fixed level $k$ is decreasing as a function of $k$. We are going to justify why this function is not always constant. \\[0.2cm] 
In \cite{K} P. Br\"{a}nd\'{e}n has given an example of an RZ-polynomial $p$ such that no power of $f$ can be written as the level $1$-determinant of a monic linear pencil but such that an RZ-polynomial $q$ exists such that the connected component of $q^{-1}(\R \setminus \{0\})$ around $0$ is containing the connected component of $p^{-1}(\R \setminus \{0\})$ around $0$ and $pq$ is the level $1$-determinant of a monic linear pencil $\ml$. Set $f_k=\det_k \ml(\mathcal{X})$ \\[0.2cm]
Write $p=p^{1}...p^{s}$ and $q=q^{1}...q^{t}$ as products of irreducible polynomials. Assume now that each $f_k$ was a product of $s+t$ irreducible polynomials. Then we could write $f_k=p^{1}_k...p^{s}_kq^{1}_k...q^{t}_k$ with $p^{j}_k$ and $q^{j}_k$ irreducible such that $p^{i}_1=p^{i}$, $q^{i}_1=q^{i}$ and $p^{j}_k|_{\S^g(\ell)}=p^{j}_\ell$, $q^{j}_k|_{\S^g(\ell)}=q^{j}_\ell$ for $\ell \leq k$. Now \autoref{dedecomp} in connection with \autoref{ald} would imply all the $p^{i}_1$ and $q^{i}_1$ are level $1$-determinants of monic linear pencils. \\[0.2cm]
The generalized Lax-conjecture is equivalent to the fact that for every RZ-polynomial $p$ with associated convex set $C_p$ we can find a RZ-polynomial $q$ as above such that $C_p \subseteq C_q$ and $pq$ can be written as the level $1$ determinant of a monic linear pencil (see \cite{K}). \\[0.2cm]
In case that $p$ is an irreducible RZ-polynomial that cannot be written as the level $1$ determinant of a monic linear pencil and $q$ is a RZ-polynomial of minimal degree such that $pq=\det_1 \ml(\mathcal{X})$ and $C_p \subseteq C_q$, the results from this chapter imply that $\det_k \ml(\mathcal{X})$ is irreducible for big $k$.   
\end{rem}

\section{Notions of extreme points for matrix convex sets}

\noindent In the classical case of a compact convex set $T$ in $\R^n$, the extreme points of $T$ form the "minimal" set of points whose convex hull is $T$ (Minkowski theorem). In the infinite-dimensional setting the Krein-Milman gives a similar statement involving closures. Let $K \subseteq \S^g$ be a compact matrix convex set. We would like to find a similar notion of extreme points which generalizes the Minkowski/Krein-Milman theorem to the matrix convex setting. \\[0.2cm]
Taking ordinary extreme points at every level is clearly too much (for instance when $\text{kz}(K) < \infty$). Also they dont reflect the nature of matrix convex combinations. Farenick, Morenz, Webster and Winkler have developed a theory of matrix extreme points. In this setting the first half of the Krein-Milman theorem holds in the sense that every compact matrix convex set $K$ has enough matrix extreme points to reconstruct itself, how the set of matrix extreme points does not need to the "minimal" set of generators (\cite{WW}, since minimality is an issue we call their theorem "weak" (free) Krein-Milman theorem). The setting in \cite{WW} is more general since we consider only matrix convex sets consisting of $g$-tuples of 
matrices where $g < \infty$ while Webster and Winkler allow tuples of matrices of infinite size.
However for most applications the less general setting is enough. We will give
a new and easier proof of the weak free Krein-Milman in our finite-dimensional setting (in the appendix we explain another variant of that proof). Indeed we will even show the free analogues of the first half of the classical Minkowski theorem and the Straszewicz theorem, which are both stronger than the Krein-Milman theorem. On the downside, the set of the matrix extreme points is only in some very weak sense a minimal generator of $K$. Also the notion of matrix extreme points does not really use the full free setting since one restricts the view only to finitely many levels. \\[0.2cm]
Another notion called absolute extreme point was introduced in \cite{EHKM} by Evert, Helton, Klep and McCullough. This smaller set of points makes use of all the levels; however it is not clear in which cases the related first half of the Krein-Milman-Theorem holds. We will even see examples of compact matrix convex sets which do not have any absolute extreme points. For $K$ being the polar of a spectrahedron, it was proven in \cite{EHKM} that the set of absolute extreme points is finite (up to unitary equivalence) and the smallest set of points generating $K$ as a matrix convex set. We will generalize this theorem to the case where $K$ has finite kz-number and is compact. The main result (general Gleichstellensatz/strong free Krein-Milman) of this chapter will be a perfect analogue of the classical Krein-Milman theorem. It states that there is a smallest operator tuple $L\in \mathcal{B}_h(\mathcal{H})^g$ defining $K$ in the sense of $K=\mconv(L)$. This tuple $L$ features all the absolute extreme points and a generalized version of absolute extreme points. \\[0.2cm]
For this chapter we borrow the notation of matrix extreme and absolute extreme points from \cite{EHKM}.

\subsection{Generating matrix convex compact sets by different kinds of extreme points}

\begin{defprop} \label{extrdef}
Let $K \subseteq \S^g$ be a matrix convex set and $A \in K(\delta)$. We call $A$ an (ordinary) extreme point of $K$ if $A$ is an extreme point of the convex set $K(\delta)$ (where we treat $\S^g(\delta)$ as a real vector space). \\[0.2cm]We call $A$ {\bfseries matrix extreme} if for all matrix convex combinations $A=\sum_{j=1}^r V_j^* B_j V_j$ where $B_j \in K(k_j)$ and the $V_j \in \C^{k_j \times \delta}$ are surjective and $\sum_{j=1}^r V_j^* V_j=I_\delta$, we already have that each $k_j=\delta$ and $A \approx B_j$. One can weaken the hypothesis of $V_j$ surjective to be $k_j \leq \delta$ and $V_j \neq 0$; also (at the same time) one can strengthen the conclusion that there are $U_j \in \C^{\delta \times \delta}$ unitary and $\lambda \in \C^r$ with $||\lambda||_2=1$ such that $B_j=U_j^* A U_j$ and $V_j=\lambda_j U_j^*$. \index{mext}\text{mext}(K) shall denote the set of matrix extreme points of $K$.\\[0.2cm]
We call $A$ {\bfseries absolute extreme} if for all matrix convex combinations $A=\sum_{j=1}^r V_j^* B_j V_j$ where $B_j \in K(k_j)$, $V_j \in \C^{k_j \times \delta} \setminus \{0\}$ and $\sum_{j=1}^r V_j^* V_j=I_\delta$ we already have $k_j \geq \delta$ and there is $C_j \in \S^g(k_j-\delta)$ such that $A \oplus C_j \approx B_j$. In this case there are $U_j \in \C^{k_j \times k_j}$ unitary and $\lambda \in \C^r$ with $||\lambda||_2=1$ such that $B_j=U_j^* (A \oplus C) U_j$ and $V_j=\lambda_j U_j^* P^*$ where $P$ is the projection from $\C^{k_j}$ to $\C^{\delta}$. \index{abex}$\text{abex}(K)$ shall denote the set of absolute extreme points of $K$.  \\[0.2cm] 
We call $A$ {\bfseries matrix exposed} if there is a linear pencil $\mathcal{L}=B-C \ov X$ of size $\delta$ such that $K \subseteq \mathcal{D}_\mathcal{L}$ and $\partial \mathcal{D}_\mathcal{L}(\delta) \cap K(\delta)=\{U^*AU \ | \ U \in \C^{\delta \times \delta} \text{ unitary} \}$. In case that $0 \in \inte{K}$ one can demand that $\mathcal{L}$ is monic. \index{mexp}$\mexp(K)$ shall denote the matrix exposed points of $K$. \\[0.2cm]
Of course absolute extreme points are matrix extreme; matrix extreme points are ordinary extreme.
\end{defprop}

\begin{proof}
Let $A$ be matrix extreme and $A=\sum_{j=1}^r V_j^* B_j V_j$ where $B_j \in K(k_j)$, $k_j \leq \delta$, $V_j \in \C^{k_j \times \delta} \setminus \{0\}$ and $\sum_{j=1}^r V_j^* V_j=I_\delta$. Let $P_j$ be the projection from $\C^{k_j}$ to the range of $V_j$ and $d_j$ the rank of $V_j$. Then $A=\sum_{j=1}^r (P_j V_j)^*(P_j B_j P_j^*) (P_j V_j)$ with $I=\sum_{j=1}^r (P_j V_j)^* (P_j V_j)$ and $P_j V_j \in \C^{d_j \times \delta}$ is surjective. Hence we get $\delta \geq k_j \geq d_j=\delta$ and the $V_j$ have been already bijective. We find $U_j$ unitary such that $B_j=U_j^* A U_j$. Now we have $A=\sum_{j=1}^r (U_j V_j)^*AU_j V_j$ and \autoref{david} implies that the $U_j V_j$ are scalar multiples of the identity (it is clear that a matrix etreme point has to be irreducible; $A=C \oplus D$ with $C,D \neq 0$ would imply $A=P^*CP+(1-P)^*D(1-P)$ where $P$ is the projection on the first coordinates and hence $C$ or $D$ have the same size as $A$). \\[0.2cm]
In a similar way we prove the additional claim for absolute extreme points.
\end{proof}

\begin{rem}
If $A \in \S^g(\delta)$ is an matrix extreme point/absolute extreme/matrix exposed point of a matrix convex set $K$ and $U \in \C^{\delta \times \delta}$ unitary, then also $U^*AU$ is matrix extreme/absolute extreme/matrix exposed. Therefore, if we say things like "$K$ has only finitely many absolute extreme points", we mean that up to unitary equivalence there are only finitely many absolute extreme points of $K$. 
\end{rem}

\begin{exar} \label{cheapexar}
Let $K \subseteq \S^g$ be matrix convex. Then the extreme points/exposed points of $K(1)$ are also matrix extreme/matrix exposed and $\text{mext}(K)(\delta)=\text{mext}(\mconv(K(\delta)))(\delta)$ as well as $\text{mexp}(K)(\delta)=\text{mexp}(\mconv(K(\delta)))(\delta)$.
\end{exar}

\begin{definition} \label{mcone}
We call a non-empty set $T \subseteq \S^{g+1}$ a {\bfseries matrix cone} if
\begin{align*}
A_j \in T(k_j), V_j \in \C^{k_j \times s} \implies \sum_{j=1}^r V_j^* A_j V_j \in T.
\end{align*}  
We call $T$ {\bfseries directed} if for all $(A_0,A_1,...,A_g) \in T(\delta)$ and $v \in \C^{\delta}$ we have $A_0 \succeq 0$ and the equality $v^*A_0v=0$ implies already $A_1v=...=A_gv=0$. If $T$ is a directed matrix cone, then \index{Deh}$\Deh(T)=\{A \in \S^g \ | \ (I,A) \in T\}$ is called the dehomogenization of $T$. For $T \subseteq \S^{g+1}$ we write $\mcc(T)$ for the smallest matrix cone containing $T$.
Let $S \subseteq \S^g$ be matrix convex. We call \index{Hom}$\Hom(S)=\mcc(\{(I_k,A) \ | \ k \in \N, A \in S(k) \})$ the homogenization of $S$. In the following we write $0_s$ for the $(g+1)$-tuple of $s \times s$ zero matrices. \\[0.2cm]
In ordinary convexity the homogenization of a convex set $S$ (i.e. the set $\text{cone}(\{(1,a) \ | \ a \in S\})$) is a trivial-looking process and it is very easy to see how properties of $S$ translate directly to properties of the homogenization of $S$ and vice versa. The homogenization of a matrix convex set is a more complicated process. It is not clear how one should compute the homogenization. We will see that the matrix extreme points of a matrix convex set $S$ are in correspondence to the ordinary extreme rays of the levels of $\Hom(S)$. This fact comes very handy in order to use the theory of ordinary convexity for the analysis of matrix extreme points. However since homogenization is a difficult process, this does not mean that matrix extreme points are easy to understand or to compute.  
\end{definition}

\begin{proposition} \label{homo} \textcolor{inv}{a}
\begin{enumerate}[(a)]
\item The mappings 
\begin{align*}
\Hom: \{S \subseteq \S^g \ | \ S \text{ is matrix convex } \} \rightarrow \{T \subseteq \S^{g+1} \ | \ T \text{ is a directed matrix cone } \} \\
\Deh: \{T \subseteq \S^{g+1} \ | \ T \text{ is a directed matrix cone } \} \rightarrow \{S \subseteq \S^g \ | \ S \text{ is matrix convex } \}
\end{align*}
are inverses of each other.
\item Let $S \subseteq \S^g$ be matrix convex. Then $\Hom(S)=\{A \in \S^{g+1} \ | \ \exists s,k \in \N_{0}, B \in S(k), V \in \C^{k \times k} \text{ invertible }: A \approx [0_s \oplus V^*(I,B)V] \}$.
\item Let $S \subseteq \S^g$ be matrix convex and $s \in \N$. Then $A \in S(\delta)$ is matrix extreme in $S$ if and only if $0_s \oplus (I,A)$ is ordinary extreme in $\Hom(S)(\delta+s)$.
\item (Effros-Winkler separation for matrix cones) Let $T \subseteq \S^g$ be a directed matrix cone $\varphi: \S^{g+1}(\delta) \rightarrow \R$ linear such that $\varphi(T(\delta) \setminus \{0\}) \subseteq \R_{>0}$. Then there exists a linear pencil $H_0X_0+...+H_gX_g$ of size $\delta$ such that $T \setminus \{0\} \subseteq \{ A \in \S^{g+1} \ | \ [H_0X_0+...+H_g X_g](A) \succ 0 \} = \{A \in \S^{g+1} \ | \ \forall V \in \C^{\size(A) \times \delta} \setminus \{0\}: \varphi(V^*AV) > 0\}$ 
\item Let $S \subseteq \S^g$ be matrix convex and $s \in \N$. Then $A \in S(\delta)$ is matrix exposed in $S$ if and only if $0_s \oplus (I,A)$ is exposed in $\Hom(S)(\delta +s)$.
\item Let $S \subseteq \S^g$ be matrix convex and compact. Then $\Hom(S)$ is closed and contains no non-trivial linear subspaces.
\item Let $T \subseteq \S^{g+1}$ be a directed matrix convex cone, $B \in T(\delta)$ and $V \in \C^{\delta \times \delta}$ invertible. Then $B$ is extreme/exposed if and only $V^* B V$ is extreme/exposed. 
\end{enumerate}
\end{proposition}

\begin{proof}
\begin{enumerate}[(a)]
\item Well-definedness of both maps is easy to see. Let $S \subseteq \S^g$ be matrix convex. The inclusion $S \subseteq \Deh(\Hom(S))$ is trivial. So let $(I,A) \in \Hom(S)(k)$. Choose $A_j \in S(k_j)$ and $V_j \in \C^{k_j \times k}$ such that $(I,A)=\sum_{j} V_j^* (I,A_i) V_j$. We conclude $\sum_{j} V_j^* V_j=I$. Hence $A \in S$. \\[0.2cm]
Let $T \subseteq \S^{g+1}$ be a directed matrix cone. Let $(A',B') \in T(\delta)$. Then there exists $k,s \in \N_0$ and $B \in \S^g(s)$, $A \in S\C^{s \times s}$ positive definite such that $(A',B') \approx 0_k \oplus (A,B)$. Hence $(A,B) \in T$. Let $D \in S\C^{s \times s}$ such that $D^2=A^{-1}$. Then we have $(I,DBD) \in T$. Hence $(I,DBD) \in \Hom(\Deh(T))$ and as $\Hom(\Deh(T))$ is a matrix cone, we get $(A',B') \in \Hom(\Deh(T))$. This shows $T \subseteq \Hom(\Deh(T))$. The other inclusion is clear.
\item Let $A \in \Hom(S)(\delta)$. Then there exists $r \in \N$ and $D_j \in S(k_j)$ and $W_j \in \C^{k_j \times \delta}$ such that $A=\sum_{j=1}^r W_j^* (I,D_j) W_j$. Let $P: \C^{\delta} \rightarrow \im(P)$ be the projection onto $\left(\bigcap_{j=1}^r \ker(W_j)\right)^\perp$. Then we have 
\begin{align*}
A \approx 0_{\delta-\text{rk}(P)} \oplus \sum_{j=1}^r P W_j^*(I, D_j) W_j P^*
\end{align*} and $\sum_{j=1}^r PW_j^* W_j P^*$ is positive definite. Hence we can choose some invertible Hermitian $V$ of the same size such that $V^2=\sum_{j=1}^r PW_j^* W_j P^*$. Now we conclude $B:=\sum_{j=1}^r V^{-1}PW_j^* D_j W_j P^*V^{-1} \in S$ and $A \approx [0_{\delta-\text{rk}(P)} \oplus V (I,B) V]$.
\item Let $A \in S(\delta)$ be matrix extreme. Let $(B_j',C_j') \in \Hom(S)(\delta+s)$ and $0_s \oplus (I,A)=\sum_j (B_j',C_j')$. Since $\Hom(S)$ is directed, we see that there are $(B_j,C_j) \in \Hom(S)(\delta)$ such that $(B_j',C_j')=0_s \oplus (B_j,C_j)$. Write $(B_j,C_j)=U_j^*(0_{s_j} \oplus V_j^*(I,A_j) V_j)U_j$ with $A_j \in S(\delta-s_j)$, $V_j \in \C^{(\delta-s_j) \times (\delta-s_j)}$ invertible and $U_j \in \C^{\delta \times \delta}$ unitary. We conclude that 
\begin{align*}(I,A)=\sum_j U_j^* \begin{pmatrix} 0 \\ V_j^* \end{pmatrix} (I,A_j) \begin{pmatrix} 0 & V_j \end{pmatrix}U_j
\end{align*}
Hence there are unitary matrices $W_{j}$ and $\lambda_{j} \in \C$ such that $A_{j}=W_{j}^* A W_{j}$ and $\begin{pmatrix} 0 & V_j \end{pmatrix}U_j=\lambda_{j} W_{j}^*$. Therefore $(B_j,C_j)$ is  a scalar multiple of $(I,A)$ and $s_j=0$. \\[0.2cm]
Let $0_s \oplus (I,A)$ be ordinary extreme in $\Hom(S)(\delta+s)$. Let $B_j \in S(k_j)$ and $V_j \in \C^{k_j \times \delta}$ surjective such that $\sum_j V_j^*V_j=I$ and $\sum_j V_j^* B_j V_j=A_j$. Then 
\begin{align*} 0_s \oplus (I,A)=\sum_{j} \begin{pmatrix} 0 \\ V_j^* \end{pmatrix} (I,B_j) \begin{pmatrix} 0 & V_j \end{pmatrix}. \end{align*} Hence for every $j$ there is $\lambda_j > 0$ such that $(V_j^*V_j, V_j^* B_j V_j)=\lambda_j (I,A)$. Thus $U_j:=\frac{V_j}{\sqrt{\lambda_j}}$ is unitary and $B_j=U_j A U_j^*$.
\item This is an easy variant of the Effros-Winkler separation technique. In this homogeneous setting we have a linear functional which can be easily translated to a linear pencil (contrary to the case in \autoref{effrosnormal} where we had to translate the affine-linear functional $1-\varphi$ with $\varphi$ the linear functional from \autoref{effrosnormal} and translation of the constant part was non-constructive). We sketch the proof: \\[0.2cm]
Extend $\varphi$ to a $\C$-linear functional $\varphi: (\C^{\delta \times \delta})^{g+1} \rightarrow \C$. By the Riesz representation theorem we can find matrices $H_0,H_1,...,H_g \in \C^{\delta \times \delta}$ such that $\varphi(C)=\sum_{i=0}^g \tr(\ov{H_i}^* C_i)$. It is easy to see that the $H_i$ have to be Hermitian. Now $H_0 X_0+...+H_g X_g$ is the required pencil. Let $B \in \S^{g+1}(k)$ and $v \in \C^{k \delta}$. Write $v=\sum_{\al=1}^\delta e_\al \otimes v_\al$ with $v_\al \in \C^k$. Define the matrix $V=\begin{pmatrix}v_1 & \hdots & v_\delta \end{pmatrix} \in \C^{k \times \delta}$. Then
\begin{align*}
&v^*[H_0 X_0+...+H_g X_g](B)v=...= \varphi(V^*BV)
\end{align*}
(see the proof of \autoref{effrosnormal} for more details)
\item Let $0_s \oplus (I,A)$ be exposed in $\Hom(S)(s+\delta)$. Then it is clear that $(I,A)$ is also exposed in $\Hom(S)(\delta)$. Let $\varphi: \S^{g+1}(\delta) \rightarrow \R$ such that $\varphi(I,A)=0$ and $\varphi(\Hom(S)(\delta) \setminus \R(I,A)) \subseteq \R_{>0}$. Claim: $\mcc(\Hom(S)(\delta) \setminus \{(V^*V,V^*AV) \ | \ V \in \C^{\delta \times \delta} \text{ invertible}\})$ is a matrix cone and has empty intersection with $\{(V^*V,V^*AV) \ | \ V \in \C^{\delta \times \delta} \text{ invertible}\}$. Indeed let $(B_j,C_j)\in \Hom(S)(\delta)$, $V_j \in \C^{\delta \times \delta} \setminus \{0\}$ and $V \in \C^{\delta \times \delta}$ invertible such that $(V^*V,V^*AV)=\sum_j V_j^* (B_j,C_j) V_j$. We infer $(I,A)=\sum_j (V^*)^{-1} V_j^* (B_j,C_j) V_j V^{-1}$. Because $(I,A)$ is extreme, we conclude that there are $\lambda_j \in \R_{>0}$ such that $(I,A)=\lambda_j (V^*)^{-1} V_j^* (B_j,C_j) V_j V^{-1} \lambda_j$ for all $j$ with $(V^*)^{-1} V_j^* (B_j,C_j) V_j V^{-1} \neq 0$. Such a $j$ exists and the claim is shown. \\[0.2cm] Now with (d) we find a pencil $H_0 X_0+...+H_g X_g$ of size $\delta$ such that $\mcc(\Hom(S)(\delta) \setminus \{(V^*V,V^*AV) \ | \ V \in \C^{\delta \times \delta} \text{ invertible}\}) \subseteq \{ B \in \S^{g+1} \ | \ [H_0 X_0+...+H_g X_g](B) \succ 0 \}$ and $[H_0 X_0+...+H_g X_g](I,A) \nsucc 0$ . \\[0.2cm]
Now let $A \in S(\delta)$ be matrix exposed in $S$ and $L_0+L \ov X$ a pencil of size $\delta$ exposing $A$. From \autoref{ald2} we know that there is $v = \sum_{\al=1}^\delta e_\al \otimes v_\al$ with $v_\al \in \C^{\delta}$ such that $\ker(L_0+L \ov X)(A)=\spam(v)$ and $\spam(v_1,...,v_\delta)=\C^{\delta}$. We claim that \begin{align*}
\varphi: \S^{g+1}(s+\delta) \rightarrow \R, \begin{pmatrix} D & C \\ C^* & B \end{pmatrix} \mapsto \text{tr}(D_0)+v^*(L_0 X_0+...+L_g X_g)(B)v
\end{align*} exposes $0_s \oplus (I,A)$. Because $T$ is directed, we only have to show that
\begin{align*}
\psi: \S^{g+1}(\delta) \rightarrow \R, B \mapsto v^*(L_0 X_0+...+L_g X_g)(B)v
\end{align*} exposes $(I,A)$ in $\Hom(S)(\delta)$. Now let $C \in S(\delta-t)$, $V \in \C^{(\delta-t) \times (\delta-t)}$ invertible, $U \in \C^{\delta \times \delta}$ unitary and $B=U^*(0_t \oplus V^*(I,C)V)U = U^*\begin{pmatrix} 0 \\ V^*\end{pmatrix} (I,C)  \begin{pmatrix} 0 & V \end{pmatrix}U$. Then we have $\psi(B)=v^* \left(I \otimes  \left(U^*\begin{pmatrix} 0 \\ V^*\end{pmatrix}\right)\right) [L_0X_0 + L\ov X](C) \left(I \otimes \left(\begin{pmatrix} 0 & V \end{pmatrix}U\right)\right) v \geq 0$. In case of $\psi(B)=0$, we conclude $t=0$ and $C \approx A$. WLOG $C=A$ since we can absorb a unitary matrix into $V$. From $\spam(v_1,...,v_\delta)=\C^{\delta}$ we deduce that $VU$ is a scalar multiple of the identity.  
\item The first part follows from (b), the second because $\Hom(S)$ is directed.
\item This follows from $T(\delta)=\{V^*AV \ | \ A \in T(\delta) \}$. \qedhere
\end{enumerate}
\end{proof}

\begin{lemma}
Let $K \subseteq \S^g$ be matrix convex and $A \in K(\delta)$. Then the following is equivalent:
\begin{enumerate}[(a)]
\item $A$ is matrix extreme.
\item $A \notin \mconv(K(\delta) \setminus \{U^*AU \ | \ U \in \C^{\delta \times \delta} \text{ unitary}\})$.
\end{enumerate} 
\end{lemma}

\begin{proof}
$(a) \implies (b)$: This is clear. \\[0.2cm]
$(b) \implies (a)$: Suppose that $A=\sum_{j=1}^r V_j^* B_j V_j$ with $B_j \in K(1),...,K(\delta)$, $V_j \in \C^{\delta \times \delta} \setminus \{0\}$. We have to show that $B_j \approx A$ for all $j \in \{1,...,r\}$. To achieve that we can suppose that $K=\mconv(A,B_1,...,B_r)$ and therefore $K$ is compact. Set $S=\mconv(K(\delta) \setminus \{U^*AU \ | \ U \in \C^{\delta \times \delta} \text{ unitary}\}) \neq K$. Then we know that $\Hom(S) \subseteq \Hom(K)$ and $(I,A) \in \Hom(K) \setminus \Hom(S)$ from \autoref{homo} (a). Also from \autoref{homo} we know that $\Hom(K)$ is closed and contains no non-trivial subspace. Hence the Minkowski theorem for cones tells us that $\Hom(K)(\delta)$ is the convex hull of its extreme rays. As $\Hom(K)(\delta) \neq \Hom(S)(\delta)$ there exists even an extreme ray $C \in \Hom(K)(\delta)$ that is not contained in $\Hom(S)(\delta)$. We have $\Hom(K)(\delta)=\Hom(S)(\delta) \cup \{V^* (I,A) V \ | \ V \in \C^{\delta \times \delta} \text{ invertible}\}$. Hence we know that there is $V \in \C^{\delta \times \delta}$ invertible such that $C=V^*(I,A)V$ is extreme in $\Hom(K)(\delta)$ and not contained in $\Hom(S)(\delta)$. \autoref{homo} tells us that $(I,A)$ is an extreme ray of $K$. Hence $A$ is matrix extreme in $K$ and for all $j$ we get $B_j \approx A$.   \\[0.2cm] $(b) \implies (a)$ also follows easily from the weak separation theorem \autoref{exposs}.
\end{proof}

\begin{lemma} \label{kreini}
Let $K \subseteq \S^g$ be compact and matrix convex with $\text{kz}(K)=\delta < \infty$. Then $K=\mconv(\text{mext}(K))$. 
\end{lemma}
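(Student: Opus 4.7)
The plan is to combine the classical Krein--Milman theorem, applied level-by-level, with the hypothesis $\text{kz}(T)=\delta$ to collapse the problem to finitely many finite-dimensional convex instances. By the very definition of $\kz_\delta$, the assumption gives
\[
T = \kz_\delta(T) = \mconv(T(1) \cup \cdots \cup T(\delta)).
\]
For each $k \in \{1,\dots,\delta\}$, the level $T(k)$ is a compact convex subset of the finite-dimensional real vector space $\S^g(k)$, so the ordinary Krein--Milman theorem gives $T(k) = \conv(\text{ex}(T(k)))$.

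The bridge between ordinary convex hulls and matrix convex hulls is the observation that, for any subset $E \subseteq \S^g(k)$, the classical convex hull $\conv(E)$ sits inside the $k$-th level of $\mconv(E)$: indeed by \Cref{konv} every matrix convex set containing $E$ is in particular convex at level $k$, and taking the intersection over all such sets preserves this. Combining this with the definition of an ordinary extreme point of $T$ (which is precisely an extreme point of some level $T(k)$, so $\text{ex}(T) = \bigcup_k \text{ex}(T(k))$ as a graded set), we obtain for each $k \leq \delta$
\[
T(k) \;=\; \conv(\text{ex}(T(k))) \;\subseteq\; \mconv(\text{ex}(T(k)))(k) \;\subseteq\; \mconv(\text{ex}(T)).
\]

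Taking the union over $k \in \{1,\dots,\delta\}$ and applying $\mconv$ monotonically gives
\[
T \;=\; \mconv\!\left(\bigcup_{k=1}^\delta T(k)\right) \;\subseteq\; \mconv(\text{ex}(T)).
\]
The reverse inclusion is immediate: $\text{ex}(T) \subseteq T$, and $T$ is matrix convex, so $\mconv(\text{ex}(T)) \subseteq T$.

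There is essentially no obstacle here --- compactness of $T$ guarantees compactness at each level and thus the applicability of classical Krein--Milman, while $\kz(T) < \infty$ reduces the matrix-convex reconstruction of $T$ to a finite union of levels. The only point requiring a moment of care is the equality $\text{ex}(T) = \bigcup_k \text{ex}(T(k))$, but this is exactly what the definition given just above the lemma says.
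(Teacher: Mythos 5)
Your proof has a fundamental misinterpretation of the notation. In this paper, $\text{ex}(T)$ denotes the set of \emph{matrix extreme} points of $T$, not the ordinary (level-wise) extreme points. This is unambiguous from the proof of \Cref{krain}, which deduces the Free Minkowski theorem from \Cref{kreini} by noting that ``$\kz(K(m))$ is the matrix convex hull of its matrix extreme points,'' and from the whole framing of Section~5 as a free analogue of the Krein--Milman theorem of [WW] in terms of matrix extreme points. Under that reading, your argument does not work: the crucial step $T(k) = \conv(\text{ex}(T(k)))$ invokes the classical Krein--Milman theorem and therefore produces ordinary extreme points, which form a strictly larger set than the matrix extreme points of $T$ at level $k$. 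Concretely, if $T = \mconv(A,0)$ with $A$ an irreducible $g$-tuple in $\S^g(n)$, then a proper compression $V^*AV$ (with $V$ an isometry, $V \neq 0$) can be an ordinary extreme point of $T$ at its level, yet it is \emph{not} matrix extreme, since it admits the nontrivial matrix convex combination $V^*AV$ in the weakened sense noted right after the definition.

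A second warning sign you should have caught: under your interpretation, the hypothesis $\kz(T)=\delta<\infty$ is never actually used --- taking the union over all levels $k$ would give the same argument for an arbitrary compact matrix convex set. The fact that the paper states \Cref{kreini} with this hypothesis and then needs a separate reduction in \Cref{krain} signals that the content lies elsewhere. The paper's actual proof is an induction on $\delta$; it applies a separation result over a real closed extension field (\Cref{step}, after [NT2]) to cut out a compact ``exposed face'' $Z_1 \subseteq T$, argues via \Cref{david} (Davidson's proof of Arveson's boundary theorem) that a point of $Z_1$ generating all of $Z_1$ under $\mconv$ must be matrix extreme, and otherwise produces a strictly decreasing chain of Zariski closures $(R_m)$ that must terminate. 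None of this is captured by the level-by-level classical Krein--Milman argument you gave.
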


\begin{proof}
From \autoref{homo} we know that $\Hom(K)(\delta)$ is closed and contains no non-trivial linear subspaces. Hence the Minkowski theorem from the theory of ordinary convexity tells us that $\Hom(K)(\delta)$ is the conic hull of its extreme rays. Let $(0_s \oplus V^*V, 0_s \oplus V^*BV) \in \Hom(K)(\delta)$ be extreme with $B \in K$ and $V$ invertible. Then \autoref{homo} tells us that $B$ is matrix extreme in $K$. Hence we see that 
\begin{align*}
\mcc\left(\left\{ (I,A) \ \middle| \ A \in \bigcup_{k=1}^\delta K(k) \text{ matrix extreme }\right\}\right)=\Hom(K).
\end{align*} From \autoref{homo} (a) we conclude that $K=\mconv(\text{mext}(K))$.
\end{proof}

\begin{theo} \label{krain} (Free Minkowski theorem for matrix extreme points - first half)
Let \marginpar{[\autoref{ex6}, \\\autoref{ex14}]} $K \subseteq \S^g$ be compact and matrix convex. Then $K=\mconv(\text{mext}(K))$.
\end{theo}

\begin{proof}
We apply \autoref{kreini} to show that $\kz(K(m))$ is the matrix convex hull of its matrix extreme points. Since the matrix extreme points of $\kz(K(m))$ are exactly the matrix extreme points of $K$ in level at most $m$, this shows the claim. 
\end{proof}

\begin{lemma} \label{charabs} \cite[Theorem 3.10]{EHKM}
Let $K \subseteq \S^g$ be matrix convex and $A \in K(\delta)$. Then $A$ is absolute extreme if and only if $A$ is irreducible and for all $\ep \in \N$, $B \in (\C^{\delta \times \ep})^g$, $C \in \S^g(\ep)$ \begin{align*}\begin{pmatrix} A & B \\ B^*&  C\end{pmatrix} \in K(\delta+\ep)\end{align*} implies $B=0$. One can weaken the condition to $\ep=1$.
\end{lemma}

\begin{proof} \cite[Theorem 3.10]{EHKM}
We prove the last part first: If $B \in (\C^{\delta \times \ep})^g \setminus \{0\}$ and $C \in \S^g(\ep)$ such that $\begin{pmatrix} A & B \\ B^*&  C\end{pmatrix} \in K(\delta+\ep)$, then we can choose $v \in \mathcal{S}^{\ep-1}$ such that $Bv \neq 0$ and have $\begin{pmatrix} A & (Bv) \\ (vB)^*&  v^*Cv\end{pmatrix} \in K(\delta+1)$ \\[0.2cm]
"$\Longrightarrow$": Let $A$ be absolute extreme. We have already seen in the proof of \autoref{extrdef} that $A$ is irreducible. Suppose $E:=\begin{pmatrix} A & b \\ b^*&  c\end{pmatrix} \in K(\delta+1)$. Then we know that there is unitary $U$ such that $U^*\begin{pmatrix} A & b \\ b^*&  c\end{pmatrix}U=\begin{pmatrix} A & 0 \\ 0 & z\end{pmatrix}=:F$ for some $z \in \R^g$. Fix $i \in \{1,...,g\}$. The characteristic polynomials of $F_i$ and $E_i$ agree for each $i \in \{1,...,g\}$. We can suppose $A_i$ is diagonal with entries $\lambda_1,...,\lambda_\delta$. Now $\chi(F_1)=\prod_{j=1}^\delta (\lambda_j-X)(z_i-X)$, $\chi(E_1)=\prod_{j=1}^\delta (\lambda_j-X)(c_i-X)-\sum_{h=1}^\delta |(b_i)_h|^2 \prod_{j \neq h}(\lambda_j-X)$. Hence
\begin{align*}
\prod_{j=1}^\delta (\lambda_j-X)(c_i-z_i)=\sum_{h=1}^\delta |(b_i)_h|^2 \prod_{j \neq h}(\lambda_j-X)
\end{align*} 
In order that the degrees on both sides agree, both sides have to equal zero; in particular we obtain $b_i=0$.  \\[0.2cm]
"$\Longleftarrow$": Suppose the right side holds. Let $A=\sum_{j=1}^r V_j^* B_j V_j$ where $B_j \in K(k_j)$, $V_j \in \C^{k_j \times \delta} \setminus \{0\}$ and $\sum_{j=1}^r V_j^* V_j=I_\delta$. WLOG let the $B_j$ be irreducible (otherwise split $V_j^*B_jV_j$ into several summands and discard those which for which the matrix weights on the left and right side are zero). Define $V^*=(V_1^* \ ... \ V_r^*)$, $C=\bigoplus_{j=1}^r B_j \in K$. Then $V$ is an isometry and $V^*CV=A$. $VV^*$ is a projection and the requirements say that $0=(1-VV^*)CV$. Now $CV=(1-VV^*)CV+VV^*CV=VA$. We also get $V^* C=A V^*$ by applying the involution. We conclude $C V V^*=V A V^*=V V^* C$ . This means
\begin{align*}
\begin{pmatrix}
B_1 V_1 V_1^* & \hdots & B_1 V_1 V_m^* \\
\vdots & \vdots & \vdots \\
B_m V_m V_1^* & \hdots & B_m V_m V_m^*
\end{pmatrix} = \begin{pmatrix}
V_1 V_1^* B_1 & \hdots & V_1 V_m^*B_m \\
\vdots & \vdots & \vdots \\
V_m V_1^* B_1 & \hdots & V_m V_m^*B_m
\end{pmatrix}
\end{align*} 
The diagonal entries together with the fact that the $B_j$ are irreducible and \autoref{burn} tell us that there is $\lambda_j \in (0,1]$ such that $V_j V_j^*=\lambda_j I$. In particular $V_j^*$ is injective and $k_j \leq \delta$. Since $CV=VA$, we have $B_j \frac{1}{\sqrt{\lambda_j}} V_j = \frac{1}{\sqrt{\lambda_j}} V_j A$ for all $j$. Thus $\frac{1}{\sqrt{\lambda_j}} V_j A \frac{1}{\sqrt{\lambda_j}} V_j^* = B_j$ and $B_j \in \mconv(A)$. We conclude $B_j \approx A$ because $A$ is matrix extreme in $\mconv(A)$ (\autoref{david}).  
\end{proof}

\begin{definition}
Let $K \subseteq \C^n$ be a convex set. We say that $K$ is balanced if for all $x \in K$ and $\lambda \in \C$ with $|\lambda| \leq 1$ already $\lambda x \in K$. 
\end{definition}

The next lemma analyzed when we can dilate a matrix extreme point to another matrix extreme points. Using slightly different constructions for the proof, similar version have appeared earlier (\cite[Lemma 2.3]{DK}) and later (\cite[Section 2.2]{EH}).

\begin{lemma} \label{matexdil}
Let \marginpar{[\autoref{ex10}]}$K \subseteq \S^g$ be a compact matrix convex set. Let $A \in K(m)$ be a matrix extreme point of $K$. Then either $A$ is absolute extreme or it dilates to a matrix extreme point
\begin{align*}
B=\begin{pmatrix}
A & b \\
b^* & c
\end{pmatrix}
\end{align*}
where $b \in (\C^m)^g \setminus \{0\}$ and $c \in \R^g$. 
\end{lemma}

\begin{proof} This proof is inspired by \cite[Proposition 5.1]{M}.
Assume that $A$ is not absolute extreme. Consider the compact convex set \begin{align*} F:=\left\{b \in (\C^m)^g \ \middle| \ \exists c \in \R^g: \ \begin{pmatrix} A & b \\ b^* & c \end{pmatrix} \in K(m+1)\right\} \end{align*} which is balanced due to
\begin{align*}
\begin{pmatrix}
I & 0 \\ 0 & \lambda^* 
\end{pmatrix}
\begin{pmatrix}
A & b \\ b^* & c 
\end{pmatrix}
\begin{pmatrix}
I & 0 \\ 0 & \lambda 
\end{pmatrix}=\begin{pmatrix}
A & \lambda b \\ (\lambda b)^* & c 
\end{pmatrix} \text{ for } \lambda \in \C \text{ with } |\lambda|=1.
\end{align*} Choose an extreme point $b \neq 0$ of $F$. Afterwards choose an extreme point $c$ of the set $E=\left\{c \in \R^g \ \middle| \ \begin{pmatrix} A & b \\ b^* & c \end{pmatrix} \in K(m+1)\right\}$ and set $B=\begin{pmatrix}
A & b \\
b^* & c
\end{pmatrix}$. \\[0.2cm]
Assume now $B=\sum_{j=1}^r V_j^* H_j V_j$ where the $V_j \neq 0$, $H_j \in \bigcup_{h=1}^{m+1}K(h)$ and $\sum_{j}^r V_j^* V_j = I$. WLOG we can suppose $H_j \in K(m+1)$ (Justification: In the case that $K(1)$ contains only one element $a$, it is easy to see that $a$ is the only matrix extreme/absolute extreme point of $K$; thus the theorem is true in this setting. In the case that some $s_j:=m+1-\size(H_j)>0$ and $K(1)$ contains infintely many elements, we can find $a_j \in K(1)$ such that $B \not\approx H_j':=H_j \oplus \bigoplus_{\al=1}^{s_j} a_j$ because the $B_i$ have only finitely many eigenvalues. Now we replace $H_j$ in the matrix convex combination by $Q_jH_j'Q_j^*$ where $Q_j: \C^{m+1} \rightarrow \C^{\size{H_j}} \oplus \{0\}^{s_j}$ is the canonical projection).\\[0.2cm] Let $P: \C^{m+1} \rightarrow \C^m, (x_1,...,x_{m+1}) \mapsto (x_1,...,x_m)$ be the projection on $\C^m$. Then we can choose unitaries $U_j$ such that $U_j V_j P^*=\begin{pmatrix}
W_j \\
0
\end{pmatrix}$. We calculate 
\begin{align*}
&A=PBP^*=\sum_{j=1}^r (W_j^* \ 0) U_jH_jU_j^* \begin{pmatrix}
W_j \\
0
\end{pmatrix}=\sum_{j=1}^r W_j^* (P U_j H_j U_j^* P^*) W_j.
\end{align*}
Define $J=\{j \in \{1,...,r\} \ | \ W_j \neq 0\}$. Therefore we conclude for all $j \in J$ that there are $\lambda_j \in \C \setminus \{0\}$ such that $P U_j H_j U_j^* P^*=Z_j^* A Z_j$ and $W_j=\lambda_j Z_j^*$ where the $Z_j \in \C^{m \times m}$ are unitary and $\sum_{j \in J} \lambda_j \lambda_j^*=1$. We deduce that $\mathcal{U}_j U_j H_j U_j^* \mathcal{U}_j^* = \begin{pmatrix} A & s_j \\ s_j^* & t_j \end{pmatrix}$ with $\mathcal{U}_j:=Z_j \oplus 1$ for some $s_j,t_j$. Define $H=\{1,...,r\} \setminus J$. For $h \in H$ write $H_h=
\begin{pmatrix}
B_h & s_h \\
s_h^* & t_h
\end{pmatrix}$. Now we calculate
\begin{align*}
&B=\sum_{j \in J} V_j^* U_j^* \mathcal{U}_j^* [\mathcal{U}_j U_j H_j U_j^* \mathcal{U}_j^*] \mathcal{U}_j U_j V_j + \sum_{h \in H} V_h^* U_h^* [U_h H_h U_h^*] U_h V_h \\
&= \sum_{j \in J} V_j^* U_j^* \mathcal{U}_j^* \begin{pmatrix} A & s_j \\ s_j^* & t_j \end{pmatrix} \mathcal{U}_j U_j V_j
+\sum_{h \in H} V_h^* U_h^* \begin{pmatrix} B_h & s_h \\ s_h^* & t_h \end{pmatrix} U_h V_h, \\
&\text{where   } 
\mathcal{U}_j U_j V_j P^*= \mathcal{U}_j \begin{pmatrix}
W_j \\
0
\end{pmatrix}=\begin{pmatrix}
Z_j W_j\\
0
\end{pmatrix}=\begin{pmatrix} \lambda_j I \\ 0\end{pmatrix} \text{ for } j \in J.
\end{align*}
We have written $B$ as 
\begin{align*}
&I=\sum_{j \in J} \begin{pmatrix}
\lambda_j^* I & 0 \\
w_j^* & y_j^*
\end{pmatrix}
\begin{pmatrix}
\lambda_j I & w_j \\
0 & y_j
\end{pmatrix}+\sum_{h \in H} \begin{pmatrix}
0 & 0 \\
w_h^* & y_h^*
\end{pmatrix}
\begin{pmatrix}
0 & w_h \\
0 & y_h
\end{pmatrix}
\\&=
\sum_{j \in J}
\begin{pmatrix}
\lambda_j^* \lambda_j & \lambda_j^* w_j \\
\lambda_j w_j^* & w_j^* w_j + y_j^*y_j
\end{pmatrix} + \sum_{h \in H}
\begin{pmatrix}
0 & 0 \\
0 & w_h^* w_h + y_h^*y_h
\end{pmatrix}\\
&B=\sum_{j \in J} \begin{pmatrix}
\lambda_j^* I & 0 \\
w_j^* & y_j^*
\end{pmatrix}
\begin{pmatrix}
A & s_j \\
s_j^* & t_j
\end{pmatrix}
\begin{pmatrix}
\lambda_j I & w_j \\
0 & y_j
\end{pmatrix}+\sum_{h \in H} \begin{pmatrix}
0 & 0 \\
w_h^* & y_h^*
\end{pmatrix}
\begin{pmatrix}
B_h & s_h \\
s_h^* & t_h
\end{pmatrix}
\begin{pmatrix}
0 & w_h \\
0 & y_h
\end{pmatrix}\\
&=\sum_{j \in J}
\begin{pmatrix}
 \lambda_j^* \lambda_j A & \lambda_j^* A w_j + \lambda_j^* s_j y_j \\
 \lambda_j w_j^* A + \lambda_j y_j^* s_j^* & w_j^*A w_j + w_j^*s_jy_j + y_j^*s_j^*w_j + y_j^* t_j y_j
\end{pmatrix}+\sum_{h \in H}\begin{pmatrix}
0 & 0 \\
0 & q_H
\end{pmatrix}
\\&=\sum_{j \in J}\begin{pmatrix}
\lambda_j^*\lambda_j A & \lambda_j^* s_j y_j \\
\lambda_j y_j s_j^* & w_j^*A w_j + w_j^*s_jy_j + y_j^*s_j^*w_j + y_j^* t_j y_j
\end{pmatrix} +\sum_{h \in H} \begin{pmatrix}
0 & 0 \\
0 & q_H
\end{pmatrix}\\
& \text{where }q_h=w_hB_h w_h^* + w_h^*s_hy_h + y_h^*s_h^*w_h + y_h^* t y_h.
\end{align*}
Because $F$ is balanced, we have $\frac{y_j}{|y_j|}\frac{\lambda_j^*}{|\lambda_j|} s_j \in F$. Now the Cauchy-Schwarz inequality shows that $b=\sum_{j \in J} \lambda_j^* y_j s_j=\sum_{j \in J} (|\lambda_j| |y_j|) \frac{y_j}{|y_j|} \frac{\lambda_j^*}{|\lambda_j|} s_j$ is a convex combination of the $\frac{y_j}{|y_j|}\frac{\lambda_j^*}{|\lambda_j|}s_j$ and $0 \in F$. $b$ is extreme and we conclude $b=\frac{y_j}{|y_j|}\frac{\lambda_j^*}{|\lambda_j|} s_j$ for all $j \in J$. The Cauchy-Schwarz inequality implies $|\lambda_j| = |y_j|$. Thus $\sum_{j \in J} y_j^*y_j=1$ and $w_\ell=0$ for all $\ell \in \{1,...,r\}$, $y_h=0$ for $h \in H$. We deduce
\begin{align*}
\begin{pmatrix}
A & s_j \\ s_j^* & t_j
\end{pmatrix}=\begin{pmatrix}
A & b \frac{|y_j|}{y_j}\frac{|\lambda_j|}{\lambda_j^*} \\ b^* \frac{|y_j|}{y_j^*}\frac{|\lambda_j|}{\lambda_j} & t_j
\end{pmatrix}=
\begin{pmatrix}
I & 0 \\
0 & \frac{|y_j|}{y_j^*}\frac{|\lambda_j|}{\lambda_j}
\end{pmatrix}
\begin{pmatrix}
A & b \\
b^* & t_j
\end{pmatrix}
\begin{pmatrix}
I & 0 \\
0 & \frac{|y_j|}{y_j}\frac{|\lambda_j|}{\lambda_j^*}
\end{pmatrix}
\end{align*} We see $t_j \in E$ and $\sum_{j \in J} y_j^* y_j t_j=t$ for all $j \in J$. Hence $t=t_j$ by the choice of $t$.
\end{proof}

\begin{cor} \label{abso} (Absolute Minkowski theorem for sets with finite $\text{kz}$-number)
Let \marginpar{[\autoref{tracesalgebra}]}$K$ be a compact matrix convex set and $\text{kz}(K)<\infty$. Then $\mconv(\abex{K})=K$. If $S \subseteq \S^g$ is a set with only irreducible elements and $\mconv(S)=K$, then $S$ contains every element of $\abex(K)$ (up to unitary equivalence). 
\end{cor}

\begin{proof}
We have $\mconv(\mex(K))=K$ due to the Minkowski theorem for matrix extreme points. Now let $A \in \mex(K)$. If $A$ is absolute extreme, $A \in \abex{K}$. Otherwise there exists $B \in \mex(K)$ and a non-trivial projection $P$ such that $A=PBP^*$. However it is clear that each matrix extreme point has size at most $\text{kz}(K)$. This means that every matrix point dilates to an absolute extreme point. Hence $\mex(K) \subseteq \mconv(\abex(K))$. The second part is clear from the definition of absolute extreme points.
\end{proof}

The following corollary was obtained in \cite{EHKM} by applying the Gleichstellensatz. We are able to conclude it from \autoref{abso}. This results in another way to prove the Gleichstellensatz (see the following proof).

\begin{cor} \label{gleich2} \cite[Theorem 1.2]{EHKM} \label{gleichcor} Let $L \in \S^g$. Then $S=\mathcal{D}_{\ml}^\circ=\mconv(L,0)$ has only finitely many absolute extreme points. Their matrix convex hull equals $S$.
\end{cor}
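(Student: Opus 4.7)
The plan is to apply the absolute Minkowski theorem \Cref{abso} after establishing that $S$ is compact and has finite convexity number, then to obtain finiteness by reducing to the decomposition theorem \Cref{zetdecomp}.

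First, \Cref{polari} gives directly $S = \mathcal{D}_{I-L\ov X}^\circ = \mconv(L,0)$. Writing $\delta = \size(L)$, each level of $\mconv(L,0)$ is a continuous image of a compact parameter space: by the free Caratheodory lemma together with \Cref{orka}, every point of $\mconv(L,0)(m)$ is a matrix convex combination of at most $4m^2+1$ compressions of $L$ and of $0$, with all occurring operator norms bounded by $\|L\|$. Hence $S$ is compact. Moreover $L \in S(\delta)$ and $0 \in S(1)$, so
\[
S = \mconv(L,0) \subseteq \mconv(S(1) \cup S(\delta)) \subseteq \kz_\delta(S) \subseteq S,
\]
i.e.\ $\kz(S) \leq \delta < \infty$. \Cref{abso} then yields that the matrix convex hull of the set $T$ of absolute extreme points of $S$ equals $S$.

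It remains to prove that $T$ is finite. Every absolute extreme point has size at most $\kz(S) \leq \delta$, so $T$ lives in at most $\delta$ levels. By matrix extremality combined with \Cref{orka}, every matrix extreme point of $\mconv(L,0)$ is unitarily equivalent to a compression $PLP^*$ of $L$, or equals $0$. My plan is now to invoke the decomposition \Cref{zetdecomp}, $L \approx \bigoplus_{i=1}^{s} L_i$, into simultaneously $\mathcal{D}$-irreducible and $\mathcal{D}$-minimal summands, and to show that the absolute extreme points of $S$ are, up to unitary equivalence, precisely the irreducible blocks $L_i$ (together possibly with $0$). This will give finiteness of $T$.

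The hard part will be this last step, which splits into two assertions. First, any compression $PLP^*$ of rank strictly less than $\size(L_i)$ admits a non-trivial border dilation back into $L_i \subseteq S$, so fails the rigidity defining an absolute extreme point. Second, each $L_i$ is itself rigid inside $S$: a non-trivial border block $\begin{pmatrix} L_i & b \\ b^* & c \end{pmatrix} \in S$ would, after expansion via the $C^*$-structural \Cref{ken} and subsequent compression, contradict the $\mathcal{D}$-minimality of $L_i$ through the uniqueness statement of \Cref{ehkm}. Making these two assertions precise is the principal technical obstacle; once established, $T$ is in bijection (up to unitary equivalence) with the finite set of summands appearing in \Cref{zetdecomp}, and finiteness follows.
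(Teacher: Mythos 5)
Your compactness and $\kz(S)\le\delta$ observations are fine, and applying \Cref{abso} to get $\mconv(T)=S$ is exactly what the paper does. The divergence is in the finiteness step, and there your plan both departs from the paper and leaves a real gap.

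The paper obtains finiteness directly and cheaply from the free Caratheodory lemma: since $L\in S=\mconv(T)$, there is a finite list $A_1,\dots,A_k\in T$ with $L\in\mconv(A_1,\dots,A_k)$; absolute extremality then forces each $A_i$ to peel off as a direct summand of $L$ in turn, and conversely any absolute extreme point is trapped as a summand of one of the $A_i$, so $T$ is the finite set $\{A_1,\dots,A_k\}$ (possibly together with $0$). No input from Chapter 3 or 4 is used. This matters: the whole point of the paper's proof, stated immediately before the corollary, is to \emph{avoid} the Gleichstellensatz machinery, so that the corollary then yields an independent re-proof of the Gleichstellensatz. Your route runs through \Cref{zetdecomp} and \Cref{ehkm}, which are precisely the Gleichstellensatz-side results; this is essentially the [EHKM] proof, not an alternative, and it would be circular if one then tried to re-derive the Gleichstellensatz from the corollary as the paper does.

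Beyond the route choice, the central claim in your plan — that the absolute extreme points of $S=\mconv(L,0)$ are, up to unitary equivalence, exactly the $\mathcal{D}$-irreducible, $\mathcal{D}$-minimal summands $L_i$ of $L$ — is not correct as stated. A summand $L_i$ can be redundant, meaning $\mathcal{D}_L=\mathcal{D}_{\bigoplus_{j\ne i}L_j}$, in which case $S=\mconv\bigl(\bigoplus_{j\ne i}L_j,0\bigr)$ and $L_i$ is \emph{not} absolute extreme in $S$ (it sits strictly inside the matrix convex hull of the remaining summands and admits nontrivial dilations there). You would first have to pass to a minimal subcollection of the $L_i$, and justifying that step via \Cref{ken} and \Cref{ehkm} is exactly the "principal technical obstacle" you flag as unresolved. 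So as written the proposal does not close; the step you defer is the whole content of the finiteness argument, and the paper's Caratheodory-based shortcut is the idea you are missing.
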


\begin{proof}
By \autoref{abso} and \autoref{dualo} we can choose $A_1,...,A_k \in \abex{S}$ which are pairwise not unitary equivalent and fulfill $L \oplus 0 \in \mconv(A_1,...,A_k)$. Thus $\{A_1,...,A_k\}$ equals $\abex(S)$ (up to unitary conjugations) by \autoref{abso}. For every $j \in \{1,...,k\}$ we have $A_j \in S$ and hence $A_j=0$ or $A_j \in T:=\mconv(L)$. Let $r=k$ if $0$ is not an absolute extreme point of $S$, and $r=k-1$ as wells as WLOG $A_k=0$ if $0$ is an absolute extreme point of $S$. \\[0.2cm] Since $A_1$ is absolute extreme in $T$, we know that $A_1 \oplus B_1 \approx L$ for a tuple $B_1$. Now $A_2 \in \mconv(B_1,A_1)$ and the fact that $A_2$ is absolute extreme implies that $B_1 \approx A_2 \oplus B_2$ for a tuple $B_2$. Inductively we see that up to unitary equivalence $H:=\bigoplus_{j=1}^r A_j$ is a direct summand of $L$. We observe $\mathcal{D}_{\ml}=\mathcal{D}_{\mh}$. \\[0.2cm]
We have seen the following: If $S$ is the polar of a free spectrahedron $\mathcal{D}_\ml$ and $\mh$ is a monic linear pencil such that $\mathcal{D}_\mh=\mathcal{D}_\ml$, then $S$ has only finitely many absolute extreme points and the direct sum $B$ of the absolute extreme points that do not equal $0$ is a direct summand of $H$ and defines the same free spectrahedron as $H$. Hence $I-B \ov X$ is the minimal pencil defining $\mathcal{D}_\ml$ (this is the statement of the Gleichstellensatz in the version of Helton, Klep and McCullough; note that $B$ does not depend on the pencil defining our spectrahedron).
\end{proof}

\begin{cor}
Let $S \subseteq \S^g$ matrix convex, closed and $0 \in S$. Then $S$ is the polar of a free spectrahedron defined by a monic linear pencil if and only if $S$ has only finitely many absolute extreme points and the matrix convex hull of those is again $S$.
\end{cor}

\begin{rem} 
The above construction yields also another proof of Arvesons boundary theorem for finite-dimensional operator systems in a matrix algebra (\autoref{david}). Indeed if $L \in \S^g(\delta)$ is irreducible, we have seen $\mconv(L,0)=\mconv(\abex(\mconv(L,0)))$ and it is clear that only $0$ and $L$ can be absolute extreme in $\mconv(L)$. Hence $L$ is absolute extreme. Together with \autoref{extrdef} this proves the claim if we can show that there is no unitary $U \in \C^{\delta \times \delta} \setminus \{\lambda I \ | \ \lambda \in \C\}$ such that $U^*LU=L$. If $U^*LU=L$, then the $L_i$ and $U$ commute. By Burnsides theorem \autoref{burn} the algebra generated by the $L_i$ equals $\C^{\delta \times \delta}$ and so $U$ is in its center. This means that $U$ is a scalar multiple of the identity. 
\end{rem}

\begin{proposition} \label{ragi}
Let $K \subseteq \R^g$ convex, compact. Then $A \in K$ is an exposed extreme point of $K$ if and only if for all $B \in K$ there is $\varphi: \R^g \rightarrow \R$ linear such that $\varphi(A)<\varphi(B)$ and $\varphi(A) \leq \varphi(C)$ for all $C \in K$. 
\end{proposition}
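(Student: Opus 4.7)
The plan for ($\Rightarrow$) is essentially tautological: if $A$ is an exposed extreme point of $K$ there is a single linear functional $\psi$ with $\psi(A) \leq \psi(C)$ for all $C \in K$ and $\psi(A) < \psi(B)$ for every $B \in K \setminus \{A\}$, so the same $\psi$ certifies the right-hand condition uniformly in $B$. (The case $B = A$ is vacuously excluded since no $\varphi$ can satisfy $\varphi(A) < \varphi(A)$; I read the hypothesis as ranging over $B \in K \setminus \{A\}$.)

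For ($\Leftarrow$) my plan is a compactness-and-summation argument. The hypothesis yields, for each $B \in K \setminus \{A\}$, a linear functional $\varphi_B : \R^g \to \R$ which after subtracting the constant $\varphi_B(A)$ I may take to satisfy $\varphi_B(A) = 0$, $\varphi_B \geq 0$ on $K$, and $\varphi_B(B) > 0$. The goal is to combine the family $\{\varphi_B\}$ into a single linear functional $\varphi$ with $\varphi(A) = 0$, $\varphi \geq 0$ on $K$, and $\varphi > 0$ on $K \setminus \{A\}$; any such $\varphi$ exhibits $A$ as an exposed point, and extremality then follows automatically.

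I will work level by level using compactness. For each $n \in \N$ the set $K_n := \{x \in K \mid \|x - A\| \geq 1/n\}$ is compact. For $x \in K_n$ the open set $U_x := \{y \in K \mid \varphi_x(y) > 0\}$ contains $x$ (by continuity of $\varphi_x$), so the family $\{U_x\}_{x \in K_n}$ is an open cover of $K_n$. By compactness I extract a finite subcover and sum the corresponding functionals to obtain $\psi_n$, a non-negative linear functional on $K$ that vanishes at $A$ and is strictly positive on every point of $K_n$. Finally $\varphi := \sum_{n \geq 1} 2^{-n}(1+\|\psi_n\|)^{-1} \psi_n$ is an absolutely convergent series of linear functionals on $\R^g$; termwise one sees $\varphi \geq 0$ on $K$ and $\varphi(A) = 0$, and since every $B \in K \setminus \{A\}$ lies in some $K_n$, the positivity $\psi_n(B) > 0$ forces $\varphi(B) > 0$.

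The only genuine obstacle is the passage from the pointwise family $\{\varphi_B\}$ to a single uniform $\varphi$ whose zero set on $K$ is exactly $\{A\}$; the cover-and-sum step above is the standard way to handle this in a finite-dimensional ambient space, and matrix convexity of $K$ plays no role in the argument since the statement only concerns the first level.
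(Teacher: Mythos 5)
Your argument is correct, and you handle the one awkward point in the statement (the literal quantifier ``for all $B \in K$'' cannot include $B=A$) in the only sensible way. The paper itself states \Cref{ragi} without any proof, so there is no authorial argument to compare against; you are supplying the missing one.

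Both directions check out. The forward direction is indeed tautological: a single exposing functional works uniformly for every $B$. For the reverse direction, your cover-and-sum construction is sound. After translating each separating functional so that it vanishes at $A$, the functions are affine rather than linear, but this does no harm: the final $\varphi$ is affine, $\varphi(A)=0$, $\varphi\ge 0$ on $K$, $\varphi>0$ on $K\setminus\{A\}$, and replacing $\varphi$ by its linear part recovers a genuine linear functional exposing $A$; extremality then follows since exposed points are extreme. One small remark on hygiene: when you normalize by $(1+\|\psi_n\|)^{-1}$ you should take $\|\psi_n\|$ to mean the norm of the linear part of $\psi_n$; the constant part is then controlled automatically via $|\psi_n(0)|\le\|\psi_n\|\,\|A\|$, so the termwise bound $2^{-n}(1+\|A\|)$ gives absolute convergence.

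It is worth knowing that in the finite-dimensional setting of $\R^g$ there is a slightly slicker route that avoids the countable exhaustion and the series entirely: translate so $A=0$, let $N$ be the normal cone $\{v : \langle v, C\rangle \le 0 \text{ for all } C \in K\}$, and pick any $v$ in the relative interior of $N$ (nonempty since $N$ is a nonempty convex set in $\R^g$). If some $B\in K\setminus\{0\}$ had $\langle v, B\rangle = 0$, the hypothesis supplies $w\in N$ with $\langle w, B\rangle<0$, and then $(1+\epsilon)v-\epsilon w\in N$ for small $\epsilon>0$ by the definition of relative interior, giving $\langle (1+\epsilon)v-\epsilon w,\, B\rangle = -\epsilon\langle w,B\rangle>0$, a contradiction. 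Hence $v$ exposes $A$. Your compactness argument has the advantage of not invoking relative interiors, but the cone argument is shorter and worth having in hand. As you observe, matrix convexity of $K$ plays no role in either version.
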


\begin{rem}\label{canv}
Let $L,H \in \S^g(\delta)$ and $v \in \mathcal{S}^{\delta^2}$ with $\ml(H) \succeq 0$, $\ker \ml(H)=\spam(v)$ and $\dim M(H,v)_\ml=\delta$. \\[0.2cm]
Write $v=\sum_{\al}e_\al \otimes v_\al=\sum_{\al}w_\al \otimes e_\al$. Set $w=\sum_{\al}e_\al \otimes w_\al$. Then $\ml(H) \approx \mh(L)$, so $\mh(L) \succeq 0$ and $\ker \mh(L)=\spam(w)$. We have $(v_1 \ ... \ v_\delta)=(w_1 \ ... \ w_\delta)^*$ and hence $\dim M(L,w)_\mh=\delta$.
\end{rem}

\begin{proposition} \label{grund}\textcolor{inv}{a}
\begin{enumerate}[(a)]
\item Let $K \subseteq \S^g$ be matrix convex. Every matrix exposed point of $K$ is matrix extreme. 
\item Let $H \in \S^g(\delta)$ be irreducible. Then $H$ is exposed in $(\mconv (H,0))(\delta)$.
\end{enumerate}
\end{proposition}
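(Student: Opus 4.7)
The plan for part (a) is to exploit, via the linearity of the exposing pencil $L$ of size $\delta$, the positive-semidefinite decomposition
\[
L(A)=\sum_{i=1}^r(I_\delta\otimes V_i)^*L(B_i)(I_\delta\otimes V_i)
\]
arising from any matrix convex combination $A=\sum_{i=1}^rV_i^*B_iV_i$ with $V_i\in\C^{k_i\times\delta}$ surjective and $\sum_iV_i^*V_i=I_\delta$. Because each $B_i\in K\subseteq\mathcal{D}_L$, the summands are positive semidefinite, and every $v\in\ker L(A)$ forces $L(B_i)(I_\delta\otimes V_i)v=0$ for all $i$. The key reduction will be to produce one such $v=\sum_\alpha e_\alpha\otimes v_\alpha$ for which the matrix $V=(v_1,\dots,v_\delta)\in\C^{\delta\times\delta}$ is invertible: then $(I_\delta\otimes V_i)v$ corresponds to $V_iV\neq 0$ for every nonzero $V_i$, and so $L(B_i)$ must be singular.

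To produce such a $v$, I would take any nonzero $v\in\ker L(A)$, let $P$ be the orthogonal projection of $\C^\delta$ onto $\spam(v_1,\dots,v_\delta)$, and reuse the compression computation already appearing in the proof of \Cref{specz} to check that $v\in\ker L(PAP^*)$. Hence $PAP^*\in\partial\mathcal{D}_L$ at level $\rk(P)$, and by matrix convexity of $K$ with $0\in K$ also $PAP^*\in K(\rk(P))$. Since $\partial\mathcal{D}_L\cap K=U(\delta)\cdot A$ is supported only at level $\delta$, this will force $\rk(P)=\delta$, so $V$ is invertible. This then forces $L(B_i)$ singular for each $i$, so $B_i\in\partial\mathcal{D}_L\cap K=U(\delta)\cdot A$, giving $k_i=\delta$ and $B_i\approx A$, which is exactly matrix extremality.

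For part (b) I plan to construct a matrix-exposing pencil for $B$ in the form $L=I-A\ov X$, where $A\in\partial\mathcal{D}_{I-B\ov X}(\delta)$ is chosen so that the kernel of $L(B)$ is one-dimensional, spanned by some $v$ with $\dim M_L(B,v)=\delta$. The existence of such an $A$ will follow from the Effros-Winkler separation of \Cref{abc}(a) applied to $\inte(\mconv(B,0))$ at the boundary point $B$, refined by Nash-manifold/density arguments analogous to those in the proofs of \Cref{existence} and \Cref{ald}: because $B$ is irreducible, \Cref{minirrchar} gives $\pz(\mconv(B,0))=\delta$, so generic points of $\partial\mathcal{D}_{I-B\ov X}(\delta)$ carry one-dimensional kernels whose matrix of coefficients has full rank, and the duality of \Cref{canv} transfers this full-span property from the pencil $I-B\ov X$ at $A$ to the pencil $L=I-A\ov X$ at $B$.

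To finish, I will take any $C\in\mconv(B,0)\cap\partial\mathcal{D}_L$, write $C=\sum_iW_i^*BW_i$ with $\sum_iW_i^*W_i\preceq I_{\size(C)}$, and apply exactly the kernel computation from part (a): every $w\in\ker L(C)$ satisfies $(I_\delta\otimes W_i)w\in\ker L(B)=\spam(v)$ for every $i$. The main obstacle will be the final rigidity step: from the tensor-level relation $(I_\delta\otimes W_i)w\in\spam(v)$ combined with the full-span property of $v$, I need to extract enough to apply \Cref{david} (Arveson boundary rigidity) to the irreducible $B$ and conclude that the $W_i$ are all scalar multiples of a common unitary, so that $C\approx B$ and matrix exposure holds. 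Coupling the Kronecker bookkeeping to \Cref{david} in a way that correctly handles $C$ at an arbitrary level is the delicate point.
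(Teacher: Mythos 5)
Your proof of part (a) is correct and follows essentially the same route as the paper: exploit the PSD decomposition $L(A)=\sum_i(I_\delta\otimes V_i)^*L(B_i)(I_\delta\otimes V_i)$, show via the compression argument that some $v\in\ker L(A)$ has invertible coordinate matrix $V=(v_1,\dots,v_\delta)$ (because a strictly smaller-rank compression of $A$ would land in $\partial\mathcal{D}_L\cap K$ at a level below $\delta$, which is forbidden), and conclude that each $B_i$ must itself lie on $\partial\mathcal{D}_L\cap K$ and hence be a unitary conjugate of $A$.

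For part (b), however, you are attacking the wrong target, and the wrong target does not obviously reduce to the right one without circularity. The claim (b) is that $B$ is \emph{exposed} in the ordinary convexity sense in $\mconv(B,0)(\delta)$, i.e.\ there is a single affine functional vanishing at $B$, strictly positive at every other point of $\mconv(B,0)(\delta)$, and nonnegative on all of $\mconv(B,0)(\delta)$. What you set out to prove is that $B$ is \emph{matrix exposed}, which is the content of \Cref{kra}(a). A matrix-exposing pencil $L$ certifies only that $\partial\mathcal{D}_L\cap\mconv(B,0)$ is the unitary orbit of $B$; the scalar functional $h(C)=v^*L(C)v$ you would extract from it vanishes on the \emph{entire} unitary orbit $\{U^*BU\}$, not just at $B$, so it does not separate $B$ from its own distinct unitary conjugates inside $\mconv(B,0)(\delta)$. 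Passing from matrix exposed to exposed is precisely what \Cref{grund2} does, and the paper's proof of \Cref{grund2} explicitly invokes \ref{grund}(b) to handle those unitary conjugates. So using matrix exposedness to deduce (b) is circular unless you independently dispose of the case $A=U^*BU$, $U\neq I$ --- which is exactly what the paper's second case does, using \Cref{canv} together with a full-span kernel vector, and which your sketch never addresses.

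Beyond the circularity, the rigidity step you flag as ``the delicate point'' is genuinely not closed. From $C=\sum_iW_i^*BW_i\in\partial\mathcal{D}_L(m)$ and $(I_\delta\otimes W_i)w\in\spam(v)$ you get $W_iW=\lambda_iV$ with $W\in\C^{m\times\delta}$, $V\in\C^{\delta\times\delta}$ invertible. This pins down $W_i$ only on $\im(W)\subseteq\C^m$, and even that requires $\rk(W)=\delta$, which is not automatic for an arbitrary $w\in\ker L(C)$. \Cref{david} applies to expressions of the form $B=\sum_jC_j^*BC_j$ with square $C_j\in\C^{\delta\times\delta}$ of a \emph{self}-map of $B$; you would have to already know $C\approx B$ to bring the decomposition into that shape. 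The paper avoids all of this: it proves ordinary exposedness directly via the characterization in \Cref{ragi}, splitting into (i) $A\not\approx B$, where $\mathcal{D}_{I-B\ov X}\subsetneq\mathcal{D}_{I-A\ov X}$ furnishes a boundary point $Z$ whose kernel vector produces a separating functional, and (ii) $A=U^*BU$, where \Cref{canv} together with the full-span kernel vector from \Cref{ald} produces a separating functional because $(I\otimes U)w\notin\spam(w)$ when $U$ is not a scalar. I recommend you replace your plan for (b) by this direct two-case separation, reserving the matrix-exposedness argument (and \Cref{david}) for \Cref{kra}(a), after \ref{grund}(b) and \ref{grund2} are already in place.
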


\begin{proof}
(a) Suppose $C-D \ov X$ is a linear pencil exposing $A$ and $A=\sum_j V_j B_j V_j^*$ such that $I = \sum_j V_j V_j^*$, every $V_j^*$ is surjective and $B_j \in K$. Assume $B_1$ is not unitarily equivalent to $A$. Since $(\mathcal{D}_{C-D \ov X} \cap K)(\delta-1)=\emptyset$ we know that there is $v \in \C^\delta \otimes \C^\delta$ such that $\dim M(A,v)_{C-D \ov X}=\delta$ and $\spam(v)=\ker(C-D \ov X)(A)$(\autoref{ald2}). Therefore $(I \otimes V_1^*)v=0$ implies $V_1=0$.  \\[0.2cm]
(b) Denote $K=(\mconv (H,0))(\delta)$. Due to \autoref{ragi} it is enough to show that for every $L \in K \setminus \{H\}$ we find $h: \S^g(\delta) \rightarrow \R$ affine linear such that $H \in \ker(h)$, $h(L)>0$ as well as $h(C) \geq 0$ for all $C \in K$. So fix $L \in K \setminus \{H\}$. \\[0.2cm]
Case $1$: If $L$ is not unitarily equivalent to $H$, then $\mathcal{D}_\ml \neq \mathcal{D}_\mh$. Since $L \in \mconv(H,0)$, we have $\mathcal{D}_\ml \supseteq \mathcal{D}_\mh$. Thus $\mathcal{D}_\ml \nsubseteq \mathcal{D}_\mh$ and we choose $Z \in \partial \mathcal{D}_\mh \cap \inte(\mathcal{D}_\ml)$ and $v \in \mathcal{S}^{\delta^2}$ such that $v^*\mh(Z)v=0$. Then set $h(C):=v^*(I-C \ov{X})(Z)v$ for $C \in \S^g(\delta)$ to obtain a separating affine linear function. \\[0.2cm]
Case $2$: Now assume that $U \neq I$ is unitary and $L=U^*HU$. Since $\pz(\mathcal{D}_\mh)=\delta$, we can choose $Z \in \partial\mathcal{D}_\mh$ and $v \in \C^{\delta \times \delta} \setminus \{0\}$ such that $\dim M(Z,v)_{\mh}=\delta$ and $\dim \ker (\mh)(Z)=1$ (\autoref{minirrchar} and \autoref{ald2}). We apply \autoref{canv} to get $w \in \mathcal{S}^{\delta^2}$ such that $\ker(I-Z \ov X)(H)=\spam(w)$ and $\dim M(H,w)_{I-Z \ov X}=\delta$. Now it is clear that $w^* (I- Z\ov X)(L)w > 0$ because $w$ is not an invariant subspace of $I \otimes U$. This leads to a separating hyperplane as in the case before. 
\end{proof}

\begin{lemma} \label{expos}
Let $A \in \S^g(\delta)$ be a matrix extreme point of a matrix convex set $K \subseteq \S^g$ with $0 \in K$. Suppose there is linear $\varphi: \S^g(\delta) \rightarrow \R$ such that $\varphi(A)=1$ and $\varphi(B)<1$ for all $B \in K(\delta) \setminus \{A\}$. Then $A$ is matrix exposed in $K$.
\end{lemma}

\begin{proof}
Since $A$ is matrix extreme, we know for the matrix convex set $S:=\mconv(K(\delta) \setminus \{U^* A U \ | \ U \in \C^{\delta \times \delta} \text{ unitary}\})$ that $A \notin S$. Now the result follows directly from the separation theorem for non-closed sets \autoref{effrosnormalexp}.
\end{proof}

\begin{theo} \label{grund2} (Characterization of matrix exposed points)
Let $K \subseteq \S^g$ be a matrix convex set and $A \in K(\delta)$ matrix extreme. Then $A$ is exposed in $K(\delta)$ if and only if $A$ is matrix exposed in $K$.
\end{theo}

\begin{proof}
Let $A$ be exposed. Then \autoref{expos} guarantees that $A$ is matrix exposed (We can shift $K$ to ensure $0 \in K$. If $A \neq 0$, we can apply \autoref{expos}. Otherwise the result follows from \autoref{cheapexar}.). \\[0.2cm]
Let $A$ be matrix exposed and $B-C \ov X$ be a linear pencil certifying this. Find $v \in \C^{\delta} \otimes \C^{\delta}$ such that $\ker (B-C \ov X)(A)=\spam(v)$ and $\dim M(A,v)_{B-C \ov X}=\delta$. Then $D \mapsto v^*(B-C \ov X)(D)v$ separates $A$ from all points from $K(\delta)$.
\end{proof}

\begin{cor} \label{kra}
\begin{enumerate}[(a)]
\item If $B \in \S^g$ irreducible, then $B$ is matrix exposed in $\mconv(B,0)$.
\item If $\ml$ is a monic linear pencil, then every matrix extreme point of $\mathcal{D}_\ml$ is matrix exposed. 
\end{enumerate}
\end{cor}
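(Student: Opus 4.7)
For part (a), I would apply \Cref{grund}(b) to obtain that $B$ is exposed in $K := \mconv(B, 0)$, and then invoke \Cref{expos}. This leaves only the alternative $B = V^* C V$ with $V \in \C^{\delta \times \delta}$ a non-trivial contraction and $C \in K(\delta)$, which I aim to rule out. Writing $C = \sum_j W_j^* B W_j$ with $\sum_j W_j^* W_j \preceq I_\delta$, the expansion
\begin{align*}
B = \sum_j (W_j V)^* B (W_j V), \qquad \sum_j (W_j V)^* (W_j V) = V^*\bigl(\textstyle\sum_j W_j^* W_j\bigr) V \preceq V^* V \preceq I_\delta
\end{align*}
combined with irreducibility of $B$ activates \Cref{david}: $W_j V = \la_j I_\delta$ with $\sum_j |\la_j|^2 = 1$, so $V^* (\sum_j W_j^* W_j) V = I_\delta$. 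Coupled with $\sum_j W_j^* W_j \preceq I_\delta$ this forces $V^* V \succeq I_\delta$, so $V$ is unitary, contradicting the non-triviality of the contraction. Hence $B$ is matrix exposed.

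For part (b), let $A \in \mathcal{D}_L(\delta)$ be matrix extreme; this forces $A \neq 0$ (since $0 \in \inte \mathcal{D}_L$) and makes $A$ a classical extreme point of $\mathcal{D}_L(\delta)$. My first step is to show that the face $F_A := \{X \in \mathcal{D}_L(\delta) \mid \ker L(X) \supseteq \ker L(A)\}$ equals $\{A\}$. If some $X \in F_A$ differed from $A$, the inclusion $\ker L(A) \subseteq \ker L(X)$ would make $L(A) \pm \ep (L(A) - L(X)) \succeq 0$ for all sufficiently small $\ep > 0$ (positive definiteness on $\ker L(A)^\perp$ persists under the small perturbation, and on $\ker L(A)$ both $L(A)$ and $L(A) - L(X)$ vanish), so the two distinct points $(1 \pm \ep) A \mp \ep X$ would lie in $\mathcal{D}_L(\delta)$ with $A$ as their midpoint, contradicting extremity. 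This face identification is the subtle step of the proof.

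Next I fix a basis $v_1, \dots, v_m$ of $\ker L(A)$ and set $\varphi(X) := \sum_{i=1}^m v_i^* L(X) v_i$. Then $\varphi$ is affine linear on $\S^g(\delta)$, non-negative on $\mathcal{D}_L(\delta)$, satisfies $\varphi(0) = \sum_i \|v_i\|^2 > 0$, and vanishes on $\mathcal{D}_L(\delta)$ exactly at $F_A = \{A\}$. Applying \Cref{netz} to $\varphi$ produces a monic pencil $H$ of size $\delta$ with $B \in \mathcal{D}_H \Longleftrightarrow \varphi(W^* B W) \geq 0$ for every contraction $W \in \C^{\size(B) \times \delta}$. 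Matrix convexity of $\mathcal{D}_L$ combined with $0 \in \mathcal{D}_L$ places each such compression $W^* B W$ of $B \in \mathcal{D}_L$ inside $\mathcal{D}_L(\delta)$, yielding $\mathcal{D}_L \subseteq \mathcal{D}_H$. Conversely, for $B \in \partial \mathcal{D}_H \cap \mathcal{D}_L$ some contraction $W$ gives $\varphi(W^* B W) = 0$, hence $W^* B W \in F_A = \{A\}$, so $W^* B W = A$. Interpreting
\begin{align*}
A = W^* B W + (\sqrt{I-W^*W})^* \cdot 0 \cdot \sqrt{I-W^*W}
\end{align*}
as a matrix convex combination of $B$ and $0$ in $\mathcal{D}_L$, the matrix extremity of $A$ combined with $A \neq 0$ eliminates the zero summand, so $\sqrt{I-W^*W} = 0$ and $W$ is a $\delta \times \delta$ unitary with $B \approx A$. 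Thus $H$ witnesses that $A$ is matrix exposed in $\mathcal{D}_L$.
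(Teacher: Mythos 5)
Both parts are correct and reach the statement by the same route as the paper, differing only in that you inline the cited lemmas rather than invoke them as black boxes. For (a), the paper's proof is just ``combine \Cref{grund} and \Cref{grund2}''; you also start from \Cref{grund}(b), but instead of passing through \Cref{grund2} (which would require first observing that an irreducible $B$ is matrix extreme) you apply \Cref{expos} directly and dispose of the dilation alternative $B = V^*CV$ by expanding $C$ over $\mconv(B,0)$ and feeding the combined combination into \Cref{david}; this simultaneously supplies the matrix-extremity input that \Cref{grund2} would have needed, so your version is a bit more self-contained. For (b), the paper cites the classical fact that ordinary spectrahedra have only exposed faces and again routes through \Cref{grund2}; you instead exhibit the exposing affine functional $\varphi$ explicitly via the kernel-face $F_A$ and then re-run the Effros--Winkler construction \Cref{netz} yourself, ruling out the dilation alternative via matrix extremity together with $A \neq 0$. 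Conceptually the pipeline is identical --- same separation machinery, same dichotomy, same use of extremity --- just unrolled. One point worth making explicit: in the final step of (b) you implicitly restrict attention to $\partial\mathcal{D}_H(\delta) \cap \mathcal{D}_L(\delta)$, so that the contraction $W$ produced by \Cref{netz}(c) is a square $\delta\times\delta$ matrix and $W^*W = I$ then forces $W$ to be unitary; this is the intended reading of the definition of matrix exposed (and matches how the paper uses it in the proof of \Cref{grund}(a)), but since the definition as written does not spell out the level restriction, you should state that you are comparing points of the same size $\delta$.
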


\begin{proof}
(a) $B$ is matrix extreme in $\mconv(B,0)$ due to \autoref{david}. Combine now \autoref{grund} and \autoref{grund2}. \\[0.2cm]
(b) It is known that every ordinary spectrahedron has only exposed faces. So every matrix extreme $A \in \mathcal{D}_\ml(\delta)$ is exposed in $\mathcal{D}_\ml(\delta)$ and due to \autoref{grund2} matrix exposed.
\end{proof}

We obtain a free Straszewicz theorem with the same homogenization trick already used in the proof of the Minkowski theorem.

\begin{cor} \label{straszewicz} (Free Straszewicz theorem)
Let $S \subseteq \S^g$ be matrix convex and compact. Then $S=\ov{\mconv(\mexp(S))}$. 
\end{cor} 

\begin{proof}
The proof runs completely analogeously to the proof of \autoref{kreini} using the classical Straszewicz theorem instead of the classical Minkowski theorem. 
\end{proof}

\subsection{Smallest defining tuples of matrix convex sets, "infinite-dimensional" absolute extreme points and the general Gleichstellensatz}

\begin{rem}
In the remainder of the chapter we need the results from the chapter about completely positive maps in the appendix. We remind the reader of \autoref{closca} and \autoref{operator}. Let $K \subseteq \S^g$ matrix convex and $\mathcal{H}$ be a separable Hilbert space. Then $K$ is compact if and only if their exists $L \in \mathcal{B}_h(\mathcal{H})^g$ such that $\mconv(L)=K$. We have \begin{align*}
K=\{ B \in \S^g(k) \ | \ \exists \varphi: \spam(I,L_1,...,L_g) \rightarrow \C^{k \times k}: \varphi(L)=B, \ \varphi \text{ is completely positive}\}.
\end{align*}
in that case. We want to analyze closed matrix convex sets and will always suppose they are given in this form. \\[0.2cm]
We have shown that the matrix extreme or matrix exposed points of a matrix convex compact set $K=\mconv(L)$ generate the whole set. Basically by considering the direct sum of all those points, we would end up with another operator tuple whose matrix convex hull is $K$. However matrix extreme/matrix exposed points are not necessarily the minimal choice. In the following, we want to show that a kind of "smallest" choice of generating points exists if we allow certain kinds of infinite-dimensional absolute extreme points. The direct sum of these points will appear as a generalized direct summand of all $H \in \mathcal{B}_h(\mathcal{H})^g$ with $\mconv(H)=K$. We need the concept of approximate unitary equivalence and a characterization from Hadwin and Larson. \\[0.2cm]
In our definitions of $\S^g$ we have excluded points of $\mathcal{B}_h(\mathcal{M})^g$ for $\mathcal{M}$ a separable infinite-dimensional Hilbert space. The next proposition shows that if we had allowed this points (in the sense that the matrix convex hull of $L$ was defined as 
\begin{align*}
\{ &B \in \mathcal{B}_h(\mathcal{M})^g \ | \ \exists \varphi: \spam(I,L_1,...,L_g) \rightarrow \mathcal{B}(\mathcal{M}): \varphi(L)=B, \ \varphi \text{ is completely positive and} \\
&\mathcal{M} \text{ is a separable Hilbert space}\}),
\end{align*}
a compact matrix convex set would still be determined by all its tuples of matrices. Therefore if $K=\mconv(L)$ with $L$ an operator tuple, we can think of $L$ as a generalized element of $K$.      
\end{rem}

\begin{proposition} \label{gelihelpcontra}
Let $\mathcal{H}$ be a separable Hilbert space and $L,H \in \mathcal{B}_h(\mathcal{H})^g$ and $\mathcal{S}$ be the operator system generated by the $L_i$. Then $\mconv(H) \subseteq \mconv(L,0)$ if and only if there is a completely positive map $\ph: \mathcal{S} \rightarrow \mathcal{B}(\mathcal{H})$ such that $\varphi(L)=H$.
\end{proposition}

\begin{proof}
Suppose $\mconv(H) \subseteq \mconv(L,0)$. By taking the polar we see 
\begin{align*}
\mathcal{D}_L = \mconv(L,0)^\circ \subseteq \mconv(H)^\circ=\mathcal{D}_H.
\end{align*}
Now the claim follows from \autoref{polariapp} \\[0.2cm]
The other direction is obtained by applying \autoref{operator}.
\end{proof}

\noindent In 1969 Arveson introduced the notion of boundary representations of operator systems in order to understand the structure of completely positive maps and complete isometries. We will see that these are a variant of absolute extreme points (possibly infinite-dimensional).

\begin{definition}
Let $\mathcal{S} \subseteq \mathcal{B}(\mathcal{H})$ be a concrete operator system. Then a completely positive map $\varphi: \mathcal{S} \rightarrow B(\mathcal{K})$ where $\mathcal{K}$ is a Hilbert space is called \textbf{boundary representation} for $S$ if $\varphi$ has only one completely positive extension $\widetilde{\varphi}: C^*(\mathcal{S}) \rightarrow B(\mathcal{K})$ and $\widetilde{\varphi}$ is an irreducible representation.
\end{definition}

\begin{lemma} \label{arviequi} \cite[Proposition 2.4]{A1}
Let $\mathcal{S} \subseteq \mathcal{B}(\mathcal{H})$ be a concrete operator system and $\varphi: \mathcal{S} \rightarrow \mathcal{B}(\mathcal{H}_1)$ completely positive. Then the following is equivalent:
\begin{enumerate}[(a)]
\item If $\psi: \mathcal{S} \rightarrow \mathcal{B}(\mathcal{H}_1 \oplus \mathcal{H}_2)$ is a completely positive map, $P$ denotes the projection from $\mathcal{H}_1 \oplus \mathcal{H}_2$ onto $\mathcal{H}_1$ and $P\psi(A)P^*=\varphi(A)$ for all $A \in \mathcal{S}$, then $\psi=\varphi \oplus \rho$ for some completely positive $\rho: \mathcal{S} \rightarrow \mathcal{B}(\mathcal{H}_2)$ 
\item There is only one completely positive extension of $\varphi$ to $C^*(S)$ and this extension is a representation (in this case we say $\varphi$ has the {\bfseries unique extension property}).  
\end{enumerate}
\end{lemma}

\begin{proof} \cite[Proposition 2.4]{A1}
$(a) \implies (b)$: Suppose $\varphi$ fulfills (a). We show that each completely positive map $\widetilde{\varphi}: C^*(\mathcal{S})\rightarrow \mathcal{B}(\mathcal{H}_1)$ extending $\varphi$ is a representation which also proves the uniqueness (existence is due to the Arveson extension theorem \autoref{arvesons}). By Stinesprings representation theorem we can find a Stinespring representation $\psi: \C^*(\mathcal{S}) \rightarrow \mathcal{B}(\mathcal{H}_1 \oplus \mathcal{H}_2)$ such that $\widetilde{\varphi}(A)=P\psi(A)P^*$ for all $A \in C^*(\mathcal{S})$ (\autoref{stinesprings}). Let $Q$ be the projection from $\mathcal{H}_1 \oplus \mathcal{H}_2$ onto $\mathcal{H}_2$. Using (a) we know that we can write $\psi(A)=\begin{pmatrix} P \psi(A) P^* & 0 \\ 0 & Q\psi(A)Q^* \end{pmatrix}$ for all $A \in \mathcal{S}$.
As $\psi$ ist a representation, this equality is even true for all $A \in C^*(\mathcal{S})$, and $P\psi P^*$ is also a representation. \\[0.2cm]
$(b) \implies (a)$: Suppose $\varphi$ fulfills (b) and $\psi: \mathcal{S} \rightarrow \mathcal{B}(\mathcal{H}_1 \oplus \mathcal{H}_2)$ is a completely positive map and $P\psi(A)P^*=\varphi(A)$ for all $A \in \mathcal{S}$. By Arvesons extension theorem \autoref{arvesons} we can extend $\psi$ to a completely positive map $\psi: C^*(\mathcal{S}) \rightarrow \mathcal{B}(\mathcal{H}_1 \oplus \mathcal{H}_2)$. From (b) we know that $P\psi P^*$ is a representation. For $A \in C^*(\mathcal{S})$ we have \begin{align*}
P \psi(A)^* P^* P \psi(A) P^*=P \psi(A^*) P^* P \psi(A) P^*=P \psi(A^*A) P^* \succeq P \psi(A)^*\psi(A) P^*
\end{align*} by the Schwarz inequality \autoref{schwatz}. This means $0 \preceq P \psi(A^*) [1-P^*P]\psi(A) P^*=P \psi(A^*) [1-P^*P]^* [1-P^*P] \psi(A) P^*$ which implies $[1-P^*P]\psi(A) P^*=0$. Thus $\psi$ is of the form $\psi=P\psi P^* \oplus \rho$.
\end{proof}

\begin{cor}
Let $\mathcal{H}$ be a separable Hilbert space, $L \in \mathcal{B}_h(\mathcal{H})^g$, $\mathcal{S}$ the operator system generated by $L_i$ and $A \in \S^g(\delta)$. Then $A$ is an absolute extreme point of $\mconv(L)$ if and only if the unital linear map $\varphi: S \rightarrow \C^{\delta \times \delta}$ defined by $\varphi(L)=A$ is a well-defined boundary representation.
\end{cor}

\begin{proof} This is true due to \autoref{operator}, \autoref{arviequi} and \autoref{charabs}.
\end{proof}

Consider a compact matrix convex set set $K \subseteq \S^g$ and write $K=\mconv(L)$ with $L \in \mathcal{B}_h(\mathcal{H})^g$. We see that a boundary representation of the operator system generated by the $L_i$ is basically a (possibly) infinite-dimensional absolute extreme point of $K$. These boundary representations do not depend on the choice of the generator $L$ (cf. \cite[Theorem 5.1]{DDSS}).\\[0.2cm] 
In order to answer the question of Arveson whether there are "enough boundary representations" for an operator system $\mathcal{S}$, Davidson and Kennedy showed that every pure completely positive map $\varphi: \mathcal{S} \rightarrow \mathcal{B}(\mathcal{H})$ where $\mathcal{H}$ is separable can be dilated to a boundary representation $\varphi$ acting again on a separable Hilbert space ("pure" can be interpreted as $\varphi$ defining a matrix extreme point). In particular the matrix convex hull of all boundary representation of the operator system generated by $L$ is again $K$. We will use some of the ideas of Davidson and Kennedy to prove the general Krein-Milman theorem/Gleichstellensatz at the end of this chapter, which shows how to extract from all boundary representations a smallest subset which generates $K$. \\[0.2cm] In the following we will show that there are not necessarily enough absolute extreme points to generate $S$, so one needs also the "infinite-dimensional" absolute extreme points to recover the complete $S$. There are even compact matrix convex sets which do not have any absolute extreme points. Considering "infinite-dimensional" absolute extreme points might also lead to other disadvantages (see \autoref{schrott}).

\begin{exar} \label{abexex}
Let $\mathcal{H}$ be a separable Hilbert space and $T_1,T_2 \in B(\mathcal{H})$ be two isometries defining the Cuntz algebra $\mathcal{A}$. Set $L=(T_1+T_1^*,\ii(T_1-T_1^*),T_2+T_2^*,\ii(T_2-T_2^*))$. Then 
there is no finite-dimensional representation of $\mathcal{A}$ because $\mathcal{A}$ is simple. Therefore the operator system $S$ defined by $L$ has also no non-trivial finite-dimensional boundary representations. Hence $\abex(\mconv(L))=\emptyset$.  
\end{exar}

The following result by Evert is the first example of a class of compact matrix convex sets having no absolute extreme points. Everts proof uses techniques from [WW] and a different description of $\mconv(L)$ than the one from \autoref{operator}, the latter one being probably the reason why it is rather long. For a concrete example see \cite[Section 5]{E}.

\begin{cor} \cite[Theorem 1.2]{E}
Let $L \in \mathcal{B}_h(\mathcal{H})^g$ be a tuple of compact operators. Suppose
that $L$ has no finite-dimensional reducing subspace. Then $\mconv(L)$ has no absolute extreme points.
\end{cor}

\begin{proof} (Sketch)
Let $\mathcal{A} = C^*(L) \subseteq \mathcal{B}(\mathcal{H})$ and $\mathcal{B}$ the non-unital $*$-algebra generated by $L$. $\mathcal{B}$ is a non-unital $C^*$-algebra of compact operators. WLOG of generality we can suppose that $\ov{C^*(L)\mathcal{H}}=\mathcal{H}$. Let $\psi: \mathcal{A} \rightarrow \mathcal{B}(\mathcal{K})$ be a non-trivial representation. Then $\psi|_\mathcal{B}$ is also a representation of non-unital $*$-algebras. \cite[Theorem 16.11 and Theorem 16.6]{C} say that $\psi|_\mathcal{B}$ decomposes into a direct sum of irreducible representations $\psi_j$ and every $\psi_j$ is unitarily equivalent to a subrepresentation of the identity representation on $\{A \in \mathcal{B}(\mathcal{H}) \ | \ A \text{ is compact}\}=C^*(L)$. Therefore the $\psi_j$ are infinite-dimensional and $\psi$ is infinite-dimensional. Thus there are no finite-dimensional boundary representations for $\mathcal{A}$.
\end{proof}

\begin{definition}
Let $\mathcal{H}$ be a separable Hilbert space and $(\mathcal{H}_n)_{n \in \N}$ a sequence of closed finite-dimensional subspaces of $\mathcal{H}$ such that $\mathcal{H}_n \subseteq \mathcal{H}_m$ for $n \leq m$ and $\ov{\bigcup_{n \in \N} \mathcal{H}_n}=\mathcal{H}$. If we are given operators $A_n: \mathcal{H}_n \rightarrow \mathcal{H}_n$ such that $A_m|_{\mathcal{H}_n}=A_n$ for all $n \leq m$ and there exists $C \in \N$ such that $||A_n|| \leq C$ for all $n \in \N$, we call $A \in \mathcal{B}_h(\mathcal{H})^g$ defined by $A|_{\mathcal{H}_n}=A_n$ for all $n \in \N$ the limit of $(A_n)_{n \in \N}$. \\[0.2cm]
In case that $S=\mconv(L) \subseteq \S^g$ is a compact matrix convex set as well as all $A_n$ are matrix extreme points of $S$ and $A$ corresponds to a completely positive map that is a boundary representation of $\mathcal{S}_L$ (the operator system defined by the $L_i$) we say that $A$ is an accesible absolute boundary point of $S$. From \autoref{gelihelpcontra} it is clear that $\mconv(A) \subseteq \mconv(S)$ in this case. We see that all absolute extreme points of $S$ are accesible absolute boundary points.  
\end{definition}

\begin{lemma} \label{irrautomatic}
Let $S \subseteq \S^g$ be compact matrix convex and $A \in \mathcal{B}_h(\mathcal{H})^g$ the limit of a sequence $(A_n)_{n \in \N}$ of matrix extreme points of $S$. Then $A$ is irreducible. 
\end{lemma}

\begin{proof}
Suppose $A=W^*(B \oplus C)W$ with $W$ unitary and $B \in \mathcal{B}_h(\mathcal{H}_B)^g, C \in \mathcal{B}_h(\mathcal{H}_C)^g$ defined on a Hilbert space of dimension at least $1$. Write $W=\begin{pmatrix}W_A \\ W_B\end{pmatrix}$. For $n \in \N$ let $P_n: \mathcal{H} \rightarrow \mathcal{H}_n$ be the canonical projection and $Q_{B,n}: \mathcal{H}_B \rightarrow \im(W_B P_n^*)$ the projection onto $\im(W_B P_n^*)$ ($Q_{C,n}$ is defined in the same manner). We obtain \begin{align*}&A_n=P_n A P_n^*=P_n W_B^* B W_B P_n^* + P_n W_C^* C W_C P_n^*\\=&[P_n W_B^*Q_{B,n}^*] Q_{B,n} B Q_{B,n}^* [Q_{B,n} W_B P_n^*] + [P_n W_C^*Q_{C,n}^*] Q_{C,n} C Q_{C,n}^* [Q_{C,n} W_C P_n^*] \end{align*} 
Since the $A_n$ are matrix extreme, we conclude that there are $\lambda_{B,n},\lambda_{C,n} \in [0,1]$ such that $\lambda_{B,n}+\lambda_{C,n}=1$ as well as $\lambda_{B,n} I=[P_n W_B^*Q_{B,n}^*] [Q_{B,n} W_B P_n^*]=P_n W_B^* W_B P_n^*$ and $Q_{B,n} B Q_{B,n}^*$ is unitary equivalent to $A_n$ in case that $\lambda_{B,n} \neq 0$ (and similar for $C$). Taking $v \in \mathcal{H}_1$ of norm $1$ we evaluate $\lambda_{B,n}=v^*\lambda_{B,n}Iv=v^* P_n W_B^* W_B P_n^* v = v^* W_B^* W_B v$. Hence $\lambda_{B,n}=:\lambda_B$ is not depending on $n$. Remembering $\mathcal{H}=\ov{\bigcup_{n \in \N} \mathcal{H}_n}$, we deduce $W_B^* W_B=\lambda_B I$. On the other hand we know $W_B W_B^*=I$ because $W$ is unitary. So $\lambda_B W_B^*=W_B^* W_B W_B^*=W_B^*$. Therefore $\lambda_B=1$ and $\lambda_C=1$, a contradiction.
\end{proof}

\begin{lemma} \label{gelihelp2}
Let $S=\mconv(L \oplus H) \subseteq \S^g$ be matrix convex and compact and $A$ an accesible absolute boundary point of $S$. Then $A \in \mconv(L)$ or $A \in \mconv(H)$.
\end{lemma}

\begin{proof}
We write $A \in \mathcal{B}(\mathcal{H})$ as the limit of a sequence $(A_n)_{n \in \N}$ of matrix extreme points of $S$. We have $\mconv(L \oplus H)=\mconv(\mconv(L) \cup \mconv(H))$. Hence we know that $A_n \in \mconv(L)$ or $A_n \in \mconv(H)$ for all $n \in \N$. WLOG suppose that all $A_n \in \mconv(L)$. Then $A \in \mconv(L)$.
\end{proof}

\begin{rem}
Let $S \subseteq \S^g$ be a compact matrix convex set and $A \in S(\delta)$. By compactness there exists $b \in (\C^\delta)^g$ and $c \in \R^g$ such that $\begin{pmatrix}A & b \\ b^* & c \end{pmatrix} \in S$ and $||(b_1)_1||$ is maximal under this property. If $d \in (\C^\delta)^g, e \in \C^g, f \in \R^g$ such that $\begin{pmatrix} A & b & d \\
b^* & c & e \\
d^*& e^*& f \end{pmatrix} \in S$, then necessarily $(d_1)_1=0$. The reason is that that for the function \begin{align*} 
p: \{ v \in \spam(e_{\delta+1},e_{\delta+2}) \ | \ ||v||=1\} \rightarrow \R, \begin{pmatrix} v_1 \\ v_2 \end{pmatrix} \mapsto \left|\left\langle \begin{pmatrix} v_1 \\ v_2 \end{pmatrix}, \begin{pmatrix} (b_1)_1 \\ (d_1)_1 \end{pmatrix} \right\rangle\right|
\end{align*} scalar multiples of $\begin{pmatrix} (b_1)_1 \\ (d_1)_1 \end{pmatrix}$ are the unique maximizers of $p$ in case that $p \neq 0$.      
\end{rem}

\begin{lemma} \label{exiacc}
Let $S \subseteq \S^g$ be a compact matrix convex set and $A \in S$ matrix extreme. Suppose $A$ does not dilate to an absolute extreme point of $S$. Then $A$ dilates to an accesible absolute boundary point $D \in \mathcal{B}_h(\mathcal{H})^g$.   
\end{lemma}

\begin{proof}
Let $\delta=\size(A)$. We want to dilate $A$ to a matrix extreme point $B=\begin{pmatrix}
A & h \\
h^* & g
\end{pmatrix}$ of $S$ of size $\delta + g \delta$ with the property that if $\begin{pmatrix} A & h & d \\
h^* & g & e \\
d^* & e^* & f 
\end{pmatrix} \in S$, then $d=0$. We call such $B$ a desired dilation of $A$. Set $A^{0,0}=A$. \\[0.2cm]
If $A^{j,k} \in S(\delta+j\delta + k)$ is already defined for $j \in \{0,...,g-1\}$ and $k \in \{0,...,\delta-1\}$, then we set $Z_{j,k}:=\left\{b \in (\C^{\delta+j\delta + k})^g \ | \ \begin{pmatrix} A^{j,k} &b \\
b^*& c
\end{pmatrix} \in S\right\}$ and choose an extreme point $b$ of $Z_{j,k}$ in such a way that the norm of $(b_{j+1})_{k+1} \in \C^{g}$ is maximal and that $b$ is extreme in $Z_{j,k}$. After choose extreme $c \in Y_j:=\left\{c \in \C^g \ | \ \begin{pmatrix} A_j& b \\
b^* &c
\end{pmatrix} \in S\right\}$. Set $A^{j+1,k}=\begin{pmatrix} A^{j,k} &b \\
b^*& c
\end{pmatrix} \in S$. \\[0.2cm]
If $A^{j,k} \in S(\delta+j\delta + k)$ is already defined for $j=g$ and $k \in \{0,...,\delta-1\}$, then we set $Z_{j,k}:=\left\{b \in (\C^{\delta+j\delta + k})^g \ | \ \begin{pmatrix} A^{j,k} &b \\
b^*& c
\end{pmatrix} \in S\right\}$ and choose an extreme point $b$ of $Z_{j,k}$ in such a way that the norm of $(b_{1})_{k+1} \in \C^{g}$ is maximal and that $b$ is extreme in $Z_{j,k}$. After choose extreme $c \in Y_j:=\left\{c \in \C^g \ | \ \begin{pmatrix} A_j& b \\
b^* &c
\end{pmatrix} \in S\right\}$. Set $A^{0,k+1}=\begin{pmatrix} A^{j,k} &b \\
b^*& c
\end{pmatrix} \in S$. \\[0.2cm]
By the first statements of the proof of \autoref{matexdil} we conclude that all the $A^{j,k}$ are matrix extreme in $S$ and in case that $\begin{pmatrix} A^{j,k}& d \\
d^*& e
\end{pmatrix} \in S$, then $(d_1)_1=(d_2)_1=...=(d_g)_1=(d_1)_2=...=(d_j)_k=0$. Now set $B=A^{g,\delta}$.\\ Now continue the same procedure by dilating $B$ to a desired dilation $C$ of $S$ and so on. By construction the limit of the sequence of these elements of $\S^g$ constitutes the accesible absolute boundary point (use \autoref{arviequi} and \autoref{irrautomatic}). 
\end{proof}

We have arrived at another type of free Krein-Milman theorem (first half) but still minimality is an issue in this version.

\begin{cor}
Let $S \subseteq \S^g$ be a compact matrix convex set. Then 
\begin{align*}
\ov{\mconv(\{A \ | \ A \text{ accesible absolute boundary point of }S\})}=S.
\end{align*}
\end{cor}

\begin{proof}
Combine \autoref{krain} and \autoref{exiacc}.
\end{proof}

In order to make precise what we mean by a "minimal" generator of a compact matrix convex set, we need the concept of approximate unitary equivalence of representations.

\begin{definition} Let $\mathcal{M}, \mathcal{H}$ be a separable Hilbert spaces
\begin{enumerate}[(a)] 
\item Let $\mathcal{A}$ be a $C^*$-algebra and $\varphi: \mathcal{A} \rightarrow \mathcal{B}(\mathcal{H}), \psi: \mathcal{A} \rightarrow \mathcal{B}(\mathcal{M})$ two linear maps. Then $\varphi$ and $\psi$ are called {\bfseries approximately unitary equivalent} (we write \index{s@$\sim_{a}$}$\varphi \sim_{a} \psi$) if there exists a sequence $(U_n)_{n \in \N}$ of unitary maps $U_n: \mathcal{H} \rightarrow \mathcal{M}$ such that for all $A \in \mathcal{A}$ we have $\lim_{n \rightarrow \infty} \varphi(A) = U_n^*\psi(A)U_n$ in the operator norm topology.
\item Let $A \in \mathcal{B}_h(\mathcal{M})^g, B \in \mathcal{B}_h(\mathcal{H})^g$ and $\mathcal{A}$ the $C^*$-algebra generated by the $A_i$. We write $A \subseteq_{a} B$ if there is a representation $\ph: \mathcal{A} \rightarrow \mathcal{B}_h(\mathcal{H})^g$ with $\ph(A)=B$, a representation $\rho: \mathcal{A} \rightarrow \mathcal{B}(\mathcal{K})$ of $\mathcal{A}$ such that $\rho \sim_{a} \text{id}_\mathcal{A}$ and an isometry $V: \mathcal{H} \rightarrow \mathcal{K}$ such that $\ph(A)=V^* \rho(A) V$ for all $A \in \mathcal{A}$. In this case $\ph$ is a direct summand of $\rho$ (\autoref{Sarason}) and we say that {\bfseries up to approximate unitary equivalence $A$ is a direct summand of $B$}.
\end{enumerate}
\end{definition}

\begin{lemma} \cite[Theorem 1 (3)]{HL} \label{hadi}
Let $\mathcal{H},\mathcal{M}$ be separable Hilbert spaces, $\mathcal{A}$ be a $C^*$-subalgebra of $\mathcal{B}(\mathcal{H})$. Consider a unital completely positive map $\ph: \mathcal{A} \rightarrow \mathcal{B}(\mathcal{M})$. Then the following is equivalent:
\begin{enumerate}[(a)]
\item There is a representation $\rho: \mathcal{A} \rightarrow \mathcal{B}(\mathcal{H})$ of $\mathcal{A}$ such that $\rho \sim_{a} \text{id}_\mathcal{A}$ and an isometry $V: \mathcal{M} \rightarrow \mathcal{H}$ such that $\ph(A)=V^* \rho(A) V$ for all $A \in \mathcal{A}$.
\item There exists a sequence $(V_n: \mathcal{M} \rightarrow \mathcal{H})_{n \in \N}$ of isometries such that $V_n^*AV_n \rightarrow \ph(A)$ for $n \rightarrow \infty$ in the weak operator topology for all $A \in \mathcal{A}$.
\end{enumerate}
\end{lemma}

\begin{lemma} \label{hadi2} \cite[Theorem 2.5, Theorem 5.1]{H}
Let $\mathcal{H}, \mathcal{M}$ be separable Hilbert spaces, $\mathcal{A} \subseteq \mathcal{B}(\mathcal{H})$ be a $C^*$-algebra and $\ph: \mathcal{A} \rightarrow \mathcal{B}(\mathcal{M})$ be a representation. 
\begin{enumerate}[(a)]
\item $\ph \sim_{a} \text{id}_\mathcal{A}$ if and only if $\ph$ is rank-preserving, i.e. for all $A \in \mathcal{A}$ the Hilbert space dimension of $\ov{\im(\ph(A))}$ is equal to the Hilbert space dimension of $\ov{(\im(A))}$. 
\item There exists a representation $\rho$ of $\mathcal{A}$ such that $\ph \oplus \rho \sim_{a} \text{id}_\mathcal{A}$ if and only if $\ph$ is rank-nonincreasing, i.e. for all $A \in \mathcal{A}$ the Hilbert space dimension of $\ov{\im(\ph(A))}$ is at most the Hilbert space dimension of $\ov{(\im(A))}$  
\end{enumerate}
\end{lemma}

\begin{lemma} \label{simpliu}
Let $\mathcal{H}, \mathcal{K}$ be a separable Hilbert spaces and $A \in \mathcal{B}_h(\mathcal{H})^g, B \in \mathcal{B}_h(\mathcal{K})^g$. If $A \subseteq_{a} B \subseteq_{a} A$, then there is a representation $\rho: \mathcal{A} \rightarrow \mathcal{B}(\mathcal{K})$ of the $C^*$-algebra $\mathcal{A}$ generated by the $A_i$ such that $\rho \sim_{a} \text{id}_\mathcal{A}$ and $\rho(A)=B$. 
\end{lemma}

\begin{proof}
Suppose that $A \subseteq_{a} B \subseteq_{a} A$ induced by representations $\ph: \mathcal{A} \rightarrow \mathcal{B}_h(\mathcal{K})^g$ with $\ph(A)=B$ and $\psi: \mathcal{B} \rightarrow \mathcal{B}_h(\mathcal{H})^g$ with $\psi(B)=A$, where $\mathcal{B}$ is the $C^*$-algebra generated by the $B_i$. Obviously, we have $\ph=\psi^{-1}$. Due to \autoref{hadi2} (b) we know that for all $C \in \mathcal{A}$ the Hilbert space dimension of $\ov{\im(\ph(C))}$ is at most the Hilbert space dimension of $\ov{(\im(C))}$ and on the other hand that for all $D \in \mathcal{B}$ the Hilbert space dimension of $\ov{\im(\psi(D))}$ is at most the Hilbert space dimension of $\ov{(\im(D))}$. Hence $\ph$ is rank-preserving and unitary equivalent to the identity representation.
\end{proof}

\begin{lemma} \label{gelihelp1}
Let $\mathcal{H}$ be a separable Hilbert space, $L \in \mathcal{B}_h(\mathcal{H})^g$ and $A \in \S^g(\delta)$ be a matrix extreme point of $\mconv(L)$. Then there are is a sequence $(S_n)_{n \in \N}$ of isometries $S_n: \C^{\delta} \rightarrow \mathcal{H}$ such that $\lim_{n \rightarrow \infty} (S_n)^* L S_n = A$ in the operator norm topology. 
\end{lemma}

\begin{proof}
Since $A \in \mconv(L)$, we know that there is a sequence of isometries $(V_n)_{n \in \N} \subseteq \mathcal{B}(\C^\delta,\mathcal{H}^{(\infty)})$ such that $\lim_{n \rightarrow \infty} ||A-V_n^*L^{(\infty)}V_n ||=0$ (\autoref{operator}). Write $V_n=\begin{pmatrix} V_n^1 \\ V_n^2 \\ \vdots \end{pmatrix}$ with $V_n^j \in \mathcal{B}(\C^\delta,\mathcal{H})$. Let $\mathcal{S}_L$ denote the operator system generated by the $L_i$ and consider the completely positive map $\ph_n: \mathcal{S}_L \rightarrow \C^{\delta \times \delta}, C \mapsto V_n^* C^{(\infty)} V_n=\sum_{j=1}^\infty (V_n^j)^* C V_n^j$. Set $r=2g\delta^2+1$. \\[0.2cm]
Now write each $V_n^j=S_n^j T_n^j$ where $S_n^j: \C^\delta \rightarrow \mathcal{H}$ is an isometry and $T_n^j \in \C^{\delta \times \delta}$. Then we have 
\begin{align*}
\varphi_n(L)=\sum_{j=1}^\infty (T_n^j)^* ((S_n^j)^*LS_n^j) T_n^j.
\end{align*} The proof of the free Caratheodory theorem \autoref{carad} applied to the $m$-th partial sum tells us that 
\begin{align*}
\sum_{j=1}^m (T_n^j)^* ((S_n^j)^*LS_n^j) T_n^j=\sum_{j=1}^r (T_{n,m}^j)^* ((S_{n,m}^j)^*LS_{n,m}^j) T_{n,m}^j
\end{align*}
for some isometries $S_{n,m}^j: \C^\delta \rightarrow \mathcal{H}$ and $T_{n,m}^j \in \C^{\delta \times \delta}$ with \hypertarget{III}{}
\begin{align*}
\sum_{j=1}^r (T_{n,m}^j)^* T_{n,m}^j=I-\sum_{j=m+1}^\infty (T_n^j)^* (T_n^j) \ \ \ \text{(III)}.
\end{align*} Since $L$ is bounded and we are only interested in the limit of the $\varphi_n$ we can demand that $0=V_n^{r+2},V_n^{r+3},...$ for all $n \in \N$ (we need the additional summand $V_n^{r+1} L V_n^{r+1}$ because the right side of \hyperlink{III}{(III)} is not zero; however one could get rid of it again with the free Caratheodory technique). By Bolzano-Weierstra{\ss} we can assume that the sequences $((S_n^j)^* L S_n^j)_{n}$, $(T_n^j)_n$ are normconvergent for all $j \in \{1,...,r+1\}$. Hence 
\begin{align*}
A=\sum_{j=1}^{r+1} \lim_{n \rightarrow \infty} (T_n^j)^* [\lim_{n \rightarrow \infty} (S_n^j)^* L S_n^j] \lim_{n \rightarrow \infty} T_n^j.
\end{align*}
Now we invoke the hypothesis that $A$ is matrix extreme to conclude that there is one $j$ such that $\lim_{n \rightarrow \infty} (S_n^j)^* L S_n^j \approx A$ already.
\end{proof}

\begin{theo} \label{geli}
Let $\mathcal{H},\mathcal{K}$ be a separable Hilbert space, $L \in \mathcal{B}_h(\mathcal{H})^g$ and $A \in \mathcal{B}_h(\mathcal{K})^g$ be an accesible absolute boundary point of $\mconv(L)$. Then $A \subseteq_{a}L$.
\end{theo}

\begin{proof}
We want to show first that there is a sequence $(R_n)_{n \in \N}$ of isometries $R_n: \mathcal{K} \rightarrow \mathcal{H}$ such that $\lim_{n \rightarrow \infty} (R_n)^* L R_n \approx A$ in the strong operator topology. If $A$ is an absolute extreme point, this follows from \autoref{gelihelp1}. So suppose that $\mathcal{K}$ is infinite-dimensional. Let $A_n \in \mconv(L)$ be a sequence of matrix extreme points of $\mconv(L)$ of increasing size such that $A_m|_{\C^{\size(A_n)}}=A_n$ and $A$ the limit of $(A_n)_{n \in \N}$. We remind the reader that $\mconv(A)=\ov{\mconv(\{A_n \ | \ n \in \N\})}$ (cf. \autoref{operator}). Again by \autoref{gelihelp1} we know that for each $n \in \N$ there a sequence $(R_n^j)_{j \in \N}$ of isometries $R_n^j: \C^{\size(A_n)} \rightarrow \mathcal{K}$ such that $\lim_{j \rightarrow \infty} (R_n^j)^* L R_n^j \approx A_n$ in the operator norm topology. By choosing $j(n) \in \N$ such that $||A_n-(R_n^{j(n)})^* L R_n^{j(n)}|| \leq \frac{1}{n}$ we see that there is a sequence $(R_n)_{n \in \N}$ of isometries $R_n: \mathcal{K} \rightarrow \mathcal{H}$ such that $A=\lim_{n \rightarrow \infty} R_n^* L R_n$ in the strong operator topology.
 \\[0.2cm] Find now $Q_n$ such that $(R_n \ Q_n)$ is unitary. Let $\mathcal{S}_L$ denote the operator system generated by the $L_i$. We know that the completely positive map $\varphi: \mathcal{S}_L \rightarrow \mathcal{B}(\mathcal{K})$ defined by $\varphi(L)=A$ has the unique extension property. We want to show that $\lim_{n \rightarrow \infty} Q_n^* L R_n=0$ in the strong operator norm topology. Otherwise there exists $x$, a sequence $y_n$ and $C>0$ such that $||y_n^* Q_n^* L R_n x|| \geq C$ for arbitrary big $n \in \N$, WLOG for all $n \in \N$. We conclude that there are completely positive maps $\varphi_n: \mathcal{S}_L \rightarrow \mathcal{B}(\mathcal{K} \oplus \C)$ such that $\psi_n(L)=\begin{pmatrix}
R_n^* L R_n & b_n \\
b_n^* & c_n
\end{pmatrix}$ with $b_n^* x=y_n^* Q_n^* L R_n x$. Now because $\text{CPU}(\mathcal{S}_L,\mathcal{B}(\mathcal{K \oplus \C}))$ is compact with respect to the BW-topology (\autoref{compactbw}) there is a subsequence $(\psi_n)_{n \in \N}$ which converges to a completely positive map $\psi: \mathcal{S}_L \rightarrow \mathcal{B}(\mathcal{K} \oplus \C)$ such that $\psi(L)=\begin{pmatrix}
A & b \\
b^* & c
\end{pmatrix}$ with $b^* x \neq 0$. This contradicts the fact that $\varphi$ has the unique extension property. \\[0.2cm]
 We define the unital completely positive map $\psi: \mathcal{S}_L \rightarrow \C^{\delta \times \delta}, C \mapsto \lim_{n \rightarrow \infty} R_n^* L R_n$ in the strong operator topology. Let $\mathcal{A}$ be the not necessarily closed algebra generated by $L$. We want to extend $\psi$ onto $\mathcal{A}$. Let $s \in \N$, $\al_1,...,\al_s \in \{1,...,g\}$ and consider $B=L_{\al_1} ... L_{\al_s}$. We see that
\begin{align*}
&\lim_{n \rightarrow \infty }R_n^* L_{\al_1} ... L_{\al_s} R_n\\&=\lim_{n \rightarrow \infty }R_n^* L_{\al_1} (Q_n Q_n^* + R_n R_n^*) L_{\al_2} (Q_n Q_n^* + R_n R_n^*) L_{\al_3} ... L_{\al_{s-1}}(Q_n Q_n^* + R_n R_n^*) L_{\al_s} R_n\\&=\lim_{n \rightarrow \infty }(R_n^* L_{\al_1} R_n) ... \lim_{n \rightarrow \infty }(R_n^* L_{\al_s} R_n)
\end{align*}
in the strong operator topology (due to the fact that the product of $(C_nD_n)_{n \in \N}$ of a sequence of uniform bounded operators $C_n$ and a sequence $(D_n)_{n \in \N}$ of operators converging $0$ in the strong operator topology converges strongly to $0$). Hence $\psi: \mathcal{A} \rightarrow \C^{\delta \times \delta}, \lim_{n \rightarrow \infty} R_n^* L R_n$ is even a $*$-homomorphism. By taking limits we see that $\psi: C^*(L) \rightarrow \C^{\delta \times \delta}, C \mapsto 
\lim_{n \rightarrow \infty} R_n^* L R_n$ is a representation of $C^*(L)$. Now the claim follows from \autoref{hadi} and the fact that if we have two representation $\pi,\rho$ on $C^*$-algebras and $\pi$ dilates $\rho$, then $\rho$ is a direct summand of $\pi$ (\autoref{Sarason}).  
\end{proof}

We obtain the following corollary, which can be also interpreted as a general Krein-Milman theorem for compact matrix convex sets $K$ that characterizes the "smallest" defining tuple $L \in \mathcal{B}_h(\mathcal{H})^g$ with respect to $\subseteq_{a}$ with $\mconv(L)=K$. This tuple will be uniquely defined up to approximately equivalent representations.

\begin{theo} (Strong free Krein-Milman for compact matrix convex sets) \label{secondhalf}\\
Let $\mathcal{H}$ be a separable Hilbert space, $L \in \mathcal{B}_h(\mathcal{H})^g$ and $K=\mconv(L)$. Let $T \subseteq \S^g$ be a (up to unitary equivalence) dense at most countable subset of $\text{abex}(\mconv(L))$ such that no two points in $T$ are unitary equivalent. Then there exists a set $P$ of infinite-dimensional accesible absolute boundary points of $\mconv(L)$ with the following properties:
\begin{enumerate} [(a)]
\item $\mconv(\left(\bigoplus_{A \in T} A \right) \oplus \left(\bigoplus_{C \in P} C \right))=K$.
\item For all $H \in \mathcal{B}_h(\mathcal{H})^g$ with $\mconv(H)=L$ there is a representation $\rho$ of $C^*(H)$ such that $\rho \sim_{a} \text{id}_{C^*(H)}$ and an operator tuple $B$ such that $\rho(H)= \left(\bigoplus_{A \in T} A \right) \oplus \left(\bigoplus_{C \in P} C \right) \oplus B$, in other words $\left(\bigoplus_{A \in T} A \right) \oplus \left(\bigoplus_{C \in P} C \right) \subseteq_{a} H$.
\item If $H \in \mathcal{B}_h(\mathcal{H})^g$ is another tuple satisfying (a) and (b), then there is a representation $\rho$ of $C^*(H)$ such that $\rho \sim_{a} \text{id}_{C^*(H)}$ such that $\rho(H)= \left(\bigoplus_{A \in T} A \right) \oplus \left(\bigoplus_{C \in P} C \right)$ 
\end{enumerate} 
In case that $\ov{\mconv(\abex K)}=K$, we can choose $P=\emptyset$.
\end{theo}

\begin{proof} (Sketch)
(c) follows directly from \autoref{simpliu} \\[0.2cm]
Suppose $T$ is infinite. Write $T=\{A_n \ | \ n \in \N\}$ and all the $A_n$ pairwise not unitary equivalent. Set $\delta_n=\size(A_n)$ for $n \in \N$ and let $\mathcal{S}_L \subseteq \mathcal{B}_h(\mathcal{H})$ be the operator system generated by the $L_i$. \autoref{geli} implies that there is a representation $\rho_1: C^*(L) \rightarrow \mathcal{B}(\mathcal{H})$, a Hilbert space $\mathcal{M}_1$ and an operator tuple $B_1 \in \mathcal{B}_h(\mathcal{M}_1)^g$ such that $\C^{\delta_1} \oplus \mathcal{M}_1=\mathcal{H}$, $\rho_1 \sim_{a} \text{id}_{C^*(L)}$ and $\rho_1(L)=A_1 \oplus B_1$. As $A_2$ is absolute extreme and $A_2 \not\approx A_1$, we have $A_2 \notin \mconv(A_1)$. \autoref{gelihelp2} says that $A_2 \in \mconv(B_1)$. Hence by \autoref{geli} there exists a representation $\rho_2: C^*(B_1) \rightarrow \mathcal{B}(\mathcal{M}_1)$, a Hilbert space $\mathcal{M}_2$ and an operator tuple $B_2 \in \mathcal{B}_h(\mathcal{M}_2)^g$ such that $\C^{\delta_2} \oplus \mathcal{M}_2 = \mathcal{M}_1$, $\rho_2 \sim_{a} \text{id}_{C^*(B_1)}$ and $\rho_2(B_1)=A_2 \oplus B_2$. \\[0.2cm]
By continuing in the above way we construct for $n \in \N$ the closed subspaces $\mathcal{M}_n$ of $\mathcal{M}_{n-1}$ with $\mathcal{M}_0=\mathcal{H}$, $\mathcal{M}_{n-1}=\C^{\delta_n} \oplus \mathcal{M}_n$, $B_n \in \mathcal{B}_h(\mathcal{M}_n)^g$, $B_0=L$, representations $\rho_n: C^*(B_{n-1}) \rightarrow \mathcal{B}(\mathcal{M}_{n-1})$, $\rho_n \sim_{a} \text{id}_{C^* (B_{n-1})}$ and $\rho_n(B_{n-1})=A_n \oplus B_n$. \\[0.2cm]
Set $\mathcal{K}=\bigoplus_{n \in \N} \C^{\delta_n} \subseteq \mathcal{H}$. Now for $n \in \N$ we can find a sequence of isometries $(U_{n,m}: \mathcal{M}_{n-1} \rightarrow \mathcal{M}_{n-1})_{m \in \N}$ such that $\rho_n(C)=\lim_{m \rightarrow \infty} U_{n,m}^* C U_{n,m}$ in the operator norm for all $C \in C^*(B_{n-1})$. Let $1_n: \bigoplus_{k=1}^n \C^{\delta_k} \rightarrow \mathcal{H}, D \mapsto D$. For $n,m \in \N$ set $V_{n,m}=\begin{pmatrix} 1_n & 0 \\ 0 & U_{n,m} \end{pmatrix}: \mathcal{H}= \bigoplus_{k=1}^{n-1} \C^{\delta_k} \oplus \mathcal{M}_{n-1} \rightarrow \mathcal{H},\begin{pmatrix} x \\ y \end{pmatrix} \mapsto \begin{pmatrix} x \\ U_{n,m}y \end{pmatrix}$. We define $\rho: \mathcal{S}_L \rightarrow \mathcal{B}(\mathcal{K})$
\begin{align*}
L \mapsto \left(\mathcal{K} \mapsto \mathcal{K}, x \mapsto \lim_{m \rightarrow \infty} V_{m,m}^* ... V_{1,m}^* L V_{1,m} ... V_{m,m}x\right)=\left(\bigoplus_{n \in \N} A_n\middle)\right|_\mathcal{K} 
\end{align*} in the strong operator topology.
Since all the $V_{n,m}$ are isometries, we can take limits and extend $\rho$ to $\rho: \mathcal{S}_L \rightarrow \mathcal{B}(\ov{\mathcal{K}})$
\begin{align*}
L \mapsto \left(\ov{\mathcal{K}} \mapsto \ov{\mathcal{K}}, x \mapsto \lim_{m \rightarrow \infty} V_{m,m}^* ... V_{1,m}^* L V_{1,m} ... V_{m,m}x\right)=\bigoplus_{n \in \N} A_n
\end{align*}
In the same way as in the proof of \autoref{geli} we can extend $\rho$ to a representation $\rho: C^*(L) \rightarrow \mathcal{B}(\ov{\mathcal{K}})$ keeping the same mapping rule (a direct sum of completely positive maps with the unique extension property has again the unique extension property). Now we use \autoref{hadi} in combination with \autoref{Sarason} to obtain: There is a representation $\rho_2$ such that $\rho \oplus \rho_2 \sim_{a} \text{id}_{C^*(L)}$ or in other words $\bigoplus_{A \in T} A \subseteq_{a}L$. The same statement in the case of finite $T$ is technically easier and follows in a similar fashion. \\[0.2cm]
In case that $\ov{\mconv(\abex K)}=K$, we are done. Indeed it is immediate that (a) holds with $P=\emptyset$. Since we have used only $K$ to define $T$ and $P$, but not $L$, (b) holds as well (set $H=L$). So suppose $\ov{\mconv(\abex K)} \neq K$. Choose a dense sequence $(E_m)_{m \in \N}$ of $\text{mext}(K)$. We form $P$ by following the upcoming algorithm:
Set $P_0= \emptyset$. Let $m \in \N$ and $P_m$ be a finite set of accesible absolute boundary points of $K$ already defined. If $E_m \in \mconv \left(\bigoplus_{A \in T} A \right) \oplus \left(\bigoplus_{C \in P_m} C \right)$ set $P_{m+1}=P_m$. So suppose the contrary is the case. By \autoref{exiacc} $E_m$ dilates to an accesible absolute boundary point $C_m$ defined on an infinite-dimensional separable Hilbert space $\mathcal{K}_m$. Set $P_{m+1}=P_m \cup \{C_m\}$. \\[0.2cm]
In the end we set $P=\bigcup_{m \in \N} P_m$. By \autoref{kreini} we have $\mconv(K)=\mconv(\text{mext}(K))$. Therefore (a) is fulfilled. Basically, we repeat now the strategy of the start of the proof. For simplicity of notation we suppose $P$ is infinite and change the numeration of the $C_m$ in such a order-preserving way such that $P=\{C_m \ | \ m \in \N\}$. Inductively we show with the help of \autoref{geli} that for all $m \in \N$ there is a representation $\tau_m: C^*(L) \rightarrow \mathcal{B}(\ov{\mathcal{K}}), L \mapsto \left(\bigoplus_{A \in T} A \right) \oplus \left(\bigoplus_{C \in P_m} C \right)$ and a representation $\sigma_m$ on $C^*(L)$ such that $\tau_m \oplus \sigma_m \sim_{a} \text{id}_{C^*(L)}$. For the induction start and induction step use \autoref{gelihelp2}, which implies $P_1 \in \mconv(\rho_2(L))$ because $P_1 \notin \mconv(\rho(L))$ and $P_{m+1} \in \mconv(\sigma_m(L))$ because $P_{m+1} \notin \mconv(\tau_m(L))$ for $m \in \N$. \\[0.2cm]
Now \autoref{hadi2} says that all $\tau_m$ are ranknonincreasing and therefore also the representation on $C^*(L)$ defined by $L \mapsto \left(\bigoplus_{A \in T} A \right) \oplus \left(\bigoplus_{C \in P} C \right)$ is ranknonincreasing. Again with \autoref{hadi2} $L \subseteq_{a} \left(\bigoplus_{A \in T} A \right) \oplus \left(\bigoplus_{C \in P} C \right)$ follows. Since we did only use $K$ but not the special form of $L$ in our construction, (b) follows by setting $H=L$.
\end{proof}  

\begin{cor}(General Gleichstellensatz)
Let $T$ be a closed matrix convex set with $0 \in \inte(T)$. Then there exists a separable Hilbert space $\mathcal{H}$ and $L \in \mathcal{B}_h(\mathcal{H})^g$ such that $T=\mathcal{D}_\ml$ and for all other $H \in \mathcal{B}_h(\mathcal{K})^g$ with $T=\mathcal{D}_\mh$ we have $L \subseteq_a H$.
\end{cor}

\begin{proof}
We apply \autoref{secondhalf} to find a smallest generator $L \in \mathcal{B}_h(\mathcal{H})^g$ of $T^\circ$. Let $K=\left( \bigcap_{i=1}^g \ker(L_i)\right)^\perp$. Then $L|_K:=(L_1|_K,...,L_g|_K)$ is the desired tuple (cf. \autoref{polariapp}). The restriction onto $K$ is only necessary if $0$ is not contained in the closure of the matrix convex hull of the other matrix extreme points of $T^\circ$.
\end{proof}

\section{Examples}

\begin{exar} \label{ex1}
Let \marginpar{[\begin{footnotesize}\autoref{arviskz}\end{footnotesize}]}$\ml \in S(\NC)_1^{\delta \times \delta}$ be a monic linear pencil defining a bounded spectrahedron $\mathcal{D}_\ml$. We see that $g \leq \delta^2-1$ because the $L_i$ and $I$ have to be linear independent. \autoref{positest2} states that every $\delta$-positive map from $\C^{\delta \times \delta}$ into a $C^*$-algebra $\mathcal{B}$ is completely positive. Now we can find $L_{g+1},...,L_{\delta^2-1}$ such that the $\spam_\C(L_1,...,L_{\delta^2-1})$ contains no non-trivial positive semidefinite matrix. Hence the pencil $H=I-\sum_{j=1}^{\delta^2-1} L_j X_j$ defines a compact spectrahedron $\mathcal{D}_H$ in $\S^{\delta^2-1}$, the operator system defined by the $L_1,...,L_{\delta^2-1}$ equals $\C^{\delta \times \delta}$ and $\text{kz}(\mathcal{D}_H) \leq \delta$ (\autoref{arviskz}). We see $\mathcal{D}_\ml=\mathcal{D}_H \cap \S^g$. 
\end{exar}

\begin{exar} \label{ex11} \cite[Section 2.1.2]{HKM2}
Consider \marginpar{[\begin{small}\autoref{existence}\end{small}]} the set $S=\{ (X,Y) \in \S^2 \ | \ 1-X^4-Y^4 \succeq 0 \}$. It is known that $S(1)$ is convex, however not an ordinary spectrahedron because the polynomial $1-X^4-Y^4$ is not an RZ-polynomial. Thus $S$ is also not a free spectrahedron. $\inte{S}$ is not even matrix convex as otherwise the requirements of \autoref{existence} would be fulfilled.
\end{exar}

\begin{exar} \label{ex2}
Let \marginpar{[\begin{small}\autoref{gleichi}\end{small}]} $S \subseteq \S^g$ be a spectrahedron and $\ml$ a monic pencil of minimal size $k$ defining $S$. Let $H=\{ h \in \R^{g \times g} \ | \ h \text{ invertible and } A \in S \implies hA= (\sum_{j=1}^g h_{i,j} A_j )_{i \in \{1,...,g\}}\in S  \}$ be the group of automorphisms which leave $S$ invariant. Then there exists a group representation $U: H \rightarrow \{U \in \C^{k \times k} \ | \ U \text{ is unitary}\}, h \mapsto U_h$ such that $I-L(h\ov X)=U_h\ml U_h^*$ for $h \in H$.
\end{exar}

\begin{proof}
WLOG we suppose that there is no unitary $U$ such that $ULU^*=L$. This is for instance the case when $\spam(I,L_1,...,L_g)=\C^{k \times k}$. We can can reach this case by introducing new variables if necessary. The Gleichstellensatz tells us that for each $h \in H$ there is a unique unitary $U_h$ such that $(I-L(h\ov X))=U_h\ml U_h^*$. We only have have to show that $h \mapsto U_h$ is multiplicative. \\[0.2cm]
Let $f,h \in H$ and $A \in \S^g$. We have $U_{hf} (L \ov X)(A) U_{hf}^*=(L\ov X)(h fA)=\sum_{i=1}^g L_i \otimes \left(\sum_{j=1}^g (hf)_{i,j} A_j\right)=\sum_{i=1}^g L_i \otimes \left(\sum_{k=1}^g \sum_{j=1}^g h_{i,k} f_{k,j} A_j\right)=\sum_{i=1}^g L_i \otimes \left(\sum_{k=1}^g h_{i,k} (f A)_k\right)=U_h (L \ov X)(fA) U_h^*=U_h U_f (L \ov X)(A) U_f^* U_h^*$. By the uniqueness of $U_{hf}$ we get $U_{hf}=U_{h}U_{f}$.
\end{proof}

\begin{exar} \label{kreu} Let \marginpar{[\begin{small}\autoref{gleichi}\end{small}]} $\ml,\mh$ be $\mathcal{D}$-minimal and $\mathcal{D}$-irreducible monic linear pencils describing two spectrahedra $S,T \subseteq \S^g$. Then the pencil $I-[(L \ov X) \oplus (H \ov Y)]$ is a description of minimal size of $S \times T$. 
\end{exar}

\begin{proof}
We have $S \times T=(S \times \S^g) \cap (\S^g \times T)=\mathcal{D}_{I-L \ov X} \cap \mathcal{D}_{I-H\ov Y}$. Now the Gleichstellensatz \autoref{gleichi} in connection with \autoref{minirrchar} implies that $L \oplus H$ is a direct summand of every other monic linear pencil defining the spectrahedron $S \times T$. 
\end{proof}

If we take a monic linear pencil $\ml$ of size $\delta$ and fix $m \in \N$, then $(\mathcal{D}_\ml(mn))_{n \in \N}$ can be interpreted as a free spectrahedron $S$ in $\S^{gm^2}$ in a natural way defined by a pencil $I-H \ov Y$. The following theorem shows: In this case the spectrahedron $S$ "knows" that it constitutes only some levels of a spectrahedron in $g$ variables and the structure is reflected in the pencil $I-H \ov Y$. 

\begin{exar} \label{ex13}
Let \marginpar{[\begin{small}\autoref{gleichi}\end{small}]}$r,\delta \in \N$ and $\mathcal{L}: \S^g(m) \rightarrow S\C^{r \times r}$ linear. We can write an element of $\S^g(m)$ as the evaluation of the $g$-tuple of generic $(m \times m)$-matrices \hypertarget{IV}{}\begin{align*}
\mathcal{Y}=\sum_{j=1}^m E_{j,j} \mathcal{Y}_{j,j} + \sum_{j,k \in \{1,...,m\} j < k}^m E_{j,k} \mathcal{Y}_{j,k} + E_{k,j}  \mathcal{Y}_{k,j} \ \ \text{(IV)}
\end{align*}
where the $\mathcal{Y}_{j,k}=(\mathcal{Y}_{j,k}^1,...,\mathcal{Y}_{j,k}^g)$ are $g$-tuples of variables. The list of all these variables constitute the tuple $\ov {\mathcal{Y}}=(\mathcal{Y}_{j,k}^i)_{j,k \in \{1,...,m\}, i \in \{1,...,g\}}$ of $gm^2$ variables.
We define the linear pencil $\mathcal{L} \ov{\mathcal{Y}}$ by setting: $\mathcal{L}_{j,k}^i$ is the result of substituting $1$ for $\mathcal{Y}_{j,k}^i$ in $\mathcal{L}(\mathcal{Y})$ and $0$ for every other variable. Set $T=\mathcal{D}_{I-\mathcal{L} \ov{\mathcal{Y}}} \subseteq (\S^{gm^2}(n))_{n \in \N}$. Suppose $I-\mathcal{L} \ov{\mathcal{Y}}$ is the pencil of minimal size in $gm^2$ variables defining $T$. \\[0.2cm]
We identify $(\S^g(m))$ with $\S^{gm^2}(1)$ by associating 
\begin{align*}
\begin{pmatrix} B_{1,1} & \hdots & C_{1,m} + \ii C_{m,1} \\
\vdots &\vdots& \vdots \\
C_{1,m} - \ii C_{m,1} & \hdots & B_{m,m} \end{pmatrix} \text{  with  } (B_{1,1},...,B_{m,m},C_{1,2},C_{2,1}...,C_{m,m-1},C_{m-1,m})
\end{align*} in the same way as we associated $\mathcal{Y}$ with $\ov{\mathcal{Y}}$. By tensoring from the right side with $n \times n$-matrices we also associate $(\S^g(mn))$ with $\S^{gm^2}(n)$ . Let \begin{align*}
S_j=\{A \in \S^g \ | \ (0,...,0,\underbrace{A}_{j,j\text{-th index}},0,...,0) \in T\}.\end{align*} If $S_j(mn)=T(n)$ for all $j \in \{1,...,m\}$ and $n \in \N$, then $S:=S_1$ is a free spectrahedron. For the minimal pencil $I-H \ov X$ defining $S$ we have that 
\begin{align*}
I-\mathcal{L} \ov{\mathcal{Y}} \approx (I-H \ov X)\left(\sum_{j=1}^m E_{j,j} \otimes \mathcal{Y}_{j,j} + \sum_{j<h}^m E_{j,h} \otimes \mathcal{Y}_{j,h} + E_{h,j} \otimes \mathcal{Y}_{h,j}\right)
\end{align*} 
\end{exar}

\begin{proof}
For $j \in \{1,...,m\}$ let $I-\mathcal{L}_{(j,j)}(\ov{\mathcal{Y}}_{j,j})$ be the the pencil we get by substituting every $\ov{\mathcal{Y}}_{h,k}$ where $(h,k) \neq (j,j)$ with $0$ in $I-\mathcal{L} \ov{\mathcal{Y}}$. By definition we have $\mathcal{D}_{I-\mathcal{L}_{(j,j)}\ov{\mathcal{Y}}_{j,j}}=S_j=S \subseteq \S^g$. \\[0.2cm]
Let $I-\mathcal{L}^{\text{diag}}(\ov{\mathcal{Y}}_{1,1},...,\ov{\mathcal{Y}}_{m,m})$ be the the pencil we get by substituting every $\ov{\mathcal{Y}}_{k,j}$ where $k \neq j$ with $0$ in $I-\mathcal{L} \ov{\mathcal{Y}}$. We see that $\mathcal{D}_{I-\mathcal{L}^{\text{diag}}(\ov{\mathcal{Y}}_{1,1},...,\ov{\mathcal{Y}}_{m,m})}=S^m\subseteq \S^{gm^2}$ by identifying the first set with $(S_j(mn))_{n \in \N}$ restricted to block-diagonal tuples. \\[0.2cm]
Let $I-H \ov X$ be a pencil of minimal size $\delta$ describing $S$. \autoref{kreu} tells us that the size of $I-\mathcal{L}^{\text{diag}}(\ov{\mathcal{Y}}_{i,i},...,\ov{\mathcal{Y}}_{m,m})$ is at least $\delta m$. Hence 
\begin{align*}
(I-H \ov X)\left(\sum_{j=1}^m E_{j,j} \otimes \mathcal{Y}_{j,j} + \sum_{j<h}^m E_{j,h} \otimes \mathcal{Y}_{j,h} + E_{h,j} \otimes \mathcal{Y}_{h,j}\right)
\end{align*} 
is a monic linear pencil of size $\delta m$ defining $T$. We have seen that it is of minimal size. Hence the claim follows from the Gleichstellensatz.
\end{proof}

\begin{exar} \label{ex3}
Let \marginpar{[\begin{small}\autoref{nullst}\end{small}]}$S \subseteq \S^g$ be closed matrix convex with $0 \in \inte{S}$. Let $\pz(S)=k< \infty$ and suppose that there is $L \in \S^g$ such that $\mathcal{D}_\ml(k+1)=S(k+1)$. Then $S$ is a spectrahedron.  
\end{exar}

\begin{proof}
Let $T=S^\circ$. Then $\kz(T) = k$ and all matrix extreme points of $T$ have size at most $k$. We have $\mconv(\abex(T))=T$. Since $T(k+1)=\mconv(L\oplus 0)(k+1)$ we know that each absolute extreme point of $T$ is an absolute extreme point of $\mconv(L \oplus 0)$ (\autoref{charabs}) and therefore a direct summand of $L \oplus 0$. Thus $T$ can have only finitely many absolute extreme points $B_1,...,B_m$. Now $S=\mathcal{D}_{I-\bigoplus_j B_j \ov X}$.
\end{proof}

\begin{lemma} (Extreme points of spectrahedra) (cf. \cite[Corollary 3]{RG}) \label{exspec}
Let $L \in \S^g,L_0 \in S\C^{\delta \times \delta}$ such that $\{L_0,...,L_g\}$ is linear independent and $S=\mathcal{D}_{L_0 X_0 + ... + L_g X_g} \subseteq \S^{g+1}, A \in S(\delta) \setminus \{0\}$. Then in the following is equivalent:
\begin{enumerate}[(a)]
\item $A$ is an extreme ray in $S(\delta)$.
\item For every $B \in \S^{g+1}(\delta)$ the inclusion $\ker (L_0 X_0 + ... + L_g X_g)(B) \supseteq \ker (L_0 X_0 + ... + L_g X_g)(A)$ implies that $B=\lambda A$ for some $\lambda \in \R$.
\item For every $B \in S(\delta)$ the inclusion $\ker (L_0 X_0 + ... + L_g X_g)(B) \supseteq \ker (L_0 X_0 + ... + L_g X_g)(A)$ implies that $B=\lambda A$ for some $\lambda \geq 0$.
\end{enumerate}
\end{lemma}

\begin{proof} Set $\mathcal{L}=L_0 X_0 + ... + L_g X_g$. \\[0.2cm]
$(b) \implies (c)$: (c) is an obvious consequence of (b) together with the fact that $\{L_0,...,L_g\}$ is linear independent.\\[0.2cm]
$(c) \implies (a)$: Suppose $A=B+C$ with $B,C \in S(\delta)$. Then $\ker \mathcal{L}(A) \subseteq \ker \mathcal{L}(B)$ and hence there is $\lambda \geq 0$ such that $B= \lambda A$. \\[0.2cm]
$(a) \implies (b)$: Let $B \in \S^{g+1}(\delta)$ and $\ker \mathcal{L}(B) \supseteq \ker \mathcal{L}(A)$. Then we find $\mu>0$ such that $A \pm \mu B \in S(\delta)$. Hence we find $\lambda\geq 0$ such that $A+\mu B=\lambda A$ and $B=\frac{\mu}{\lambda-1}A$.  
\end{proof}

\begin{proposition} \label{cdlabel} \cite[Lemma 2]{AZ}
Let $A,B \in S\C^{k \times k}$ and suppose that $A^2x \in \spam\{x,Ax\}$, $B^2x \in \spam\{x,Bx\}$ for all $x \in \C^k$. Then there is a reducing subspace of dimension $1$ or $2$ of $A$ and $B$.
\end{proposition}

\begin{proof} \cite[Lemma 2]{AZ}
Choose an eigenvector $x \in \C^k \setminus \{0\}$ of $A-B$. Set $H=\spam\{x,Ax\}=\spam\{x,Bx\}$. It is evident that $H$ is an invariant subspace of $A$ and $B$. Since $A$ and $B$ are Hermitian, it is also reducing.
\end{proof}

\begin{exar} \label{ex6}(Free cube) (cf. \cite[Proposition 7.1]{EHKM}) Let \marginpar{[\begin{small}\autoref{krain}\end{small}]} $g \geq 2$, $S=\mathcal{D}_\ml$ where $L_i=e_i e_i^* \otimes \begin{pmatrix} 1 & 0 \\ 0 & -1 \end{pmatrix}$. Then $S=\{A \in \S^g \ | \ \forall i \in \{1,...,g\}: I-A_i^2 \succeq 0\}$. $A \in S$ is absolute extreme if and only if [$A$ is ordinary extreme and irreducible] if and only if [$A_i^2=I$ for every $i \in \{1,...,g\}$ and $A$ is irreducible]. $S$ is the matrix convex hull of its absolute extreme points. \\[0.2cm]
If $g=2$, then $\kz(S)=2$. If $g=3$, then $\kz(S)=\infty$.
\end{exar}

\begin{proof}
Let $A \in S$ and $\lambda \in (-1,1)$ be an eigenvalue of $A_1$ with eigenvector $v$. Then we can find $\mu \in \R$ small such that $A=\frac{1}{2}(A_1+\mu vv^*,A_2,...,A_g) + \frac{1}{2}(A_1-\mu vv^*,A_2,...,A_g) \in S$ is not an extreme point of $S$. \\[0.2cm] On the other hand let $A \in S$ be irreducible such that $A_i^2=I$ for all $i \in \{1,...,g\}$. Suppose $B=\begin{pmatrix} A & b \\ b^* & c \end{pmatrix} \in S$. Then $B_i^2 =\begin{pmatrix} A_i^2 + b_ib_i^* & A_ib_i + b_ic_i \\ b_i^*A_i + c_ib_i^* & b_i^*b_i + c_i^2 \end{pmatrix}\preceq I$, in particular $I \succeq A_i^2+b_ib_i^*=I + b_ib_i^*$. We conclude that $b=0$ and $A$ is absolute extreme. \\[0.2cm]
Now let $g=2$ and $A$ be absolute extreme. The $A_i$ have the eigenvalues $-1$ and $1$. Hence the requirements of the \autoref{cdlabel} are fulfilled and $A$ has a reducing subspace of dimension $1$ or $2$. However $A$ is irreducible, thus $A \in S(1) \cup S(2)$. Now let $g \geq 3$, $m \in \N$ and $n=2m$. We want to construct $A \in S(n)$ absolute extreme. Chandler has shown in \cite[Theorem 1]{CD} that there exist $3$ projections $\uv{P},\uv{Q},\uv{R}: \C^{n} \rightarrow \C^{n}$ (the line under $P$ means that $\uv{P}$ is not necessarily surjective like most other projections in this paper) such that the $C^*$-algebra generated by $\uv{P},\uv{Q},\uv{R}$ equals $\C^{n \times n}$. Burnsides theorem \autoref{burn} implies that $(\uv{P},\uv{Q},\uv{R})$ is irreducible. We see that $(1-2\uv{P},1-2\uv{Q},1-2\uv{R}) \in S$ is irreducible and forms an absolute extreme point of $S$.
\end{proof}

\begin{rem} \label{schrott}
\cite[Theorem 1]{CD} used in the last proof is also valid for infinite-dimensional separable Hilbert spaces $\mathcal{H}$. There exist three projections which generate as a $C^*$-algebra already $\mathcal{B}(\mathcal{H})$. This shows that a matrix convex $S$ set can have an infinite-dimensional boundary representation even though $\abex(S)=S$ already.
\end{rem}

\begin{exar} \label{tracesalgebra}
Let \marginpar{[\begin{small}\autoref{abso}\end{small}]} $L \in \S^g(\delta)$ and suppose that $\spam\{I,L_1,...,L_g\}=\C^{\delta \times \delta}$ and $\mathcal{D}_{\ml}$ is bounded. Then $\mathcal{D}_{\ml}$ is the matrix convex hull of its absolute extreme points and $A \in \S^g(k)$ is absolute extreme if and only if $k=\delta$, $A$ is irreducible and $\dim \ker\ml(A)=k\delta-1$. \\[0.2cm]
The absolute extreme points $A$ of $\mathcal{D}_\ml$ are in bijective correspondence to the one-dimensional subspaces $\spam(v)$ of $\C^{\delta^2}$ given by some $v=\sum_{\al=1}^\delta v_\al \otimes e_\al$ such that $(v_1,...,v_\delta)$ is an orthonormal basis of $\C^\delta$ with respect to some scalar product on $\C^\delta$ depending on $L$. An absolute extreme point $A$ corresponds to $\ker \ml(A)$.  
\end{exar}

\begin{proof}
Because $S:=\mathcal{D}_{\ml}$ is bounded, we know that the $L_i$ are linear independent and $\spam_\R\{L_1,...,L_g\} \cap \{A \in S\C^{\delta \times \delta} \ | \ A \succeq 0\}=\{0\}$. This is a trivial intersection of cones. Therefore there is a non-trivial linear $\varphi: S\C^{\delta \times \delta} \rightarrow \R$ such that $\varphi(\{A \in S\C^{\delta \times \delta} \ | \ A \succeq 0\}) \subseteq [0,\infty)$ and $\varphi(\spam_\R\{L_1,...,L_g\}) \subseteq (-\infty,0]$. We conclude $(\spam_\R\{L_1,...,L_g\})=\ker \varphi$. Since the cone of positive semidefinite matrices is selfdual, we conclude that there is a positive definite matrix $B \in S\C^{\delta \times \delta}$ such that $\varphi(A)=\text{tr}(B^*A)$ for all $A \in \C^{\delta \times \delta}$. \\[0.2cm]
\autoref{arviskz}, \autoref{positest2} and \autoref{abso} imply that $S$ is the matrix convex hull of the absolute extreme points which are elements of $S(1),...,S(\delta)$. We know that $\spam\{L_1,...,L_g\} \otimes \C^{k \times m}=\left\{\sum_{j=1}^k \sum_{h=1}^m e_j e_h^* \otimes D_{j,h} \ | \ D_{j,h} \in \C^{\delta \times \delta}, \varphi(D_{j,h})=0\right\}$ (where $e_j \in \C^k, e_h \in \C^m$). \\[0.2cm]
Let $A \in S(k)$. Then $\ml(A) \neq 0$, whence $\dim \ker \ml(A) \leq k\delta-1$.
Now suppose $A\in \partial \mathcal{D}_{\ml}(k)$ and $k \leq \delta$. 
Consider the map $\psi: [(\ker \ml(A))^\perp]^{1 \times \delta} \rightarrow \C^{k}$
\begin{align*}
\left(\sum_{\al=1}^k v_\al^1 \otimes e_\al,...,\sum_{\al=1}^k v_\al^\delta \otimes e_\al\right) \mapsto \begin{pmatrix} \varphi \left(v_1^1 \ ... \ v_1^\delta\right) \\ \vdots \\ \varphi \left(v_k^1 \ ... \ v_k^\delta\right)\end{pmatrix}.    
\end{align*}
If $\dim \ker \ml(A) < k\delta-1$ or $k<\delta$, the dimension of domain of $\varphi$ is bigger than the dimension of the codomain and therefore $\psi$ admits an element $c\neq 0$ in the kernel. We can write $c=(L \ov X)(b)$ where $b \in (\C^{k \times 1})^g$. Now for $\lambda \in (0,\infty)$
\begin{align*}\ml \left(
\begin{pmatrix}
A & \lambda b \\
\lambda b^* & 0
\end{pmatrix}
\right) \approx \begin{pmatrix}
\ml(A) & \lambda (L \ov X)(b) \\
\lambda (L \ov X)(b)^* & I
\end{pmatrix}=\begin{pmatrix}
\ml(A) & \lambda c \\
\lambda c^* & I
\end{pmatrix}
\end{align*}
The Schur complement tells us that this is matrix is positive semidefinite if and only if $\ml(A)-\lambda^2 c^* c \succeq 0$. This is fulfilled for small $\lambda$ and thus $A$ is not in the Arveson boundary. \\[0.2cm]
If on the other hand $k=\delta$ and $\dim \ker \ml(A) = \delta^2-1$, we write $\ml(A)=\sum_{j,h \in \{1,...,\delta\}} C_{j,h} \otimes F_{j,h}$ where $F_{j,h}=e_je_h^*$ and $C_{j,h} \in \spam{(L_1,...,L_g)}=\ker \varphi$ for $h \neq j$ and $C_{j,j}-I \in \spam{(L_1,...,L_g)}$. Let $\ker \ml(A)=v^\perp$ and write $v=\sum_{\al=1}^\delta v_\al \otimes e_\al$. Since $\ml(A) \succeq 0$, we can scale $v$ such that $\ml(A)=vv^*=\sum_{\al,\beta=1}^\delta v_\al v_\beta^* \otimes F_{\alpha,\beta}$. Since $B$ is positive definite, $\rho: \C^\delta \times \C^\delta\rightarrow \C, (u,v) \mapsto \varphi(vu^*)$ is a scalar product. Scale $B$ in such a way that $\tr(B^*)=1$. Now we see that the vectors $v_\al$ are pairwise orthogonal with respect to $\rho$, thus $\{v_1,...,v_\delta\}$ is an orthonormal basis of $\C^\delta$ with respect to $\rho$. \\[0.2cm]
Consider the map $\psi: [(\ker \ml(A))^\perp]^{1 \times \delta} \rightarrow \C^{\delta},$
\begin{align*}
\left(\lambda_1 \sum_{\al=1}^\delta v_\al \otimes e_\al,...,\lambda_\delta \sum_{\al=1}^\delta v_\al \otimes e_\al\right) \mapsto \begin{pmatrix} \varphi \left(\lambda_1v_1 \ ... \ \lambda_\delta v_1\right) \\ \vdots \\ \varphi \left(\lambda_1 v_\delta \ ... \ \lambda_\delta v_\delta\right)\end{pmatrix}.    
\end{align*}
Suppose $\lambda \in \C^\delta$ such that $\left(\lambda_1 \sum_{\al=1}^\delta v_\al \otimes e_\al,...,\lambda_\delta \sum_{\al=1}^\delta v_\al \otimes e_\al\right) \in \ker \psi$. Then for all $\al \in \{1,...,\delta\}$ we obtain $0=\tr(B^* v_\al \lambda^T)=\tr(\lambda^T B^* v_\al)=\langle \ov{\lambda}, B^* v_\al \rangle$ and hence $\lambda=0$. This means that $\psi$ is injective. \\[0.2cm]
Now let $b \in (\C^{\delta \times 1})^g$ and $d \in \C^g$ such that $\begin{pmatrix} A & b \\
b^* & d\end{pmatrix} \in S$. Due to the Schur complement this implies that $(L \ov X)b \in \ker(\psi)=\{0\}$ and therefore $b=0$; thus $A$ is absolute extreme. \\[0.2cm]
Along the lines of the previous arguments one also shows that a one-dimensional subspace of $\C^{\delta^2}$ coming from an orthonormal basis with respect to $\rho$ induces an absolute extreme point of $S$. 
\end{proof}

\begin{exar} \label{ex10}
We \marginpar{[\begin{small}\autoref{matexdil}\end{small}]}demonstrate how \autoref{matexdil} works in the situation of \autoref{tracesalgebra}. We set $\delta=3$. Define
\begin{align*}
&L=\left( \begin{pmatrix} 2 & 0 & 0 \\ 0 &-1 & 0 \\ 0 & 0 & -1 \end{pmatrix}, \begin{pmatrix} -1 & 0 & 0 \\ 0 & 2 & 0 \\ 0 & 0 & -1 \end{pmatrix}, \begin{pmatrix} 0 & 1 & 0 \\ 1 & 0 & 0 \\ 0 & 0 & 0 \end{pmatrix}, \begin{pmatrix} 0 & \ii & 0 \\ -\ii & 0 & 0 \\ 0 & 0 & 0 \end{pmatrix} \right.\\
&\left. \begin{pmatrix} 0 & 0 & 1 \\ 0 & 0 & 0 \\ 1 & 0 & 0 \end{pmatrix},\begin{pmatrix} 0 & 0 & \ii \\ 0 & 0 & 0 \\ -\ii & 0 & 0 \end{pmatrix},\begin{pmatrix} 0 & 0 & 0 \\ 0 & 0 & 1 \\ 0 & 1 & 0 \end{pmatrix}, \begin{pmatrix} 0 & 0 & 0 \\ 0 & -\ii & 0 \\ 0 & 0 & \ii \end{pmatrix} \right).
\end{align*}This corresponds to $B=I$ and $\varphi=\tr$ in \autoref{tracesalgebra}.
$\ml(e_1)=\begin{pmatrix} 3 & 0 & 0 \\
0 & 0 & 0 \\
0 & 0 & 0 \end{pmatrix}$ has a two-dimensional kernel and $e_1$ is matrix extreme in $\mathcal{D}_\ml$ because $(1,e_1)$ is extreme in $\Hom(\mathcal{D}_\ml)(1)$ (\autoref{exspec}). We want to check if $e_1$ is absolute extreme. If $\begin{pmatrix} e_1 & b^* \\
b & c\end{pmatrix} \in \mathcal{D}_\ml$ with $b \neq 0$, then $\ker \ml(e_1) \subseteq \ker (L \ov X)(b)$. Hence the rows of $(L \ov X)(b)$ have to be elements of $\spam(e_1)$. The possible form of $(L \ov X)(b)$ is $\begin{pmatrix} 0 & 0 & 0 \\ \mu & 0 & 0 \\ \nu & 0 & 0 \end{pmatrix}$ where $\mu,\nu \in \C$ (the trace has to be zero). 
The desired form is \begin{align*}
\ml \left(\begin{pmatrix} e_1 & b^* \\
b & c\end{pmatrix}\right)=\begin{pmatrix}
3 & 0 & 0 & \mu^* & 0 & \nu^* \\
0 & 3-x_5-x_6 & 0 & x_2 & 0 & x_3 \\ 
0 & 0 & 0 & 0 & 0 & 0 \\ 
\mu & x_2^* & 0 & x_5 & 0 & x_4 \\
0 & 0 & 0 & 0 & 0 & 0 \\ 
\nu & x_3^* & 0 & x_4^* & 0 & x_6 \\
\end{pmatrix} \succeq 0
\end{align*}
with some $x_j$. $\Hom(\mathcal{D}_\ml)(2)$ is an ordinary spectrahedron and we know that in order to make $\begin{pmatrix} e_1 & b^* \\
b & c\end{pmatrix}$ matrix extreme in $\mathcal{D}_\ml$, we have to make $\left(I_2,\begin{pmatrix} e_1 & b^* \\
b & c\end{pmatrix}\right)$ extreme in $\Hom(\mathcal{D}_\ml)(2)$. Using the characterization of the extreme points of an ordinary spectrahedron \autoref{exspec}, we see that $\dim\ker \ml \left(\begin{pmatrix} e_1 & b^* \\
b & c\end{pmatrix}\right)=5$ would imply that $\begin{pmatrix} e_1 & b^* \\
b & c\end{pmatrix}$ is matrix extreme. So set $x_2=0,x_3=0,x_5+x_6=3$.  
Hence for $\mu,\nu \in \C$ with $\frac{\mu\mu^*}{3} + \frac{\nu\nu^*}{3}=3$ the choice
\begin{align*}
\ml\left(\begin{pmatrix} e_1 & b^* \\
b & c\end{pmatrix}\right)=\begin{pmatrix}
3 & 0 & 0 & \mu^* & 0 & \nu^* \\
0 & 0 & 0 & 0 & 0 & 0 \\ 
0 & 0 & 0 & 0 & 0 & 0 \\ 
\mu & 0 & 0 & \frac{\mu\mu^*}{3} & 0 & \frac{\mu\nu^*}{3} \\
0 & 0 & 0 & 0 & 0 & 0 \\ 
\nu & 0 & 0 & \frac{\mu^*\nu}{3} & 0 & \frac{\nu\nu^*}{3} \\
\end{pmatrix}
\end{align*}
constitutes an matrix extreme point of $\mathcal{D}_\ml$.
We choose as a solution $\mu=3$ and $\nu=0$ and obtain
\begin{align*}
B:=\begin{pmatrix} e_1 & b^* \\
b & c\end{pmatrix} =\left( \begin{pmatrix} 1 & 0 \\
0 & 0\end{pmatrix}, \begin{pmatrix} 0 & 0 \\
0 & 1\end{pmatrix}, \begin{pmatrix} 0 & \frac{3}{2} \\
\frac{3}{2} & 0\end{pmatrix}, \begin{pmatrix} 0 & -\frac{3}{2}\ii \\
\frac{3}{2}\ii & 0\end{pmatrix},0_2,0_2,0_2,0_2 \right)
\end{align*}
as a new matrix extreme point of $\mathcal{D}_{\ml}$. We want to check if $B$ is extreme and argue similar as above. If $\begin{pmatrix} B & d^* \\
d & e\end{pmatrix} \in \mathcal{D}_\ml$ with $d \neq 0$, then $\ker \ml(B) \subseteq \ker (L \ov X)(d)$. Hence the rows of $(L \ov X)(d)$ have to be elements of $\spam(e_1+e_4)$. The possible form of $(L \ov X)(d)$ is $\begin{pmatrix} 0 & 0 & 0 & 0 & 0 & 0 \\ 0 & 0 & 0 & 0 & 0 & 0 \\ \kappa & 0 & 0 & \kappa & 0 & 0 \end{pmatrix}$ where $\kappa \in \C$ (the traces of two submatrices consisting of column $1,3,5$ resp. $2,4,6$ have to be zero). We maximize $\text{Re}(\kappa)$ in order to make $d$ an "extreme" choice according to the proof of \autoref{matexdil}. Clearly this is the case for $\kappa=3$. The kernel of $\ml\left(\begin{pmatrix} B & d^* \\
d & e\end{pmatrix}\right)$ is then maximal for $\ml(e)=3e_3e_3^T$. We obtain
\begin{align*}
&C:=\begin{pmatrix} B & d^* \\
d & e\end{pmatrix} =\left( \begin{pmatrix} 1 & 0 & 0 \\
0 & 0 & 0 \\ 0 & 0 & -1 \end{pmatrix}, \begin{pmatrix} 0 & 0 & 0\\
0 & 1 & 0 \\ 0 & 0 & -1 \end{pmatrix}, \begin{pmatrix} 0 & \frac{3}{2} & 0 \\
\frac{3}{2} & 0 & 0 \\ 0 & 0 & 0 \end{pmatrix}, \begin{pmatrix} 0 & -\frac{3}{2}\ii & 0 \\
\frac{3}{2}\ii & 0 & 0 \\ 0 & 0 & 0\end{pmatrix}, \right.\\
&\left. \begin{pmatrix} 0 & 0 & \frac{3}{2} \\
0 & 0 & 0 \\ \frac{3}{2} & 0 & 0 \end{pmatrix},\begin{pmatrix} 0 & 0 & -\frac{3}{2}\ii \\
0 & 0 & 0 \\ \frac{3}{2}\ii & 0 & 0 \end{pmatrix},0_3,0_3 \right)
\end{align*}
Now $C$ is absolute extreme. If not, the same methods of above would imply that there is $f$ such that $(L\ov X)(f) \neq 0$ is a $3 \times 9$ matrix with rows in $\spam(e_1+e_5+e_9)$. However the traces of the submatrices built of row $1,4,7$ resp. $2,5,8$ resp. $3,6,9$ must have trace $0$, which is not possible.
\end{exar}

\begin{exar} \label{traces2}
\autoref{tracesalgebra} \marginpar{[\begin{footnotesize}\autoref{positest2}\end{footnotesize}]} in connection with \autoref{positest2} and \autoref{arviskz} implies that for all $\delta \in \N$ there is some $\ep \in \N$ and a $(\delta-1)$-positive map $\varphi_\delta: \C^{\delta \times \delta} \rightarrow \C^{\ep \times \ep}$ which is not completely positive and hence not $\delta$-positive. 
\end{exar}

\begin{exar} \label{ex14} (Computing matrix extreme points of compact free spectrahedra) \marginpar{[\autoref{krain}]}
Let $S=\mathcal{D}_\ml$ be a compact free spectrahedron defined by a monic linear pencil $\ml$. Let $\mathcal{L}=\ml-I$. Then we have $\Hom(S)=\{(A_0,A) \in \S^{g+1} \ | \ A_0 \succeq 0, I \otimes A_0 + ...+L_g A_g \succeq 0\}=:T$. \\
Justification: The inclusion "$\subseteq$" is obvious. $T$ is a matrix cone. We show that it is directed. Let $(A_0,A)=\begin{pmatrix} (0,B) & (0,C) \\ (0,C^*) & (D_0,D)\end{pmatrix}$ with $D_0 \succ 0$ be an element of the right set. By applying a unitary conjugation we conclude that $\begin{pmatrix} \mathcal{L}(B) & \mathcal{L}(C) \\ \mathcal{L}(C^*) & I \otimes D_0 + \mathcal{L}(D)  \end{pmatrix} \succeq 0$. Because $\mathcal{D}_L$ is compact, we know that $\C^{k \times k} \otimes \spam(L_1,...,L_g)$ contains no non-trivial positive semidefinite matrix for all $\delta \in \N$ and the $L_i$ are linearly independent. From $\mathcal{L}(B) \succeq$ we deduce $B=0$. Consequently $\mathcal{L}(C)=0$ and hence $C=0$. This shows that $T$ is directed.\\
Now let $(A_0,A) \in T$. WLOG let $A_0 \succ 0$. Then we have $(I, \sqrt{A_0^{-1}}A\sqrt{A_0^{-1}}) \in T$ and so $\sqrt{A_0^{-1}}A\sqrt{A_0^{-1}} \in S$. This means $(I, \sqrt{A_0^{-1}}A\sqrt{A_0^{-1}}) \in \Hom(S)$ and therefore $(A_0,A) \in \Hom(S)$. \\
Now that we have established $\Hom(S)=T$, suppose that the extreme rays of the ordinary spectrahedron $T(k)$ are given. If $(A_0,A)$ is such an extreme ray we can find unitary $U$ such that $U^* (A_0,A) U=0_s \oplus (B_0,B)$ with $B_0 \succ 0$. Now $\sqrt{B_0^{-1}}B\sqrt{B_0^{-1}}$ is a matrix extreme point of $S$ and all matrix extreme points of size at most $k$ are obtained in this fashion. 
\end{exar}

\section{Projection number and sequences of determinants of monic linear pencils/compatible RZ-polynomials}

\noindent In chapter $5$ we have seen how the sequence $(f_k)_{k \in \N}=(\det_k \ml)_{k \in \N}$ of determinants of a monic linear pencil $\ml$ determines how the decomposition of $\ml$ in simultaneously $\mathcal{D}$-irreducible and $\mathcal{D}$-minimal pencils looks like. In this chapter we want to analyze how $\pz(\mathcal{D}_\ml)$ is encoded in this sequence of determinants. More generally we extend our result also to sequences of RZ-polynomials which satisfy some natural compatibility assumptions and define matrix convex sets as well. \\[0.2cm] If $H \subseteq \C^k$ is a subspace and $P: \C^k \rightarrow H$ is the projection onto $H$, we will set \index{P@$\underline{P}$}$\underline{P}: \C^k \rightarrow \C^k, x \rightarrow Px$. We say that a projection $P$ is minimal with respect to a property (R) if $\im(\uv{P}) \nsupseteq \im(\uv{Q})$ for every other projection $Q$ satisfying (R).

\begin{rem} \label{adjunct}
Let $h: \R \rightarrow \R^{k \times k}$ be a differentiable function. Then for $t \in \R$ we have $(\det h)'(t)=\text{tr} (\text{adj}(h(t)) h'(t))$ where $\text{adj}(h(t))$ denotes the adjugate matrix of $h(t)$.
\end{rem}

\begin{lemma} \label{unipro}
Let $\ml$ be a monic linear pencil of size $\delta$ and $f_k=\det_k \ml(\mathcal{X})$. Suppose $A \in \partial \mathcal{D}_\ml$ such that $\nabla f_k(A) \neq 0$. Then there exists a unique minimal projection $P$ such that $PAP^* \in \partial \mathcal{D}_\ml$.
\end{lemma}

\begin{proof}
Let $P$ be a projection such that $PAP^* \in \partial\mathcal{D}_\ml$. Then we can find a kernel vector $v=\sum_{\al=1}^\delta e_\al \otimes v_\al$ of $\ml(PAP^*)$. Set $w:=\sum_{\al=1}^\delta e_\al \otimes w_\al:=(I_\delta \otimes P^*)v=\sum_{\al=1}^\delta e_\al \otimes P^*v_\al$. Then $w \in \ker \ml(A)$. Of course this kernel vector is unique (\autoref{adjunct}). So $\spam(w_1,...,w_\delta) \subseteq \im(P)$ and the first set is the range of the minimal projection. 
\end{proof}

\begin{lemma} \label{abl}
Let $\ml$ be a monic linear pencil of size $\delta$, $f_k=\det_k \ml(\mathcal{X})$ and $A \in \partial \mathcal{D}_\ml(k)$ such that $\nabla f_k(A) \neq 0$. Then the unique minimal projection $\uv{Q}$ with $\uv{Q}\nabla f_k(A)\uv{Q}=\nabla f_k(A)$ (i.e. the projection onto $\im \nabla f_k(A)$) equals the unique projection $\uv{P}$ given by \autoref{unipro}.
\end{lemma}

\begin{proof}
Because $\nabla f_k(A) \neq 0$ we have $\dim \ker \ml(A)=1$. We know that there is a neighborhood $U$ of $A$ such that $M:=\{B \in \S^g(k) \cap U \ | \ f_k(B)=0\}=\{B \in \S^g(k) \cap U \ | \ \dim\ker \ml(B)=1\}=\partial \mathcal{D}_\ml(k) \cap U$ is a connected $C^1$-manifold of dimension $g k^2-1$.
Let $v \in \ker \ml(A) \setminus \{0\}$. We want to calculate the normal vector $N_A$ of this manifold $M$ in $A$. \\[0.2cm]
We have $0=\ml(A)v$, thus $(L \ov X)(A)v=v$ and $1$ is the biggest eigenvalue of $(L \ov X)(A)$. Consider the function $h:\S^n(k) \times \mathcal{S}^{\delta k}, (B,w) \mapsto \ml(B)w$. Therefore the derivative of $h$ in direction $\mathcal{S}^{\delta k}$ in $(A,v)$ is given by $\ml(A)|_{v ^\perp}$ and the image of this part of the derivative is $v^\perp$. Now if $H \in \S^n(k)$, then the derivative in $(A,v)$ in direction $H$ is given by $(L \ov X)(H)v$ and the component in direction of $v$ by $\langle v, (L \ov X)(H)v \rangle$. In particular we have $\partial_A h(A,v)=v$. This means that $h'(A,v)$ has full rank and there is a neighborhood $V$ of $(A,v)$ such that $T=\{(B,w) \in \S^g(k) \times \mathcal{S}^{\delta k} \ | \ h(B,w)=0\}$ is a $g k^2 -1$-dimensional connected $C^1$-manifold. \\[0.2cm]
Now we know from \cite[Lemma 2.1]{HV} that $U \cap \mathcal{D}_\ml(k)$ has full dimension. Since $\mathcal{D}_\ml(k)$ is convex, the hyperplane $\{A\}+N_A^\perp$ separates $\mathcal{D}_\ml(k)$. Thus up to multiplication by scalar multiples the normal vector $N_A$ is only vector $N$ that can satisfy: If we take some $D$ with $\langle N,D\rangle < 0$, then around $(A,v)$ we can find a unique $C^1$-resolution $\varphi: D^\perp \mapsto \spam{D} \times (\mathcal{S}^{\delta k} \cap B(v,1))$ such that $(A+E+\varphi_1(E),\varphi_2(E)) \in T$ for small $E$ (hence $(a+E+\varphi_1(E)) \in M$) and $\varphi(0)=(0,v)$ (uniqueness of $\varphi_1$ comes from the fact that $\mathcal{D}_\ml(k)$ is convex and $M=\partial \mathcal{D}_\ml(k) \cap U$; uniqueness of $\varphi_2$ is due to the fact that the derivative of $f_k$ does not vanish around $A$).
Now we want to apply the Implicit function theorem to determine $N$ with that property. \\[0.2cm]
We saw already that the image of the derivative of $h$ in $(A,v)$ in direction $\mathcal{S}^{\delta k}$ is $v^\perp$.
For $B \in \S^n(k)$, the component of $v$ of the derivative of $(h,A)$ in direction $B$ is $\langle v, (L \ov X)(H)v \rangle$. Together with the Implicit function theorem this means that the normal vector $N_A$ points into the direction $H \in \S^g(k)$ such that $\langle v, (L \ov X)(G)v \rangle=0$ for all $G \in \S^g(k)$ perpendicular to $H$. Thus the normal direction $N_A$ is given by $\nabla r(A)$ where $r: \S^n(k) \rightarrow \R, B \mapsto v^*\ml(B)v$. \\[0.2cm]
Since the equality $f_k=0$ defines the manifold $M$, we see that $\spam(N_A)=\spam(\nabla f_k(A))=\spam \nabla r(A)$. Now if we write $v=\sum_{\al=1}^\delta e_\al \otimes v_\al$, the proof of \autoref{unipro} tells us that $P$ is the projection onto $\{v_1,...,v_\delta\}$. Since for every $B \in \S^n(k)$ we have $v^*\ml(\uv{P}B\uv{P})v=v^*\ml(B)v$, we get $\uv{P}\nabla f_k(A)\uv{P} = \nabla f_k(A)$. \\[0.2cm]
On the other hand we have $\uv{Q} \nabla r(A) \uv{Q}=\nabla r(A)$. Hence for every $B \in \S^g(k)$ we have 
\begin{align*}
v^* (L \ov X)(B) v=\tr(\nabla r(A) B)=\tr(\uv{Q} \nabla r(A) \uv{Q} B)=\tr(\nabla r(A) \uv{Q} B \uv{Q})=v^* (L \ov X)(\uv{Q}B\uv{Q}) v.
\end{align*} Thus we conclude $0=v^*\ml(A)v= v^*\ml(\uv{Q}A\uv{Q})v$. Since $\mathcal{D}_\ml$ is matrix convex, $\uv{Q}A\uv{Q} \in \mathcal{D}_\ml$. Hence $\uv{Q}A\uv{Q} \in \partial \mathcal{D}_\ml$ and $\im(\uv{P}) \subseteq \im(\uv{Q})$. Due to $\uv{P}\nabla f_k(A)\uv{P} = \nabla f_k(A)$, even $\im(\uv{P}) = \im(\uv{Q})$.  
\end{proof}

\begin{definition}
We call a system of functions $(f_k: \S^g(k) \rightarrow \R)_{k \in \N}$ a \textbf{compatible sequence of RZ-polynomials} if the following holds:
\begin{enumerate}
\item For all $k \in \N$ the function $f_k$ is an RZ-polynomial on $\S^g(k)$ with $f_k(0)=1$.
\item For $A_i \in \S^g(k_j)$ we have $f_k(\bigoplus_j A_j)=\prod_{j} f_{k_j}(A_j)$, where $k=\sum_j k_j$.
\item Every $f_k$ is invariant under unitary similtaries. 
\item There is $k_0 \in \N$ such that $f_k$ is irreducible for $k \geq k_0$.
\end{enumerate}
\end{definition}

\begin{proposition}
Let $(f_k: \S^g(k) \rightarrow \R)_{k \in \N}$ be a compatible sequence of RZ-polynomials. Then the closures of the connected components of $f_k^{-1}(\R \setminus \{0\})$ around $0$ form a closed matrix convex set $S$. 
\end{proposition}

\begin{proof}
The claim follows from \autoref{konv}.
\end{proof}

\begin{cor}
Let $(f_k)_{k \in \N}$ be a compatible sequence of RZ-polynomials, $S$ the generated closed matrix convex set and $m \in \N$. Suppose that for each $A \in \partial S(k)$ there is a projection $P$ of rank at most $m$ such that $\nabla f_k(A)=\uv{P} \nabla f_k(A) \uv{P}$. Then $\text{pz}(S) \leq m$.  
\end{cor}

\begin{proof}
Let $A \in \partial S(k)$. Suppose first that $\nabla f_k(A) \neq 0$. Let $P$ be a projection of rank at most $m$ such that $\nabla f_k(A)=\uv{P} \nabla f_k(A) \uv{P}$. Consider the function $\varphi: \C^2 \rightarrow \C, (s,t) \mapsto f_k(s\uv{P}A\uv{P}+t(A-\uv{P}A\uv{P}))$. Then $\varphi$ is an RZ-polynomial. \cite[Theorem 3.1]{HV} says that there exists $r \in \N$, $B,C \in S\C^{r \times r}$ such that $\varphi(s,t)=\det (I + sB+tC)$. We know that $\varphi'(1,1)(1,1)=\langle\nabla f_k(A),A\rangle \neq 0$ from \cite{HV} (This is true because the connected component of $f_k^{-1}(\R \setminus \{0\})$ around $0$ is convex and the hypersurface $A+(\nabla f_k(A))^\perp$ isolates it. If $\langle\nabla f_k(A),A\rangle=0$, this would contradict $0 \in \inte{f_k^{-1}(\R \setminus \{0\})}$.).  \\[0.2cm]
Now let $\mathcal{L}$ be the pencil $I + BX_1 + CX_2$. $\mathcal{L}(1,1)$ has a unique kernel vector $v$. Apart from that $\varphi'(1,1)(0,1)=0$. Thus the proof of \autoref{abl} shows $v^*Cv=0$. Hence $f_k(\uv{P} A \uv{P})=\varphi(1,0)=0$. \\[0.2cm]
Now for a general $A \in \partial S(k)$ choose $s \geq k$ such that $f_s$ is irreducible. Now approximate $A \oplus 0$ by regular points $B$ of the real variety defined by $f_s$ (\autoref{densi}). We find a projection $P_B$ of rank at most $m$ such that $f_{\rk(P_B)}(P_B^*BP_B)=0$. By taking the limit we find a projection $P$ of rank at most $m$ such that $f_{\rk(P)}(P^*(A \oplus 0)P)=0$. So there exists $C \in \partial S(m) \cap \mconv(A,0)(m)$. However $C \in \mconv(A,0)(m)=\mconv(\{PAP^* \ | \  P \text{ projection of rank at most } m \} \cup \{0\})$ (\autoref{orka}). Moreover $0 \in \inte(S)$, so not all $P^*AP$ can be in $\inte(S)$.
\end{proof}

We do not know whether the converse of the previous corollary is also true.

\section{Sequence of the degrees of the determinants of a monic linear pencil}

\begin{rem} \label{rat} For the purposes of this chapter we need the concept of (noncommutative) rational functions. We will only briefly discuss the properties we need and refer the reader for instance to \cite{R} or \cite{KVV} for a more detailed exposition. \\[0.2cm]
A \textbf{rational expression} is a senseful combination consisting of noncommutative polynomials $\C\langle \ov X \rangle$, functions $+,\cdot,^{-1}$ and brackets. We distinguish rational expressions from the function they represent, so $X_1+X_2$ and $X_2+X_1$ are different rational expressions. If $f$ is a rational expression, then we can evaluate it in a tuple of equally sized square matrices. The domain $\text{dom}(f)$ of $f$ consists of all tuples where all matrix inverses exist when following the arithmetic operations in $f$. On each level $k$ the domain $\dom_k(f)$ of $f$ is a Zariski-open set. \\[0.2cm]  
We call two rational expressions $t,s$ with non-empty domains equivalent if the intersection of their domains $\dom(t) \cap \dom(s)$ is non-empty and for all $A \in \dom(t) \cap \dom(s)$ we have $t(A)=s(A)$. An equivalence class of a rational expression with non-empty domain will be a \textbf{rational function}. Its domain will be the union of all the domains of the rational expressions it represents. \\[0.2cm]
In contrast to noncommutative polynomials, rational expressions can have a strange behaviour. For example there exist non-trivial rational expressions which represent the zero function. Also a noncommutative expression can vanish on level $k$ and be constantly the identity matrix on level $k-1$.  
\end{rem}

\begin{lemma} \label{kal} \cite[Proposition 2.1]{KVV}
If $f $ is a rational expression which does not represent the zero function, then $\det f$ is not the zero function on $\dom(f)$. In this case $\dom(f^{-1})(n)$ is dense in $(\C^{n \times n})^g$ for sufficiently big $n$.
\end{lemma}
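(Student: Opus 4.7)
The plan is to prove both assertions by structural induction on the rational expression $f$, with the key input being the faithful evaluation of nonzero elements of the free skew field at generic matrices. First I would clarify the statement: I read ``$\det f \neq 0$ on $\dom(f)$'' as saying that $\det f$ is not identically zero as a function on $\dom(f)$, equivalently that for some $A \in \dom(f)$ the matrix $f(A)$ is invertible, equivalently that $\det f(\mathcal X) \neq 0$ in the commutative rational function field generated by the entries of the generic matrices $\mathcal X$ (of size $n$, for $n$ large enough).

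The density of $\dom(f)(n)$ for large $n$ will follow from the first assertion by a routine structural induction. Noncommutative polynomials have full domain; for $f = g \pm h$ or $f = gh$, $\dom(f)(n) = \dom(g)(n) \cap \dom(h)(n)$, an intersection of two Zariski-dense subsets of the irreducible variety $(\C^{n\times n})^g$, hence Zariski-dense; for $f = g^{-1}$, $\dom(f)(n) = \dom(g)(n) \cap \{A : \det g(A) \neq 0\}$, which is nonempty and thus Zariski-open dense in $\dom(g)(n)$ by the first assertion applied inductively to $g$ (which must itself represent a nonzero function for $g^{-1}$ to be a meaningful rational expression).

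The main content is therefore the first assertion. The cleanest route passes through the free skew field $\mathcal U$ over $\C\!<\! \ov X \!>$: every rational expression $f$ represents an element $[f] \in \mathcal U$, and $[f] \neq 0$ precisely when $f$ does not represent the zero function; since $\mathcal U$ is a division ring, $[f]^{-1}$ exists in $\mathcal U$. The classical theorem of Amitsur (refined by Cohn) provides, for $n$ large enough, a faithful embedding $\mathcal U \hookrightarrow M_n(\C(\mathcal X))$ sending $X_i$ to the generic matrix $\mathcal X_i$. Under this embedding $[f] \mapsto f(\mathcal X)$, and $[f]^{-1}$ maps to an inverse of $f(\mathcal X)$ in $M_n(\C(\mathcal X))$, so $\det f(\mathcal X) \neq 0$ as an element of $\C(\mathcal X)$, which is exactly the first assertion.

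The main obstacle is this faithful generic-matrix evaluation of $\mathcal U$; it is already equivalent to the polynomial base case, which is itself nontrivial: for instance the commutator $X_1 X_2 - X_2 X_1$ is identically zero on $1\times 1$ matrices, illustrating why the ``sufficiently big $n$'' caveat is unavoidable. A direct elementary induction avoiding the free skew field would stumble on the sum case, where $g(\mathcal X)$ and $h(\mathcal X)$ could be individually invertible while $(g+h)(\mathcal X)$ is singular, or worse, where $[g] + [h] = 0$ in $\mathcal U$ yet each summand is a substantial expression; the skew-field route sidesteps this by working with the abstract element $[f] \in \mathcal U$ and invoking Amitsur's embedding once and for all.
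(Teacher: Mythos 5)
The paper offers no proof of this lemma at all: it is cited verbatim from [KVV], Proposition 2.1, as a known result. So there is no in-paper argument to compare against, and your task is really to sketch the proof of that cited proposition.

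Your overall route --- interpret the rational expression in the free skew field $\mathcal{U}$, exploit that $\mathcal U$ is a division ring, push forward under generic-matrix evaluation, then peel off the density statement by structural induction --- is the standard and correct line of attack, and it is in fact the same general framework that underlies the [KVV] proof. But one central step is stated incorrectly in a way that matters. There is \emph{no} faithful embedding $\mathcal U \hookrightarrow M_n(\C(\mathcal X))$ for any fixed $n$ when $g \geq 2$: a ring homomorphism out of the simple ring $\mathcal U$ that does not kill $1$ must be injective, so its existence would force $\mathcal U$ to satisfy the standard polynomial identity $s_{2n}$, and the free skew field in $\geq 2$ variables satisfies no polynomial identity. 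Moreover the obvious candidate ``evaluate at generic $n\times n$ matrices'' is not even globally defined on $\mathcal U$, since an expression like $(X_1X_2-X_2X_1)^{-1}$ has empty $n=1$ domain. What Amitsur's theorem actually gives is an \emph{element-by-element} statement: for each fixed nonzero $[f] \in \mathcal U$ there exists $N$ (depending on $[f]$) such that for $n \geq N$ the expression $f$ and all of its subexpressions can be evaluated at the generic matrices $\mathcal X$ of size $n$, yielding an invertible element of $M_n(\C(\mathcal X))$, hence $\det f(\mathcal X) \neq 0$ in $\C(\mathcal X)$. That corrected formulation is precisely what [KVV], Proposition 2.1 asserts, and you should be aware that it is not a ``once and for all'' reduction but rather the theorem itself; your appeal to it is therefore closer to a citation than to a fresh proof (which is, to be fair, exactly what the paper also does).

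A second, more cosmetic point: in the density induction you justify the $\pm$ and $\cdot$ cases by saying an intersection of two Zariski-dense subsets of an irreducible variety is Zariski-dense. That is false for mere dense subsets; what saves the argument is that $\dom_n(g)$ and $\dom_n(h)$ are Zariski-\emph{open}, so as soon as both are nonempty their intersection is nonempty open, hence dense. The inverse case is handled correctly: $\dom_n(g^{-1}) = \dom_n(g) \cap \{\det g \neq 0\}$ is nonempty open by the first assertion applied to $g$, and the ``sufficiently big $n$'' is obtained by taking the maximum of the thresholds needed for the finitely many subexpressions of $f$. With the Amitsur step restated in its element-wise form and the openness (rather than mere density) of domains made explicit, the argument is sound.
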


\begin{lemma} \label{juri} (J. Vol$\check{\text{c}}$i$\check{\text{c}}$, private communication)
Let $f$ be a rational expression, $k \in \N$ and $\dom_k(f) \neq \emptyset$. Then $\{A \in (S\C^{k \times k})^g \ | \ A \in \dom_k(f)\}$ is a dense subset of $(S\C^{k \times k})^g$ in the Euclidean topology and Zariski open.  
\end{lemma}

\begin{proof}
Let $\mathcal{X}$ be the tuple of generic matrices of size $k$. Since $\dom_k(f)$ is Zariski-open, it is enough to show the following. Let $g \in \C[\mathcal{X}]$ be a polynomial vanishing on all tuples of Hermitian matrices, then $g$ vanishes also on all tuples of complex matrices. However such a $g$ vanishes for all substitutions of the $\mathcal{X}_{\al,\beta}^i$ by real numbers. Therefore $g$ vanishes also for all substitutions of the $\mathcal{X}_{\al,\beta}^i$ by complex numbers and hence $g=0$.
\end{proof}

\begin{proposition} \label{row} \cite[Corollary 2.3]{NT}
Let $L \in \S^g(\delta)$. Then the degree of $p=\det_k \ml(\mathcal{X})$ equals $\max \{ \rk (L \ov X)(B) \ | \ B \in (S\C^{k \times k})^g \}$.
\end{proposition}

\begin{proof}
We consider the polynomial $q=\det(I_{\delta k} \otimes Z- L_1 \otimes \mathcal{X}^1 - ... - L_g \otimes \mathcal{X}^g) \in \C[Z,\mathcal{X}_{\al,\be}^i \ | \ 1 \leq i \leq g, 1 \leq \al,\be \leq k]$. Then we write $q=\sum_{j=0}^{\delta k} Z^j h_j$ where $Z$ does not appear in $h_j$. We know that $\deg(p)=\alpha$ if and only if $Z$ has multiplicity exactly $\delta k-\alpha$ as a factor of $q$. \\[0.2cm]
Now suppose $Z$ has multiplicity $\delta k-\alpha$ as a factor in in $q$. Then $h_{\delta k-\al} \neq 0$. Assume that $h_{\delta k -\al}$ equals $0$ for all substitutions of the $\mathcal{X}_{\al,\beta}^i$ by real numbers. Then $h_{\delta k -\al}$ equals $0$ for all substitutions of the $\mathcal{X}_{\al,\beta}^i$ by complex numbers and thus $h_{\delta k - \al}$=0. Therefore we know that there exists $B \in (S\C^{k \times k})^g$ such that $(L \ov X)(B)$ has kernel of dimension exacly $\delta k-\alpha$. So the rank is $\alpha$. \\[0.2cm]
On the other hand $h_{0}=...=h_{\delta k - \al -1}=0$ and for all $B \in (S\C^{k \times k})^g$ the kernel of $(L \ov X)(B)$ has dimension at least $\delta k-\alpha$, so the rank is at most $\alpha$.
\end{proof}

\begin{rem}($WDW^*$-decomposition) \cite[Section 2.6.2]{HKM2} \label{wdw}\\
Let $A \in S(\C \langle \ov X \rangle)^{\delta \times \delta}$ be a Hermitian matrix polynomial. Then we can find a block-diagonal matrix $D$ which consists of blocks of the form 
\begin{align*}
(p) \ \ \ \text{or} \ \ \ \begin{pmatrix} 0 & p^* \\ p & 0 \end{pmatrix}, 
\end{align*}
where $p$ is a rational function, a lower triangular matrix $W$ with ones on the diagonal and a permutation matrix $Q$ such that $QAQ^T=WDW^*$. The equality is understood entry-wise as an equality of rational functions. The algorithm to calculate such a decomposition is easy to understand:
Let
\begin{align*}
A=\begin{pmatrix} a & b^* & E^* \\ b & c & F^* \\ E & F & G \end{pmatrix}
\end{align*}
with matrices $E,F,G$ of rational functions. In the case $a \neq 0$ or $c \neq 0$ we permute $A$ if necessary to assume $a \neq 0$. Then we have
\begin{align*}
A=\begin{pmatrix} 1 & 0 & 0 \\ ba^{-1} & 1 & 0 \\ Ea^{-1} & 0 & 1 \end{pmatrix} 
\begin{pmatrix} a & 0 & 0 \\ 0 & c-ba^{-1}b^* & F^*-ba^{-1}E^* \\ 0 & F-Ea^{-1}b^* & G-E (a^{-1}E^*) \end{pmatrix}    
\begin{pmatrix} 1 & a^{-1}b^* & a^{-1}E^* \\ 0 & 1 & 0 \\ 0 & 0 & 1 \end{pmatrix}
\end{align*}
Now one can continue with the submatrix appearing when we delete the first row and the first column.
In the case where both $a,c$ are zero we have
\begin{align*}
A=\begin{pmatrix} 0 & b^* & E^* \\ b & 0 & F^* \\ E & F & G \end{pmatrix}
\end{align*}
If $b=0$ and $E^*=0$ we can continue by deleting the first row and column. If $b=0$ and $E^* \neq 0$, we can permute $A$ again to assume $b \neq 0$. So let $b \neq 0$.
Then 
\begin{align*}
A=\begin{pmatrix} 1 & 0 & 0 \\ 0 & 1 & 0 \\ F(b^*)^{-1} & Eb^{-1} & 1 \end{pmatrix} 
\begin{pmatrix} 0 & b^* & 0 \\ b & 0 & 0 \\ 0 & 0 & G-F (b^*)^{-1} E^* -E b^{-1}F^* \end{pmatrix}.    
\begin{pmatrix} 1 & 0 & b^{-1}F^* \\ 0 & 1 & (b^*)^{-1}E^* \\ 0 & 0 & 1 \end{pmatrix}
\end{align*}
\end{rem}

\begin{cor}\label{algo}
Let $\ml$ be a monic linear pencil. The $WDW^*$-decomposition gives an algorithm to determine $\deg (\det_k \ml(\mathcal{X}))$ for almost all $k \in \N$ simultaneously.
\end{cor}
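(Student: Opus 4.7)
The idea is to run the $WDW^{*}$-algorithm of \Cref{wdw} on $L$ inside the ring of noncommutative rational functions, and then exploit the multiplicativity of $\det_k$ together with the fact that $\det_k$ of a lower triangular matrix with $1$'s on the diagonal is $1$.

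First, I apply \Cref{wdw} to the hermitian matrix-valued polynomial $L$. The output is a permutation matrix $Q$, a lower triangular matrix $W$ with $1$'s on the diagonal, and a block-diagonal matrix $D$ whose diagonal blocks are of the form $(p_i)$ or $\begin{pmatrix}0 & p_j^* \\ p_j & 0\end{pmatrix}$ with $p_i, p_j$ noncommutative rational expressions, such that
\begin{align*}
QLQ^T \;=\; WDW^{*}
\end{align*}
holds as an identity of matrices of NC rational expressions. By \Cref{kal}, for every sufficiently large $k$ each of the finitely many rational subexpressions appearing in $W$ and $D$ has non-empty (hence Zariski-dense open) domain on $(\C^{k\times k})^g$; let $U_k$ be the intersection of these domains.

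Second, on $U_k$ both sides of the identity can be evaluated at $\mathcal{Y}$. Taking $\det_k$ and using $\det_k Q=\pm 1$ together with $\det_k W(\mathcal{Y})=1$ (since $W(\mathcal{Y})$ is lower triangular with $I_k$-blocks on the diagonal) yields
\begin{align*}
\det_k L(\mathcal{Y}) \;=\; \pm\,\det_k D(\mathcal{Y}) \;=\; \pm \prod_i \det_k p_i(\mathcal{Y})\;\cdot\;\prod_j (-1)^k\,\det_k p_j(\mathcal{Y})\,\det_k p_j^{*}(\mathcal{Y})
\end{align*}
as an equality of rational functions on $(\C^{k\times k})^g$ (equality of rational expressions on a non-empty Zariski-open set lifts to equality of rational functions by \Cref{kal}). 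Since the left-hand side is a polynomial, its degree equals the degree of the right-hand side, which is the sum of the degrees of the individual factors; each factor is the $\det_k$ of a concrete NC rational expression produced by the algorithm, from which a formula for its degree as a polynomial in the entries of $\mathcal{Y}$ can be read off in a manner that is uniform in $k$.

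\textbf{Main obstacle.} The conceptually clean part is the decomposition and the reduction $\det_k L = \pm \det_k D$; the delicate point is extracting $\deg \det_k p$ for each block expression $p$ as an explicit function of $k$ simultaneously. This is tractable because the $p_i, p_j$ are built from finitely many rational operations on linear (hence simply controlled) expressions; carefully tracking numerator and denominator degrees through the recursion of \Cref{wdw}, and combining this with the multiplicativity of $\det_k$, produces an expression that is eventually linear in $k$ (consistent with \Cref{degdet}), and the value of $\deg\det_k L$ is obtained by summing the block contributions. The validity only for \emph{almost all} $k$ is forced precisely by those finitely many $k$ for which some $U_k$ is empty or some block rational expression degenerates.
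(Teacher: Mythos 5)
Your overall instinct—use the $WDW^{*}$-decomposition, reduce to the block-diagonal factor $D$, and read off the answer from $D$—is the right neighborhood, but you apply the decomposition to the wrong object and, more importantly, you leave the actual computation as a hand-wave. The paper does not decompose the monic pencil $L=I-A\ov X$ itself; it first invokes \Cref{row}, which converts the problem into a \emph{rank} computation: $\deg(\det_k L(\mathcal{Y}))$ equals the maximal rank of the truly linear part $\widetilde{L}(B)=A\ov{X}(B)$ over $B\in(\C^{k\times k})^g$. It then runs $WDW^{*}$ on $\widetilde{L}$. For generic $B$ at large $k$ (using \Cref{kal}), $W(B)$ is invertible, so the maximal rank of $\widetilde{L}(B)$ is that of $D(B)$, and since each non-zero diagonal entry $d_i$ is generically invertible, this rank is simply $hk$ where $h$ is the number of non-zero entries of $D$. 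No degree-bookkeeping of rational functions is ever needed.

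By contrast, your route requires extracting $\deg\det_k p(\mathcal{Y})$ for each NC rational expression $p$ appearing as a block of $D$. You yourself flag this as ``the delicate point'' and assert that it is ``tractable'' by ``carefully tracking numerator and denominator degrees,'' but you do not carry this out, and it is precisely here that the argument is missing its substance: the diagonal blocks of the $WDW^{*}$-decomposition of $L$ contain inverses of expressions like $1-(A\ov X)_{1,1}$, whose $\det_k$-evaluations are rational functions in the entries of $\mathcal{Y}$ with degree structure depending nontrivially on $k$, and the asserted cancellation down to a polynomial (and the eventual linearity in $k$) would need to be proved, not observed as ``consistent with \Cref{degdet}.'' The key idea you are missing is \Cref{row}: once the degree question is translated into a rank question for $\widetilde{L}$, the $WDW^{*}$-decomposition answers it immediately because congruence preserves rank and the rank of a generic evaluation of $D$ is visible at a glance, with no degree arithmetic on rational functions at all.
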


\begin{proof}
\autoref{row} tells us that we have to calculate the maximal rank of $(L \ov X) (B)$ where $B \in (S\C^{k \times k})^g$. Notice that for fixed $k$ the set of $B \in (S\C^{k \times k})^g$ for which $(L \ov X)(B)$ has maximal rank is open in $(S\C^{k \times k})^g$ in the Euclidean topology. Let $WDW^*$ the decomposition of $(L \ov X)$ from \autoref{wdw}. Let $\{d_1,...,d_h\}$ be the entries of $D$ which are not representing the zero function and $\{w_1,...,w_r\}$ be the entries of $W$. \autoref{kal} tells us that the intersection $\mathfrak{D}_k$ of the domains all $(d_\beta)^{-1}$ and $w_\gamma$ and $(\C^{k \times k})^g$ is nonempty if $k$ is big enough. By \autoref{juri} we know that for large $k$ the set $\mathfrak{D}_k \cap (S\C^{k \times k})^g$ is non-empty as well as Zariski open in $(S\C^{k \times k})^g$ and therefore contains some $B \in (S\C^{k \times k})^g$ for which $(L \ov X)(B)$ has maximal rank. Hence for those $k$ we can identify the maximal rank of $(L \ov X)(B)$ where $B \in (S\C^{k \times k})^g$ with the maximal rank of $D(B)$ where $B \in (S\C^{k \times k})^g$.
\end{proof}

\begin{cor} \label{degdet}
Let $\ml$ be a monic linear pencil. Then there is some $b \geq \deg(\det_1 \ml(\mathcal{X}))$ and $N \in \N$ such that for all $n \in \N$ we have $\deg(\det_1 \ml(\mathcal{X}))n \leq \deg(\det_n \ml(\mathcal{X})) \leq bn$ and for $n \geq N$ already $\deg(\det_n \ml(\mathcal{X}))=bn$. 
\end{cor}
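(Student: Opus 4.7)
The plan is to let $\mu_n := \deg(\det_n L(\mathcal{Y}))$ and use \Cref{row} to identify $\mu_n$ with $\max\{\rk \widetilde{L}(B) \mid B \in (\C^{n\times n})^g\}$. Three independent arguments then combine to give the three assertions of the corollary.

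First I would establish superadditivity of the sequence $(\mu_n)_{n\in\N}$. For $B \in (\C^{n\times n})^g$ and $C \in (\C^{m\times m})^g$, a direct computation shows that the $(i,j)$-block of $\widetilde{L}(B\oplus C)$ (treated as a $\delta \times \delta$ array of $(n+m)\times(n+m)$-blocks) equals $\widetilde{L}(B)_{(i,j)} \oplus \widetilde{L}(C)_{(i,j)}$. Rearranging rows and columns by a suitable permutation similarity identifies $\widetilde{L}(B\oplus C)$ with $\widetilde{L}(B) \oplus \widetilde{L}(C)$, so the ranks add and $\mu_{n+m} \geq \mu_n + \mu_m$. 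Fekete's lemma, combined with the trivial bound $\mu_n \leq \delta n$, then yields that $\mu_n/n$ converges to $b := \sup_{n \in \N} \mu_n/n \in \R_{\geq 0}$ and in particular $\mu_n \leq bn$ for every $n \in \N$; this is the desired upper bound.

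For the lower bound $\deg(\det_1 L) \cdot n \leq \mu_n$, I would pick $B \in \C^g$ with $\rk \widetilde{L}(B) = \mu_1 = \deg(\det_1 L(\mathcal{Y}))$ and evaluate $\widetilde{L}$ at the level-$n$ tuple $(B_1 I_n, \ldots, B_g I_n)$. Since each $B_k$ is a scalar the resulting matrix is $\widetilde{L}(B) \otimes I_n$, which has rank $n\mu_1$. Hence $\mu_n \geq n\mu_1$, and letting $n \to \infty$ also gives $b \geq \mu_1$.

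Finally, \Cref{algo} furnishes an integer $h \in \N$ and a threshold $N \in \N$ such that $\mu_n = hn$ for every $n \geq N$; comparing with the convergence $\mu_n/n \to b$ forces $b = h$, and therefore $\mu_n = bn$ for all $n \geq N$. The only real bookkeeping is the permutation argument underlying superadditivity; everything else is a clean application of Fekete's lemma together with the structural result already obtained in \Cref{algo}.
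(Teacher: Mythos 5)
Your argument is correct and follows essentially the paper's route: both rest on the direct-sum superadditivity of the degree sequence and on \Cref{algo} for the eventual linear growth. The only cosmetic difference is that you invoke Fekete's lemma to obtain $b=\sup_n \mu_n/n$ and the uniform bound $\mu_n\leq bn$, whereas the paper unrolls the same argument by hand, picking $M$ with $Mn\geq N$ and using superadditivity to bound $M\deg(\det_n L(\mathcal{Y}))\leq \deg(\det_{Mn} L(\mathcal{Y}))=bMn$.
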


\begin{proof}
The second statement for large numbers follows directly from the end of the proof of \autoref{algo}. It remains to show that $\deg(\det_1 \ml(\mathcal{X}))n \leq \deg(\det_n \ml(\mathcal{X})) \leq bn$. Let $\mathcal{Y}_1,...,\mathcal{Y}_n$ be $g$-tuples of generic $1 \times 1$-matrices in pairwise different variables. For $n \in \N$ we have that $\prod_{j=1}^n \det_1 \ml(\mathcal{Y}_j)$ equals $\det_n \ml\left(\bigoplus_{j=1}^n \mathcal{Y}_j\right)$, thus $n\deg(\det_1 \ml(\mathcal{X})) \leq \deg(\det_n \ml(\mathcal{X}))$. Now choose $M \in \N$ with $Mn \geq N$. With the same construction as before we see that the $M$-fold direct potence of $\det_n \ml$ equals $\det_{Mn} \ml$ restricted to $n \times n$-block matrices. Thus $\deg(\det_n \ml(\mathcal{X}))M \leq \deg(det_{Mn} \ml(\mathcal{X}))=bnM$. 
\end{proof}

\begin{cor} \label{gegen}
There is a monic linear pencil $\ml$ and a $k>1$ such that $\deg(\det_k \ml(\mathcal{X})) > k \deg(\det_1 \ml(\mathcal{X}))$.
\end{cor}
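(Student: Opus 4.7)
The plan is to exhibit a single explicit pencil witnessing the strict inequality. The idea is to exploit the fact that an odd-sized skew-symmetric matrix has identically vanishing determinant over commuting variables but is generically of full rank over noncommuting matrix variables; the resulting rank jump, transported through \Cref{row}, will force the asymptotic slope to exceed $\deg(\det_1 L)$.

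Concretely, I would take $g=3$ and consider the skew-symmetric matrix of linear forms
\begin{align*}
M(X_1,X_2,X_3)=\begin{pmatrix} 0 & X_1 & X_2 \\ -X_1 & 0 & X_3 \\ -X_2 & -X_3 & 0 \end{pmatrix}.
\end{align*}
Since $M$ is not hermitian, I would hermitise it via the off-diagonal construction
\begin{align*}
\tilde L=\begin{pmatrix} 0 & \ii M \\ -\ii M & 0\end{pmatrix},\qquad L:=I_6-\tilde L.
\end{align*}
As $M$ has real coefficients and $M^T=-M$, the block $\ii M$ is hermitian on every hermitian tuple, so $\tilde L$ is a hermitian pencil of size $6$, and a short block-matrix computation gives $\rk\tilde L(B)=2\rk M(B)$ for every $B\in(\C^{k\times k})^3$.

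At the scalar level, $M(x)$ is a real odd-sized skew-symmetric matrix, hence of rank at most $2$, and this bound is attained generically. Via \Cref{row} this gives $\deg(\det_1 L)=4$ (as a sanity check one can verify directly that $\det_1 L=(1+x_1^2+x_2^2+x_3^2)^2$ using the skew-symmetric block-determinant formula). For level $k=2$ one witness suffices: I would test
\begin{align*}
X_1=I_2,\qquad X_2=\begin{pmatrix} 0 & 1\\ 1 & 0\end{pmatrix},\qquad X_3=\begin{pmatrix} 1 & 0 \\ 0 & 2 \end{pmatrix}
\end{align*}
and solve $M(X)v=0$ for $v=(v_1,v_2,v_3)\in(\C^2)^3$. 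With $X_1=I_2$ the second and third block equations give $v_1=X_3v_3$ and $v_2=-X_2v_3$, and the first block equation then reduces to $[X_3,X_2]v_3=0$. Direct calculation shows $[X_3,X_2]=\bigl(\begin{smallmatrix} 0 & -1\\ 1 & 0\end{smallmatrix}\bigr)$, which is invertible, so $v=0$ and $\rk M(X)=6$. Hence $\rk\tilde L(X)=12$, and \Cref{row} yields $\deg(\det_2 L)\geq 12>8=2\deg(\det_1 L)$, which is the claim.

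No step should present a real obstacle. The only substantive ingredients are the hermiticity of $\tilde L$ and the invertibility of the commutator $[X_3,X_2]$ at the chosen witness, each a one-line verification. The entire conceptual content is the classical discrepancy between the commutative generic rank and the noncommutative generic rank of the skew-symmetric pencil $M$; the off-diagonal hermitisation is merely a device to stay within the hermitian setting required by the chapter.
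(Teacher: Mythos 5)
Your approach is correct in spirit and genuinely different from the paper's. The paper exhibits a $7\times 7$ tridiagonal-style hermitian pencil in $g=4$ variables and runs the $WDW^*$-algorithm of \Cref{wdw} and \Cref{algo}: it computes the rational diagonal entries $d_1,\dots,d_7$ and observes that $d_7$ vanishes identically on the level-$1$ domain but not on the level-$2$ domain, which by \Cref{row} forces the degree to jump from $6$ at level $1$ to $14$ at level $2$. You instead start from the classical discrepancy between the commutative rank ($=2$) and the noncommutative rank ($=3$) of the $3\times 3$ generic skew-symmetric pencil, pass to a hermitian $6\times 6$ pencil by an off-diagonal hermitization with $\rk\tilde L(B)=2\rk M(B)$, and certify the noncommutative rank jump at level $2$ by a single explicit witness in which a commutator is invertible. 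Your route avoids the rational-function machinery of \Cref{wdw} entirely and needs only the rank interpretation of \Cref{row} together with the elementary fact that a hermitian witness $X$ gives $\deg\det_k L(\mathcal{X})\geq \rk\tilde L(X)$ via the univariate specialization $t\mapsto\det L(tX)$; it is shorter and makes the source of the degree jump more transparent.

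There is, however, one local error: the hermitization you wrote down is not hermitian. Since $\ii M$ is a hermitian pencil, the coefficient matrices of $\left(\begin{smallmatrix}0 & \ii M \\ -\ii M & 0\end{smallmatrix}\right)$ are \emph{skew}-hermitian, so your $\tilde L$ does not lie in $S\C\!<\!\ov X\!>^{6\times 6}$ and $L=I_6-\tilde L$ is not a pencil in the sense of this paper. The correct off-diagonal hermitization of a block $A$ is $\left(\begin{smallmatrix}0 & A \\ A^* & 0\end{smallmatrix}\right)$; applied to $A=M$ (using $M^*=M^T=-M$ with real coefficients) this gives $\tilde L=\left(\begin{smallmatrix}0 & M \\ -M & 0\end{smallmatrix}\right)$, which \emph{is} hermitian, and applied to $A=\ii M$ it gives $\tilde L=\left(\begin{smallmatrix}0 & \ii M \\ \ii M & 0\end{smallmatrix}\right)$. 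Both fixes preserve $\rk\tilde L(B)=2\rk M(B)$ and both give $\det_1 L(\mathcal{X})=(1-x_1^2-x_2^2-x_3^2)^2$ (your $(1+x_1^2+x_2^2+x_3^2)^2$ is an artefact of the skew-hermitian version). After this correction the rest of your argument, including the level-$2$ witness, the invertibility of $[X_3,X_2]$, and the conclusion $\deg\det_2 L(\mathcal{X})\geq 12>8=2\deg\det_1 L(\mathcal{X})$, goes through unchanged.
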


\begin{proof}
Set $g=4$ and write $\ov{X}=\{a,x,y,z\}$. We then have for
\begin{small}
\begin{align*}&L \ov X:=
\begin{pmatrix}
a & z & 0 & 0 & 0 & 0 & 0 \\ 
z & 0 & x & 0 & 0 & 0 & 0 \\
0 & x & 0 & y & 0 & 0 & 0 \\
0 & 0 & y & 0 & z & 0 & 0 \\
0 & 0 & 0 & z & 0 & x & 0 \\
0 & 0 & 0 & 0 & x & 0 & y \\
0 & 0 & 0 & 0 & 0 & y & -a
\end{pmatrix}
\end{align*}
\end{small}
\begin{tiny}
\begin{align*}L \ov X = &\begin{pmatrix} 
1 & 0 & 0 & 0 & 0 & 0 & 0 \\ 
v_1 & 1 & 0 & 0 & 0 & 0 & 0 \\
0 & v_2 & 1 & 0 & 0 & 0 & 0 \\
0 & 0 & v_3 & 1 & 0 & 0 & 0 \\
0 & 0 & 0 & v_4 & 1 & 0 & 0 \\
0 & 0 & 0 & 0 & v_5 & 1 & 0 \\
0 & 0 & 0 & 0 & 0 & v_6 & 1
\end{pmatrix}
\begin{pmatrix}
d_1 & 0 & 0 & 0 & 0 & 0 & 0 \\ 
0 & d_2 & 0 & 0 & 0 & 0 & 0 \\
0 & 0 & d_3 & 0 & 0 & 0 & 0 \\
0 & 0 & 0 & d_4 & 0 & 0 & 0 \\
0 & 0 & 0 & 0 & d_5 & 0 & 0 \\
0 & 0 & 0 & 0 & 0 & d_6 & 0 \\
0 & 0 & 0 & 0 & 0 & 0 & d_7
\end{pmatrix}
\begin{pmatrix}
1 & v_1 & 0 & 0 & 0 & 0 & 0 \\ 
0 & 1 & v_2 & 0 & 0 & 0 & 0 \\
0 & 0 & 1 & v_3 & 0 & 0 & 0 \\
0 & 0 & 0 & 1 & v_4 & 0 & 0 \\
0 & 0 & 0 & 0 & 1 & v_5 & 0 \\
0 & 0 & 0 & 0 & 0 & 1 & v_6 \\
0 & 0 & 0 & 0 & 0 & 0 & 1
\end{pmatrix}
\end{align*}
\end{tiny}
with 
\begin{align*}
v=(&za^{-1}, 
-xz^{-1}az^{-1},
yx^{-1}za^{-1}zx^{-1},
-zy^{-1}xz^{-1}az^{-1}xy^{-1}, \\
&xz^{-1}yx^{-1}za^{-1}zx^{-1}yz^{-1},
-yx^{-1}zy^{-1}xz^{-1}az^{-1}xy^{-1}zx^{-1}) \\
d=(&-za^{-1}z,xz^{-1}az^{-1}x,-yx^{-1}za^{-1}zx^{-1}y,zy^{-1}xz^{-1}az^{-1}xy^{-1}z,\\&-xz^{-1}yx^{-1}za^{-1}zx^{-1}yz^{-1}x,-a+yx^{-1}zy^{-1}xz^{-1}az^{-1}xy^{-1}zx^{-1}y)
\end{align*}
One easily sees that $d_7$ is zero on $\dom(d_7)(1)$ and non-trivial on $\dom(d_7)(2)$. Thus we conclude that for the pencil $\ml$ we have $\deg(\det_1 \ml)=6$ and $\deg(\det_2 \ml)=14$.
\end{proof}

\newpage
\setcounter{section}{9}
\renewcommand{\thesection}{\Roman{section}}
\section{Appendix}

\subsection{\texorpdfstring{$C^*$-algebras and representations}{C*-algebra and representations}}

\begin{definition}
A (complex) $C^*$-algebra $\mathcal{A}=(A,+,\cdot,1,||.||,{}^*)$ is a $\C$-algebra (with 1), a submultiplicative norm $||.||$ making $(A,+,||.||)$ a Banach space and an involution $^*$ extending the complex conjugation such that for all $x,y \in \mathcal{A}$ and $\lambda \in \C$:
\begin{align*}
(x^*)^*=x, \ (x+y)^*=x^*+y^*, \ (yx)^*=x^* y^*, \ ||x^*||=||x|| \ \text{and} \ ||x^*x||=||x||^2. 
\end{align*}
\end{definition}

For a Hilbert space $\mathcal{H}$ every closed subalgebra of $\mathcal{B}(\mathcal{H})$ with the adjoint operation as involution and the operator norm as a norm is the standard example of a $C^*$-algebra. A representation of a $C^*$-algebra $\mathcal{A}$ is a (unital) $^*$-homomorphism $\varphi: \mathcal{A} \rightarrow \mathcal{B}(\mathcal{H})$ where $\mathcal{H}$ is a Hilbert space. $\varphi$ is automatically continuous. If $\varphi$ is injective, then $\varphi$ is an isometry. $\varphi$ is called irreducible if it cannot be written as a non-trivial direct sum of other representations or equivalently if $\varphi(\mathcal{A})$ does not admit a non-trivial reducing subspace. For every $C^*$-algebra $\mathcal{A}$ there exists an injective representation map $\varphi$. Therefore $\mathcal{A}$ is isometric to a closed subalgebra of $\mathcal{B}(\mathcal{H})$. Even though one can write every $C^*$-algebra as a concrete subalgebra of the bounded linear operators on a Hilbert space, sometimes it can be advantageous to work with the abstract definition. \\[0.2cm]
We call $x \in \mathcal{A}$ selfadjoint if $x=x^*$ and positive if there exists $y \in \mathcal{A}$ such that $y=xx^*$.

\begin{theo} \label{burn} (Burnsides theorem)
Let $L_1,...,L_g \in \C^{\delta \times \delta}$ and suppose there is some $i$ with $L_i \neq 0$. Then the $C^*$-subalgebra $\mathcal{A}$ of $\C^{\delta \times \delta}$ defined by the $L_1,...,L_g$ equals $\C^{\delta \times \delta}$ if and only if the $L_i$ have no non-trivial common reducing subspace if and only if for all $v,w \in \C^{\delta} \setminus \{0\}$ there exists $A \in \mathcal{A}$ such that $Av=w$.
\end{theo}

\begin{theo}
Let $\mathcal{A}$ be a finite-dimensional $C^*$-algebra (as a $\C$-vector space). Then there exist $\delta_1,...,\delta_m \in \N$ uniquely determined up to permutation such that $\mathcal{A} \cong \bigoplus_{j=1}^m \C^{\delta_j \times \delta_j}$.
\end{theo}

\begin{proposition} \label{Sarason}
Let $\mathcal{A}$ be a $C^*$-algebra and $\phi: \mathcal{A} \rightarrow \mathcal{B}(\mathcal{H}_{1} \oplus \mathcal{H}_{2}), A \mapsto \begin{pmatrix} \psi(A) & \rho_1(A) \\ \rho_2(A) & \eta(A) \end{pmatrix}$ a representation. If $\psi$ is a representation, then $\rho=0$.
\end{proposition}

\begin{proof} For $A,B \in \mathcal{A}$ we have
\begin{align*} &\begin{pmatrix} \psi(A)\psi(B) & \rho_1(AB) \\ \rho_2(AB) & \eta(AB) \end{pmatrix}=\phi(AB)=\phi(A)\phi(B)\\&=\begin{pmatrix} \psi(A)\psi(B) + \rho_1(A)\rho_2(B) & \psi(A)\rho_1(B)+\rho_1(A)\eta(B) \\ \rho_2(A)\psi(B) + \eta(A)\rho_2(B) & \rho_2(A)\rho_1(B)+\eta(A)\eta(B) \end{pmatrix} \end{align*}
For $A$ selfadjoint and $B=A$ we obtain $\rho_2(A)=\rho_1(A)^*$; thus $\rho_1(A)\rho_1(A)^*=0$ and $\rho_1(A)=0$, $\rho_2(A)=0$. Since the linear span of the self-adjoint elements is $\mathcal{A}$, we get $\rho_1=0$, $\rho_2=0$. 
\end{proof}

\begin{lemma} \label{ken} \cite[Lemma III.2.1]{D}
Let $\mathcal{A}=\prod_{j=1}^s\C^{k_j \times k_j}$, $\varphi: \mathcal{A} \rightarrow \mathcal{B}$ be a $^*$-homomorphism into a finite-dimensional $C^*$-algebra $\mathcal{B} \subseteq \C^{m \times m}$. Then there exists a unitary matrix $U \in \C^{m \times m}$ and uniquely determined $h \in \N_0^{\{1,...,s\}}$, $r \in \N_{0}$ such that for all $C=\bigoplus_{j=1}^s C_j \in \mathcal{A}$
\begin{align*}
U\varphi\left(\bigoplus_{j=1}^s C_j\right)U^* = (I_r \otimes 0 ) \oplus \left(\bigoplus_{j=1}^s I_{h(j)} \otimes C_j\right) 
\end{align*}  
\end{lemma}

\subsection{\texorpdfstring{Operator systems, completely positive maps and their connection to matrix convexity}{C*-algebras and completely positive maps}}

For the study of matrix convexity we need the concepts of operator systems and completely positive maps. In particular the Arveson extension theorem and Stinespring representation theorem are important ingredients. We will sketch the proofs, but omit some details and proofs of lemmas. A good exposition of this theory is given in the book of Paulsen \cite{P}. 

\begin{definition}
Let $\mathcal{A} \subseteq \mathcal{B}(\mathcal{H})$ be a $C^*$-algebra. We call $\{a^2 \in \mathcal{A} \ | \ a \in \mathcal{A}, \ a^*=a \}=\{aa^* \in \mathcal{A} \ | \ a \in \mathcal{B}(\mathcal{H})\}$ the psd (positive semidefinite) elements of $\mathcal{A}$. For each $k \in \N$ we can interpret $\mathcal{B}(\bigoplus_{i=1}^k\mathcal{H})$ as $\C^{k \times k} \otimes \mathcal{B}(\mathcal{H})$ and take the operator norm from the former set to make the latter one again a $C^*$-algebra. \\[0.2cm]
A (concrete) operator system $(S,P)$ in $\mathcal{A}$ is a tuple satisfying: There is a linear subspace $\mathcal{S}$ of $\mathcal{A}$ which contains $1$ and is selfadjoint, i.e. $\mathcal{S}^*=\mathcal{S}$ such that $S=\bigcup_{k \in \N} \C^{k \times k} \otimes \mathcal{S}$ and $P=\bigcup_{k \in \N} \{A \in \C^{k \times k} \otimes \mathcal{S} \ | \ A \succeq 0\}$. $P$ is called the set of psd elements of $S$. In the literature the inaccuracy has been established to identify $\mathcal{S}$ with $(S,P)$, which can be dangerous because $P$ depends on the ambient space $\mathcal{A}$. However we will stick to this slightly dangerous convention, reminding the reader to be careful. \\[0.2cm]
If $\mathcal{B}$ is another $C^*$-algebra, then $\varphi: \mathcal{S} \rightarrow \mathcal{B}$ is called (unital) completely positive if $\varphi$ is linear, $\varphi(1)=1$ and for all $k \in \N$ the linear map \index{p@$\varphi_k$}$\varphi_k: \C^{k \times k} \otimes \mathcal{S} \rightarrow \C^{k \times k} \otimes \mathcal{B}$ defined by $B \otimes A \mapsto B \otimes \varphi(A)$ is positive, i.e. it maps positive semidefinite elements (i.e. elements of $P$) to positive semidefinite elements. The condition $1 \in \mathcal{S}$ guarantees that $\mathcal{S}$ is the span of its positive semidefinite elements; thus the condition to be completely positive is not merely a condition speaking about some small part of $\mathcal{S}$. A completely positive map $\varphi$ is automatically continuous and its operator norm equals $1$ (even all $\varphi_k$ have operator norm $1$).
One reason why completely positive maps were introduced is that their structure is more rigid than the one of positive maps. Much more is known about completely positive maps than about positive maps. There is no general procedure to test if a map is positive while one can build a hierarchy of semidefinite programmes to test whether a map is completely positive \cite[Chapter 4]{HKM4}.  
\end{definition}

\begin{proposition} \label{positest2} \cite[Theorem 3.14]{P}
Let \marginpar{[\autoref{traces2}]}$\mathcal{A}$ be a $C^*$-algebra and $\varphi: \C^{n \times n} \rightarrow \mathcal{A}$ linear. Then the following is equivalent:
\begin{enumerate}[(a)]
\item $\varphi$ is completely positive. 
\item $\varphi$ is $n$-positive.
\item The so-called Choi matrix $A=(A_{j,h})_{j,h \in \{1,...,n\}} \in \C^{n \times n} \otimes \mathcal{A}$ defined by $A_{j,h}=\varphi(e_je_h^*)$ is positive semidefinite.
\end{enumerate}
\end{proposition}

\begin{proposition} \label{positest} \cite[Theorem 6.1]{P}
Let $\mathcal{A}$ be a $C^*$-algebra, $\mathcal{S}$ an operator system in $\mathcal{A}$ and $n \in \N$. Let $\varphi: \mathcal{S} \rightarrow \C^{n \times n}$ be a linear map. We define the linear functional $s_\varphi: \C^{n \times n} \otimes \mathcal{S} \rightarrow \C$ by setting $s_\varphi(e_{j}e_h^* \otimes A)=\frac{1}{n} \varphi(A)_{j,h}$. The mapping $\varphi \mapsto s_\varphi$ is an isomorphism between the linear maps $\mathcal{S} \rightarrow \C^{n \times n}$ and the linear maps $\C^{n \times n} \otimes \mathcal{S} \rightarrow \C$.
The following is equivalent:
\begin{enumerate}[(a)]
\item $\varphi$ is completely positive.
\item $\varphi$ is n-positive.
\item $s_\varphi$ is positive and $(n s_\varphi(e_{j}e_h^* \otimes I))_{j,h \in \{1,...,n\}}=I$.
\end{enumerate}
\end{proposition}

\begin{proposition} \label{contract} \cite[Proposition 2.11 and Exercise 2.3]{P}
Let $\mathcal{S}$ be an operator system and $\varphi: \mathcal{S} \rightarrow \C$ be a linear functional. Then $\varphi$ is positive if and only if $\varphi$ is a contraction. 
\end{proposition}

\begin{lemma} \label{vorext} \cite[Theorem 6.2]{P}
Let $\mathcal{A}$ be a $C^*$-algebra, $\mathcal{S}$ an operator system in $\mathcal{A}$ and $n \in \N$. Then every completely positive map $\varphi: \mathcal{S} \rightarrow \C^{n \times n}$ admits a completely positive extension $\varphi: \mathcal{A} \rightarrow \C^{n \times n}$  
\end{lemma}

\begin{proof}
From \autoref{positest} we know that $s_\varphi: \C^{n \times n} \otimes \mathcal{S} \rightarrow \C$ is positive. Hence $s_\varphi$ is a contraction (\autoref{vorext}) and we can extend $s_\varphi$ to a linear contraction $t: \C^{n \times n} \otimes \mathcal{A} \rightarrow \C$. $t$ is again positive. Now we find linear $\psi: \mathcal{A} \rightarrow \C^{n \times n}$ such that $s_\psi=t$. \autoref{positest} says that $\psi$ is completely positive. 
\end{proof}

\begin{definition} \label{bwtop}
Let $\mathcal{S}$ be an operator system and $\mathcal{H}$ a Hilbert space. We define a topology on $\mathcal{B}(\mathcal{S},\mathcal{B}(\mathcal{H}))$ by setting: A net $(\varphi_\lambda)_\lambda$ converges to $\varphi$ if for all $x,y \in \mathcal{H}$ and $A \in \mathcal{S}$ we have
\begin{align*}
\langle \varphi_\lambda(A)x,y \rangle \rightarrow \langle \varphi(A)x,y \rangle
\end{align*} 
The BW-topology can be interpreted as the weak$^*$-topology of a certain Banach space.
\end{definition}

An important fact is that the BW-topology turns the set of completely positive maps into a compact set. The proof appearing in most textbooks shows that the BW-topology can be interpreted as the weak$^*$-topology coming from forming the dual of a certain Banach space (predual). After that one can apply the Banach-Alaoglu-theorem. Instead of that, we will not introduce the predual, but adapt the proof of the Banach-Alaoglu-theorem to this setting.   

\begin{lemma} \cite[Theorem 7.4]{P} \label{compactbw}
Let $\mathcal{S}$ be an operator system and $\mathcal{H}$ a Hilbert space. Then
\begin{align*}
\text{CPU}(\mathcal{S},\mathcal{B}(\mathcal{H})):=\{\varphi: \mathcal{S} \rightarrow \mathcal{B}(\mathcal{H}) \text{ is completely positive} \}  
\end{align*}
is a compact set with respect to the BW-topology.
\end{lemma}

\begin{proof}
Consider the map 
\begin{align*}
\Psi: \text{CPU}(\mathcal{S},\mathcal{B}(\mathcal{H})) &\rightarrow \prod_{A \in \mathcal{S}, \ x,y \in \mathcal{H}} \{a \in \C \ | \ ||a||\leq ||A|| \cdot ||x|| \cdot ||y||\}, \\f &\mapsto [(A,x,y) \mapsto \langle f(A) x,y \rangle]
\end{align*}
 where the left side carries the BW-topology and the right side the product topology. Tykhonovs theorem says that the right side is compact. It is easy to see that $\Psi$ is a homeomorphism onto $\Psi(\text{CPU}(\mathcal{S},\mathcal{B}(\mathcal{H})))$. So it is enough to show that $\Psi(\text{CPU}(\mathcal{S},\mathcal{B}(\mathcal{H})))$ is closed in $\prod_{A \in \mathcal{S}, \ x,y \in \mathcal{H}} \{a \in \C \ | \ ||a||\leq ||A|| \cdot ||x|| \cdot ||y||\}$. The set $\Psi(\text{CPU}(\mathcal{S},\mathcal{B}(\mathcal{H})))$ equals
\begin{align*}
&\bigcap_{A,B \in \mathcal{S}, x,y \in \mathcal{H}, \lambda \in \C} \left\{q \ | \ q(A+\lambda B,x,y) = q(A,x,y)+\lambda q(B,x,y)\right\} \\
&\cap \bigcap_{A\in \mathcal{S}, x,y \in \mathcal{H}} \{q \ | \ q(A^*,x,y)=\ov{q(A,y,x)}\} \\
&\cap \bigcap_{n \in \N, A_{h,j} \in \mathcal{S}, x_h \in \mathcal{H}, \ (1\leq h,j \leq n), (A_{h,j})_{h,j \in \{1,...,n\}} \text{ is psd }} \left\{q \ \middle| \ \sum_{h,j=1}^n q(A_{h,j},x_h,x_j) \geq 0\right\},
\end{align*}
which is a intersection of closed sets.
\end{proof}

\begin{theo} (Arveson extension theorem) (Arveson 1969) \cite[Theorem 7.5]{P} \label{arvesons}
Let $\mathcal{A}$ be a $C^*$-algebra, $\mathcal{S}$ an operator system in $\mathcal{A}$ and $\mathcal{H}$ a separable Hilbert space. Then every completely positive map $\varphi: \mathcal{S} \rightarrow \mathcal{B}(\mathcal{H})$ admits a completely positive extension $\varphi: \mathcal{A} \rightarrow \mathcal{B}(\mathcal{H})$.  
\end{theo}

\begin{proof}
We can find an increasing sequence $(\mathcal{H}_n)_{n \in \N}$ of finite-dimensional subspaces of $\mathcal{H}$ such that $\ov{\bigcup_{n \in \N} \mathcal{H}_n}=\mathcal{H}$. For each $n \in \N$ let $\mathcal{T}_n$ be an orthogonal complement of $\mathcal{H}_n$ in $\mathcal{H}$. Let $P_n$ be the projection from $\mathcal{H}$ onto $\mathcal{H}_n$. Define $\varphi_n: \mathcal{S} \rightarrow \mathcal{B}(\mathcal{H}_n) \bigoplus \mathcal{B}(\mathcal{T}_n), \ A \mapsto [P\varphi(A) P^*] \oplus 0$, which is still completely positive. By \autoref{vorext} we can extend $\varphi_n$ to a completely positive map $\psi_n: \mathcal{A} \rightarrow \mathcal{B}(\mathcal{H})$. Now on $\mathcal{S}$ the sequence $(\psi_n)_{n \in \N}$ converges to $\varphi$ in the BW-topology. Additionally we know that the set of completely positive maps from $\mathcal{A}$ to $\mathcal{B}(\mathcal{H})$ is compact, hence we can assume that $(\psi_n)_{n \in \N}$ converges to a completely positive map $\psi$ which is an extension of $\varphi$.    
\end{proof}

The following result characterizes completely positive maps as dilations of representations (the latter ones are easily seen to be completely positive, remembering that psd elements of a $C^*$-algebra are squares). The proof is a generalization of the GNS-construction. Since positive maps which have range $\C$ are automatically completely positive, the GNS-state-representation theorem can be seen as a special case of the following result. 

\begin{theo} (Stinespring representation theorem) (Stinespring 1955) \cite[Theorem 4.1]{P} \label{stinesprings}
Let $\mathcal{A}$ be a $C^*$-algebra, $\mathcal{H}$ a separable Hilbert space and $\varphi: \mathcal{A} \rightarrow \mathcal{B}(\mathcal{H})$ a completely positive map. Then there exists a Hilbert space $\mathcal{K}$, an isometry $V \in \mathcal{B}(\mathcal{H},\mathcal{K})$ and a representation $\pi: \mathcal{A} \mapsto \mathcal{B}(\mathcal{K})$ such that $\varphi(A)=V^* \pi (A) V$ for every $A \in \mathcal{A}$. This form is called Stinespring representation. \\[0.2cm]
Additionally, one can achieve that $\ov{\pi(\mathcal{A})V\mathcal{H}}=\mathcal{K}$, in which case the representation is called minimal. The minimal Stinespring  representation $(V,\pi,\mathcal{K})$ is uniquely determined up to unitary equivalence and can be obtained by restricting $\pi$ to its reducing subspace $\mathcal{K}'=\ov{\pi(\mathcal{A})V\mathcal{H}}$. 
\end{theo}

The following results relate completely positive maps with matrix convex combinations and are well-known. \autoref{rumpf2} and \autoref{rumpf} are Positivstellens\"atze, which translate the property of containment of two free spectrahedra (i.e. $\mathcal{D}_\ml \subseteq \mathcal{D}_\mh$) into a property regarding their defining pencils (i.e. a sums-of-squares representation of $\mh$ with $\ml$ as weight). 

\begin{cor} \label{steinfels}
Let $n,m \in \N$, $\mathcal{S} \subseteq \C^{n \times n}$ be an operator system. The completely positive maps $\varphi: \mathcal{S} \rightarrow \C^{m \times m}$ are exactly the maps for which there exists an $r \in \N$ and $V_1,...,V_r \in \C^{n \times m}$ such that $\varphi(A)=\sum_{j=1}^r V_j^* A V_j$ for every $A \in \mathcal{S}$. 
\end{cor}

\begin{proof} Let $\varphi$ be completely positive. First extend $\varphi$ to a completely positive map defined on the whole $\C^{n \times n}$. Consider the minimal Stinespring representation $(V,\pi,\mathcal{K})$ of $\varphi$. We have $\pi(\C^{n \times n})V\C^m=\mathcal{K}$. Thus $\mathcal{K}$ is at most of dimension $n^2m$. $\pi$ is unitary equivalent to a direct sum of $r$ identity representations (\autoref{ken}). Thus by absorbing a unitary operator into $V$ we get $\varphi(A)=V^* (A \oplus ... \oplus A) V$ for every $A \in \C^{n \times n}$ and $r \leq mn$. Write $V^*=(V_1^* ... V_r^*)$ with $V_j \in \C^{n \times m}$. Then we have $\sum_{j=1}^r V_j^* V_j = I$ and $\varphi(A)=\sum_{j=1}^r V_j^* A V_j$. \\[0.2cm]
The reverse direction is easy.
\end{proof}

\begin{cor} \label{steini}
Let $S \subseteq \S^g$. Then $\mconv(S)(k)=\{ B \in \S^g(k) \ | \ \exists C \in \S^g: \exists r \in \N, A_1,...,A_r \in S: C=A_1 \oplus ... \oplus A_r, \exists \varphi: \spam(I,C_1,...,C_g) \rightarrow \C^{k \times k}: \varphi(C)=B, \ \varphi \text{ is completely positive}\}$. In other words, $S$ is matrix convex if and only if it is closed with respect to direct sums and taking images of completely positive maps. \hfill\qedsymbol
\end{cor}

\begin{lemma} \label{voiculescu} \cite[Lemma II.5.2]{D}
Let $\mathcal{A} \subseteq \mathcal{B}(\mathcal{H})$ be a $C^*$-algebra containing no other compact operator from $\mathcal{B}(\mathcal{H})$ than $0$ and $\varphi: \mathcal{A} \rightarrow \C^{m \times m}$ be a completely positive map. Then there exists a sequence $(V_n)_{n \in \N}$ of isometries from $\C^m$ to $\mathcal{H}$ such that $\lim_{n \rightarrow \infty} || \varphi(A) - V_n^* A V_n||=0$ for all $A \in \mathcal{A}$.
\end{lemma}

\begin{lemma} \label{operator} \cite[Proposition 3.5]{DDSS}
If $\mathcal{H}$ is a separable Hilbert space and $L \in \mathcal{B}_h(\mathcal{H})^g$, then 
\begin{align*}
&\mconv(L)=\left(\ov{\mconv\{PLP^* \ | \ P: \mathcal{H} \rightarrow \im(P) \text{ is a projection of rank at most }k \}}(k)\right)_{k \in \N} \\&=\{ B \in \S^g(k) \ | \ \exists \varphi: \spam(I,L_1,...,L_g) \rightarrow \C^{k \times k}: \varphi(L)=B, \ \varphi \text{ is completely positive}\} \\
\\&=\{A \ | \ \exists (V_n)_{n \in \N} \subseteq \mathcal{B}(\C^{\size(A)},\mathcal{H}^{(\infty)}): V_n^*V_n=I, \ \lim_{n \rightarrow \infty} ||A-V_n^*L^{(\infty)}V_n ||=0\}.
\end{align*}
$\mconv(L)$ is compact.
\end{lemma}

\begin{proof} (cf. \cite[Proposition 3.5]{DDSS}) Set $L^{(\infty)}=\bigoplus_{n \in \N} L \in \mathcal{B}(\mathcal{H}^\infty)$. Note that the unital linear map 
\begin{align*}
\psi: \spam(I,L^{(\infty)}_1,...,L^{(\infty)}_g) \rightarrow  \spam(I,L_1,...,L_g)
\end{align*} defined by $\psi(L^{(\infty)}_i)=L_i$ is completely positive (even completely isometric).
Let $\varphi: \spam(I,L_1,...,L_g) \rightarrow \C^{k \times k}$ be completely positive and $\varphi(L)=B \in \S^g(\delta)$. Since the composition of completely positive maps is again completely positive, we know that there exists a completely positive map $\rho: \spam(I,L^{(\infty)}_1,...,L^{(\infty)}_g)$ such that $\rho(L^{(\infty)})=B$. Since $C^*(I,L^{(\infty)}_1,...,L^{(\infty)}_g)$ does not contain other compact operators than $0$, \autoref{voiculescu} applies and we find a sequence of isometries $(V_n)_{n \in \N} \subseteq \mathcal{B}(\C^\delta,\mathcal{H}^{(\infty)})$ such that $\lim_{n \rightarrow \infty} ||B-V_n^*L^{(\infty)}V_n ||=0$. \\[0.2cm] 
Now suppose $(V_n)_{n \in \N} \subseteq \mathcal{B}(\C^\delta,\mathcal{H}^{(\infty)})$ is a sequence of isometries satisfying $\lim_{n \rightarrow \infty} ||A-V_n^*L^{(\infty)}V_n ||=0$. Fix an isometry $W \in \mathcal{B}(\C^\delta,\mathcal{H})$. Let $\ep>0$. Choose $n \in \N$ such that $||A-V_n^*L^{(\infty)}V_n || < \frac{\ep}{3}$. As $V_n$ is defined on a finite-dimensional vector space, we know that there is an $M \in \N$ such that $||V_n^* P_m^* P_m L^{(\infty)} P_m^* P_m V_n-V_n^*L^{(\infty)}V_n|| < \frac{\ep}{3}$ for all $m>M$, where $P_m$ denotes the projection of $\mathcal{H}^{(\infty)}$ onto $\mathcal{H}^m$. With the same argument we know that $||I-V_n^* P_m^* P_m V_n|| \rightarrow 0$ for $m \rightarrow \infty$. Hence we can find a big $m >M$ such that $||\sqrt{I-V_n^* P_m^* P_m V_n}W^* L W\sqrt{I-V_n^* P_m^* P_m V_n}|| < \frac{\ep}{3}$. All in all, we obtain $||A- (V_n^* P_m^* L^m P_m V_n +\sqrt{I-V_n^* P_m^* P_m V_n}W^* L W\sqrt{I-V_n^* P_m^* L P_m V_n})||< \ep$. \\[0.2cm] We have proven that for each $\ep>0$ there is $n \in \N$ and an isometry $W \in \mathcal{B}(\C^\delta,\mathcal{H}^n)$ such that $||A-W^*L^nW||<\ep$. For $h \in \{1,...,n\}$ let $Q_h: \mathcal{H}^n \rightarrow \mathcal{H}, (x_1,...,x_n) \mapsto x_h$ and $R_h$ be the projection from $\mathcal{H}$ onto $\im (Q_h W)$. Then we calculate $W^* L^n W=\sum_{h=1}^n W^* Q_h^* L Q_h W=\sum_{h=1}^n W^* Q_h^* R_h^* (R_h L R_h^*) R_h Q_h W \in \mconv(\{PLP^* \ | \ P: \mathcal{H} \rightarrow \im(P) \text{ projection of rank at most }\delta\})$ \\[0.2cm]
Now it is clear that 
\begin{align*}&\mconv\{PLP^* \ | \ P: \mathcal{H} \rightarrow \im(P) \text{ projection on a finite-dimensional space)}\} \\ \subseteq &\{ B \in \S^g(k) \ | \ \exists \varphi: \spam(I,L_1,...,L_g) \rightarrow \C^{k \times k}: \varphi(L)=B, \ \varphi \text{ is completely positive}\}.\end{align*} We only have to show that the latter set is closed. This is the case due to \autoref{compactbw}. \\[0.2cm] It is evident that $\mconv(L)$ is bounded.
\end{proof}

The following Positivstellens\"{a}tze are very important and tell us that for two monic pencils $\ml$ and $\mh$ we have $\mathcal{D}_\ml \subseteq \mathcal{D}_\mh$ if and only if $\mh$ has a sums-of-squares representation with $\ml$ as a weight.

\begin{theo} \cite{HKM}[Theorem 4.6] \label{rumpf2}
Let $\mathcal{H}$ be a separable Hilbert space and $L,H \in \mathcal{B}_h(\mathcal{H})^g$, $n \in \N$ and define the operator systems $\mathcal{S}=\spam(I \oplus 1,L_1 \oplus 0,...,L_g \oplus 0)$, $\mathcal{T}=\spam(1,H_1,...,H_g)$. Consider the linear map $\varphi : \mathcal{S} \rightarrow C^*(\mathcal{T})$ defined by $\varphi(I \oplus 1)=I, \varphi(L_i \oplus 0)=H_i$. Then
\begin{enumerate}[(a)]
\item $\varphi$ is well-defined and $n$-positive if and only if $\mathcal{D}_{\mathfrak{L}}(n) \subseteq \mathcal{D}_{\mathfrak{H}}(n)$
\item $\varphi$ is well-defined and completely positive if and only if $\mathcal{D}_{\mathfrak{L}} \subseteq \mathcal{D}_{\mathfrak{H}}$.
\end{enumerate}
\end{theo}

\begin{proof} \cite{Z}[Theorem 2.5]
(a) Suppose first that $\varphi$ is $n$-positive and well-defined. Now let $A \in \S^g(n)$. If $A \in \mathcal{D}_{\mathfrak{L}}(n)$, then $(I-A \ov X) (L \oplus 0) \approx ((I \oplus 1)-[L \oplus 0] \ov X)(A) \succeq 0$ and $(I-A \ov X) (L \oplus 0) \in \C^{n \times n} \otimes \mathcal{S}$. Hence $0 \preceq\varphi_n((I-A \ov X) (L \oplus 0))=(I-A \ov X)(H) \approx \mathfrak{H}(A)$. \\[0.2cm]
Now suppose that $\mathcal{D}_{\mathfrak{L}}(n) \subseteq \mathcal{D}_{\mathfrak{H}}(n)$. We show well-definedness first. Suppose $\lambda \in \C^n$ such that $\sum_{i=1}^g \lambda_i L_i=0$. Write $\lambda=\mu+ \ii \nu$ with $\mu,\nu \in \R^g$. We know that $\R \mu \subseteq \mathcal{D}_\ml$. If $\sum_{i=1}^g \lambda_i H_i \neq 0$, then we find $r \in \R$ such that $r\lambda \notin \mathcal{D}_\mh$. In the same way we show that $\nu=0$.\\[0.2cm]
Let $B \in \C^{n \times n} \otimes \mathcal{S}$ such that $B \succeq 0$. Choose $A_0 \in \C^{n \times n}$ and $A \in (\C^{n \times n})^g$ such that $B= A_0 \otimes (I \oplus 1) + \sum_{i=1}^g A_i \otimes (L_i \oplus 0)$. In particular $A_0 \otimes 1 + \sum_{i=1}^g A_i \otimes 0 \succeq 0$, whence $A_0 \succeq 0$. We notice that we can demand that $A \in \S^g(n)$ (indeed if $L_1,...,L_g$ is linear independent, then $A \in \S^g(n)$ automatically; otherwise we represent $B$ by a tuple $A$ for which $(L_i \ | \ i \in \{1,...,g\}, A_i \neq 0)$ is linear independent).\\[0.2cm]
Now let $\ep>0$. We find an invertible square matrix $C$ such that $C^*(A_0 + \ep I)C=I$.
From $(I \otimes C)^*[I \otimes (A_0 + \ep I) + \sum_{i=1}^g L_i \otimes A_i ](I \otimes C)\succeq 0$ we conclude that $I \otimes I + \sum_{i=1}^g L_i \otimes {C}^* A_i C \succeq 0$. Thus $C^* A C \in \mathcal{D}_{\mathfrak{L}}(n)$ and by assumption $C^* A C \in \mathcal{D}_{\mathfrak{H}}(n)$. This means $I \otimes I + \sum_{i=1}^g H_i \otimes C^* A_i C \succeq 0$ and consequently $(A_0 + \ep I) \otimes I + \sum_{i=1}^g A_i \otimes H_i \approx I \otimes (A_0 + \ep I) + \sum_{i=1}^g H_i \otimes A_i \succeq 0$. By letting $\ep$ go to $0$ we obtain $\varphi_n(B)=A_0 \otimes I + \sum_{i=1}^g A_i \otimes H_i \succeq 0$ as well. \\[0.2cm]
(b) follows from (a).
\end{proof}

\begin{proposition} \label{bondi} \cite[Lemma 3.6]{HKM4}
Suppose $\mathcal{H}$ is a separable Hilbert space, $L \in \mathcal{B}_h(\mathcal{H})^g$ and that $\mathcal{D}_{\mathfrak{L}}(1)$ is bounded. Then $0 \in \mconv(L)$. 
\end{proposition}

\begin{proof} \cite[Lemma 3.6]{HKM4}
Define the operator systems $\mathcal{S}=\spam(I,L_1,...,L_g)$ and $\mathcal{T}=\spam(1)$. We will show that the map $\varphi: \mathcal{S} \rightarrow \mathcal{T}$ defined by $\varphi(L_i)=0$ and $\varphi(I)=1$ is well-defined and completely positive. Since $\mathcal{D}_\ml$ is bounded, it is clear that that $\{L_i \ | \ i \in \{1,...,g\}\} \cup \{I\}$ is linear independent. Hence $\varphi$ is well-defined. So let $A_0 \in \C^{k \times k}$ and $A \in (\C^{k \times k})^g$ such that $(A_0 \otimes I) + \sum_{i=1}^g A_i \otimes L_i \succeq 0$. We notice that $A \in \S^g$ and $A_0 \in S\C^{k \times k}$ because of the linear independence of $I,L_1,...,L_g$. Assume that $A_0$ was not positive semidefinite. Choose $v \in \mathcal{S}^{k-1}$ with $v^*A_0v <0$. Then $0 \preceq (v^* \otimes I)[(A_0 \otimes I) + \sum_{i=1}^g A_i \otimes L_i](v \otimes I) \approx I \otimes v^*A_0v + \sum_{i=1}^g L_i \otimes v^*A_iv$. This means that $\sum_{i=1}^g L_i \otimes v^*A_iv \succ 0$; therefore $\R_{\leq 0} v^*Av \in \mathcal{D}_{\mathfrak L}(1)$, which contradicts the boundedness. 
\end{proof}

\begin{cor} \cite{HKM4}[Theorem 3.5] \label{rumpf}
Let $L,H \in \S^g$ such that $\mathcal{D}_{\mathfrak L}(1)$ is bounded. Define the operator systems $\mathcal{S}=\spam(I,L_1,...,L_g)$, $\mathcal{T}=\spam(1,H_1,...,H_g)$. Then $\mathcal{D}_{\mathfrak L} \subseteq \mathcal{D}_{\mathfrak H}$ if and only if the linear map $\varphi : \mathcal{S} \rightarrow C^*(\mathcal{T})$ defined by $\varphi(I)=I, \varphi(L_i)=H_i$ is completely positive if and only if there is $r \in \N$ and matrices $V_1,...,V_r$ such that $\sum_{j=1}^r V_j^* L V_j=H$ and $\sum_{j=1}^r V_j^* V_j =I$. \hfill\qedsymbol
\end{cor}

Next, we want to prove the bipolar theorem. In order to do that we determine the polar of a free spectrahedron.

\begin{cor} \label{polariapp} \cite[Theorem 4.6, Proposition 4.9]{HKM}
Let $\mathcal{H}$ be a separable Hilbert space, $L \in \mathcal{B}_h(\mathcal{H})^g$. Then we have $\mathcal{D}_{\mathfrak{L}}^\circ=\mconv(L,0)$ and $\mconv(L)^\circ=\mathcal{D}_{\mathfrak{L}}$. If $\mathcal{D}_{\mathfrak{L}}$ is even bounded, then $\mconv(L,0)=\mconv(L)$.
\end{cor}

\begin{proof}
$\mathcal{D}_{\mathfrak{L}}^\circ=\mconv(L,0)$ can be obtained by looking into \autoref{rumpf2}. \\[0.2cm]
Now let $A \in \mathcal{D}_{\mathfrak{L}}$ and $B \in \mconv(L)(\delta)$. We know that there is a completely positive map $\varphi: \spam(I,L_1,...,L_g) \rightarrow \C^{\delta \times \delta}$ such that $B = \varphi(L)$. Then we have that $0 \preceq \ml(A) \approx (I-A \ov X)(L)$ and $0 \preceq \varphi_{\size(A)} ((I- A\ov X)(L))=(I-A \ov X)(B)$. \\[0.2cm]
Now let $H \in \mconv(L)^\circ$. Then for every projection $P: \mathcal{H} \rightarrow \im(P)$ on a finite-dimensional subspace we have $0 \preceq (I-H \ov X)(PLP^*) \approx (I-PLP^* \ov X)(H)$ and thus $H \in \mathcal{D}_\ml$. \\[0.2cm] The last part is a consequence of \autoref{bondi}.
\end{proof}

\begin{lemma} \label{closca}
Let $S \subseteq \S^g$ be matrix convex. Then $S=\mconv(L)$ for some $L \in \mathcal{B}_h(\mathcal{H})^g$ if and only if $S$ is compact.  
\end{lemma}

\begin{proof}
First claim: Let $S$ be compact. Let $\{A_n \ | \ n \in \N\}$ be a dense subset of $S$. Then we claim $\mconv(\bigoplus_{n \in \N}A_n)=\ov{\mconv(\{A_n \ | \ n \in \N \})}=S$. $\bigoplus_{n \in \N}A_n$ is an element of $\mathcal{B}_h(\mathcal{H})^g$ where $\mathcal{H}=\ov{\bigoplus_{n \in \N} \C^{\size(A_n)}}$ due to compactness of $S$. We use \autoref{operator} and have only to show that for each isometry $V: \C^k \rightarrow \mathcal{H}$ we have $V^* \bigoplus_{n \in \N}A_n V \in S$. However letting $P_m: \mathcal{H} \rightarrow \bigoplus_{n=1}^m \C^{\size(A_n)}$ be the canonical projection, we have $V^* \bigoplus_{n \in \N} A_n V=\lim_{m \rightarrow \infty} V^* P_m^*[P_m \bigoplus_{n \in \N} A_n P_m^*]P_m V=\lim_{m \rightarrow \infty} V^* P_m^*[\bigoplus_{n=1}^m A_n]P_m V \in \ov{S}$.
\end{proof}

\begin{proof} (of \autoref{simpolara})
Let $S \subseteq \S^g$ be matrix convex. Obviously $S^{\circ}=\ov{\mconv(S \cup \{0\})}^\circ$ and hence we can assume that $S$ is closed and contains $0$. Suppose first that $0 \in \inte{S}$. We have to show $S^{\circ \circ}=S$. It is clear that $S \subseteq S^{\circ \circ}$. We have $S=\bigcup_{n \in \N} S_n$ where $S_n:=S \cap \ov{B_{\S^g}(0,n)}$ is matrix convex. We know that $S_n=\mconv(L_n)$ for some $L_n \in \mathcal{B}_h(\mathcal{H})^g$ due to \autoref{closca}. We obtain $S^\circ=:T=:\bigcap_{n \in \N} T_n$ where $(T_n)_{n \in \N}=\mathcal{D}_{I-L_n \ov X}$ is a descending sequence of matrix convex compact sets (\autoref{simpolarbb}).\\[0.2cm] We claim $T^{\circ}=\bigcup_{n \in \N} T_n^\circ$. This proves the theorem as $\bigcup_{n \in \N} T_n^\circ=\bigcup_{n \in \N} \mconv(L_n)=S$ (\autoref{polariapp}). It is immediate that $T_n^\circ \subseteq T^\circ$ for all $n \in \N$. Now assume $A \in T^\circ (\delta)$ and $A \notin \bigcup_{n \in \N} T_n^\circ$. As $A \notin \bigcup_{n \in \N} T_n^\circ=S$ and $S$ is closed, we find $\lambda \in (0,1)$ such that $\lambda A \in T^\circ \setminus S$. Then for each $n \in \N$ we find $B_n \in T_n(\delta)$ such that $(I-\lambda A \ov X)(B_n) \nsucceq 0$. Since $(T_n)_{n \in \N}$ is a descending sequence of compact sets, we can suppose that $B=\lim_{n \rightarrow \infty} B_n$ exists. Let $C \in S$. Then $(I-B_n \ov X)(C) \succeq 0$ for all $n \in \N$ with $C \in B_{\S^g}(0,N)$. Hence $B \in T=S^\circ$. Obviously, $(I-\lambda A \ov X)(B) \nsucc 0$. Together with $\lambda A \in T^\circ$, this means $\ker(I- \lambda A \ov X)(B) \neq \{0\}$. But now $(I- A \ov X)(B) \nsucc (1-\frac{1}{\lambda})I$, which contradicts $A \in T^\circ, B \in T$. \\[0.2cm]
In the case that $0 \notin \inte{S}$, we can find an subspace $U$ of $\R^g$ containing $S(1)$ such that $0$ is in the relative interior of $S(1)$ with respect to $U$. WLOG $U=\R^r \times \{0\}^{g-r}$ with some $r < g$. \autoref{inte} ensures that $S \subseteq \S^r \times \{0\}^{g-r}$. Write $S=S' \times \{0\}^{g-r}$. Denote by $\Box$ the polar in $\S^{r}$. Now we have $S^{\circ \circ}=(S'^\Box \times \S^{g-r})^\circ=S'^{\Box \Box} \times \{0\}^{g-r}=S' \times \{0\}^{g-r}=S$.   
\end{proof}

\begin{proof} (of \autoref{trenn} (a))
Using \autoref{simpolara} and \autoref{simpolarc} we get $H \in S^\circ(\delta)$ such that $\mh(Y) \nsucceq 0$.
\end{proof}

\begin{cor}
Let $S \subseteq \S^g$ be closed and matrix convex. Then $S=\mathcal{D}_\ml$ for some $L \in \mathcal{B}_h(\mathcal{H})^g$ if and only if $0 \in \inte(S)$.
\end{cor}

\begin{proof}
Suppose $0 \in \inte{S}$. Then $S^\circ$ is compact (\autoref{simpolarbb}) and hence we find $L \in \mathcal{B}_h(\mathcal{H})^g$ such that $S^\circ=\mconv(L)$. Now we have $S=S^{\circ\circ}=\mathcal{D}_\ml$ (\autoref{simpolara} and \autoref{closca}).
\end{proof}

\begin{proposition} \label{shu} (Schur complement)
Let $D=\begin{pmatrix} A & B \\ B^* & C \end{pmatrix} \in S\C^{k \times k}$ and $A \succ 0$. Then $D$ is positive semidefinite if and only if $C-B^*A^{-1}B \succeq 0$.
\end{proposition}

\begin{proof}
$\begin{pmatrix} A & B \\ B^* & C \end{pmatrix}=\begin{pmatrix} I & 0 \\ B^*A^{-1} & I \end{pmatrix}\begin{pmatrix} A & 0 \\ 0 & C-B^*A^{-1}B \end{pmatrix}\begin{pmatrix} I & A^{-1}B \\ 0 & I \end{pmatrix}$
\end{proof}

\begin{cor} \label{schwatz} (Schwarz inequality for completely positve maps) \cite[Proposition 3.3]{P}
Let $\mathcal{A},\mathcal{B}$ be $C^*$-algebras and $\varphi: \mathcal{A} \rightarrow \mathcal{B}$ completely positive. For $A \in \mathcal{A}$ we have $\varphi(A^*)\varphi(A) \preceq \varphi(A^*A)$.
\end{cor}

\begin{proof} \cite[Proposition 3.3]{P}
$D=\begin{pmatrix} 1 \\ A^* \end{pmatrix}\begin{pmatrix} 1 & A\end{pmatrix}=\begin{pmatrix} 1 & A \\
A^* & A^*A \end{pmatrix}$ is positive semidefinite. Therefore $0 \preceq \varphi_2(D)=\begin{pmatrix} 1 & \varphi(A) \\
\varphi(A^*) & \varphi(A^*A) \end{pmatrix}$. Now the claim follows from \autoref{shu}.
\end{proof}

\subsection{Real closed fields and semialgebraic sets} \textcolor{inv}{a} \\
A real closed field $R$ is a field such that for $R^2=\{r^2 \ | \ r \in R\}$ we have $R^2 \cup -R^2=R$, $R^2 \cap -R^2=\{0\}$, $R^2+R^2 \subseteq R^2$ and every polynomial $f \in R[X]$ of odd degree has a root in $R$. The convention $a \geq b :\Longleftrightarrow a-b \in R^2$ defines an ordering on $R$. This is the only ordering on $R$. One can also characterize real closed fields as orderable fields $R$ for which $R[\ii]$ is algebraically closed. Every ordered field $(K,\leq)$ admits a real closure $(R,\leq)$ which is a real closed extension field $(R,\leq)$ extending the order of $K$ such that $R|K$ is algebraic. Up to order isomorphism the real closure is uniquely determined. Of course the most important example of a real closed field is the field of real numbers $\R$.

\begin{definition}
Let $R$ be a real closed field and $K \subseteq R$ a real closed subfield. We say that $S \subseteq R^n$ is a $K$-semialgebraic set if there exist $f_h,g_{h,j} \in K[\ov{X}]$ such that $S=\bigcup_{h=1}^r \{x \in R^n \ | \ f_h(x)=0,g_{h,1}(x)>0,...,g_{h,m}(x)>0\}$. If we dont specify $K$, we mean $K=\R$ or $K=R$.
\end{definition}

The sets $\{x \in R^n \ | \ \sum_{i=1}^n x_i^2 \leq \ep\}$ where $\ep \in R_{>0}$ form the basis of a topology on $R^n$. One key fact is that the theory of real closed fields in the language of ordered rings admits quantifier elimination (in particular if $R$ is a real closed field, $F_1,F_2$ are real closed extension fields and $\varphi(x)$ a formula in the language of real closed fields with parameters from $R$ in the free variables $x$, then $\varphi(x)$ is satisfiable over $F_1$ if and only if it is satisfiable over $F_2$). This means that projections of $K$-semialgebraic sets are again $K$-semialgebraic. An important corollary is the following transfer-principle. 

\begin{theo} (Tarski transfer principle)
Let $R$ be a real closed field and $F \supseteq R$ be a real closed extension field of $R$ and $n \in \N_0$. Let $S=\bigcup_{h=1}^r \{x \in R^n \ | \ f_h(x)=0,g_{h,1}(x)>0,...,g_{h,m}(x)>0\}$ be an $R$-semialgebraic set where $f_h,g_{h,j} \in R[\ov{X}]$. Then there is only one $R$-semialgebraic set $S_F \subseteq F^n$ with $S_F \cap R^n=S$ which is called the transfer of $S$ into $F$. We have $S_F=\bigcup_{h=1}^r \{x \in F^n \ | \ f_h(x)=0,g_{h,1}(x)>0,...,g_{h,m}(x)>0\}$.
\end{theo}

The importance of this theorem lies in the fact that one can easily generalize statements, which can be stated using semialgebraic sets, from the real numbers to all real closed extension fields. As an exercise the reader should try to prove: Let $f:\R \rightarrow \R^n$ be a continuous/bijective/monotoneous/... function whose graph is semialgebraic and $R$ a real closed extension field of $\R$. Then $f$ extends uniquely to a function $f_R:R \rightarrow R^n$ whose graph is $\R$-semialgebraic. $f_R$ is continuous/bijective/monotoneous/... \\[0.2cm]
If $S$ is an $R$-semialgebraic set, then 
\begin{align*}
\prod_{F \text{ real closed extension field of }R} S_F
\end{align*} is called a semialgebraic class. \\[0.2cm]
For a real closed extension field $R$ of $\R$ denote by $\mfm_R=\left\{a \in R \ | \ \forall N \in \N: -\frac{1}{N} < a < \frac{1}{N} \right\}$ the set of {\bfseries infinitesimal elements} and by $\OO_R=\left\{a \in R \ | \ \exists N \in \N: -N< a < N\right\}$ the set of {\bfseries finite elements}. A real closed field $R$ is Archimedean if for all $a \in R$ there exists $N \in \N$ such that $a \pm N >0$. Every Archimedean ordered field can be interpreted as a subfield of $\R$.  Therefore every real closed extension field of $\R$ is non-Archimedean and contains infinitesimal and infinite elements. A transcendent extension $L$ of an ordered field $(K,\leq)$ admits an ordering extending $\leq$. Forming the real closure, one sees that there are many non-Archimedean fields. For a real closed field $R$ and $a \in R$ we define $|a|=\max(a,-a)$. For $a,b \in R$ we write $a \sim b$ if there exist $N,M \in \N$ such that $|a| < N|b| < M|a|$. The residue classes $\widetilde{a}$ of this equivalence class are called Archimedean classes. The residue class map $\nu$ is called the canonical valuation on $R$. It fulfills $\nu(a+b) \geq \min\{\widetilde{a}, \widetilde{b}\}$, where $\widetilde{a} \geq \widetilde{b}$ if there is $N \in \N$ such that $|a| \leq |b|N$. \\[0.2cm]
The canonical residue map $\OO_R \rightarrow \OO_R/\mfm_R, a \mapsto \ov{a}^{\mfm_R}$ is a ring homomorphism and for each $a \in \OO_R$ there is a unique $b \in \R$ such that $b \in \ov{a}^{\mfm_R}$. This $b$ is called the standard part of $a$ and denoted by $\text{st}(a)$. $\text{st}: \OO \rightarrow \R$ is a homomorphism of ordered rings meaning that for $a,b \in \OO$ the inequality $a \leq b$ implies $\text{st}(a) \leq \text{st}(b)$.

\begin{theo} (Finiteness theorem for semialgebraic classes) \label{finot}
Let $I$ be an index set, $R$ a real closed field and for every $j \in J$ let $S^j$ be a semialgebraic set. Suppose that $\bigcap_{j \in J} S_\mathcal{R}^j=\emptyset$ for every real closed extension field $\mathcal{R}$ of $R$. Then there exists a finite set $I \subseteq J$ such that $\bigcap_{j \in I} S^j=\emptyset$.  
\end{theo}   

\subsection{Weak separation and a proof of the free Minkowski theorem using pencils}

In the theory of ordinary convexity sometimes strong separation is not possible. For example, a non-exposed point $A$ of a convex set $S \subseteq \R^n$ can not be separated strictly from $S \setminus \{A\}$ by an affine linear function. However it is possible to separate these two sets strongly by a finite hierarchy of affine-linear functions $\varphi_1,...,\varphi_r$ in the sense that $\varphi_1(A)=...=\varphi_r(A)=1$ and for each $B \in S$ there exists $j \in \{1,...,r\}$ such that $\varphi_1(B)=...=\varphi_{j-1}(B)=1$ and $\varphi_j(B)<1$. Equivalently, by working with an infinitesimal number $\ep$ from a real closed extension field $R$ of $\R$, one can form the affine-linear function $\varphi=\sum_{j=1}^r \ep^j \varphi_j$, which separates the two sets strongly. The second view point was introduced in \cite{NT2}. \\[0.2cm]
When going through the idea of the Effros-Winkler translation process, it is not clear how to translate all the $\varphi_j$ into pencils (since there is no concave function $\Psi_j$ in \autoref{effri1} for each $\varphi_j$). However translating $\varphi$ can be achieved with some technical difficulties.  \\[0.2cm]
The aim of this section is to prove the free Minkowski theorem again by using pencils and not the homogenization trick from the proof in Chapter 6. The proof is much more involved, however is has also beautiful aspects in the opinion of the author. Normally, the classical Minkowski theorem is proven by an induction on the dimension and working with the faces of a convex set. However the concept of a "free face" is cumbersome because those sets would not be matrix convex anymore. The proof of \autoref{min2} shows how those "free faces" could look like and how to argue with those non-matrix convex objects. 

\begin{definition}
Let $R$ be a real closed field extension of $\R$ with algebraic closure $C=R[\ii]$ and $S \subseteq \S^g$. Then we set $\mconv_R(S):=\{\sum_{j=1}^r V_j^* B_j V_j \ | \ r,k \in \N, B_j \in S(k_j), V_j \in C^{k_j \times k}, \sum_{j=1}^r V_j^* B_j V_j=I\}$.
\end{definition}     

\begin{proposition} \label{effro0}
Let $H \subseteq \S^g$ be matrix convex, $0 \in H$ and $A \in \S^g(\delta) \setminus H$. Set $r=\delta^2g-1$. If $R$ is a real closed extension field $\R$, $C=R[\ii]$ and $\ep \in R_{>0}$ infinitesimal, then there exist $\varphi_0,...,\varphi_r: \S^g(\delta) \rightarrow \R$ affine linear such that $\varphi:=\sum_{j=0}^r \ep^j \varphi_j$ fulfills: $\varphi(B) \leq \gamma :=1+\ep+...+\ep^r-\ep^{r+1}$ for all $B \in \mconv(H)$ and $\varphi(A)=\gamma+\ep^{r+1}$. \\[0.2cm]
Now extend $\varphi$ to a $R$-linear function on $(SC^{\delta \times \delta})^g$. For all $A_1,...,A_s \in H$ and $W_1,...,W_s \in R^{\size(A_h) \times \delta}$ such that $\sum_{h=1}^s W_h^* W_h \preceq I$ and $C=\sum_{h=1}^s W_h^* A_h W_h$ we have $\varphi(C) \leq \gamma$.
\end{proposition}

\begin{proof}
We construct the $\varphi_j$ inductively. For this purpose we set $H_j=\bigcap_{\al < j} \varphi_\al^{-1}(1) \cap H$, $W_j=\bigcap_{\al < j} \varphi_\al^{-1}(1) \cap \S^g(\delta)$ and we will achieve that $\dim (W_j)=g \delta^2-j-1$ and $A \in W_j$. Now suppose that $\varphi_{0},...,\varphi_{j-1}$ have already been defined. Then $H_j$ is a convex set in $W_j$ which does not contain $A$. Now we can find non-trivial affine-linear $\varphi_j: W_j \rightarrow \R$ such that $\varphi_j(H_j) \subseteq (-\infty,1]$ and $\varphi_j(A)=1$. Now extend $\varphi_j$ to a $R$-linear function $\varphi_j: (SC^{\delta \times \delta})^g \rightarrow R$. This shows the first part. \\[0.2cm]
For the second part fix $A_1,...,A_s \in H$ and $W_1,...,W_s \in R^{\size(A_h) \times \delta}$ satisfying both $\sum_{h=1}^s W_h^* W_h \preceq I$ and $C=\sum_{h=1}^s W_h^* A_h W_h$. Let $j$ denote the first index such that $\text{st}(\varphi_j(C)) \neq 1$ and $r+1$ if no such index exists. We observe $\text{st}(\varphi_k(C))=\varphi_k(\text{st}(C))$ for all $k \in \{1,...,r\}$ and $\text{st}(C) \in H$. Inductively we conclude that $\varphi_k(C) \leq 1$ for all $k \leq j$ due to the Tarski-transfer principle. If $j \neq r+1$, we know that $\text{st}(\varphi_j(C))<1$ which shows the claim. If $j=r+1$, then $\varphi(\text{st}(C))=\gamma+\ep^{r+1}$, a contradiction. 
\end{proof}

\begin{proposition} \label{effro1}
Suppose we are in the situation of \autoref{effro0}. By setting $\psi=\frac{1}{\gamma} \varphi$ we get the linear $R$-functional $\psi: (SC^{\delta \times \delta})^g \rightarrow R$ such that $\psi(B) \leq 1$ for all $B \in (\mconv_R H)(\delta)$ and $\psi(A)>1$. For $B \in H_R(k):=(\mconv_R H)(k)$ and a contraction $V \in C^{\delta \times k}$ define $f_{B,V}: SC^{\delta \times \delta} \rightarrow R, T \mapsto \text{tr}(V \ov{T} V^*)-\psi(V^* B V)$. \\[0.2cm]
Then the set $\mathcal{F}=\{f_{B,V} \ | \ k \in \N, \ B \in H_R(k), V\in C^{\delta \times k}, V^*V \preceq I\}$ is $R$-convex. For every $f \in \mathcal{F}$ there is a $T \in \mathcal{T}_{R,\delta}$ such that $f(T) \geq 0$. 
\end{proposition}

\begin{proof}
This can be obtained by applying the Tarski-transfer to \autoref{effri1}.
\end{proof}

\begin{lemma} \label{effro2}
Suppose we are in the situation of \autoref{effro1}. Then there is a real closed extension field $\mathcal{R}$ of $R$ and $T \in \mathcal{T_\mathcal{R,\delta}}$ such that $f(T) \geq 0$ for all $f \in \mathcal{F}$.
\end{lemma}

\begin{proof}
For $f \in \mathcal{F}$ consider the $R$-semialgebraic classes
\begin{align*}\{(\mathcal{R},T) \ | \ \mathcal{R} \text{ real closed extension field of }R, \ T \in \mathcal{T}_{\mathcal{R},\delta}, f(T) \geq 0\}.
\end{align*} 
\autoref{finot} tells us that in order to prove 
\begin{align*}
\bigcap_{f \in \mathcal{F}} \{(\mathcal{R},T) \ | \ \mathcal{R} \text{ real closed extension field of }R, \ T \in \mathcal{T}_{\mathcal{R},\delta}, f(T) \geq 0\}\neq \emptyset,
\end{align*}
it is enough to show that every finite intersection of those sets is non-empty.
That the latter is the case follows from \autoref{effri2} and the Tarski-transfer principle. (We remark that we cannot use the Tarski-transfer principle directly and avoid the introduction of a new real closed field; the reason is that $H$ does not need to be $\R$-semialgebraic. In our argument it is enough to consider finite subsets of $H$ which are $\R$-semialgebraic.) 
\end{proof}

\begin{cor} \label{exposs} (Effros-Winkler - weak separation)
Suppose we are in the situation of \autoref{effro1}. Then there exist $L_0^j,...,L_g^j \in S\C^{\delta \times \delta}$ for $j \in \{0,...,(g+1)\delta^2-1\}$ such that $L=\sum_{j=0}^{(g+1)\delta^2-1} \ep^j (L_0^j + L_1^j X_1 + ... + L_g^j X_g)$ fulfills: For all $B \in H$: $L(B) \succeq_\R 0$ and $ L(A) \nsucceq_\R 0$. 
\end{cor}

\begin{proof}
We apply \autoref{effro1} and \autoref{effro2} with $\mathcal{F}$ defined like in \autoref{effro1} . We conclude that there is a real closed extension field $\mathcal{R}$ of $R$ and $T \in \mathcal{T}_\mathcal{R,\delta}$ such that $f(T) \geq 0$ for all $f \in \mathcal{F}$. Now one follows the proof of \cite[Proposition 6.4]{HM} to see that there is a linear pencil $L=L_0+L_1 X_1 +...+ L_g X_g \in S(\mathcal{C}\!<\!\ov X\!>\!)_1^{\delta \times \delta}$ such that $L(B) \succeq_\R 0$ (even $L(B) \succeq_R 0$, but maybe $L(B) \nsucceq_\mathcal{R} 0$) for all $B \in H$ and $L(A) \nsucceq_\R 0$. \\[0.2cm] Now due to \autoref{vitali} there exist $\lambda_0,...,\lambda_{2 (g+1)\delta^2-1} \in \mathcal{R}$ and $L_0^j,...,L_g^j \in S\C^{\delta \times \delta}$ such that $\lambda_0>>...>>\lambda_{(g+1)\delta^2-1}>0$ and $L=\sum_{j=0}^{(g+1)\delta^2-1} \lambda_j (L_0^j + L_1^j X_1 + ... + L_g^j X_g)$. Now observe that we can also exchange the $\lambda_j$ by $\ep^j$.
\end{proof}

\begin{definition} \label{defstep}
Let $R$ be a real closed extension field of $\R$ and $C=R[\ii]$ be the algebraic closure of $R$. For $H \in (\S C^{\delta \times \delta})^g$, $A \in C^{\delta \times \delta}$ and $L=A+H \ov X$ we define $\mathcal{D}_L^{\R}=\{C \in \S^g(k) \ | \  L(C) \succeq_\R 0\}$ and $\mathcal{D}_L^{\R,0}=\{C \in \S^g(k) \ | \  L(C) \succeq_\R 0 \ \& \ \exists w \in \C^{k \delta} \setminus \{0\}: w^* L(C) w=0\}$, $\mathcal{D}_L^{\R,\succ}=\mathcal{D}_L^{\R} \setminus \mathcal{D}_L^{\R,0}$. For $B \in \mathcal{D}_L^{\R,0}(k)$ and $v=\sum_{\al=1}^\delta e_\al \otimes v_\al \in \C^{k \delta}$ with $L(B)v=0$ set $M(B,v)_{L,\R}=\spam_\C \{v_\al \ | \ \al \in \{1,...,\delta\}\}$.
\end{definition}

\begin{lemma} (Free Minkowksi theorem) \label{min2}
Let $K \subseteq \S^g$ be compact and matrix convex with $\text{kz}(K)=\delta < \infty$. Then $K=\mconv(\text{mext}(K))$. 
\end{lemma}

\begin{proof}
Without loss of generality suppose that $0 \in K$. We make an induction on $\delta \in \N$: The case $\delta=1$ is just the usual Minkowski theorem since the ordinary extreme points of $K(1)$ equal the matrix extreme points of $\mconv(K(1))$. So let $\delta > 1$ and suppose that the claim has been proven for $\delta-1$. We conclude that $K(\delta -1) \subseteq \mconv(\text{mext}(K))$. Let $r=(g+1)\delta^2-1$ and $R$ be a real closed extension field of $\R$. Take $\ep \in R_{>0}$ infinitesimal. \\[0.2cm]
Let $H=\text{mext}(K)$. Assume that $\mconv(H) \neq K$. Then we can find $A_1 \in K \setminus \mconv(H)$. The induction start tells us that $K(1) \subseteq \mconv{H}$, in particular $0 \in H$. By \autoref{exposs} we find $L^1:=\sum_{k=0}^r \ep^r (M_k^1 + L_k^1 \ov X)$ where $L_0^1,...,L_r^1 \in \S^g(\delta)$, $M_0^1,...,M_r^1 \in S\C^{\delta \times \delta}$ such that $\mconv(H) \subseteq \mathcal{D}_{L^1}^{\R,\succ}$, $A \in \mathcal{D}_{L^1}^{\R,0}$. \\[0.2cm]
Consider the continuous function $\gamma_1: K(\delta) \times \S^{\delta^2-1} \rightarrow \R+\ep \R + ...+ \ep^r \R, (B,v) \mapsto v^* L^1(B) v$. This function attains a minimum $\rho_1=\sum_{k=0}^r \ep^k \lambda_k^1 \leq 0$ where $\lambda^1 \in \R^{r+1}$. Now define $\lambda^1 \circ L^1=L^1 - \rho_1 I_\delta$. We still have $\mconv(H) \subseteq \mathcal{D}_{\lambda^1 \otimes L^1}^{\R,\succ}$ and $Z_1:=\mathcal{D}_{\lambda^1 \otimes L^1}^{\R,0} \cap K \neq \emptyset$. Additionally we achieved that $K \subseteq \mathcal{D}_{\lambda^1 \otimes L^1}^{\R}$.
$Z_1$ is compact. We know that for all $B \in Z_1$ there is $v=\sum_{\al=1}^\delta e_\al \otimes v_\al \in \mathcal{S}^{\delta^2-1}$ such that $\dim M(B,v)_{\lambda^1 \circ L_1,\R}=\dim \spam\{v_1,...,v_\delta\}=\delta$ and $\ker(\lambda^1 \circ L^1)(B) \cap \C^{\delta^2}=\spam(v)$. If the dimension was smaller, we could compress $B$ to $PBP^*$ where $P$ is the projection onto $M(B,v)_{\lambda^1 \circ L^1,\R}$ and the compression would be also not in the closed matrix convex hull of the matrix extreme points (cf. \autoref{ald2}). This contradicts the induction hypothesis. We consider two cases: \\[0.2cm]
Case $1.1$: There is a $E \in Z_1$ such that $\mconv(E) \supseteq Z_1$. Fix $v \in \mathcal{S}^{\delta^2-1}$ such that $\ker(\lambda^1 \circ L^1)(E) \cap \C^{\delta^2}=\spam(v)$ and $\spam\{v_1,...,v_\delta\}=\C^{\delta}$. If $E$ is reducible, then we know that $E=C \oplus D$ and $C,D \in \mconv{H}$ which implies $E \in \mconv{H}$. Therefore $E$ is irreducible. Suppose $E=\sum_{j=1}^r V_j^* A_j V_j$ for some $A_j \in K(1),...,K(\delta)$ with $I=\sum_{j=1}^r V_j^* V_j$ and each $V_j \neq 0$. If one $(\lambda^1 \circ L^1)(A_j) \succ^{\R} 0$, then due to $(I \otimes V_j)v \neq 0$ also $v^*(\lambda^1 \circ L^1)(E)v > 0$, which contradicts the choice of $E$. Thus all $A_j$ are contained in $Z_1 = \mconv(E)$. We saw already that $E$ is matrix extreme in $\mconv(E)$ (\autoref{david}). Hence all $A_j$ are unitarily equivalent to $E$. This means $E \in H$. \\[0.2cm]
Case $1.2$: There is no $B \in Z_1$ such that $\mconv(B) \supseteq Z_1$. Choose $B \in Z_1$ and $C \in Z_1$ such that $C \notin \mconv(B)$. Set $H_1=H \cup \{B\}$. We claim $C \notin \mconv(H_1)$. This is again due to the fact that $(\lambda^1 \circ L^1)(A) \succ^{\R} 0$ for all $A \in H$. So again by \autoref{exposs} we can find $L^2:=\sum_{k=0}^r \ep^r (M_k^2 + L_k^2 \ov X)$ where $L_0^2,...,L_r^2 \in \S^g(\delta)$, $M_0^2,...,M_r^2 \in S\C^{\delta \times \delta}$ such that $\mconv(H_1) \subseteq \mathcal{D}_{L^2}^{\R,\succ}$, $C \in \mathcal{D}_{L^2}^{\R,0}$. The continuous function $\gamma_2: Z_1 \times \S^{\delta^2-1} \rightarrow \R+\ep \R + ... \ep^r \R, (D,v) \mapsto v^* L^2(D) v$ attains a minimum $\rho_2=\sum_{k=0}^r \ep^k \lambda_k^2 \leq 0$ where $\lambda^2 \in \R^{r+1}$. Set $\lambda^2 \circ L^2=L^2 - \rho_2 I_\delta$. We observe that $\mconv(H_1) \subseteq \mathcal{D}_{\lambda^2 \circ L^2}^{\R,\succ}$ and $Z_2:=\mathcal{D}_{\lambda^2 \circ L^2}^{\R,0} \cap Z_1 \neq \emptyset$ is compact as well as $Z_1 \subseteq \mathcal{D}_{\lambda^2 \circ L^2}^{\R}$.\\[0.2cm] 
Again we consider the two cases $1.2$ (there is $B \in Z_2$ such that $\mconv(B) \supseteq Z_2$; in this case we show again that $B$ is matrix extreme) or the case $2.2$ (there is no $B \in Z_2$ such that $\mconv(B) \supseteq Z_2$; in this case we define $L^3$ and $Z_3$ like above and continue the procedure). If at some iteration $m$ the case $m.1$ occurs, we are done. Assume this is not the case. Noticing that  $\ep$ is infinitesimal and algebraically independent over $\R$, we get the following: For $m \in \N$ let $R_m=\{C \in \S^g(\delta) \ | \ \det\left( \sum_{k=0}^r(-\lambda_k^m + M_k^m + L_k^m \ov X)(C)^2\right)=0\} \}$. Then the sequence $\left(\bigcap_{m=1}^n R_m\right)_{n \in \N}$ is a strictly descending sequence of varieties, which is a contradiction to the Hilbert basis theorem.  
\end{proof}

\section*{Acknowledgements}

\noindent First of all I want to thank my supervisor Markus Schweighofer for giving me the possibility of writing a PhD thesis, from which this paper evolved,
under his supervision and helping me with mathematical suggestions, discussions and advice. I always enjoyed working in his group. I am grateful that Markus was always available for giving support and encouraged me to do research. \\[0.2cm]
I am indebted to Victor Vinnikov, who was present in many discussions with Markus Schweighofer and has been something like a second supervisor. His ideas and vision often proved to be very useful and lead me in the right way. \\[0.2cm]
Igor Klep was helping by having inspirational conversations about many different aspects of matrix convex sets with me. I am very grateful that he gave me the possibility to visit him in Ljubljana and to do research together. He made a lot of effort to bring me closer to other people working with matrix convex sets and helped me to make my work known to the mathematical community. \\[0.2cm] 
I also appreciate having fruitful discussions with Tim Netzer, Jurij Vol$\check{\text{c}}$i$\check{\text{c}}$, J.W. Helton and Eric Evert.

\printindex

%

\end{document}